\theoremstyle{theorem}
	\newtheorem{theorem}{Theorem}[section]
	\newtheorem{lemma}[theorem]{Lemma}
	\newtheorem{proposition}[theorem]{Proposition}
	\newtheorem{corollary}[theorem]{Corollary}
	\newtheorem{conjecture}[theorem]{Conjecture}
\theoremstyle{definition}
	\newtheorem{definition}[theorem]{Definition}
	\newtheorem{remark}[theorem]{Remark}
	\newtheorem{question}[theorem]{Question}
\newcommand{\N}{\mathbb{N}}
\newcommand{\Z}{\mathbb{Z}}
\newcommand{\Q}{\mathbb{Q}}
\newcommand{\R}{\mathbb{R}}
\newcommand{\C}{\mathbb{C}}
\newcommand{\T}{\mathbb{T}}
\newcommand{\A}{\mathbb{A}}
\newcommand{\mZ}{\mathcal{Z}}
\newcommand{\mY}{\mathcal{Y}}
\newcommand{\mX}{\mathcal{X}}
\newcommand{\mH}{\mathcal{H}}
\newcommand{\mA}{\mathcal{A}}
\newcommand{\mB}{\mathcal{B}}
\newcommand{\mM}{\mathcal{M}}
\newcommand{\mE}{\mathcal{E}}
\newcommand{\EFS}{\mathbf{EFS}}
\newcommand{\eps}{\varepsilon}
\newcommand{\es}{\emptyset}
\renewcommand{\tilde}{\widetilde}
\renewcommand{\hat}{\widehat}
\newcommand{\UClim}{\textup{UC-}\lim}
\newcommand{\gen}{\textup{gen}}
\newcommand{\id}{\textup{id}}
\newcommand{\ind}{\mathbbm{1}}
\newcommand{\supp}[1]{\textup{supp} \left( #1 \right)}
\newcommand{\diam}[1]{\textup{diam} \left( #1 \right)}
\newcommand{\E}[2]{\mathbb{E} \left[ #1 \mid #2 \right]}
\newcommand{\innprod}[2]{\left\langle #1, #2 \right\rangle}
\newcommand{\norm}[2]{\left\| #2 \right\|_{#1}}
\newcommand{\seminorm}[2]{{\left\vert\kern-0.25ex\left\vert\kern-0.25ex\left\vert #2 
    \right\vert\kern-0.25ex\right\vert\kern-0.25ex\right\vert}_{#1}}
\title[Infinite polynomial patterns in $\Q$]{Infinite polynomial patterns in large subsets of the rational numbers}
\author{Ethan Ackelsberg}
\address{Institute of Mathematics, École Polytechnique Fédérale de Lausanne, Lausanne, Switzerland}
\email{ethan.ackelsberg@epfl.ch}
\date{\today}
\keywords{}
\subjclass[2020]{Primary: 05D10; Secondary: 11B13, 37A15}
\begin{document}

\maketitle


\begin{abstract}
	Inspired by a question of Kra, Moreira, Richter, and Robertson \cite{kmrr_survey}, we prove two new results about infinite polynomial configurations in large subsets of the rational numbers.
	First, given a finite coloring of $\Q$, we show that there exists an infinite set $B = \{b_n : n \in \N\} \subseteq \Q$ such that
	\begin{equation*}
		\{b_i, b_i^2 + b_j : i < j\}
	\end{equation*}
	is monochromatic.
	Second, we prove that every subset of positive density in the rational numbers contains a translate of such an infinite configuration.
	The corresponding results in the integers are both known to be false, so our results provide natural and relatively simple examples of combinatorial structures that distinguish between the Ramsey-theoretic properties of the rational numbers and the integers.
	
	The proofs of our main results build upon methods developed in a series of papers by Kra, Moreira, Richter, and Robertson \cite{kmrr1, kmrr2, kmrr_FS} to translate from combinatorics into dynamics, where the core of the argument reduces to understanding the behavior of certain polynomial ergodic averages.
	The new dynamical tools required for this analysis are a Wiener--Wintner theorem for polynomially-twisted ergodic averages in $\Q$-systems and a structure theorem for Abramov $\Q$-systems.
\end{abstract}
 



\section{Introduction}


\subsection{Infinite sumset configurations}

The breakthrough work of Moreira, Richter, and Robertson resolving the Erd\H{o}s sumset conjecture \cite{mrr} opened up a new active area of research into which \emph{infinite} combinatorial patterns can be found in large subsets of the integers or other groups or semigroups.
Following the simplified dynamical proof of the Erd\H{o}s sumset conjecture given by Host \cite{host}, a series of papers by Kra, Moreira, Richter, and Robertson \cite{kmrr1, kmrr2, kmrr_FS} recently culminated in the ``density finite sums theorem,'' stating that every positive density\footnote{By ``density,'' we always mean the \emph{upper Banach density}, which is defined for a set $A \subseteq \N$ by
\begin{equation*}
	d^*(A) = \lim_{N \to \infty} \sup_{x \in \N} \frac{|\{x+1, x+2, \ldots, x+N\}|}{N}.
\end{equation*}} subset of the positive integers contains, for every $k \in \N$, a configuration of the form
\begin{equation*}
	t + \left\{ \sum_{x \in F} x : F \subseteq B, 0 < |F| \le k \right\}
\end{equation*}
for some $t \in \Z$ and some infinite set $B \subseteq \N$ (see \cite[Theorem 1.1]{kmrr_FS}).
A subsequent paper of Hern\'{a}ndez \cite{hernandez} provides a full classification of the infinite linear patterns that can be found in arbitrary sets of positive density in the integers.

The original work of Moreira, Richter, and Robertson \cite{mrr} applies to general countable amenable groups, and some of the other sumset results have also been extended beyond the integers: in \cite{cm}, Charamaras and Mountakis proved the $k=2$ case of the density finite sums theorem for an appropriate class of abelian groups and certain additional amenable groups; and in \cite{aj}, Jamneshan and the author extended the $k=3$ case to abelian groups.


\subsection{Polynomial sumsets}

While the aforementioned work addresses quite thoroughly the behavior of linear patterns, it leaves open many natural questions about which (if any) nonlinear patterns one should expect in arbitrary large sets.
One of the simplest such questions, inspired by the Furstenberg--S\'{a}rk\"{o}zy theorem \cite{furstenberg, sarkozy} and posed by Kra, Moreira, Richter and Robertson in their survey paper on infinite sumset configurations (see \cite[Question 3.15]{kmrr_survey}), is whether for any set $A \subseteq \N$ with positive density there exist an infinite set $B = \{b_1 < b_2 < \ldots\}$ and a shift $t \in \Z$ such that
\begin{equation} \label{eq: sumset configuration with B}
	\left\{ b_i, b_i^2 + b_j : i < j \right\}
\end{equation}
is contained in $A - t$.
Note that taking $x = b_j + t$, $y = b_i^2 + b_j + t$, and $n = b_i$ gives a pair of points $x, y \in A$ with $y - x = n^2$, so an affirmative answer to this question strengthens the Furstenberg--S\'{a}rk\"{o}zy theorem.
A weaker question that was also asked in \cite[Question 3.13]{kmrr_survey} is whether every set $A \subseteq \N$ of positive density contains a shift of a set of the form
\begin{equation} \label{eq: sumset configuration}
	\left\{ b_i^2 + b_j : i < j \right\}
\end{equation}
for some infinite set $B = \{b_1 < b_2 < \ldots\}$.

There is substantial evidence in favor of an affirmative answer to both of the questions from the previous paragraph.
To enable this discussion, we recall some definitions.
A set $A \subseteq \N$ is
\begin{itemize}
	\item	\emph{thick} if it contains arbitrary long intervals,
	\item	\emph{syndetic} if it has bounded gaps, and
	\item	\emph{piecewise syndetic} if it can be written as the intersection of a thick set with a syndetic set.
\end{itemize}
Piecewise syndetic sets are an important class of sets in Ramsey theory; for example, van der Waerden's theorem can be rephrased as the statement that every piecewise syndetic set contains arbitrarily long arithmetic progressions.\footnote{This formulation of van der Waerden's theorem appears to have first been observed by Furstenberg and Glasner \cite{fg}, who provide a short argument in the opening paragraph of their paper.
Furstenberg and Glasner use the term ``big'' instead of piecewise syndetic and refer to thick sets as ``replete'' sets.
For a rendering of their argument in modern terminology, see the first page of \cite{beiglbock}.}
We will now show that every piecewise syndetic set contains a configuration of the form \eqref{eq: sumset configuration} up to a shift.
Suppose that $A \subseteq \N$ is a piecewise syndetic set.
Then there exists $k \in \N$ such that $T = A \cup (A - 1) \cup \ldots \cup (A - k)$ is thick.
Define a sequence $c_1 < c_2 < \ldots$ as follows.
Let $c_1 = 1$.
Given $c_1 < \ldots < c_n$, we use the fact that $T$ is thick to choose $c_{n+1}$ such that $c_{n+1} + \{c_1^2, \ldots, c_n^2\} \subseteq T$.
We thus have an infinite sequence $(c_n)_{n \in \N}$ such that $\{c_i^2 + c_j : i < j\} \subseteq T$.
Now we define a $(k+1)$-coloring of the 2-element subsets of $\N$ by
\begin{equation*}
	\chi(\{i,j\}) = \min\{t \in \{0, 1, \ldots, k\} : c_i^2 + c_j \in A - t\}
\end{equation*}
for $i < j$.
By Ramsey's theorem, there is an infinite subset $S \subseteq \N$ and a color $t \in \{0, 1, \ldots, k\}$ such that $\chi(\{i,j\}) = t$ for every $i, j \in S$, $i < j$.
Enumerating $S = \{s_1 < s_2 < \ldots\}$ and putting $b_n = c_{s_n}$, we see that $A - t$ contains a set of the form \eqref{eq: sumset configuration}.
Thus, the second question in the previous paragraph (\cite[Question 3.13]{kmrr_survey}) has a positive answer for piecewise syndetic sets.

The above discussion does not rely on any particular properties of the set of squares, which may be hiding potential issues that arise when trying to upgrade from piecewise syndetic sets to arbitrary sets of positive upper Banach density or from the more limited configuration in \eqref{eq: sumset configuration} to the configuration in \eqref{eq: sumset configuration with B}.
This is where the Furstenberg--S\'{a}rk\"{o}zy theorem plays a role: using the fact that $\{n^2 : n \in \N\}$ is a set of strong recurrence\footnote{Subsequent work by Luke Hetzel (personal communication) establishes a similar result for sets of recurrence without assuming that they are also sets of strong recurrence.}, it was shown in \cite[Corollary 3.33]{kmrr_survey} that for any set $A \subseteq \N$ of positive density, there exist infinite sets $B, C \subseteq \N$ such that $\{c, b^2 + c : b \in B, c \in C, b < c\} \subseteq A$.
Asking for a configuration of the form \eqref{eq: sumset configuration with B} is equivalent to asking for $B$ and $C$ to be equal (at the cost of allowing $A$ to be shifted).

Despite the evidence in its favor, even the weaker question of Kra, Moreira, Richter, and Robertson has a negative answer, as shown in \cite{counterexample}:

\begin{theorem}[{\cite[Theorem 1.5]{counterexample}}] \label{thm: counterexample}
	For any $\eps > 0$, there exists a set $A \subseteq \N$ with $d^*(A) > 1 - \eps$ such that if $B \subseteq \N$, $t \in \Z$, and
	\begin{equation*}
		\{b_1^2 + b_2 : b_1, b_2 \in B, b_1 < b_2\} \subseteq A - t,
	\end{equation*}
	then $B$ is finite.
\end{theorem}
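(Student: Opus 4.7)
The plan is to construct $A$ explicitly as $\N \setminus S$ for a sparse set $S$ with $d^*(S) < \eps$, so that $d^*(A) > 1 - \eps$ holds automatically, and all of the work is concentrated on arranging $S$ to block every potential pair $(B, t)$ with $B \subseteq \N$ infinite and $t \in \Z$ satisfying $\{b_i^2 + b_j + t : i < j, \ b_i, b_j \in B\} \subseteq A$.

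I would build $S$ as a union $\bigcup_{k \geq 1} J_k$ of ``blocker'' sets, each positioned in a distinct window $W_k$ around a rapidly increasing sequence of scales $N_1 \ll N_2 \ll \cdots$. Each $J_k$ is allowed to occupy at most $\eps \lvert W_k \rvert / 2$ elements of its window, which keeps $d^*(S) < \eps$ as long as $N_k$ grows fast enough. The guiding principle is diagonalization over pairs $(b, t) \in \N \times \Z$: enumerate them as $(b^{(k)}, t^{(k)})_{k \geq 1}$, and at stage $k$, populate $J_k$ with short intervals positioned so that any infinite $B$ whose smallest element is $b^{(k)}$ and whose shift is $t^{(k)}$ is forced to have some $b_j \in W_k$ producing $(b^{(k)})^2 + b_j + t^{(k)} \in J_k$, contradicting the assumption $\{b_i^2 + b_j + t : i < j\} \subseteq A$.

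The main obstacle is that pinning down only the smallest element $b_1$ and the shift $t$ is insufficient, since the later elements $b_2, b_3, \ldots$ of an adversarial $B$ can be placed to evade the stage-$k$ trap. The resolution is that the diagonalization eventually cycles over \emph{every} pair $(b, t)$, so later stages also trap configurations in which $b^{(k)}$ plays the role of a later element $b_i$, not only the first. Combining this with the structural constraint $B \setminus \{b_1, \ldots, b_i\} \subseteq \bigcap_{j \leq i} \left( A - t - b_j^2 \right)$, forced by the hypothesis on $B$, should funnel $B$'s tail into an intersection of translates of $A$ whose joint density decays to zero as $i \to \infty$, incompatible with $B$ being infinite. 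The delicate part will be the quantitative bookkeeping: ensuring the cumulative mass of all $J_k$ stays under the density budget $\eps$ while simultaneously blocking every pair. This will likely require $N_k$ to grow super-exponentially, together with a careful accounting of how much genuinely new blocking each $J_k$ contributes beyond what $J_1, \ldots, J_{k-1}$ already enforce on the relevant shifts of $A$.
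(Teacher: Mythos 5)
The statement is quoted from \cite[Theorem 1.5]{counterexample} rather than proved in this paper, but the introduction describes the mechanism of the actual construction: $A$ is built from ``local obstructions''---it has the form $\bigcup_{t \in \Z} A_t$ where each $A_t$ is, up to finitely many elements, a finite union of infinite arithmetic progressions---and the construction is driven by a general criterion (restated here as Theorem \ref{thm: counterexample criterion}) combined with the congruence structure of the squares. That arithmetic structure is precisely what your diagonalization does not supply.

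The gap is at the point you yourself flag. You assert that the constraint $B \setminus \{b_1, \ldots, b_i\} \subseteq \bigcap_{j \leq i} (A - t - b_j^2)$ forces a ``joint density decaying to zero as $i \to \infty$, incompatible with $B$ being infinite.'' Neither half of this holds. First, density decaying to $0$ is not incompatible with infinitude: for the argument to close, you need the intersection $\bigcap_{j \le i}(A - t - b_j^2)$ to become \emph{finite} at some finite stage $i$, which is a strictly stronger conclusion than density $\to 0$. Second, a sparse $S$ scheduled by windowed diagonalization will not even achieve the density decay: the shifts $b_1^2, b_2^2, \ldots$ are all distinct, so for an $S$ without built-in arithmetic alignment the translates $S - t - b_j^2$ behave like roughly independent density-$\eps$ sets, the intersection $\bigcap_{j \le i}(A - t - b_j^2)$ has density on the order of $(1-\eps)^i > 0$, and---more to the point---remains infinite for every $i$. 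A greedy choice of $b_{i+1}$ from this infinite set then produces an infinite $B$ that evades all of your blockers, regardless of how you cycle through pairs $(b,t)$: the density budget $< \eps$ for $S$ is simply too small to close off the intersection unless the translates are made to interfere constructively. Making them do so is exactly where the paper's ``local obstruction'' (congruences modulo $m$ that trap $n \mapsto n^2$) comes in, and your proposal never reaches that idea.
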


In \cite{kmrr_survey}, a related question about finite colorings of the integers was also formulated; namely, if $\N$ is finitely colored, can one can always find an infinite set $B = \{b_1 < b_2 < \ldots\}$ such that $\{b_i, b_i^2 + b_j : i < j\}$ is monochromatic (see \cite[Remark 3.17]{kmrr_survey}).
We note that by a clever argument of Bergelson (see \cite[p. 53]{ert_update}) one can always find a monochromatic triple of the form $\{b_1, b_2, b_1^2 + b_2\}$.
However, a recent paper of Di Nasso, Luperi Baglini, Mamino, Mennuni, and Ragosta \cite{ramsey_witness} answers the coloring question of Kra, Moreira, Richter, and Robertson again in the negative\footnote{The phrasing of \cite[Corollary 5.10]{ramsey_witness} uses terminology that may not be familiar to the reader.
In \cite{ramsey_witness}, the authors say that an equation $f(x) + g(y) = h(z)$ is \emph{Ramsey partition regular} if for any finite coloring of $\N$, there is an infinite monochromatic set $B \subseteq \N$ such that for any $b_1, b_2 \in B$ with $b_1 < b_2$, there exists $c$ of the same color as the elements of $B$ such that $f(b_1) + f(b_2) = h(c)$.
So, for example, Theorem \ref{thm: not ramsey pr} is equivalent to the statement that the equation $x^2+y=z$ fails to be Ramsey partition regular.}:

\begin{theorem}[cf. {\cite[Corollary 5.10]{ramsey_witness}}] \label{thm: not ramsey pr}
	There exists a finite coloring of $\N$ with the property that for any infinite set $B = \{b_1 < b_2 < \ldots\}$, the set $\{b_i, b_i^2 + b_j : i < j\}$ contains elements of at least two different colors.
\end{theorem}

The proof given in \cite{ramsey_witness} uses methods from nonstandard analysis and does not provide an explicit coloring.
In Section \ref{sec: 5-coloring}, we translate the argument into a simple combinatorial proof using 5 colors. \\

Theorems \ref{thm: counterexample} and \ref{thm: not ramsey pr} seem to eliminate the possibility of meaningfully incorporating polynomials into the framework of infinite sumset configurations.
We show in this paper, however, that not all hope is lost: both Theorem \ref{thm: counterexample} and Theorem \ref{thm: not ramsey pr} depend on particular number-theoretic aspects of the integers, and the questions of Kra, Moreira, Richter, and Robertson in fact have positive answers when one moves to the rational numbers, as we shall see in the main results formulated below.
As a consequence, we exhibit new combinatorial patterns that have divergent Ramsey-theoretic behaviors with respect to the integers and the rationals, a phenomenon for which only limited families of examples were previously known (see \cite{bhl,bhls}).


\subsection{New results}

To give a precise formulation of our first main result, we need a definition.
A sequence $\Phi = (\Phi_N)_{N \in \N}$ of finite subsets of $\Q$ is an (additive) \emph{F{\o}lner sequence} if
\begin{equation*}
	\lim_{N \to \infty} \frac{\left| (\Phi_N + x) \triangle \Phi_N \right|}{|\Phi_N|} = 0
\end{equation*}
for every $x \in \Q$.
An example of a F{\o}lner sequence in $\Q$ is the sequence
\begin{equation*}
	\Phi_N = \left\{ \sum_{n=1}^N \frac{a_n}{n} : a_n \in \Z, -N \le a_n \le N \right\}.
\end{equation*}
The \emph{upper Banach density} of a set $A \subseteq \Q$ is the quantity
\begin{equation*}
	d^*(A) = \sup_{\Phi} \limsup_{N \to \infty} \frac{|A \cap \Phi_N|}{|\Phi_N|},
\end{equation*}
where the supremum is taken over all F{\o}lner sequences $\Phi$ in $\Q$.

\begin{theorem} \label{thm: density}
	Let $A \subseteq \Q$ with $d^*(A) > 0$.
	Then there exists $t \in \Q$ and an infinite set $\{b_n : n \in \N\} \subseteq \Q$ such that
	\begin{equation*}
		\{b_i, b_i^2 + b_j : i < j\} \subseteq A - t.
	\end{equation*}
\end{theorem}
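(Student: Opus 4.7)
The plan is to follow the framework developed by Kra, Moreira, Richter, and Robertson in \cite{kmrr1, kmrr2, kmrr_FS}, adapted to $\Q$-actions. The first step is a $\Q$-version of the Furstenberg correspondence principle, which converts the density hypothesis on $A$ into a measure-preserving $\Q$-system $(X, \mu, (T^q)_{q \in \Q})$ together with a set $E \subseteq X$ of measure $\mu(E) = d^*(A)$ such that positivity of intersection measures $\mu\bigl(\bigcap_{k} T^{-q_k - t} E\bigr)$ translates, for a suitable shift $t \in \Q$, into inclusions of the form $\{q_k\} \subseteq A - t$. The problem thereby reduces to producing an infinite sequence $(b_n) \subseteq \Q$ such that
\[
    \mu \left( E \cap \bigcap_{i} T^{-b_i} E \cap \bigcap_{i < j} T^{-(b_i^2 + b_j)} E \right) > 0.
\]

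The sequence $(b_n)$ is constructed inductively. Having selected $b_1, \ldots, b_n$ and a positive-measure set $E_n \subseteq X$ of points at which all constraints of the partial configuration already hold, we wish to show that for a positive-density set of $b \in \Q$, the set
\[
    E_{n+1}(b) = E_n \cap T^{-b} E \cap \bigcap_{i=1}^{n} T^{-(b_i^2 + b)} E
\]
has positive measure; any such $b$ is then a legitimate choice for $b_{n+1}$. This amounts to positivity in the Cesàro average
\[
    \UClim_{b \in \Q} \int_X \ind_{E_n} \cdot T^{-b} \ind_E \cdot \prod_{i=1}^{n} T^{-(b_i^2 + b)} \ind_E \, d\mu,
\]
and the Wiener--Wintner theorem for polynomially-twisted averages in $\Q$-systems---one of the new dynamical inputs announced in the abstract---is the natural tool to control such expressions pointwise, off a universal null set, so that the inductive choices remain simultaneously available at every stage.

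To evaluate these averages, project onto the characteristic factor supplied by the structure theorem for Abramov $\Q$-systems. On such a factor the polynomial $b \mapsto b^2$ linearizes into rotation by a character on a compact abelian group, so that the contribution from $T^{-(b_i^2 + b)} \ind_E$ reduces to a product of characters evaluated at shifts of $b$. A direct computation then expresses the limit in terms of integrals of eigenfunctions along shifted orbits; combined with the positivity of $\mu(E_n)$ and a trivial lower bound in the spirit of the Cauchy--Schwarz inequality, this yields a bound comparable to $\mu(E)^{n+1} > 0$. Iterating and extracting a diagonal subsequence produces the infinite set $\{b_n\}$, while the correspondence principle provides the shift $t$.

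The main obstacle is establishing the polynomial Wiener--Wintner statement with enough uniformity to drive the induction: a \emph{single} null set must exist outside which the required pointwise limits behave well \emph{simultaneously} for every finite tuple $(b_1, \ldots, b_n)$ of previously chosen anchors, and this uniformity must survive the projection onto the Abramov factor. A secondary hurdle is constructing the Abramov characteristic factor in the $\Q$-setting, where the squaring map $b \mapsto b^2$ must be accommodated in a manner compatible with both the additive $\Q$-action and the multiplicative structure needed to linearize the quadratic shifts. Once these two ingredients are in hand, the induction and passage to the limit follow the KMRR template.
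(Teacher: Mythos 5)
Your proposal has a fundamental structural gap at the very first reduction. You claim the problem reduces to producing an infinite sequence $(b_n)$ such that
\[
    \mu\left( E \cap \bigcap_{i} T^{-b_i} E \cap \bigcap_{i < j} T^{-(b_i^2 + b_j)} E \right) > 0,
\]
but this is \emph{not} the correct translation of the combinatorial statement, and it is also not a tractable target. The Furstenberg correspondence converts positive measure of a \emph{finite} intersection $\mu\bigl(\bigcap_{k=1}^N T^{-q_k}E\bigr) > 0$ into a common shift $t$ placing $\{q_1,\dots,q_N\}$ inside $A - t$; for an infinite index set, no such implication holds, and worse, the intersection measure generically decays to zero as $n \to \infty$. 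Your inductive scheme keeps shrinking the positive-measure set $E_n$ with no control on $\inf_n \mu(E_n)$, so the construction offers no positive-measure limiting set, and even if it did, that would not by itself deliver a single shift $t$ witnessing the infinite configuration. This is precisely the obstruction that the Kra--Moreira--Richter--Robertson framework was designed to overcome, and you cannot invoke it by analogy while discarding its central mechanism.

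The paper's actual route avoids infinite intersections entirely by working \emph{topologically}. The target becomes an Erd\H{o}s--Furstenberg--S\'{a}rk\"{o}zy $P$-progression: a triple $(T^t a, x_1, x_2)$ with $x_1, x_2 \in E$ and a sequence $(s_n)$ with $\bigl(T^{s_n}(T^t a), T^{P(s_n)}x_1\bigr) \to (x_1, x_2)$. A nested-neighborhoods argument (Proposition~\ref{prop: EFS implies sumset}) then extracts the sequence $(b_n)$ directly from the convergence, with each condition $T^{b_i}(T^t a) \in E$, $T^{P(b_i)+b_j}(T^t a) \in E$ checked pointwise along the orbit of a single generic, transitive point rather than demanding any infinite intersection to carry measure. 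The measure-theoretic inputs you flag (the polynomial Wiener--Wintner theorem, the Abramov structure theorem) are indeed used, but for a different purpose: to build a \emph{progressive measure} $\sigma^P_{T^t a}$ supported on $\overline{\EFS^P_X(T^t a)}$ (Theorem~\ref{thm: sigma is progressive}) and to show, by averaging over $t$ and invoking a $\Q$-Furstenberg--S\'{a}rk\"{o}zy recurrence theorem, that some $\sigma^P_{T^t a}$ gives positive mass to $E \times E$. The averages you want to control at each inductive step are replaced in the paper by a single limit formula (Theorem~\ref{thm: limit formula}) evaluated once, and the positivity argument uses an Ajtai--Szemer\'{e}di-type bound inside the Abramov factor (Proposition~\ref{prop: uniform bound on factor}), not a Cauchy--Schwarz bound of the form $\mu(E)^{n+1}$. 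Without reorganizing your argument around the topological EFS-progression mechanism, the passage from finitely many $b_i$'s to an infinite set does not go through.
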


We can replace the polynomial term $b_i^2$ by a more general polynomial expression $P(b_i)$ but with some restriction on the polynomial $P(x) \in \Q[x]$.
To aid with the discussion, call a polynomial $P(x) \in \Q[x]$ \emph{good for sumsets} if Theorem \ref{thm: density} holds with $P(b_i)$ in place of $b_i^2$.
We will now describe the family of polynomials that are good for sumsets.

One relatively simple obstruction to being good for sumsets is when $P = c$ for some nonzero constant $c \in \Q \setminus \{0\}$.
Indeed, we can find a set $A \subseteq \Q$ such that $A \cap (A+c) = \es$ and $\Q = A \cup (A+c)$.
Such a set $A$ must have density $\frac{1}{2}$, so we see that $P=c$ is bad for sumsets.

A more subtle obstacle arises from certain linear polynomials.
By modifying a construction of Straus (never published by Straus but described, e.g., in \cite{bbhs} and \cite[Section 2]{counterexample}), we show in Section \ref{sec: straus example rationals} that there exists a subset $A \subseteq \Q$ with $d^*(A) > 0$ such that no translate of $A$ contains a $\Delta$-set, i.e. an infinite set of the form $\{b_j - b_i : i < j\}$.
As a consequence, the polynomial $P(x) = -x + c$ is bad for sumsets for every $c \in \Q$.

As we will show, there are no other obstacles to producing polynomial sumsets.
That is, $P(x) \in \Q[x]$ is good for sumsets if and only if $P = 0$ or both $P(x)$ and $P(x) + x$ are nonconstant, as demonstrated by the following theorem (we do not account for the trivial $P = 0$ case explicitly):

\begin{theorem} \label{thm: density P}
	Let $P(x) \in \Q[x]$ such that both $P(x)$ and $P(x) + x$ are nonconstant.
	Let $A \subseteq \Q$ with $d^*(A) > 0$.
	Then there exists $t \in \Q$ and an infinite set $\{b_n : n \in \N\} \subseteq \Q$ such that
	\begin{equation*}
		\{b_i, P(b_i) + b_j : i < j\} \subseteq A - t.
	\end{equation*}
\end{theorem}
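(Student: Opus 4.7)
The plan is to mirror the dynamical approach developed by Kra, Moreira, Richter, and Robertson for $\Z$-actions, adapted to the additive group $\Q$. First, using a Furstenberg-type correspondence principle for the (countable amenable) group $\Q$, I would translate the statement into dynamics: given a measure-preserving $\Q$-system $(X, \mu, (T_q)_{q \in \Q})$ and a set $E \subseteq X$ with $\mu(E) > 0$ corresponding to $A$, it suffices to produce a point $x \in X$ and an infinite set $\{b_n\} \subseteq \Q$ such that $T_{b_i} x \in E$ for every $i$ and $T_{P(b_i) + b_j} x \in E$ for all $i < j$. Pulling back via the correspondence then provides the shift $t$ and configuration in $A - t$.

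Next, I would build $\{b_n\}$ inductively. At stage $n$, having fixed $b_1, \ldots, b_{n-1}$ and a positive-measure ``return set'' $E_{n-1}$ on which the first $n-1$ terms of the configuration are realized, the goal is to find $b_n \in \Q$ and a positive-measure subset $E_n \subseteq E_{n-1}$ so that $T_{b_n} x \in E$ and $T_{P(b_i) + b_n} x \in E$ for all $i < n$ whenever $x \in E_n$. Averaging over $b_n$ along a F{\o}lner sequence, this reduces to showing strict positivity of the polynomial multiple ergodic average
\begin{equation*}
    \frac{1}{|\Phi_N|} \sum_{b \in \Phi_N} \ind_E(T_b x) \prod_{i < n} \ind_E(T_{P(b_i) + b} x)
\end{equation*}
for $x$ in a large enough subset of $E_{n-1}$, at least along some subsequence in $N$. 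The positivity of these averages is the dynamical core of the argument.

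Both of the new ingredients announced in the abstract enter at this step. The structure theorem for Abramov $\Q$-systems should identify a characteristic factor for averages involving the polynomial shift $P(b)$, on which the indicator functions decompose into sums of eigenfunctions and the multilinear averages reduce to character sums evaluated at $P(b_1), \ldots, P(b_{n-1})$. The hypotheses on $P$ serve precisely to guarantee non-degeneracy of these character sums: $P$ nonconstant rules out the trivial constant obstruction noted after Theorem \ref{thm: density}, while $P + x$ nonconstant excludes the $P(x) = -x + c$ case obstructed by the rational analogue of Straus's construction from Section \ref{sec: straus example rationals}. The Wiener--Wintner theorem for polynomially-twisted averages in $\Q$-systems then upgrades $L^2$ positivity to pointwise control that is uniform over the relevant characters, which legitimizes the inductive choice of $b_n$ for an individual point $x$ rather than merely on average.

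I expect the main obstacle to be establishing the Abramov structure theorem in sufficient detail to extract strict positivity of the multivariate polynomial averages. The cross-terms introduced by the shifts $P(b_i) + b_n$ must be shown to interact coherently, and verifying this under the divisibility structure of $\Q$, which differs substantially from $\Z$, is where the integer case breaks down (compare Theorem \ref{thm: counterexample}). Once structural positivity is in hand and Wiener--Wintner supplies the required pointwise uniformity, the induction proceeds by standard KMRR-style bookkeeping, and the combinatorial conclusion follows by unwinding the correspondence.
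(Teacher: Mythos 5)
There are genuine gaps in this plan, and they concern the overall architecture rather than just the technical new ingredients.

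First, the reduction. You say it suffices to produce a point $x \in X$ and an infinite set $\{b_n\}$ with $T_{b_i}x \in E$ and $T_{P(b_i)+b_j}x \in E$, and then "pull back via the correspondence." But the Furstenberg correspondence gives $A = \{q \in \Q : T^q a \in E\}$ for one specific transitive point $a$; the pattern in $A - t$ corresponds to patterns seen by the point $T^t a$, not by an arbitrary $x \in X$. Your inductive construction produces a decreasing sequence of positive-measure sets $E_n$ and (implicitly) wants a point $x \in \bigcap_n E_n$. That intersection may have measure zero (indeed empty), and even if nonempty it contains no information relating $x$ to the orbit of $a$. The paper sidesteps this with Proposition \ref{prop: EFS implies sumset} and the topological refinement in Proposition \ref{prop: progressive criterion sets}: one first finds a pair $(x_1, x_2) \in E \times E$ lying in $\EFS^P_X(T^t a)$, i.e.\ a limit $T^{s_n}(T^ta) \to x_1$, $T^{P(s_n)}x_1 \to x_2$, using nested open sets with closures in their predecessors and shrinking diameters (so the intersection is a singleton by compactness, not a measure argument), and then extracts the $b_n$'s from that approximate progression. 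This is the structural role of the Erd\H{o}s--Furstenberg--S\'{a}rk\"{o}zy progression machinery, which your plan replaces with a measure-theoretic induction that does not close.

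Second, the averages you write down are not the ones for which the Abramov factor is characteristic. Once $b_1, \ldots, b_{n-1}$ and the offsets $P(b_1), \ldots, P(b_{n-1})$ are fixed, the expression
\begin{equation*}
    \frac{1}{|\Phi_N|} \sum_{b \in \Phi_N} \ind_E(T_b x) \prod_{i < n} \ind_E(T_{P(b_i)+b} x)
\end{equation*}
is an average of \emph{linear} shifts in the averaging variable $b$; its characteristic factor is the Kronecker factor, and the Abramov factor and polynomial Wiener--Wintner theorem are not what controls it. The polynomial structure enters through the \emph{future} constraints: choosing $b_n$ now commits you to requiring $T^{P(b_n)+b_j}x \in E$ for later $j$, and one must understand how $P(b_n)$ distributes as $b_n$ ranges over $\Q$. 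The EFS progression formulation packages both pieces: the limit $T^{s_n}x_0 \to x_1$ is linear in $s_n$, while $T^{P(s_n)}x_1 \to x_2$ is polynomial, and it is the latter (via Theorem \ref{thm: Abramov partially characteristic} and Theorem \ref{thm: characteristic factor}) that requires the Abramov factor and the Wiener--Wintner input. Your induction never isolates the polynomial average that actually needs these tools. Finally, you omit the averaging over the shift $t$. The paper's positivity does not hold from $a$ itself in general — Remark \ref{rem: shift} gives a uniquely ergodic counterexample where $\EFS^{q^2}_X(a) \cap (E\times E) = \es$ — and the proof of Theorem \ref{thm: density dynamical} establishes $\UClim_{t \in \Q} \sigma^P_{T^ta} = \mu \times \mu$ via a $\Q$-version of the Furstenberg--S\'{a}rk\"{o}zy theorem (Theorem \ref{thm: FS rationals}) to conclude positivity for \emph{some} $t$. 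That averaging step is essential and is not visible in your outline.
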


Using the methods developed for proving Theorem \ref{thm: density P}, we are also able to prove a result about monochromatic infinite polynomial sumset configurations in finite colorings of $\Q$.

\begin{theorem} \label{thm: coloring P}
	Let $P(x) \in \Q[x]$ with $\deg{P} \ge 2$.
	Then for any finite coloring of $\Q$, there exists an infinite set $B = \{b_n : n \in \N\}$ such that
	\begin{equation*}
		\{b_i, P(b_i) + b_j : i < j\}
	\end{equation*}
	is monochromatic.
\end{theorem}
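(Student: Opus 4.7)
The strategy is to adapt the dynamical framework of Kra, Moreira, Richter, and Robertson underlying the density results of the paper to the coloring setting. Any finite coloring of $\Q$ admits a color class $A$ that is piecewise syndetic, so it suffices to prove: every piecewise syndetic $A \subseteq \Q$ contains a configuration $\{b_i, P(b_i) + b_j : i < j\}$ with all elements lying in $A$ itself (not merely in a translate of $A$).

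The first step is a correspondence principle. From the piecewise syndetic set $A$, one extracts a measure-preserving $\Q$-system $(X, \mathcal{B}, \mu, (T_q)_{q \in \Q})$, a clopen set $E \subseteq X$ with $\mu(E) > 0$, and a point $x_0 \in X$ generic for $\mu$ along some F{\o}lner sequence $\Phi$, such that the return-time set $R := \{q \in \Q : T_q x_0 \in E\}$ is contained in $A$ (up to a harmless shift absorbed into the choice of $x_0$). Producing $b_1, b_2, \ldots \in R$ with $P(b_i) + b_j \in R$ for $i < j$ then yields the desired monochromatic configuration inside $A$.

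The sequence is constructed inductively. Given $b_1, \ldots, b_n$, I would seek $b_{n+1}$ in the set $\{b \in \Q : T_b x_0 \in E \cap \bigcap_{i=1}^n T_{-P(b_i)} E\}$, and it suffices to show that this set is infinite. This follows from a positivity estimate of the form
\begin{equation*}
	\liminf_{N \to \infty} \frac{1}{|\Phi_N|} \sum_{b \in \Phi_N} \ind_E(T_b x_0) \prod_{i=1}^n \ind_E(T_{P(b_i) + b} x_0) > 0,
\end{equation*}
which is precisely the kind of polynomial multiple average that the dynamical core of the paper is designed to handle. The Wiener--Wintner theorem for polynomially-twisted averages in $\Q$-systems should reduce the analysis to the projection of $\ind_E$ onto the Abramov factor of $(X, \mu, (T_q))$; the structure theorem for Abramov $\Q$-systems then identifies this factor concretely enough that the limiting expression can be computed. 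Positivity should follow from $\mu(E) > 0$ together with the hypothesis $\deg P \ge 2$, which guarantees that the twists $P(b_i)$ distribute in an equidistributed fashion against the linear term $b$ and do not conspire to cancel.

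The main obstacle is carrying the induction through uniformly. Each new $b_i$ is drawn from the sparse return-time set $R$, so the twists $P(b_i)$ in the multiple average are not arbitrary parameters but must be ``admissible,'' corresponding to visits of the orbit of $x_0$ to $E$. This is exactly the role of the Wiener--Wintner statement: it provides a single generic $x_0$ along which the polynomial averages converge to their expected limits against \emph{every} admissible twist simultaneously, so that the induction does not degenerate as $n$ grows. Preserving the positivity estimate through every stage of the induction, while also honoring the extra factor $\ind_E(T_b x_0)$ that constrains $b_{n+1} \in R$, is the most delicate part of the argument; the rest is a translation of the density proof of Theorem \ref{thm: density P} with the density hypothesis replaced by the generic-point setup coming from piecewise syndeticity.
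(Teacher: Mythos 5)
Your reduction is unjustified at its first step, and the positivity estimate at its heart does not hold. You reduce to showing that every piecewise syndetic $A \subseteq \Q$ contains the full configuration $\{b_i, P(b_i) + b_j : i < j\}$ \emph{without a shift}, but there is no mechanism in your argument forcing this. The estimate
\begin{equation*}
	\liminf_{N \to \infty} \frac{1}{|\Phi_N|} \sum_{b \in \Phi_N} \ind_E(T_b x_0) \prod_{i=1}^n \ind_E(T_{P(b_i) + b} x_0) > 0
\end{equation*}
would amount to $\sigma^P_{x_0}(E \times E) > 0$ in the notation of Section~\ref{sec: progressive measure}, and this can fail even when $\mu(E) > 0$. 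Remark~\ref{rem: shift} constructs exactly such an example in a uniquely ergodic $\Q$-system: $E = U \times V$ with $(V+U) \cap U = \es$, $a=(0,0)$, so that $A = \{q : T^q a \in E\}$ contains no $\{b_1, b_2, b_1^2 + b_2\}$ at all. This is why Theorem~\ref{thm: density dynamical} requires the shift $t$, and your heuristic that ``the twists $P(b_i)$ do not conspire to cancel'' is precisely what that example refutes. Piecewise syndeticity does not save you: the usual dividend of piecewise syndeticity is passage to a minimal subsystem, but the counterexample in Remark~\ref{rem: shift} already lives in a minimal (indeed uniquely ergodic) system, so nothing improves.

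The paper avoids this by never committing to a single color class. After the topological correspondence (Theorem~\ref{thm: topological correspondence}) and choosing an ergodic measure for which $a$ is generic, it proves $\sigma^P_a\bigl(\bigcup_{k=1}^r (E_k \times E_k)\bigr) > 0$ --- a positivity statement for the \emph{sum over all colors} --- via a lower bound $c(d,r) > 0$ that is uniform across Abramov systems (Theorems~\ref{thm: sigma measure of diagonal covering}--\ref{thm: coloring skew product average}). That uniform bound is ultimately supplied by a measure-theoretic Ramsey theorem for ordered hypergraphs (Theorem~\ref{thm: measure theory Ramsey}), which is where the hypothesis $\deg P \ge 2$ is actually consumed (via the linearization and change of variables in Section~\ref{sec: proof finish coloring}). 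This collective Ramsey mechanism is the crucial ingredient your proposal omits: no single $E_k$ need work, but at least one must, and that can only be seen by treating all colors simultaneously. Your inductive single-class construction therefore has no way to get off the ground when the chosen class happens to be structured adversarially.
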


We do not know what happens in the linear case $\deg{P} = 1$, and the following question appears to be open:

\begin{question}
	For which linear polynomials $P(x) = cx+d \in \Q[x]$ does the following hold: for any finite coloring of $\Q$, there is an infinite set $B = \{b_n : n\in \N\}$ such that $\{b_i, P(b_i) + b_j : i < j\}$ is monochromatic?
\end{question}

This problem is surprisingly recalcitrant, and the only linear polynomial for which we know the answer is $P(x) = x$: by Hindman's theorem \cite{hindman}, if $\Q$ is finitely colored, then there exists an infinite set $B = \{b_n : n \in \N\}$ such that $\{b_i, b_i + b_j : i < j\}$ is monochromatic.


\subsection{Overview of the paper}

We will prove Theorem \ref{thm: density P} and Theorem \ref{thm: coloring P} using techniques from ergodic theory.
For problems concerning patterns in sets of positive density, dynamical tools have in most cases been deployed in the language of abstract ergodic theory with no reference to an underlying topological structure (see, e.g., \cite[Chapters 3--7]{furstenberg_book}).
As highlighted by Host's dynamical proof of the Erd\H{o}s sumset conjecture \cite{host}, topology suddenly plays a prominent role when one is interested in infinite patterns as opposed to the finite patterns handled by classical results in ergodic Ramsey theory.
The combination of topological and ergodic-theoretic arguments has since been used to great effect in the various extensions of the Erd\H{o}s sumset conjecture mentioned at the beginning of the introduction.
Our proof of Theorem \ref{thm: density P} builds upon these recent developments and utilizes topological as well as measure-theoretic tools.

On the other hand, in most applications of dynamical methods to coloring problems (first established in a systematic way in \cite{fw}; see also \cite[Chapters 1, 2, and 8]{furstenberg_book}), the natural setting is in the realm of topological dynamics.
In keeping with this tradition, we give a reformulation of Theorem \ref{thm: coloring P} as a statement in topological dynamics in Section \ref{sec: Furstenberg systems} (see Theorem \ref{thm: coloring dynamical}).
However, in the course of the proof, we introduce a measure and carry out the majority of the argument in the context of measure-preserving systems in order to make use of many of the same ingredients that appear in the proof of Theorem \ref{thm: density P}. \\

The paper is organized as follows.
Section \ref{sec: Furstenberg systems} uses the framework introduced in \cite{kmrr2} to translate Theorem \ref{thm: density P} and Theorem \ref{thm: coloring P} into dynamical language.
There is substantial overlap in the arguments for both of the main theorems, and the common tools for both results are developed in Sections \ref{sec: characteristic factor}--\ref{sec: progressive measure}.
We complete the proof of Theorem \ref{thm: density P} in Section \ref{sec: proof finish density}; the finishing touches for the proof of Theorem \ref{thm: coloring P} appear in Section \ref{sec: proof finish coloring}.
Two important technical results about polynomial structures in measure-preserving systems play a central role in our argument: (1) a Wiener--Wintner-type theorem for ergodic averages twisted by polynomial phases (Theorem \ref{thm: polynomial ww adelic}), and (2) a structure theorem for Abramov systems (Theorem \ref{thm: Abramov structure}).
Both of these theorems have known analogues in the case of (totally ergodic) $\Z$-actions but are new for $\Q$-actions.
The proofs of these new results, which may be of independent interest, appear in Section \ref{sec: ww} and Section \ref{sec: Abramov}, respectively.
As promised in the discussion following Theorem \ref{thm: density}, an adaptation of Straus's example to the rational numbers is detailed in Section \ref{sec: straus example rationals}.
We end the paper with a discussion of infinite polynomial sumset configurations in the integers in Section \ref{sec: integers}.


\section*{Acknowledgements}

This work is supported by the Swiss National Science Foundation under grant TMSGI2-211214.
Thanks to Florian K. Richter for helpful comments on an earlier draft and to Mauro Di Nasso for drawing the author's attention to the recent paper \cite{ramsey_witness} and explaining the relevance of \emph{Ramsey witnesses} from nonstandard analysis.


\section{Dynamical reformulation of the main theorems} \label{sec: Furstenberg systems}

The Furstenberg correspondence principle, originating in Furstenberg's ergodic-theoretic proof of Szemer\'{e}di's theorem \cite{furstenberg}, is a tool that allows one to encode combinatorial statements in a dynamical framework.
Before stating the version of the Furstenberg correspondence principle that we will need, we introduce basic objects from ergodic theory.

A \emph{topological dynamical $\Q$-system} is a pair $(X, T)$, where $X$ is a compact metric space and $T$ is an action of $(\Q,+)$ by homeomorphisms $T^q : X \to X$.
By a (measure-preserving) \emph{$\Q$-system}, we will mean a triple $(X, \mu, T)$, where $(X,T)$ is a topological dynamical $\Q$-system and $\mu$ is a Borel probability measure on $X$ that is $T$-invariant, meaning that $\mu(T^{-q}A) = \mu(A)$ for every Borel set $A \subseteq X$ and every $q \in \Q$.
(Note that the notion of a measure-preserving system is often more general than we have stated here, allowing for $(X, \mu)$ to be an arbitrary probability space and $T$ a measurable map.
Several arguments in this paper rely on simultaneously keeping track of both topological and measure-theoretic properties, which is why we impose additional topological constraints in the definition of a system.)

A system $(X, \mu, T)$ is \emph{ergodic} if whenever $A \subseteq X$ is a Borel set satisfying $\mu(A \triangle T^{-q}A) = 0$ for every $q \in \Q$, one has $\mu(A) \in \{0,1\}$.
A point $a \in X$ is \emph{transitive} if the orbit $\{T^q a : q \in \Q\}$ is dense in $X$.
We say that $a$ is \emph{generic for $\mu$ along a F{\o}lner sequence $\Phi$}, abbreviated $a \in \gen(\mu, \Phi)$, if $\frac{1}{|\Phi_N|} \sum_{q \in \Phi_N} \delta_{T^q a} \to \mu$ in the weak$^*$ topology.

\begin{theorem}[Topological Furstenberg correspondence principle] \label{thm: topological correspondence}
	Let $\chi : \Q \to \{1, \ldots, r\}$ be an $r$-coloring of $\Q$ for some $r \in \N$.
	Then there exists a topological dynamical $\Q$-system $(X, T)$, a transitive point $a \in X$, and a partition $X = \bigcup_{k=1}^r E_k$ of $X$ into clopen sets such that $T^q a \in E_{\chi(q)}$ for every $q \in \Q$.
\end{theorem}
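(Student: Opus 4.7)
The plan is to realize the coloring $\chi$ as a point in a symbolic shift space and let $X$ be its orbit closure. Specifically, I would set $X_0 = \{1,\ldots,r\}^\Q$, equipped with the product topology (where $\{1,\ldots,r\}$ is discrete). Since $\Q$ is countable and $\{1,\ldots,r\}$ is finite, $X_0$ is compact and metrizable by Tychonoff's theorem, with a compatible metric obtained from any enumeration of $\Q$.

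Next, I would define the shift $T^q \colon X_0 \to X_0$ by $(T^q\omega)(p) = \omega(p+q)$ for $p,q \in \Q$. Each $T^q$ only permutes coordinates, hence is a homeomorphism, and $T^{q_1+q_2} = T^{q_1}\circ T^{q_2}$, so $T$ is a $\Q$-action on $X_0$ by homeomorphisms. Viewing $\chi$ as an element of $X_0$, I would take $a = \chi$ and let $X = \overline{\{T^q a : q \in \Q\}}$ be its orbit closure in $X_0$. Then $X$ is a closed, $T$-invariant subset of a compact metric space, so $(X,T)$ is a topological dynamical $\Q$-system, and $a$ is transitive in $X$ by construction.

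For the partition, I would define $E_k = \{\omega \in X : \omega(0) = k\}$ for $k = 1,\ldots,r$. Each $E_k$ is the preimage of $\{k\}$ under the continuous evaluation map $\pi_0 \colon X \to \{1,\ldots,r\}$, $\omega \mapsto \omega(0)$, and since the target is discrete, $\{k\}$ is clopen; hence $E_k$ is clopen in $X$. They clearly partition $X$. Finally, for every $q \in \Q$ one computes $(T^q a)(0) = \chi(0+q) = \chi(q)$, so $T^q a \in E_{\chi(q)}$, as required.

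There is no serious obstacle here: the only minor points to verify carefully are that $X_0$ is metrizable (needing countability of $\Q$), that the group $\Q$ acts by homeomorphisms (so that the orbit closure inherits a genuine $\Q$-action), and that the level sets $E_k$ are clopen (which uses that the alphabet is finite and discrete). Once these are in place, transitivity of $a$ and the compatibility $T^q a \in E_{\chi(q)}$ are immediate from the definitions.
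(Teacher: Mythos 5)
Your proposal is correct and follows essentially the same route as the paper: build the full shift on $\{1,\dots,r\}^\Q$ with the product topology, take $X$ to be the orbit closure of $\chi$, set $a = \chi$, and let $E_k$ be the clopen cylinder $\{\omega \in X : \omega(0) = k\}$. The only cosmetic difference is that the paper writes down an explicit compatible metric via an enumeration of $\Q$, while you simply invoke metrizability; the verification $(T^q a)(0) = \chi(q)$ is identical in both.
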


\begin{proof}
	Encoding a finite coloring in a topological dynamical system has its origins in \cite{fw}, where a corresponding statement for $\Z$ appears implicitly.
	We give a short self-contained proof for the convenience of the reader.
	
	Let $\Omega = \{1, \ldots, r\}^{\Q}$ be the space of all $r$-colorings of $\Q$.
	Equipping $\Omega$ with the product topology makes it into a compact set with compatible metric given, for example, by
	\begin{equation*}
		d(\omega, \omega') = \begin{cases}
			\frac{1}{\min\{j \in \N : \omega(q_j) \ne \omega'(q_j)\}}, & \text{if}~\omega \ne \omega' \\
			0, & \text{if}~\omega = \omega',
		\end{cases}
	\end{equation*}
	where $q_1, q_2, \ldots$ is an enumeration of $\Q$.
	
	For $q \in \Q$, let $T^q : \Omega \to \Omega$ be the shift map $(T^q \omega)(s) = \omega(s+q)$, and let $X = \overline{\{T^q \chi : q \in \Q\}} \subseteq \Omega$.
	Then $(X, T)$ is a topological dynamical $\Q$-system, and $a = \chi$ is a transitive point by construction.
	Let $E_k = \{x \in X : x(0) = k\}$ for each $k \in \{1, \ldots, r\}$.
	Then the sets $E_1, \ldots, E_r$ form a partition of $X$ into clopen sets, and for $q \in \Q$ and $k \in \{1, \ldots, r\}$, we have
	\begin{equation*}
		T^q a \in E_k \iff (T^qa)(0) = k \iff \chi(q) = k.
	\end{equation*}
\end{proof}

\begin{theorem}[Furstenberg correspondence principle] \label{thm: correspondence}
	Let $A \subseteq \Q$ with $d^*(A) > 0$.
	Then there exists an ergodic $\Q$-system $(X, \mu, T)$, a transitive point $a \in X$, a F{\o}lner sequence $\Phi$ such that $a \in \gen(\mu, \Phi)$, and a clopen set $E \subseteq X$ with $\mu(E) > 0$ such that $A = \{q \in \Q : T^q a \in E\}$.
\end{theorem}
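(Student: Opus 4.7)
The plan is a standard Furstenberg-style symbolic encoding together with an ergodic-decomposition refinement. First I would set $\Omega = \{0,1\}^{\Q}$ with the product topology, which is compact and metrizable because $\Q$ is countable, equipped with the shift action $(T^q\omega)(s) = \omega(s+q)$. Putting $a = \ind_A$ and $X := \overline{\{T^q a : q \in \Q\}}$ yields a topological dynamical $\Q$-system in which $a$ is automatically a transitive point. The clopen set $E := \{x \in X : x(0)=1\}$ satisfies $A = \{q \in \Q : T^q a \in E\}$ by construction, so these choices already take care of three of the four structural requirements on $(X,T,a,E)$.

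To produce the invariant measure I would use the definition of $d^*(A)>0$ to pick a F{\o}lner sequence $\Phi^{(0)}$ with $\lim_N |A \cap \Phi^{(0)}_N|/|\Phi^{(0)}_N| = d^*(A)$. By weak-$\ast$ compactness of Borel probabilities on the compact metric space $X$, some subsequence of the empirical measures $\frac{1}{|\Phi^{(0)}_N|}\sum_{q \in \Phi^{(0)}_N} \delta_{T^q a}$ converges weak-$\ast$ to a probability measure $\mu$. The F{\o}lner property of $\Phi^{(0)}$ then forces $\mu$ to be $T$-invariant, and the continuity of $\ind_E$ (from $E$ being clopen) yields $\mu(E) = d^*(A) > 0$. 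Writing $\Phi$ for the retained subsequence of $\Phi^{(0)}$, we automatically obtain $a \in \gen(\mu,\Phi)$.

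The main obstacle is upgrading $\mu$ to an ergodic measure while retaining the specific point $a$ as a generic point. I would invoke the ergodic decomposition $\mu = \int \mu_\omega \, d\mathbb{P}(\omega)$; since $\int \mu_\omega(E)\, d\mathbb{P}(\omega) = \mu(E) > 0$, some ergodic component $\mu_\omega$ satisfies $\mu_\omega(E)>0$. The delicate step is to exchange $\mu$ for $\mu_\omega$ while preserving the genericity of $a$ along some sub-F{\o}lner sequence $\Phi' \subseteq \Phi$. This should be done by splitting each F{\o}lner window according to which ergodic component the orbit point $T^q a$ most closely resembles, and exploiting the amenability of $\Q$ --- for instance by viewing $\Q$ as the increasing union of the cyclic groups $\frac{1}{n!}\Z$ and combining Birkhoff's theorem on each level with the Lindenstrauss pointwise ergodic theorem for amenable groups --- to guarantee that the sub-F{\o}lner property survives. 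With the ergodic $\mu_\omega$ in hand, the system $(X, \mu_\omega, T)$ together with $a$, $\Phi'$, and $E$ then satisfies all four required conditions.
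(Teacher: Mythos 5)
Your architecture is the right one and matches the paper's: the symbolic encoding $X = \overline{\{T^q a : q \in \Q\}} \subseteq \{0,1\}^{\Q}$ with $a = \ind_A$ and the clopen set $E = \{x : x(0) = 1\}$ reproduces the paper's proof of Theorem~\ref{thm: topological correspondence}, and the weak-$\ast$ limit of empirical measures along a F{\o}lner sequence realizing a positive fraction of $A$ yields a $T$-invariant $\mu$ with $\mu(E) > 0$ and $a \in \gen(\mu,\Phi)$. This is exactly what the paper says is the ``added step'' relative to the topological version, and the paper then defers the remainder to \cite[Theorem 2.15]{cm}, so the two arguments agree in spirit up to this point.

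The ergodicity upgrade, however, is a real gap in what you wrote, not a routine detail, and the sketch you offer does not fill it. The proposed mechanism of ``splitting each F{\o}lner window according to which ergodic component the orbit point $T^q a$ most closely resembles'' faces several concrete problems: the orbit $\{T^q a : q \in \Q\}$ is countable and typically $\mu$-null, so the ergodic decomposition assigns no component to the points $T^q a$ and ``most closely resembles'' has no well-defined meaning; even if one had such a classification, the resulting subsets of $\Phi_N$ need not be a F{\o}lner sequence (positive-density subsets of F{\o}lner sets generally are not), and you also implicitly require the new averaging scheme to sit inside the original $\Phi$, which is not part of the conclusion and may be impossible; and Birkhoff/Lindenstrauss are almost-everywhere statements that say nothing about the fixed transitive point $a$. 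What is actually needed is the non-trivial lemma that for a quasi-generic point $a$, one can pass to an ergodic invariant measure with positive mass on $E$ for which $a$ remains generic along some (possibly unrelated) F{\o}lner sequence; this is precisely the content of \cite[Theorem 2.15]{cm} (and, for $\Z$-actions, of the corresponding argument in \cite{kmrr1}). Without proving that lemma or citing it, the argument is incomplete at the exact point you flagged as delicate.
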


Theorem \ref{thm: correspondence} can be proved similarly to the proof of Theorem \ref{thm: topological correspondence}, with the added step that one must construct a $T$-invariant ergodic measure $\mu$ on the space $X$.
A full proof of Theorem \ref{thm: correspondence} (in the wider generality of amenable groups) appears in \cite[Theorem 2.15]{cm}. \\

In order to complete the translation of Theorems \ref{thm: density P} and \ref{thm: coloring P} into dynamical language, we need an appropriate notion of dynamical progressions, which generalizes the notion of Erd\H{o}s progressions from \cite{kmrr2}.

\begin{definition} \label{defn: EFS}
	Let $(X, T)$ be a topological dynamical $\Q$-system, and let $P(x) \in \Q[x]$.
	A triple $(x_0, x_1, x_2) \in X^3$ is an \emph{Erd\H{o}s--Furstenberg--S\'{a}rk\"{o}zy $P$-progression} if there exists a sequence of distinct elements $(s_n)_{n \in \N}$ such that
	\begin{equation*}
		\lim_{n \to \infty} \left( T^{s_n} x_0, T^{P(s_n)} x_1 \right) = (x_1, x_2).
	\end{equation*}
\end{definition}

We will denote the set of Erd\H{o}s--Furstenberg--S\'{a}rk\"{o}zy $P$-progressions by $\EFS^P_X \subseteq X^3$.
For a point $x_0 \in X$, we let $\EFS^P_X(x_0) = \left\{ (x_1, x_2) \in X^2 : (x_0, x_1, x_2) \in \EFS^P_X \right\}$.
We have the following variant of \cite[Theorem 2.2]{kmrr2}.

\begin{proposition} \label{prop: EFS implies sumset}
	Let $(X, T)$ be a topological dynamical $\Q$-system, $x_0 \in X$, and $U, V \subseteq X$ open sets.
	If $\EFS^P_X(x_0) \cap (U \times V) \ne \es$, then there exists a sequence $(b_n)_{n \in \N}$ of distinct elements such that
	\begin{align*}
		\{b_i : i \in \N\} & \subseteq \{q \in \Q : T^q x_0 \in U\} \\
		\{P(b_i) + b_j : i < j\} & \subseteq \{q \in \Q : T^q x_0 \in V\}.
	\end{align*}
\end{proposition}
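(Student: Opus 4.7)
The plan is to extract from the hypothesis a single sequence $(s_n)_{n \in \N}$ of distinct rationals with $T^{s_n} x_0 \to x_1$ and $T^{P(s_n)} x_1 \to x_2$, and then to carve out the required $(b_n)$ as a subsequence $(s_{n_k})_{k \in \N}$ built by induction. Since $U$ and $V$ are open and $x_1 \in U$, $x_2 \in V$, we have $T^{s_n} x_0 \in U$ and $T^{P(s_n)} x_1 \in V$ for all sufficiently large $n$. The key identity $T^{P(s_n) + s_m} x_0 = T^{P(s_n)}(T^{s_m} x_0)$ combined with continuity of $T^{P(s_n)}$ means that for each fixed $n$, $T^{P(s_n) + s_m} x_0 \to T^{P(s_n)} x_1$ as $m \to \infty$; so whenever $T^{P(s_n)} x_1 \in V$, we also have $T^{P(s_n) + s_m} x_0 \in V$ for $m$ large.

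The induction is set up to carry the following hypothesis after $k$ steps: we have chosen $n_1 < \dots < n_k$ with $b_i := s_{n_i}$ satisfying (i) $T^{b_i} x_0 \in U$ for all $i \le k$, (ii) $T^{P(b_i) + b_j} x_0 \in V$ for all $i < j \le k$, and the crucial auxiliary condition (iii) $T^{P(b_i)} x_1 \in V$ for each $i \le k$. Condition (iii) is what allows the induction to continue: to choose $n_{k+1}$, we need $n > n_k$ large enough so that $T^{s_n} x_0 \in U$, so that $T^{P(s_n)} x_1 \in V$, and so that $T^{P(b_i) + s_n} x_0 \in V$ for each $i \le k$. The first two requirements are handled by the limits $T^{s_n} x_0 \to x_1 \in U$ and $T^{P(s_n)} x_1 \to x_2 \in V$; the finitely many remaining requirements use (iii) together with the continuity observation above. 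Taking $n_{k+1}$ large enough to satisfy all finitely many open conditions completes the induction, and distinctness of the $b_n$ follows from the distinctness of the $s_n$ and strict monotonicity of $(n_k)$.

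The main obstacle, and the reason one must track condition (iii) as part of the inductive invariant rather than just (i) and (ii), is that condition (ii) involves the interaction of an \emph{earlier} index $i$ with a \emph{later} index $j$: one must commit to $b_i$ before knowing $b_j$, so at the time $b_i$ is selected one has no chance to verify (ii) directly. Carrying (iii) sidesteps this, since (iii) is a property of $b_i$ alone that, via continuity, guarantees (ii) can be arranged at any future step by taking $b_j$ far enough along the sequence. Once the construction is finished, the two containments in the conclusion are exactly restatements of (i) and (ii).
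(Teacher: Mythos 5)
Your proof is correct and follows essentially the same strategy as the paper's. The paper streamlines the bookkeeping by first refining $(s_n)$ so that $(T^{s_n}x_0, T^{P(s_n)}x_1) \in U \times V$ for \emph{every} $n$ (which builds your invariant (iii) into the sequence once and for all, so it need not be re-verified inductively); your on-the-fly verification of (i) and (iii) at each step is a harmless variant, and the core mechanism — using $T^{P(b_i)}x_1 \in V$ together with continuity of $T^{P(b_i)}$ to guarantee that $T^{P(b_i)+b_j}x_0 \in V$ can be arranged at all later steps — is the same as the paper's observation that $\bigcap_{i \le k} T^{-P(b_i)}V$ is an open neighborhood of $x_1$.
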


\begin{proof}
	Let $(x_1, x_2) \in U \times V$ with $(x_0, x_1, x_2) \in \EFS^P_X$.
	Let $(s_n)_{n \in \N}$ be a sequence such that
	\begin{equation*}
		\lim_{n \to \infty} \left( T^{s_n} x_0, T^{P(s_n)} x_1 \right) = (x_1, x_2).
	\end{equation*}
	By refining the sequence $(s_n)_{n \in \N}$, we may assume $(T^{s_n}x_0, T^{P(s_n)}x_1) \in U \times V$ for every $n \in \N$.
	
	We will construct an increasing sequence $(n_i)_{i \in \N}$ by induction and take $b_i = s_{n_i}$.
	Let $n_1 = 1$.
	Suppose we have chosen $n_1 < n_2 < \ldots < n_k$ such that the elements $b_i = s_{n_i}$ satisfy
	\begin{equation*}
		x_0 \in \bigcap_{1 \le i < j \le k} T^{-P(b_i) - b_j } V.
	\end{equation*}
	(Note that this condition is vacuously satisfied for $k = 1$.)
	The set $\bigcap_{1 \le i \le k} T^{-P(b_i)} V$ is an open neighborhood of $x_1$, so we may choose $n_{k+1} > n_k$ such that
	\begin{equation*}
		T^{s_{n_{k+1}}} x_0 \in \bigcap_{1 \le i \le k} T^{-P(b_i)} V.
	\end{equation*}
	Let $b_{k+1} = s_{n_{k+1}}$.
	By the induction hypothesis, this implies
	\begin{equation*}
		x_0 \in \bigcap_{1 \le i < j \le k+1} T^{-P(b_i) - b_j} V.
	\end{equation*}
	
	By construction, $T^{b_i}x_0 \in U$ for every $i \in \N$, and $T^{P(b_i) + b_j}x_0 \in V$ for $i < j$, so this completes the proof.
\end{proof}

We can thus rephrase our main results in dynamical terms.
First, we have the following reformulation of Theorem \ref{thm: density P}:

\begin{theorem} \label{thm: density dynamical}
	Let $P(x) \in \Q[x]$ such that both $P(x)$ and $P(x) + x$ are nonconstant.
	Let $(X, \mu, T)$ be an ergodic $\Q$-system, $a \in \gen(\mu, \Phi)$, and $E \subseteq X$ an open set with $\mu(E) > 0$.
	Then there exist $t \in \Q$ and points $x_1, x_2 \in E$ such that $(T^ta, x_1, x_2)$ is an Erd\H{o}s--Furstenberg--S\'{a}rk\"{o}zy $P$-progression.
\end{theorem}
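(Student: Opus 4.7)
The plan is to follow the progressive-measure framework for $\Q$-systems developed in Section \ref{sec: progressive measure}, which adapts the methodology of Kra, Moreira, Richter, and Robertson. I would construct a Borel probability measure $\sigma$ on $X^3$ whose support is contained in $\EFS^P_X$ and which assigns positive mass to a set of the form $\{T^t a : t \in \Q\} \times E \times E$; the conclusion is then immediate by projecting to the relevant fiber.

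First, via a weak-$\ast$ limit along a subsequence of the F{\o}lner sequence $\Phi$, I would extract a three-point correlation measure $\sigma$ as a limit of averages of the form
\begin{equation*}
	\sigma_N = \frac{1}{|\Phi_N|} \sum_{s \in \Phi_N} \delta_{a} \otimes \delta_{T^s a} \otimes (T^{P(s)})_{\ast}\mu,
\end{equation*}
or a closely related variant in which a nontrivial joining replaces the pure product to produce the correct fiber structure in the third coordinate. By genericity of $a$ along $\Phi$, the second marginal of $\sigma$ is $\mu$, and a diagonal extraction argument in the spirit of Section \ref{sec: progressive measure} would show that $\sigma$-almost every triple $(x_0, x_1, x_2)$ lies in $\EFS^P_X$: the essential point is that the defining limits $T^{s_n} x_0 \to x_1$ and $T^{P(s_n)} x_1 \to x_2$ can be realized along a single sequence $(s_n)$.

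The heart of the proof lies in showing that $\sigma$ gives positive mass to $X \times E \times E$. After unwinding the construction, this reduces to establishing a positive $\liminf$ of the multiple ergodic average
\begin{equation*}
	\frac{1}{|\Phi_N|} \sum_{s \in \Phi_N} \int \ind_E(x)\, \ind_E(T^s x)\, \ind_E(T^{P(s) + s} x)\, d\mu(x),
\end{equation*}
or an appropriate variant. To this end I would project $\ind_E$ onto the characteristic factor constructed in Section \ref{sec: characteristic factor}: the Wiener--Wintner Theorem \ref{thm: polynomial ww adelic} provides pointwise control of twisted averages of the form $\frac{1}{|\Phi_N|} \sum_{s \in \Phi_N} f(T^s x)\, e(\phi(s))$ for polynomial phases $\phi$, and the Abramov structure Theorem \ref{thm: Abramov structure} gives an explicit algebraic description of the characteristic factor. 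Together, these reduce the positivity claim to a concrete computation on an algebraic model. The hypothesis that both $P(x)$ and $P(x) + x$ are nonconstant enters here: it is precisely what guarantees equidistribution of the pair of polynomial orbits on the model, ruling out the degenerate case $P(x) = -x + c$ flagged in the introduction.

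Finally, from positivity of $\sigma$ on $X \times E \times E$ together with a Fubini/disintegration argument controlled by the genericity of $a$ in the first coordinate, I would extract a specific $t \in \Q$ and a pair $(x_1, x_2) \in E \times E$ with $(T^t a, x_1, x_2) \in \EFS^P_X$, as required. The main technical obstacle will be the characteristic-factor analysis: while the analogous story is classical for totally ergodic $\Z$-systems, transferring it to $\Q$-actions requires both new dynamical ingredients (Theorems \ref{thm: polynomial ww adelic} and \ref{thm: Abramov structure}), and one must handle the two polynomial expressions $s \mapsto s$ and $s \mapsto P(s) + s$ simultaneously when identifying and using the characteristic factor.
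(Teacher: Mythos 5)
There is a genuine gap in your outline, located exactly where you most casually wave your hands: the step asserting that ``a diagonal extraction argument in the spirit of Section~\ref{sec: progressive measure} would show that $\sigma$-almost every triple lies in $\EFS^P_X$.'' Recall that $(x_0,x_1,x_2)\in\EFS^P_X$ requires a \emph{single} sequence $(s_n)$ with $T^{s_n}x_0\to x_1$ and simultaneously $T^{P(s_n)}x_1\to x_2$, where the second limit is applied to the limit point $x_1$, not to $T^{s_n}x_0$. If $\sigma$ is constructed as a weak-$\ast$ limit of point masses $\delta_{(a,\,T^sa,\,T^{P(s)+s}a)}$, then a point $(a,x_1,x_2)$ of its support only comes with a sequence satisfying $T^{s_n}a\to x_1$ and $T^{P(s_n)+s_n}a\to x_2$. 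There is no reason for $T^{P(s_n)}x_1\to x_2$: $T^{P(s_n)}(T^{s_n}a)$ and $T^{P(s_n)}x_1$ need not be close even though $T^{s_n}a\to x_1$, since $T^{P(s_n)}$ is not equicontinuous. This is precisely the obstruction Proposition~\ref{prop: progressive criterion sets} is designed to overcome: one builds $(s_n)$ by a nested-neighborhood induction in which, at each stage, the new $s_{n+1}$ must satisfy a recurrence property of the measure $\tau$ (namely $\tau\bigl((U_n\times V_n)\cap(T^{-P(s_{n+1})}V_n\times X)\bigr)>0$), and verifying that $\sigma^P_a$ satisfies this criterion is nontrivial: it goes through Proposition~\ref{prop: uniform bound on factor}, whose engine is the Ajtai--Szemer\'{e}di theorem in compact abelian groups (Theorem~\ref{thm: uniform Ajtai Szemeredi}). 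Your sketch omits this ingredient entirely, and without it the support containment $\supp{\sigma}\subseteq\overline{\EFS^P_X(a)}$ is not established.

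A secondary discrepancy is that your positivity step goes through a three-term multiple ergodic average, $\UClim_s\int \ind_E\cdot T^s\ind_E\cdot T^{P(s)+s}\ind_E\,d\mu$, which would itself demand a double-recurrence theorem for polynomial $\Q$-actions. The paper sidesteps this: the measure $\sigma^P_a$ is defined on $X^2$ via conditional expectations onto $A_d$ and the Kronecker factor, and after averaging over the shift $t$ the computation reduces to a \emph{single} recurrence statement, namely the Furstenberg--S\'{a}rk\"{o}zy theorem over $\Q$ (Theorem~\ref{thm: FS rationals}), which immediately yields $\UClim_t\sigma^P_{T^ta}=\mu\times\mu$. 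Your identification of the Wiener--Wintner and Abramov structure theorems as the key dynamical inputs is correct, but you have located the real difficulty in the wrong place: the characteristic-factor analysis is substantial but the progressive-measure verification (via Ajtai--Szemer\'{e}di) is the step your sketch genuinely misses.
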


\begin{proof}[Proof that Theorem \ref{thm: density dynamical}$\implies$Theorem \ref{thm: density P}]
	Suppose Theorem \ref{thm: density dynamical} holds.
	Let $P(x) \in \Q[x]$ such that both $P(x)$ and $P(x) + x$ are nonconstant, and let $A \subseteq \Q$ with $d^*(A) > 0$.
	By the Furstenberg correspondence principle, let $(X, \mu, T)$ be an ergodic $\Q$-system, $a \in X$ a transitive point, $\Phi$ a F{\o}lner sequence such that $a \in \gen(\mu, \Phi)$, and $E \subseteq X$ a clopen set with $\mu(E) > 0$ such that $A = \{q \in \Q : T^q a \in E\}$.
	Then by Theorem \ref{thm: density dynamical}, there exists $t \in \Q$ such that $\EFS^P_X(T^ta) \cap (E \times E) \ne \es$.
	Applying Proposition \ref{prop: EFS implies sumset}, we obtain a sequence $(b_n)_{n \in \N}$ of distinct rational numbers such that
	\begin{equation*}
		\{b_i, P(b_i) + b_j : i < j\} \subseteq \{q \in \Q : T^q(T^ta) \in E\} = A - t.
	\end{equation*}
\end{proof}

We similarly give a dynamical formulation of Theorem \ref{thm: coloring P}:

\begin{restatable}{theorem}{ColoringDynamical} \label{thm: coloring dynamical}
	Let $P(x) \in \Q[x]$ with $\deg{P} \ge 2$.
	Let $(X, T)$ be a topological dynamical $\Q$-system.
	Suppose $a \in X$ is a transitive point.
	If $E_1, \ldots, E_r \subseteq X$ is a finite open cover of $X$, then there exists $k \in \{1, \ldots, r\}$ with points $x_1, x_2 \in E_k$ such that $(a, x_1, x_2)$ is an Erd\H{o}s--Furstenberg--S\'{a}rk\"{o}zy $P$-progression.
\end{restatable}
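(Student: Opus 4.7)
My strategy is to mirror the proof of the density theorem (Theorem \ref{thm: density dynamical}) after first introducing an invariant measure on $X$. By weak-$*$ compactness of the space of Borel probability measures on $X$, I can pass to a F{\o}lner sequence $\Phi$ in $\Q$ along which the orbit averages $\frac{1}{|\Phi_N|}\sum_{q\in\Phi_N}\delta_{T^q a}$ converge weakly to a $T$-invariant Borel probability measure $\mu$ on $X$, so that $a\in\gen(\mu,\Phi)$. Since $E_1,\ldots,E_r$ is an open cover of the probability space $(X,\mu)$, some $E_k$ has $\mu(E_k)>0$; fix such a $k$ and write $E:=E_k$. The only structural gap between this setup and the hypotheses of Theorem \ref{thm: density dynamical} is that $\mu$ need not be ergodic, which I would handle either by decomposing into ergodic components and selecting one $\mu'$ with $\mu'(E)>0$, or by observing that the common machinery of Sections \ref{sec: characteristic factor}--\ref{sec: progressive measure} does not in fact require ergodicity for the purposes of producing EFS $P$-progressions.

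With a positive-measure open set in hand, the core analytic work mirrors the density proof. Using the characteristic factor analysis together with the polynomial Wiener--Wintner theorem (Theorem \ref{thm: polynomial ww adelic}) and the Abramov structure theorem (Theorem \ref{thm: Abramov structure}), I would study a joining-type weak-$*$ limit $\sigma$ on $X\times X$ built from the measures $\frac{1}{|\Phi_N|}\sum_{s\in\Phi_N}\delta_{(T^s a,\,T^{s+P(s)}a)}$ and establish that $\sigma(E\times E)>0$. This produces a sequence $(s_n)$ of distinct rationals and a pair $(x_1,x_2)\in E\times E$ with $T^{s_n}a\to x_1$ and $T^{s_n+P(s_n)}a\to x_2$, which is almost, but not quite, the data demanded by Definition \ref{defn: EFS}.

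The main obstacle is precisely the gap between what the joining argument gives directly, namely $T^{s_n+P(s_n)}a\to x_2$ (a condition on the orbit of $a$), and what Definition \ref{defn: EFS} requires, namely $T^{P(s_n)}x_1\to x_2$ (a condition on the $P$-twisted orbit of the limit point $x_1$). Na\"ive continuity is insufficient: because $T^{P(s_n)}$ depends on $n$, controlling $d(T^{s_n}a,x_1)$ does not control $d(T^{P(s_n)}T^{s_n}a,\,T^{P(s_n)}x_1)$. The remedy is the Abramov structure: on the Abramov factor the map $s\mapsto T^{P(s)}x_1$ is described explicitly by polynomial character phases, and because the polynomial Wiener--Wintner statement in Theorem \ref{thm: polynomial ww adelic} is uniform in $x$ rather than merely almost everywhere, one can refine $(s_n)$ so that both $T^{P(s_n)}x_1\to x_2$ and $T^{s_n}a\to x_1$ hold simultaneously, with the limits lying inside $E_k$ by openness. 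Transitivity of $a$ is what permits the whole argument to be run with $x_0=a$, yielding the desired EFS $P$-progression.
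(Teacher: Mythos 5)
Your proposal breaks down at the step where you claim to "establish that $\sigma(E\times E)>0$" for a single fixed $E=E_k$ by mirroring the density argument. This is precisely what cannot work. The density proof (Theorem \ref{thm: density dynamical}) establishes positivity only after averaging the measures $\sigma^P_{T^t a}$ over shifts $t\in\Q$; the Fubini trick converts the shifted average into $\mu\times\mu$, at which point Furstenberg--S\'{a}rk\"{o}zy recurrence applies. In the coloring setting the shift is unavailable, and the paper's Remark \ref{rem: shift} gives an explicit skew-product example in which $\mu(E)>0$ yet $\EFS^{q^2}_X(a)\cap(E\times E)=\es$. So your plan of picking one color $E_k$ of positive measure and running the density machinery cannot produce the conclusion; the statement being proved is simply false in that form.

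What the paper actually proves is the strictly weaker (and correct) assertion $\sigma^P_a\bigl(\bigcup_{k=1}^r(E_k\times E_k)\bigr)>0$, which uses the entire open cover at once rather than any single $E_k$. The mechanism is not recurrence but combinatorics: after reducing to Abramov skew products and performing a change of variables, the relevant quantity becomes a sum over colors $k$ of integrals of the form $\int h_k(y_1,\dots,y_d)\,h_k(y_d,\dots,y_{2d-1})\,d\bm{y}$, which is bounded below by a measure-theoretic Ramsey theorem for ordered hypergraphs (Theorem \ref{thm: measure theory Ramsey}). The uniformity in the degree $d$ and the number of colors $r$, supplied by the finiteness of the ordered Ramsey number, is the essential new ingredient replacing the $t$-average; none of this appears in your outline.

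Two secondary points. First, you worry that $\mu$ obtained by a weak-$*$ limit may not be ergodic; the paper avoids this by invoking the fact that a transitive point is generic along some F{\o}lner sequence for \emph{some ergodic} invariant measure (\cite[Proposition 3.9]{furstenberg_book}, \cite[Proposition 2.7]{bf_d*}), so one does not have to decompose anything. Second, your concern about the mismatch between $T^{s_n+P(s_n)}a\to x_2$ and the requirement $T^{P(s_n)}x_1\to x_2$ is a legitimate observation, but the paper never runs into it because it never attempts to extract the EFS triple directly from a joining limit; instead it defines $\sigma^P_a$ on the Abramov/Kronecker factors, proves it is \emph{progressive} via the recursive nested-open-sets argument (Propositions \ref{prop: progressive criterion sets}--\ref{prop: progressive criterion}), and that criterion manufactures the sequence $(s_n)$ and the limit points in one pass. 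Your proposed fix via pointwise uniformity in the Wiener--Wintner theorem is not how the paper resolves this and, on its own, does not substitute for the missing Ramsey-theoretic step.
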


\begin{proof}[Proof that Theorem \ref{thm: coloring dynamical}$\implies$Theorem \ref{thm: coloring P}]
	Suppose Theorem \ref{thm: coloring dynamical} holds.
	Let $P(x) \in \Q[x]$ be a nonconstant polynomial, and let $\chi : \Q \to \{1, \ldots, r\}$ be a finite coloring of $\Q$.
	Then by Theorem \ref{thm: topological correspondence}, let $(X, T)$ be a topological dynamical $\Q$-system, $a \in X$ a transitive point, and $X = \bigcup_{k=1}^r E_k$ a partition into clopen sets such that $T^q a \in E_{\chi(q)}$ for every $q \in \Q$.
	By Theorem \ref{thm: coloring dynamical}, there exists $k \in \{1, \ldots, r\}$ and points $(x_1, x_2) \in \EFS^P_X(a) \cap (E_k \times E_k)$.
	Hence, by Proposition \ref{prop: EFS implies sumset}, there exists an infinite sequence $(b_n)_{n \in \N}$ such that
	\begin{equation*}
		\{b_i, P(b_i) + b_j : i < j\} \subseteq \{q \in \Q : T^q a \in E_k\} = \{q \in \Q : \chi(q) = k\}.
	\end{equation*}
	That is, $\{b_i, P(b_i) + b_j : i < j\}$ is monochromatic of color $k$, so Theorem \ref{thm: coloring P} holds.
\end{proof}

The basic strategy for proving these dynamical statements is as follows.
First, we show that Erd\H{o}s--Furstenberg--S\'{a}rk\"{o}zy $P$-progressions are ``controlled'' in some sense by the Abramov factor of order $d = \deg{P}$.
The main tool here is a polynomial variant of the Wiener--Wintner theorem (Theorem \ref{thm: polynomial ww adelic}).
Using a structure theorem for Abramov systems (Theorem \ref{thm: Abramov structure}), we are able to define a measure on Erd\H{o}s--Furstenberg--S\'{a}rk\"{o}zy progressions starting at a fixed point in the Abramov factor of order $d$.
We then lift this measure to the original system and prove that it remains supported on the closure of the space of Erd\H{o}s--Furstenberg--S\'{a}rk\"{o}zy progressions.
From this point on, the proofs of Theorem \ref{thm: density dynamical} and Theorem \ref{thm: coloring dynamical} diverge, as we need to establish different properties of the measure.

It turns out that the proof of Theorem \ref{thm: density dynamical} is simpler, so we deal with it first.
The remaining step is to average the measures corresponding to the spaces $\EFS^P_X(T^ta)$ over $t \in \Q$ and show that this averaged measure give positive mass to the product set $E \times E$.
By introducing the average over $t$, we are able to obtain this positivity result by an application of a version of the Furstenberg--S\'{a}rk\"{o}zy theorem over $\Q$.

To prove Theorem \ref{thm: coloring dynamical}, we need to show that the measure supported on the closure of $\EFS^P_X(a)$ gives positive mass to $\bigcup_{k=1}^r (E_k \times E_k)$.
The inability to shift the point $a$ means that recurrence results from ergodic theory are no longer helpful.
However, we can compute polynomial orbits in Abramov systems rather explicitly in order to reduce to an expression that can be controlled by an application of Ramsey's theorem for hypergraphs.


\section{Characteristic factors for polynomial averages} \label{sec: characteristic factor}

The goal of this section is to show that polynomial ergodic averages related to Erd\H{o}s--Furstenberg--S\'{a}rk\"{o}zy progressions are ``controlled'' by certain structured factors (the Abramov factors, to be defined later in this section).


\subsection{Factors of measure-preserving systems}

A system $(Y, \nu, S)$ is a \emph{(measurable) factor} of another system $(X, \mu, T)$ if there exists a measurable map $\pi : X \to Y$ such that $\pi_*\mu = \nu$ and $\pi \circ T^q = S^q \circ \pi$ almost everywhere for every $q \in \Q$.
We will say that $\pi$ is a \emph{topological factor map} if $\pi$ is continuous and surjective and the identity $\pi \circ T^q = S^q \circ \pi$ holds everywhere.

Measurable factors of $(X, \mu, T)$ correspond to $\sigma$-algebras that are invariant under the action of $T$.
For this reason, while dealing with factors, it is often convenient to make explicit reference to $\sigma$-algebras.
We will denote the Borel $\sigma$-algebra on a space by calligraphic letters; for example, we use $\mX$ to denote the $\sigma$-algebra on $X$, $\mY$ to denote the $\sigma$-algebra on $Y$, and so on.
In the other direction, given a $T$-invariant $\sigma$-algebra $\mY \subseteq \mX$, we will denote the correspond space by $Y$.
In general, such a set $Y$ is an abstract space about which we can say very little.
However, for the structured factors of interest, we will be able to produce a concrete realization of $Y$.

We caution the reader that in a standard terminological abuse, we will use the term \emph{factor} to refer both to the system $(Y, \nu, S)$ and to the corresponding sub-$\sigma$-algebra $\mY \subseteq \mX$, using whichever notion is more convenient for the specific problem at hand.

Each factor comes with a corresponding conditional expectation operator.
Given a sub-$\sigma$-algebra $\mY \subseteq \mX$, the \emph{conditional expection} $\E{\cdot}{\mY} : L^1(\mu) \to L^1(\mu)$ is the operator defined by the properties: for any $f \in L^1(\mu)$, 
\begin{itemize}
	\item	$\E{f}{\mY}$ is $\mY$-measurable, and
	\item	if $g \in L^{\infty}(\mu)$ is $\mY$-measurable, then
		\begin{equation*}
			\int_X fg~d\mu = \int_X \E{f}{\mY} \cdot g~d\mu.
		\end{equation*}
\end{itemize}
Restricting $\E{\cdot}{\mY}$ to $L^2(\mu)$ produces the orthogonal projection onto the subspace of $L^2(\mu)$ of $\mY$-measurable square-integrable functions.

Suppose $\pi : (X, \mX, \mu, T) \to (Y, \mY, \nu, S)$ is a measurable factor map.
(Here, we have included the Borel $\sigma$-algebras explicitly for clarity in the ensuing discussion.)
A function $f : X \to \C$ is measurable with respect to the sub-$\sigma$-algebra $\pi^{-1}(\mY)$ if and only if there is a measurable function $g : Y \to \C$ such that $f = g \circ \pi$.
We will write $\E{f}{Y}$ for the function on $Y$ determined by $\E{f}{Y} \circ \pi = \E{f}{\pi^{-1}(\mY)}$.


\subsection{A variation on characteristic factors}

Let us now give meaning to ``control'' by factors.
Let $(X, \mu, T)$ be an ergodic $\Q$-system and $a \in X$.
The space of progressions $\EFS^P_X(a)$ is intimately related to polynomial orbits of the form $(T^qa, T^{P(q)}x)_{q \in \Q}$ for $x \in X$ (see Definition \ref{defn: EFS}).
In order to understand such orbits from a measure-theoretic point of view, it is natural to study ergodic averages taking the shape
\begin{equation*}
	A_N(x) = \frac{1}{|\Phi_N|} \sum_{q \in \Phi_N} f(T^qa) \cdot g(T^{P(q)}x).
\end{equation*}
Here, we will assume (as we may in Theorem \ref{thm: density dynamical}) that $\Phi$ is a F{\o}lner sequence in $\Q$ for which $a \in \gen(\mu, \Phi)$.
To take advantage of genericity of $a$, we will also assume that $f$ is a continuous function.
It turns out that an appropriate and convenient space to work in is the space $L^2(\mu)$: we assume $g \in L^2(\mu)$ and study convergence of $A_N$ in $L^2$ norm.
What we will mean by factors ``controlling'' the average $A_N$ is a result of the following kind: there exists a pair of factors $(\mB_1, \mB_2)$ such that for any $\eps > 0$, there exists a decomposition $f = f_s + f_u$ as a sum of continuous functions and a decomposition $g = g_s + g_u$ as a sum of measurable functions satisfying:
\begin{itemize}
	\item	the ``structured'' functions $f_s$ and $g_s$ are measurable with respect to factors $\mB_1$ and $\mB_2$ respectively,
	\item	the ``uniform'' function $f_u$ is $\eps$-almost orthogonal to $\mB_1$, i.e. $\norm{L^1(\mu)}{\E{f_u}{\mB_1}} < \eps$,
	\item	the ``uniform'' function $g_u$ is orthogonal to $\mB_2$, and
	\item	\begin{equation*}
			\limsup_{N \to \infty} \norm{L^2(\mu)}{\frac{1}{|\Phi_N|} \sum_{q \in \Phi_N} \left( f(T^qa) \cdot T^{P(q)}g - f_s(T^qa) \cdot T^{P(q)}g_s \right)} < \eps.
		\end{equation*}
\end{itemize}
This closely resembles the notion of \emph{characteristic factors} for multiple ergodic averages as introduced by Furstenberg and Weiss \cite{fw_polynomials}.
The difference is that here we must allow for a small error tolerance $\eps$ in order to ensure that the function $f_s$ is continuous.
Without continuity, we would be unable to say anything about the evaluation of the function $f_s$ along the specific orbit $(T^qa)_{q \in \Q}$.

We will carry out our argument in two steps, first identifying the factor $\mB_2$ and then using the structure of $g_s$ to more easily identify $\mB_1$.


\subsection{The van der Corput Lemma}

One of the main tools we will use for producing the factors $\mB_1$ and $\mB_2$ described in the previous subsection is the van der Corput differencing trick.

\begin{lemma}[van der Corput lemma, cf. {\cite[Lemma 2.1]{shalom}}] \label{lem: vdC}
	Let $\mH$ be a Hilbert space, and let $u : \Q \to \mH$ be a bounded sequence.
	Let $\Phi$ be a F{\o}lner sequence in $\Q$.
	Suppose that:
	\begin{itemize}
		\item	the limit
			\begin{equation*}
				z(r) = \lim_{N \to \infty} \frac{1}{|\Phi_N|} \sum_{q \in \Phi_N} \innprod{u(q+r)}{u(q)}
			\end{equation*}
			exists for every $r \in \Q$, and
		\item	there exists $K < \infty$ such that for any F{\o}lner sequence $\Psi$ in $\Gamma$,
			\begin{equation*}
				\limsup_{M \to \infty} \frac{1}{|\Psi_M|} \left| \sum_{r \in \Psi_M} z(r) \right| \le K.
			\end{equation*}
	\end{itemize}
	Then
	\begin{equation*}
		\limsup_{N \to \infty} \norm{}{\frac{1}{|\Phi_N|} \sum_{q \in \Phi_N} u(q)}^2 \le K.
	\end{equation*}
\end{lemma}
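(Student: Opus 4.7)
The plan is to adapt the classical van der Corput double-averaging argument to the amenable group $\Q$, using an auxiliary Følner sequence $\Psi$ alongside $\Phi$ (one may take, e.g., $\Psi = \Phi$). First I would set $S_N := \frac{1}{|\Phi_N|}\sum_{q \in \Phi_N} u(q)$ and introduce the shifted averages $S_N^{(r)} := \frac{1}{|\Phi_N|}\sum_{q \in \Phi_N} u(q+r)$ together with their further average $v_{N,M} := \frac{1}{|\Psi_M|}\sum_{r \in \Psi_M} S_N^{(r)}$. The first step is a Følner approximation: for each fixed $r$, boundedness of $u$ and the Følner property of $\Phi$ give $\|S_N^{(r)} - S_N\| \le \|u\|_\infty \cdot |(\Phi_N + r) \triangle \Phi_N|/|\Phi_N| \to 0$ as $N \to \infty$, and summing over the finite set $\Psi_M$ yields $\|v_{N,M} - S_N\| \to 0$ for each fixed $M$. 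In particular $\limsup_{N \to \infty}\|S_N\|^2 = \limsup_{N \to \infty}\|v_{N,M}\|^2$ for every $M \in \N$.

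Next I would apply the Cauchy--Schwarz inequality in $\mH$ (in the $q$-averaging) and expand the inner norm squared:
\begin{align*}
\|v_{N,M}\|^2 \le \frac{1}{|\Phi_N|}\sum_{q \in \Phi_N}\left\|\frac{1}{|\Psi_M|}\sum_{r \in \Psi_M} u(q+r)\right\|^2 = \frac{1}{|\Psi_M|^2}\sum_{r, r' \in \Psi_M}\frac{1}{|\Phi_N|}\sum_{q \in \Phi_N}\innprod{u(q+r)}{u(q+r')}.
\end{align*}
For each fixed pair $(r, r')$, the substitution $p = q + r'$ followed by the Følner property of $\Phi$ (which lets me replace the sum over $\Phi_N + r'$ by the sum over $\Phi_N$ up to a vanishing error) shows that the inner average converges to $z(r - r')$ as $N \to \infty$. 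Since the outer sum runs over the finite set $\Psi_M \times \Psi_M$, I can interchange limit and sum to obtain
\[
\limsup_{N \to \infty}\|S_N\|^2 \le \frac{1}{|\Psi_M|^2}\sum_{r, r' \in \Psi_M} z(r - r')
\]
for every $M$; the right-hand side is real and nonnegative, as it equals $\lim_{N \to \infty}\frac{1}{|\Phi_N|}\sum_q \left\|\frac{1}{|\Psi_M|}\sum_r u(q+r)\right\|^2$.

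The last step --- and the point I expect to be the main obstacle --- is to send $M \to \infty$ and obtain the bound $K$. Rewrite $\frac{1}{|\Psi_M|^2}\sum_{r, r'} z(r - r') = \frac{1}{|\Psi_M|}\sum_{r' \in \Psi_M} h_M(r')$ where $h_M(r') := \frac{1}{|\Psi_M|}\sum_{s \in \Psi_M - r'} z(s)$, so by the triangle inequality the right-hand side is bounded by $\frac{1}{|\Psi_M|}\sum_{r' \in \Psi_M}|h_M(r')|$. The difficulty is that the hypothesis only controls $\frac{1}{|\Psi_M|}\big|\sum_{s \in \Psi_M} z(s)\big|$ (absolute value \emph{outside} the sum) for arbitrary Følner sequences, so no uniform pointwise bound on $|h_M(r')|$ is immediately available. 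I would instead argue by contradiction: if $\limsup_M \frac{1}{|\Psi_M|}\sum_{r' \in \Psi_M}|h_M(r')| > K + \eps$ along some subsequence, then by the averaging principle some $r'_M \in \Psi_M$ must satisfy $|h_M(r'_M)| > K + \eps$. Because the Følner defect is translation-invariant, $(\Psi_M - r'_M)_M$ is itself a Følner sequence in $\Q$, and $|h_M(r'_M)| = \frac{1}{|\Psi_M - r'_M|}\big|\sum_{s \in \Psi_M - r'_M} z(s)\big|$; applying the hypothesis to this new Følner sequence contradicts the lower bound. Combined with the previous step, this gives $\limsup_{N \to \infty}\|S_N\|^2 \le K$, as desired.
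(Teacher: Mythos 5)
Your proof is correct. The paper does not supply its own proof of this lemma (it is cited to Shalom), but your argument is the standard van der Corput double-averaging scheme for amenable groups, and you correctly identified and resolved the one genuinely delicate point: after Cauchy--Schwarz one must control $\frac{1}{|\Psi_M|}\sum_{r'\in\Psi_M}|h_M(r')|$, which involves translates of $\Psi_M$ by elements depending on $M$, and your observation that any sequence $(\Psi_{M_k}-r'_{M_k})_k$ of translates along a subsequence is again a Følner sequence lets the universally-quantified hypothesis close the argument. The only cosmetic remark is that in the final contradiction step the witnesses $r'_M$ are only defined along the chosen subsequence, so one should write $(\Psi_{M_k}-r'_{M_k})_k$ rather than $(\Psi_M-r'_M)_M$.
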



\subsection{The Kronecker factor is partially characteristic}

The first structured factor of importance for the present analysis is the Kronecker factor.
Let $(X, \mu, T)$ be an ergodic $\Q$-system.
The \emph{Kronecker factor} is the maximal factor of $(X, \mu, T)$ that is measurably isomorphic to a compact group rotational system, i.e. a system of the form $(Z, m_Z, R)$, where $Z$ is a compact abelian group with Haar measure $m_Z$ and $R^q z = z + \psi(q)$ for some homomorphism $\psi : \Q \to Z$.
There are many equivalent characterizations of the Kronecker factor, of which we reproduce a selection below (cf. \cite[Chapter 4]{hk_book}):

\begin{proposition} \label{prop: Kronecker}
	Let $(X, \mu, T)$ be an ergodic $\Q$-system, and let $f, g\in L^2(\mu)$.
	Consider the statements:
	\begin{enumerate}[(i)]
		\item	$f$ is measurable with respect to the Kronecker factor;
		\item	the orbit $\{T^q f : q \in \Q\}$ is pre-compact in $L^2(\mu)$;
		\item $f$ can be approximated arbitrarily well in $L^2(\mu)$ by linear combinations of eigenfunctions\footnote{We say that $f \in L^2(\mu)$ is an \emph{eigenfunction} if there is a group character $\chi : \Q \to S^1$ such that for every $q \in \Q$, $T^qf = \chi(q)f$.};
		\item	$g$ is orthogonal to the Kronecker factor;
		\item	$g$ is orthogonal to every eigenfunction;
		\item	there exists a F{\o}lner sequence $\Phi$ such that
			\begin{equation*}
				\lim_{N \to \infty} \frac{1}{|\Phi_N|} \sum_{q \in \Phi_N} \left| \innprod{T^qg}{g} \right| = 0;
			\end{equation*}
		\item	for every F{\o}lner sequence $\Phi$, one has
			\begin{equation*}
				\lim_{N \to \infty} \frac{1}{|\Phi_N|} \sum_{q \in \Phi_N} \left| \innprod{T^qg}{g} \right| = 0.
			\end{equation*}
	\end{enumerate}
	Each of the statements (i)--(iii) is equivalent, and the statements (iv)--(vii) are equivalent to each other.
\end{proposition}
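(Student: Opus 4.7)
The plan is to prove the two clusters (i)--(iii) and (iv)--(vii) separately, as they concern complementary notions (being measurable with respect to, versus orthogonal to, the Kronecker factor).

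For (i)--(iii), I would first establish (i) $\iff$ (iii) using the concrete model of the Kronecker factor as a compact group rotation $(Z, m_Z, R)$ with $R^q z = z + \psi(q)$. The characters $\chi \in \hat{Z}$ pull back under the factor map to eigenfunctions of $(X, \mu, T)$ with eigenvalue $\chi(\psi(q))$, and by Peter--Weyl these characters span $L^2(Z, m_Z)$; so (i) is equivalent to $f$ lying in the closed linear span of eigenfunctions, which is (iii). Next, (iii) $\implies$ (ii) is immediate: the orbit of an eigenfunction lies in a circle of $L^2(\mu)$, the orbit of a finite linear combination of eigenfunctions lies in a finite-dimensional compact torus, and pre-compactness is preserved under $L^2$ limits. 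For (ii) $\implies$ (iii), I would invoke the spectral theorem for the unitary Koopman representation of the discrete abelian group $\Q$ on $L^2(\mu)$, which associates to each $f$ a positive spectral measure $\sigma_f$ on the dual $\hat{\Q}$ satisfying $\norm{L^2(\mu)}{T^r f - f}^2 = 2 \int (1 - \mathrm{Re}\,\chi(r))\, d\sigma_f(\chi)$. Pre-compactness of the orbit forces $\sigma_f$ to be purely atomic: otherwise the projection of $f$ onto the continuous-spectrum subspace has a non-pre-compact orbit, as follows from the Wiener argument described below.

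For (iv)--(vii), the equivalence (iv) $\iff$ (v) is a direct consequence of (i) $\iff$ (iii), and (vii) $\implies$ (vi) is trivial. It remains to prove (v) $\implies$ (vii) and (vi) $\implies$ (v), both of which I would handle via the spectral theorem and Wiener's lemma for Følner averages in the discrete amenable abelian group $\Q$. Writing $\innprod{T^q g}{g} = \hat{\sigma}_g(q)$, orthogonality of $g$ to every eigenfunction translates to $\sigma_g$ being a continuous measure, and Wiener's lemma states that for any Følner sequence $\Phi$ in $\Q$,
\[
	\lim_{N \to \infty} \frac{1}{|\Phi_N|} \sum_{q \in \Phi_N} |\hat{\sigma}_g(q)|^2 = \sum_{\chi \in \hat{\Q}} \sigma_g(\{\chi\})^2.
\]
If $g$ is orthogonal to every eigenfunction, the right side vanishes, and the Cauchy--Schwarz inequality yields $\frac{1}{|\Phi_N|} \sum_{q \in \Phi_N} |\hat{\sigma}_g(q)| \to 0$, giving (vii). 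Conversely, if $\sigma_g$ has an atom of mass $c > 0$, the limit above is at least $c^2$ on every Følner sequence; combining with the pointwise bound $|\hat{\sigma}_g(q)|^2 \leq \norm{L^2(\mu)}{g}^2 \cdot |\hat{\sigma}_g(q)|$ forces $\liminf_{N \to \infty} \frac{1}{|\Phi_N|} \sum_{q \in \Phi_N} |\hat{\sigma}_g(q)| \geq c^2/\norm{L^2(\mu)}{g}^2 > 0$ on every Følner sequence, ruling out (vi).

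The main delicate point is the validity of Wiener's lemma for arbitrary Følner sequences in $\Q$, but this is standard and follows from the mean ergodic theorem applied to the unitary translation operator on $L^2(\hat{\Q} \times \hat{\Q}, \sigma_g \otimes \sigma_g)$ evaluated on the diagonal. Everything else is a routine assembly of spectral and functional-analytic ingredients.
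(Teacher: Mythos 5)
Your proof is correct, and it takes the canonical spectral-theoretic route (spectral measure of the Koopman representation, Wiener's lemma along F{\o}lner sequences, atomic vs.\ continuous decomposition) that the paper itself declines to reproduce and instead cites from Host--Kra, \emph{Nilpotent Structures in Ergodic Theory}, Chapter 4. All the steps — pulling back characters of the Kronecker rotation for (i) $\Leftrightarrow$ (iii), precompactness of orbits and projection onto the continuous-spectrum subspace for (ii), and the Wiener/Cauchy--Schwarz bookkeeping establishing the (v) $\Rightarrow$ (vii) $\Rightarrow$ (vi) $\Rightarrow$ (v) cycle — are sound, and the one passage you leave slightly compressed (that a nonzero continuous-spectrum component forces a non-precompact orbit) is a standard syndeticity/covering argument that closes without difficulty.
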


\begin{proposition} \label{prop: Kronecker partially characteristic}
	Let $(X, \mu, T)$ be an ergodic $\Q$-system and $a \in \gen(\mu, \Phi)$.
	Let $P(x) \in \Q[x]$ be a nonconstant polynomial.
	For any $f \in C(X)$ and $g \in L^2(\mu)$, if $\E{g}{\mZ} = 0$, then
	\begin{equation*}
		\lim_{N \to \infty} \frac{1}{|\Phi_N|} \sum_{q \in \Phi_N} f(T^qa) \cdot T^{P(q)}g = 0.
	\end{equation*}
\end{proposition}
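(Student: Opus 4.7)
The natural tool is the van der Corput Lemma (Lemma \ref{lem: vdC}) applied to the $L^2(\mu)$-valued sequence $u(q) = f(T^q a) \cdot T^{P(q)}g$, combined with induction on the degree $d = \deg P$. A short computation using $T$-invariance of $\mu$ gives
$$\innprod{u(q+r)}{u(q)} = h_r(T^q a) \cdot \innprod{T^{Q_r(q)}g}{g},$$
where $h_r(x) := f(T^r x)\overline{f(x)} \in C(X)$ and $Q_r(q) := P(q+r) - P(q) \in \Q[q]$ has degree $d-1$ for every $r \neq 0$. Thus differencing by $r$ trades $P$ for a polynomial of strictly lower degree, which is precisely what induction on $d$ needs.

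For the base case $d = 1$, write $P(q) = cq + c_0$ with $c \neq 0$. Then $Q_r(q) = cr$ is constant in $q$, so genericity of $a$ yields $z(r) = \innprod{T^{cr}g}{g} \cdot \int_X h_r \, d\mu$, which is bounded in modulus by $\norm{\infty}{f}^2 \cdot |\innprod{T^{cr}g}{g}|$. Since $g$ is orthogonal to the Kronecker factor and dilation by the nonzero scalar $c$ sends Følner sequences to Følner sequences, characterization (vii) of Proposition \ref{prop: Kronecker} forces the Cesàro averages of $|z(r)|$ to vanish along any Følner sequence, and the van der Corput Lemma yields the claim.

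For the inductive step $d \geq 2$ and $r \neq 0$, the polynomial $Q_r$ is nonconstant of degree $d-1$, so one can rewrite
$$\frac{1}{|\Phi_N|} \sum_{q \in \Phi_N} \innprod{u(q+r)}{u(q)} = \left\langle \frac{1}{|\Phi_N|} \sum_{q \in \Phi_N} h_r(T^q a) \cdot T^{Q_r(q)}g, \, g \right\rangle$$
and invoke the inductive hypothesis for the pair $(h_r, Q_r)$ to conclude that the inner average tends to $0$ in $L^2(\mu)$. Hence $z(r) = 0$ for every $r \neq 0$; a single exceptional value at $r = 0$ does not affect the Cesàro average along any Følner sequence, so van der Corput closes the induction.

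I do not anticipate a serious obstacle. The points requiring mild attention are the verification that $z(r)$ exists for every $r \in \Q$ (immediate in the base case from genericity of $a$, and in the inductive step from the $L^2$-convergence to $0$ supplied by the inductive hypothesis) and the confirmation that the image of a Følner sequence under a nonzero dilation is again a Følner sequence, so that characterization (vii) of the Kronecker factor is genuinely available in the base case.
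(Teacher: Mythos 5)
Your proof is correct and follows essentially the same route as the paper's: van der Corput applied to $u(q) = f(T^qa)\cdot T^{P(q)}g$, induction on $\deg P$ via the degree-lowering of $Q_r(q) = P(q+r)-P(q)$, and the spectral characterization (vii) of the Kronecker factor in the base case. The small points you flag — dilation by a nonzero rational preserving Følner sequences, and $z(r)$ existing for all $r$ — are indeed the right ones to note and are handled implicitly in the paper.
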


\begin{proof}
	We prove the claim by induction on the degree of $P$.
	First, suppose $\deg{P} = 1$, and write $P(q) = cq + d$ with $c, d \in \Q$, $c \ne 0$.
	Let $u(q) = f(T^qa) \cdot T^{cq+d}g \in L^2(\mu)$.
	Then
	\begin{align*}
		\innprod{u(q+r)}{u(q)} & = \overline{f(T^qa)} \cdot f(T^{q+r}a) \int_X T^{cq+d}\overline{g} \cdot T^{c(q+r)+d}g~d\mu \\
		 & = (\overline{f} \cdot T^rf)(T^qa) \int_X \overline{g} \cdot T^{cr}g~d\mu,
	\end{align*}
	so
	\begin{equation*}
		z(r) = \lim_{N \to \infty} \frac{1}{|\Phi_N|} \sum_{q \in \Phi_N} \innprod{u(q+r)}{u(q)} = \left( \int_X \overline{f} \cdot T^rf~d\mu \right) \left( \int_X \overline{g} \cdot T^{cr}g~d\mu \right).
	\end{equation*}
	Therefore, by the assumption $\E{g}{\mZ} = 0$, given any F{\o}lner sequence $\Psi$ in $\Q$, we have (see (viii) in Proposition \ref{prop: Kronecker})
	\begin{equation*}
		\limsup_{M \to \infty} \frac{1}{|\Psi_M|} \sum_{r \in \Psi_M} |z(r)| \le \norm{u}{f}^2 \cdot \lim_{M \to \infty} \frac{1}{|\Psi_M|} \sum_{r \in \Psi_M} \left| \int_X \overline{g} \cdot T^{cr}g~d\mu \right| = 0.
	\end{equation*}
	Thus, by Lemma \ref{lem: vdC}, we conclude
	\begin{equation*}
		\lim_{N \to \infty} \frac{1}{|\Phi_N|} \sum_{q \in \Phi_N} f(T^qa) \cdot T^{P(q)}g = \lim_{N \to \infty} \frac{1}{|\Phi_N|} \sum_{q \in \Phi_N} u(q) = 0.
	\end{equation*}
	
	Now suppose the proposition holds for polynomials of degree at most $d$, and assume $\deg{P} = d+1$.
	As before, let $u(q) = f(T^qa) \cdot T^{P(q)}g \in L^2(\mu)$.
	Then
	\begin{align*}
		\innprod{u(q+r)}{u(q)} & = \overline{f(T^qa)} \cdot f(T^{q+r}a) \int_X T^{P(q)}\overline{g} \cdot T^{P(q+r)}g~d\mu \\
		 & = (\overline{f} \cdot T^rf)(T^qa) \int_X \overline{g} \cdot T^{P(q+r)-P(q)}g~d\mu.
	\end{align*}
	For fixed $r \in \Q \setminus \{0\}$, the polynomial $Q_r(q) = P(q+r) - P(q)$ has degree at most $d$.
	Therefore, by the induction hypothesis,
	\begin{equation*}
		\lim_{N \to \infty} \frac{1}{|\Phi_N|} \sum_{q \in \Phi_N} (\overline{f} \cdot T^rf)(T^qa) \cdot T^{Q_r(q)} g = 0.
	\end{equation*}
	Hence,
	\begin{equation*}
		\lim_{N \to \infty} \frac{1}{|\Phi_N|} \sum_{q \in \Phi_N} \innprod{u(q+r)}{u(q)} = 0
	\end{equation*}
	for every $r \in \Q \setminus \{0\}$.
	By Lemma \ref{lem: vdC}, it follows that
	\begin{equation*}
		\lim_{N \to \infty} \frac{1}{|\Phi_N|} \sum_{q \in \Phi_N} f(T^qa) \cdot T^{P(q)}g = \lim_{N \to \infty} \frac{1}{|\Phi_N|} \sum_{q \in \Phi_N} u(q) = 0.
	\end{equation*}
\end{proof}


\subsection{The Abramov factor is partially characteristic} \label{sec: Abramov characteristic}

We now wish to describe a factor that will allow us also to replace $f$ by a structured function when dealing with the limiting behavior of averages of the form
\begin{equation} \label{eq: polynomial average}
	\frac{1}{|\Phi_N|} \sum_{q \in \Phi_N} f(T^qa) \cdot T^{P(q)}g.
\end{equation}
Since the Kronecker factor is generated by eigenfunctions (see item (iii) in Proposition \ref{prop: Kronecker}), Proposition \ref{prop: Kronecker partially characteristic} shows that the key to understanding the behavior of \eqref{eq: polynomial average} is to analyze ``polynomially-twisted'' ergodic averages
\begin{equation} \label{eq: polynomial twist}
	\frac{1}{|\Phi_N|} \sum_{q \in \Phi_N} f(T^qa) \cdot \chi(P(q))
\end{equation}
for characters $\chi : \Q \to S^1$.

Let us begin by introducing some notation for describing characters on $\Q$.
For each prime number $p$, we define the $p$-adic metric on $\Q$ by $d_p(x,y) = p^{-n}$ if $x - y = p^n \cdot \frac{a}{b}$ with $n \in \Z$ and $a, b \in \Z$ coprime to $p$.
The completion of $\Q$ with respect to $d_p$ is the space of $p$-adic numbers $\Q_p$, which may be expressed as formal series $\sum_{j=N}^{\infty} c_j p^j$ with $N \in \Z$ and $c_j \in \{0, 1, \ldots, p-1\}$.
The $p$-adic integers are the subspace $\Z_p = \{\sum_{j=0}^{\infty} c_j p^j : c_j \in \{0, 1, \ldots, p-1\}\} \subseteq \Q_p$.
The ring of adeles is defined by
\begin{equation*}
	\A = \left\{ (a_{\infty}, a_2, a_3, \ldots) \in \R \times \prod_p \Q_p : a_p \in \Z_p~\text{for all but finitely many}~p \right\}.
\end{equation*}
We embed $\Q$ as a subspace of $\A$ by identifying $q \in \Q$ with the element $(q, q, q, \ldots) \in \A$.
The ring of adeles $\A$ is a topological space with the subspace topology inherited from the product space $\R \times \prod_p \Q_p$.
One can show that $\Q \subseteq \A$ is discrete and co-compact, so $\A/\Q$ is a compact group.
In fact, $\A/\Q$ is isomorphic to the Pontryagin dual $\hat{\Q}$.
We let $e_{\Q} : \A/\Q \to S^1$ such that each character $\chi \in \hat{\Q}$ is of the form $\chi(q) = e_{\Q}(qx)$ for some $x \in \A/\Q$.
(For a concrete description of the function $e_{\Q}$, see, e.g., \cite[Section 5]{bm_vdC}.)
We may thus express the averages \eqref{eq: polynomial twist} in the more concrete form
\begin{equation} \label{eq: polynomial twist adelic}
	\frac{1}{|\Phi_N|} \sum_{q \in \Phi_N} f(T^qa) \cdot e_{\Q}(P(q))
\end{equation}
with $P$ a polynomial taking values in $\A/\Q$.

As may be expected, averages of the form \eqref{eq: polynomial twist adelic} are controlled in a certain sense by polynomial structures in the system $(X, \mu, T)$.
Let $(X, \mu, T)$ be an ergodic $\Q$-system.
For a function $g \in L^{\infty}(\mu)$ and an element $q \in \Q$, we define the multiplicative derivative $\Delta_q g = \overline{g} \cdot T^q g$.
Given $q_1, \ldots, q_k \in \Q$, we define $\Delta^k_{q_1, \ldots, q_k}$ inductively by $\Delta^k_{q_1, \ldots, q_k} g = \Delta_{q_k} \left( \Delta^{k-1}_{q_1, \ldots, q_{k-1}} g \right)$.
We say that $g \in L^{\infty}(\mu)$ is a \emph{phase polynomial of degree at most $k$} if for each $(q_1, \ldots, q_{k+1}) \in \Q^{k+1}$,
\begin{equation*}
	\Delta^{k+1}_{q_1, \ldots, q_{k+1}} g = 1~\text{a.e.}
\end{equation*}
Let $\mE_k(T)$ be the set of phase polynomials of degree at most $k$.
The \emph{Abramov factor of order $k$} is the factor $\mA_k$ for which $L^2(X, \mA_k, \mu)$ is the closure of the linear span of $\mE_k(T)$, and we denote the factor as a system by $(A_k, m_{A_k}, T_k)$.
(We have thus far reserved the letter $m$ for the Haar measure on a compact group.
We will show in Theorem \ref{thm: Abramov structure} that (up to a measurable isomorphism) $A_k$ is indeed a compact abelian group, so this notation is consistent with our usage.)
Note that $\mE_1(T)$ is the set of eigenfunctions of $(X, \mu, T)$, so the order 1 Abramov factor $\mA_1$ is equal to the Kronecker factor $\mZ$ by item (iii) in Proposition \ref{prop: Kronecker}.)

The following theorem relates polynomially twisted averages to Abramov factors.

\begin{restatable}{theorem}{WW} \label{thm: polynomial ww adelic}
	Let $(X, \mu, T)$ be an ergodic $\Q$-system, and let $d \in \N$.
	Let $\Phi$ be a tempered\footnote{A F{\o}lner sequence $\Phi$ is \emph{tempered} if
	\begin{equation*}
		\sup_{N \in \N} \frac{\left| \bigcup_{j=1}^{N-1} (\Phi_N - \Phi_j) \right|}{|\Phi_N|} < \infty.
	\end{equation*}
	Tempered F{\o}lner sequences play an important role in pointwise ergodic theory, as demonstrated in \cite{lindenstrauss}.} F{\o}lner sequence in $\Q$.
	If $f \in L^1(\mu)$ and $\E{f}{\mA_d} = 0$, then there exists a set $X_0 \subseteq X$ with $\mu(X_0) = 1$ such that for any polynomial $P(x) \in \A[x]$ with $\deg{P} \le d$ and any $x \in X_0$,
	\begin{equation*}
		\lim_{N \to \infty} \frac{1}{|\Phi_N|} \sum_{q \in \Phi_N} f(T^qx) \cdot e_{\Q} \left( P(q) \right) = 0.
	\end{equation*}
\end{restatable}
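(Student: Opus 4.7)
The plan is to induct on $d$, following the classical pattern for polynomial Wiener--Wintner theorems. For the base case $d = 0$, the Abramov factor $\mA_0$ consists of constants, so $\E{f}{\mA_0} = 0$ means $\int f \, d\mu = 0$, and polynomials of degree $\le 0$ are constants; the conclusion reduces to Lindenstrauss's pointwise ergodic theorem applied to the $\Q$-action along the tempered F{\o}lner sequence $\Phi$. For the inductive step I would reduce by density to $f \in L^\infty(\mu)$ and apply Lemma \ref{lem: vdC} to $u(q) = f(T^q x) \cdot e_\Q(P(q))$ at each fixed $x$. The key algebraic identity is
\begin{equation*}
    \overline{u(q)} \, u(q+r) = g_r(T^q x) \cdot e_\Q(Q_r(q)), \qquad g_r := \overline{f} \cdot T^r f, \qquad Q_r(q) := P(q+r) - P(q),
\end{equation*}
where $\deg Q_r \le d - 1$ for every $r \in \Q$.

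Decompose $g_r = \phi_r + h_r$ with $\phi_r := \E{g_r}{\mA_{d-1}}$ and $h_r \perp \mA_{d-1}$. The inductive hypothesis applied to $h_r$ and $Q_r$ shows that the inner $q$-average of $h_r(T^q x) \cdot e_\Q(Q_r(q))$ vanishes along $\Phi$, pointwise in $x$ on a full-measure set depending on $r$. A Cauchy--Schwarz bound combined with the pointwise ergodic theorem applied to $|\phi_r|^2$ then yields $|z(r,x)|^2 \le \norm{L^2(\mu)}{\phi_r}^2$ for the limiting inner product appearing in Lemma \ref{lem: vdC}, so to conclude it suffices to prove the Ces\`aro descent
\begin{equation*}
    \E{f}{\mA_d} = 0 \; \Longrightarrow \; \lim_{M \to \infty} \frac{1}{|\Psi_M|} \sum_{r \in \Psi_M} \norm{L^2(\mu)}{\E{g_r}{\mA_{d-1}}}^2 = 0
\end{equation*}
for every F{\o}lner sequence $\Psi$ in $\Q$. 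This is the heart of the matter, and the route I would take is a Host--Kra / Gowers-style seminorm analysis: approximate $\phi_r$ by finite linear combinations of elements of $\mE_{d-1}(T)$, re-expand $\norm{L^2(\mu)}{\phi_r}^2$ as an integral of $g_r$ against phase polynomials of degree $\le d - 1$, and then average in $r$ to re-express everything as integrals of $f$ against phase polynomials of degree $\le d$, which vanish by hypothesis.

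To upgrade from a.e. convergence for each fixed $P$ to a single full-measure set $X_0$ working uniformly in every $P$ of degree $\le d$, I would exploit separability of the coefficient group $(\A/\Q)^{d+1} \cong \hat{\Q}^{d+1}$ together with a uniform $L^2$ maximal-type estimate for the twisted averages, in the style of Bourgain's uniform Wiener--Wintner theorem. The main technical obstacle I anticipate is the Ces\`aro descent identity itself: in the $\Z$-setting the analogous statement is part of standard Host--Kra / Gowers theory, but for $\Q$-actions it must be reconciled with the adelic structure of the target group $\A/\Q$ and with the temperedness assumption on $\Phi$, and one must in particular check that each derivative step preserves the ability to invoke a pointwise ergodic theorem. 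The forthcoming structure theorem for Abramov $\Q$-systems (Theorem \ref{thm: Abramov structure}) should provide the concrete group-theoretic realization of $\mA_{d-1}$ needed to carry the descent through.
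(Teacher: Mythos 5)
Your approach is genuinely different from the paper's. You propose van der Corput on $u(q) = f(T^q x)\,e_{\Q}(P(q))$ at each fixed $x$, then a Ces\`aro descent from $\mA_d$ to $\mA_{d-1}$, and finally a Bourgain-style maximal estimate to get a single full-measure set. The paper instead follows Lesigne's skew-product method: Lemma \ref{lem: spectral disjointness} (van der Corput plus spectral disjointness of $(\A/\Q, m_{\A/\Q}, R_\alpha)$ from $(X,\mu,T)$) handles all out-of-spectrum leading coefficients $\alpha \notin \sigma'(T)$ simultaneously, on the single full-measure set of $\Phi$-generic points; after passing to a divisible-spectrum extension (Lemma \ref{lem: divisible extension}), the countably many remaining $\alpha \in \sigma(T')$ are handled one at a time by absorbing the twist into an ergodic skew-product $X \times (\A/\Q)^{d-1}$ (Lemmas \ref{lem: adelic eigenfunction}--\ref{lem: phase polynomial induction step} and Proposition \ref{prop: convergence in joining}), which reduces degree $d$ to degree $d-1$ in the extension. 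There is no descent of conditional expectations anywhere in that argument.

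Your proposal has a genuine gap at precisely the step you flag as the heart of the matter. Expanding $\norm{L^2(\mu)}{\E{\Delta_r f}{\mA_{d-1}}}^2$ over a countable orthonormal basis $(\psi)$ of phase polynomials of degree $\le d-1$ gives $\sum_\psi |\langle \Delta_r f, \psi\rangle|^2$; one can show $\UClim_r |\langle \Delta_r f, \psi\rangle|^2 = 0$ for each fixed $\psi$ using only $\E{f}{\mZ}=0$, but interchanging $\UClim_r$ with the infinite sum is not free, and "averaging in $r$ to re-express as integrals of $f$ against degree-$\le d$ phase polynomials" does not match what the computation actually produces. More fundamentally, rewriting the left-hand side as $\UClim_r \int_{X\times_{A_{d-1}}X} (\bar f \otimes f)\cdot (T\times T)^r(f\otimes\bar f)\,d(\mu\times_{A_{d-1}}\mu)$ and applying the mean ergodic theorem on the relatively independent joining shows the descent is equivalent to $f$ being orthogonal to the \emph{relative Kronecker factor over} $\mA_{d-1}$, which contains $\mA_d$ but is not a priori equal to it. Establishing that they coincide for $\Q$-systems is exactly the nontrivial structural input the paper supplies via the phase-polynomial analysis of Lemma \ref{lem: phase polynomial induction step} and Section \ref{sec: Abramov}; a ``Gowers-style seminorm analysis'' would instead descend along the Host--Kra factors $\mZ_d$, which are not known to equal the $\mA_d$. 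Finally, the uniform $L^2$ maximal estimate you invoke to upgrade to a single full-measure $X_0$ is itself a substantial additional theorem whose adaptation to polynomially-twisted $\Q$-averages with $\A/\Q$-valued coefficients you would need to prove; the paper avoids it because the full-measure set from Lemma \ref{lem: spectral disjointness} is automatically uniform over all $\alpha\notin\sigma'(T)$, leaving only the countable set $\sigma(T')$ of in-spectrum frequencies to intersect over.
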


We reserve the proof of Theorem \ref{thm: polynomial ww adelic} until Section \ref{sec: ww}.

Theorem \ref{thm: polynomial ww adelic} does not immediately address how one may control the averages \eqref{eq: polynomial twist adelic}, since the F{\o}lner sequence in Theorem \ref{thm: polynomial ww adelic} is required to be tempered and the set $X_0$ may not contain every generic point.
However, we can overcome these complications with the following consequence of Theorem \ref{thm: polynomial ww adelic}:

\begin{theorem} \label{thm: Abramov partially characteristic}
	Let $(X, \mu, T)$ be an ergodic $\Q$-system and $a \in \gen(\mu, \Phi)$.
	Let $P(x) \in \Q[x]$ be a nonconstant polynomial of degree $d$, and let $\alpha \in \A/\Q$.
	If $f \in C(X)$, then
	\begin{equation} \label{eq: Abramov control}
		\limsup_{N \to \infty} \left| \frac{1}{|\Phi_N|} \sum_{q \in \Phi_N} f(T^qa) \cdot e_{\Q}(P(q)\alpha) \right| \le \norm{1}{\E{f}{\mA_d}}.
	\end{equation}
\end{theorem}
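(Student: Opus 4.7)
The plan is a structured--uniform decomposition of $f$ combined with an iterated van der Corput argument. Fix $\epsilon > 0$. Invoking the structure theorem for Abramov systems (Theorem \ref{thm: Abramov structure}) to realize $(A_d, m_{A_d}, T_d)$ as a compact abelian group rotation with a topological factor map $\pi \colon X \to A_d$, I would approximate $\E{f}{\mA_d}$ in $L^1(\mu)$ by a pullback $f_s := g \circ \pi$ with $g \in C(A_d)$, and set $f_u := f - f_s \in C(X)$. Then $f_s \in C(X)$ is $\mA_d$-measurable and $\norm{1}{\E{f_u}{\mA_d}} < \epsilon$. Abbreviating $L_N(h) := \frac{1}{|\Phi_N|} \sum_{q \in \Phi_N} h(T^q a) e_\Q(P(q)\alpha)$, one has $|L_N(f)| \le |L_N(f_s)| + |L_N(f_u)|$, and it suffices to estimate each piece and then let $\epsilon \to 0$.

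The structured piece is handled by the trivial estimate $|L_N(f_s)| \le \frac{1}{|\Phi_N|} \sum_q |f_s|(T^q a)$. Since $|f_s|$ is continuous and $a \in \gen(\mu, \Phi)$, the right-hand side converges to $\int |f_s|\,d\mu \le \norm{1}{\E{f}{\mA_d}} + \epsilon$, so $\limsup_N |L_N(f_s)| \le \norm{1}{\E{f}{\mA_d}} + \epsilon$.

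For the uniform piece, I would iterate Lemma \ref{lem: vdC} $d$ times on the bounded scalar sequence $u(q) := f_u(T^q a) e_\Q(P(q)\alpha)$. One application replaces $f_u$ by the continuous multiplicative derivative $\overline{f_u} \cdot T^r f_u$ and the polynomial $P(q)$ by the difference $P(q+r) - P(q)$, which has degree $d - 1$ in $q$ (for $r \ne 0$). After $d$ iterations the polynomial degenerates to the $q$-independent multilinear expression $d!\,c_P\, r_1 \cdots r_d$ (with $c_P$ the leading coefficient of $P$), so by genericity and continuity of $\Delta^d_{r_1,\ldots,r_d} f_u$, the innermost $q$-average converges to $e_\Q(d!\,c_P\, r_1\cdots r_d\,\alpha) \int \Delta^d_{r_1,\ldots,r_d} f_u\,d\mu$. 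Iterating the van der Corput output yields $\limsup_N |L_N(f_u)|^{2^d} \le K$, where $K$ is an iterated Cesàro average over tuples $(r_1,\ldots,r_d)$ of $\left|\int \Delta^d_{r_1,\ldots,r_d} f_u\,d\mu\right|$ (after discarding the $q$-independent $e_\Q$-phase by the triangle inequality). By the Gowers--Host--Kra seminorm theory for Abramov $\Q$-systems built from Theorem \ref{thm: Abramov structure}, $K$ is controlled by the $\mA_d$-projection of $f_u$: concretely, $K \le \norm{\infty}{f_u}^{2^d - 1}\,\norm{1}{\E{f_u}{\mA_d}} \le C_f\,\epsilon$ for a constant $C_f$ depending only on $\norm{\infty}{f}$.

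Combining the two estimates, $\limsup_N |L_N(f)| \le \norm{1}{\E{f}{\mA_d}} + \epsilon + C_f^{1/2^d}\epsilon^{1/2^d}$, and letting $\epsilon \to 0$ closes the argument. The main obstacle is the last step of the uniform estimate, namely relating the iterated multilinear Cesàro average $K$ to a single $L^1$-norm on the Abramov factor; this requires the $\Q$-action analog of the classical Host--Kra seminorm inequalities, naturally intertwined with Theorem \ref{thm: Abramov structure}. Two further care-points arise in the van der Corput iteration: one must pass to a subsequence of $\Phi$ (via a countable diagonal argument over $r \in \Q$) along which all intermediate $q$-limits exist and which realizes $\limsup_N |L_N(f_u)|$, and the $r = 0$ contributions vanish in the Cesàro average over $r$ by the Følner property.
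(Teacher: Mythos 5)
The proposal takes a genuinely different route from the paper and contains a fatal gap in the estimate of the uniform piece. Your argument for $f_u$ iterates van der Corput $d$ times and, after discarding the surviving multilinear phase $e_\Q(d!\,c_P\,r_1\cdots r_d\,\alpha)$ via the triangle inequality, produces an iterated Ces\`aro average of $\bigl|\int_X \Delta^d_{r_1,\ldots,r_d} f_u\,d\mu\bigr|$. By Cauchy--Schwarz this is dominated by $\seminorm{U^{d+1}}{f_u}^{2^d}$ (using the paper's normalization $\seminorm{U^{k+1}}{h}^{2^{k+1}} = \UClim_{\bm{r}\in\Q^k}\bigl|\int\Delta^k_{\bm r}h\,d\mu\bigr|^2$). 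The characteristic factor for the $U^{d+1}$-seminorm is the \emph{Host--Kra} factor $\mZ_d$, not the Abramov factor $\mA_d$, and $\mA_d$ is in general a \emph{proper} subfactor of $\mZ_d$. Your asserted inequality $K \le \norm{\infty}{f_u}^{2^d-1}\norm{1}{\E{f_u}{\mA_d}}$ is therefore not something one can get from the structure theorem or from generic seminorm theory: there can be functions $f_u$ with $\E{f_u}{\mA_d}=0$ but $\seminorm{U^{d+1}}{f_u}>0$, so the bound fails as $\eps\to 0$. The ``Gowers--Host--Kra seminorm theory for Abramov $\Q$-systems'' you invoke, which would need $\mZ_d$ to coincide with $\mA_d$ for $\Q$-actions, is neither developed in the paper nor something you prove, and it is the entire difficulty of the theorem. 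Informally: once you have discarded the multilinear phase, you can no longer distinguish your \emph{specific} polynomial twist from an average over all degree-$d$ twists, and it is precisely that distinction that makes the Abramov factor (rather than $\mZ_d$) the correct characteristic object here.

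The paper avoids this entirely. Its proof does not decompose $f$ at all; instead it encodes the polynomial phase as $e_\Q(P(q)\alpha) = g(S^q(\bm{0}))$ for a skew-product system $S$ on $(\A/\Q)^d$, passes to a subsequential joining $\lambda$ of $(X,\mu,T)$ with $((\A/\Q)^d,m,S)$, and uses the \emph{polynomial Wiener--Wintner theorem} (Theorem \ref{thm: polynomial ww adelic}) together with Lindenstrauss's pointwise ergodic theorem to show that $\lambda$ only sees the $\mA_d$-component of $f$; the inequality then falls out of H\"older. The Wiener--Wintner theorem is the ingredient that is actually sensitive to the fact that every fiberwise polynomial phase $q \mapsto k(S^q y)$ has degree at most $d$, and this is what makes $\mA_d$ the right factor. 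Your proposal never invokes Theorem \ref{thm: polynomial ww adelic}, which is the real content of the result.

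A secondary issue: your decomposition $f = f_s\circ\pi_d + f_u$ presupposes a \emph{continuous} factor map $\pi_d : X\to A_d$. The theorem you are proving makes no such hypothesis (the paper deliberately reserves that assumption for Theorem \ref{thm: characteristic factor}, which comes after and explicitly requires ``topological Abramov factors''). The paper's joining argument needs no topological factor map. You would have to additionally pass to a topological extension via a Lemma-\ref{lem: topological factors}-type device and check that this does not disturb the inequality, which is another omitted step. By contrast, your estimate for the structured piece $f_s$ is correct as stated and matches the bound one wants.
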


\begin{proof}
	If $\alpha = 0$, then by the assumption $a \in \gen(\mu,\Phi)$, the left-hand side of \eqref{eq: Abramov control} is equal to $\left| \int_X f~d\mu \right|$, and in this case the inequality is trivial.
	We will therefore assume $\alpha \ne 0$.
	
	We define an action of $\Q$ on $(\A/\Q)^d$ by
	\begin{equation*}
		S^q(v_1, \ldots, v_d) = \left( v_1 + q\alpha, v_2 + 2qv_1 + q^2\alpha, \ldots, v_d + \sum_{j=1}^{d-1} \binom{d}{j} q^{d-j} v_j + q^d \alpha \right).
	\end{equation*}
	One can show that $((\A/\Q)^d, S)$ is minimal and uniquely ergodic.
	The argument appears in \cite[Lemma 1.25 and Theorem 3.12]{furstenberg_book} for a corresponding action of $\Z$ on $\T^d$ and is easily adapted to the $\Q$-action on $(\A/\Q)^d$.
	Writing $P(x) = \sum_{j=0}^d c_j x^j$, we have
	\begin{equation*}
		e_{\Q}(P(q)\alpha) = g \left( S^q(\bm{0}) \right)
	\end{equation*}
	for the function $g(v_1, \ldots, v_d) = e_{\Q} \left( c_0\alpha + \sum_{j=1}^d c_j v_j \right)$.
	
	Now let $(N_k)_{k \in \N}$ be a sequence such that
	\begin{equation*}
		\limsup_{N \to \infty} \left| \frac{1}{|\Phi_N|} \sum_{q \in \Phi_N} f(T^qa) \cdot e_{\Q}(P(q)\alpha) \right| = \left| \lim_{k \to \infty} \frac{1}{|\Phi_{N_k}|} \sum_{q \in \Phi_{N_k}} f(T^qa) \cdot e_{\Q}(P(q)\alpha) \right|.
	\end{equation*}
	Refining to a further subsequence if necessary, we may assume that the weak* limit
	\begin{equation*}
		\lambda = \lim_{k \to \infty} \frac{1}{|\Phi_{N_k}|} \sum_{q \in \Phi_{N_k}} \delta_{T^qa} \times \delta_{S^q(\bm{0})}
	\end{equation*}
	exists.
	Since $a \in \gen(\mu, \Phi)$ by assumption and $\bm{0} \in \gen(m_{\A/\Q}^d, \Phi)$ by unique ergodicity of the system $((\A/\Q)^d, S)$, the measure $\lambda$ is a joining of the systems $(X, \mu, T)$ and $((\A/\Q)^d, m_{\A/\Q}^d, S)$. \\
	
	\noindent \textbf{Claim.} For $h \in L^2(\mu)$ and $k \in L^2((\A/\Q)^d)$,
	\begin{equation} \label{eq: joining comes from Abramov}
		\int_{X \times (\A/\Q)^d} h \otimes k~d\lambda = \int_{X \times (\A/\Q)^d} \E{h}{\mA_d} \otimes k~d\lambda.
	\end{equation}
	
	\begin{proof}[Proof of Claim] \renewcommand\qedsymbol{$\blacksquare$}
		By linearity, it suffices to prove the identity when $\E{h}{\mA_d} = 0$ and $k(\bm{v}) = e_{\Q} \left( \sum_{j=1}^d r_j v_j \right)$ for some $\bm{r} \in \Q^d$.
		Moreover, if $\lambda = \int_{\Omega} \lambda_{\omega}~d\rho(\omega)$ is the ergodic decomposition of $\lambda$, then each of the ergodic components $\lambda_{\omega}$ is an ergodic joining of $(X, \mu, T)$ and $((\A/\Q)^d, m_{\A/\Q}^d, S)$ (see, e.g., \cite[Proposition 1]{dlR_joinings}), and it suffices to prove \eqref{eq: joining comes from Abramov} with $\lambda_{\omega}$ in place of $\lambda$.

		Fix a tempered F{\o}lner sequence $\Psi$ in $\Q$ (such a sequence exists by \cite[Proposition 1.4]{lindenstrauss}).
		By Theorem \ref{thm: polynomial ww adelic}, let $X_0 \subseteq X$ with $\mu(X_0) = 1$ such that
		\begin{equation*}
			\lim_{N \to \infty} \frac{1}{|\Psi_N|} \sum_{q \in \Psi_N} h(T^qx) \cdot e_{\Q} \left( Q(q) \right) = 0
		\end{equation*}
		for every $Q(x) \in \A[x]$ of degree at most $d$ and every $x \in X_0$.
		By Lindenstrauss's pointwise ergodic theorem (see \cite[Theorem 1.2]{lindenstrauss}), let $Y \subseteq X \times (\A/\Q)^d$ with $\lambda_{\omega}(Y) = 1$ such that
		\begin{equation*}
			\lim_{N \to \infty} \frac{1}{|\Psi_N|} \sum_{q \in \Psi_N} h(T^qx) \cdot k(S^qy) = \int_{X \times (\A/\Q)^d} h \otimes k~d\lambda_{\omega}
		\end{equation*}
		for every $(x,y) \in Y$.
		
		Let $(x, y) \in Y \cap (X_0 \times (\A/\Q)^d)$.
		Note that
		\begin{equation*}
			k(S^qy) = e_{\Q} \left( Q_y(q) \right)
		\end{equation*}
		for some polynomial $Q_y$ of degree at most $d$ depending on the point $y$.
		Thus,
		\begin{equation*}
			\int_{X \times (\A/\Q)^d} h \otimes k~d\lambda_{\omega} = \lim_{N \to \infty} \frac{1}{|\Psi_N|} \sum_{q \in \Psi_N} h(T^qx) \cdot k(S^qy) = \lim_{N \to \infty} \frac{1}{|\Psi_N|} \sum_{q \in \Psi_N} h(T^qx) \cdot e_{\Q} \left( Q_y(q) \right) = 0.
		\end{equation*}
		This proves the claim.
	\end{proof}
	
	Now, using the claim, we have
	\begin{equation*}
		\limsup_{N \to \infty} \left| \frac{1}{|\Phi_N|} \sum_{q \in \Phi_N} f(T^qa) \cdot e_{\Q}(P(q)\alpha) \right| = \left| \int_{X \times (\A/\Q)^d} f \otimes g~d\lambda \right| = \left| \int_{X \times (\A/\Q)^d} \E{f}{\mA_d} \otimes g~d\lambda \right|,
	\end{equation*}
	and the inequality \eqref{eq: Abramov control} follows by an application of H\"{o}lder's inequality.
\end{proof}


\subsection{An approximation theorem for averages in systems with topological Abramov factors}

We now have nearly everything in place to obtain the desired type of ``control'' described at the beginning of the section.
The final missing piece is a small technical assumption on the system $(X, \mu, T)$.
We say that $(X, \mu, T)$ \emph{has topological Abramov factors} if there are topological (continuous) factor maps $\pi_d : X \to A_d$ for every $d \in \N$.
We will see later on that this is not overly restrictive: every system has a topological extension that is measurably isomorphic and has topological Abramov factors (see Lemma \ref{lem: topological factors} below).
Having topological Abramov factors ensures that continuous functions on $A_d$ lift to continuous functions on $X$ via the factor map $\pi_d$.

We collect the main results of this section in the following theorem:

\begin{theorem} \label{thm: characteristic factor}
	Let $(X, \mu, T)$ be an ergodic $\Q$-system, and let $a \in \gen(\mu, \Phi)$.
	Assume that $(X, \mu, T)$ has topological Abramov factors.
	Let $P(x) \in \Q[x]$ be a nonconstant polynomial of degree $d$.
	Let $f \in C(X)$ and $g \in L^2(\mu)$.
	Then given $\eps > 0$, there exists a decomposition $f = f_s \circ \pi_d + f_u$ with $f_s \in C(A_d)$ and $f_u \in C(X)$ such that $\norm{L^1(\mu)}{\E{f_u}{\mA_d}} < \eps$ and
	\begin{equation} \label{eq: approximate characteristic factor}
		\limsup_{N \to \infty} \norm{L^2(\mu)}{\frac{1}{|\Phi_N|} \sum_{q \in \Phi_N} \left( f(T^qa) \cdot T^{P(q)}g - f_s(T_d^q\pi_d(a)) \cdot T^{P(q)}\E{g}{\mZ} \right)} < \eps.
	\end{equation}
\end{theorem}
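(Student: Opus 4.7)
The plan is to decompose $f$ with respect to the Abramov factor $\mA_d$ and $g$ with respect to the Kronecker factor $\mZ$, exploiting the fact that eigenfunctions of $(X,\mu,T)$ interact with the polynomial twist $T^{P(q)}$ exactly through polynomial characters of the form $e_{\Q}(P(q)\alpha)$, which are handled by Theorem~\ref{thm: Abramov partially characteristic}.

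I build $f_s$ first. Since $\pi_d : X \to A_d$ is a continuous surjection onto a compact metric space, continuous functions are $L^1$-dense on $A_d$; hence for any $\eta > 0$, I can choose $f_s \in C(A_d)$ with $\norm{L^1(m_{A_d})}{\E{f}{A_d} - f_s} < \eta$, and by truncating at $\pm \norm{\infty}{f}$ I may also arrange $\norm{\infty}{f_s} \le \norm{\infty}{f}$. Setting $f_u := f - f_s \circ \pi_d \in C(X)$, the identity $\E{f_u}{\mA_d} = (\E{f}{A_d} - f_s) \circ \pi_d$ gives $\norm{L^1(\mu)}{\E{f_u}{\mA_d}} < \eta$ together with $\norm{\infty}{f_u} \le 2\norm{\infty}{f}$. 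Using $f_s(T_d^q \pi_d(a)) = (f_s\circ\pi_d)(T^qa)$, the integrand inside~\eqref{eq: approximate characteristic factor} factors as
\begin{equation*}
	f(T^qa) \cdot T^{P(q)}g - (f_s \circ \pi_d)(T^qa) \cdot T^{P(q)}\E{g}{\mZ} = f_u(T^qa) \cdot T^{P(q)}g + (f_s \circ \pi_d)(T^qa) \cdot T^{P(q)}(g - \E{g}{\mZ}).
\end{equation*}
The Ces\`{a}ro average of the second summand, and of the sub-piece $f_u(T^qa) \cdot T^{P(q)}(g - \E{g}{\mZ})$ of the first, both tend to $0$ in $L^2(\mu)$ by Proposition~\ref{prop: Kronecker partially characteristic} (applied with $f_s \circ \pi_d$ and $f_u$ respectively, against the Kronecker-orthogonal function $g - \E{g}{\mZ}$). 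What is left to estimate is the core term $\frac{1}{|\Phi_N|}\sum_{q \in \Phi_N} f_u(T^qa) \cdot T^{P(q)}\E{g}{\mZ}$.

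To control this core, approximate $\E{g}{\mZ} = \sum_{k=1}^K c_k \phi_k + r$ in $L^2(\mu)$, where $\norm{L^2(\mu)}{r} < \delta$ and each $\phi_k$ is a normalized eigenfunction satisfying $T^q\phi_k = e_{\Q}(q\alpha_k)\phi_k$ for some $\alpha_k \in \A/\Q$ (one can take $\norm{\infty}{\phi_k} \le 1$, since on the Kronecker factor eigenfunctions correspond to characters of a compact group). Then $T^{P(q)}\phi_k = e_{\Q}(P(q)\alpha_k)\phi_k$, so the core term equals
\begin{equation*}
	\sum_{k=1}^K c_k \phi_k \cdot \frac{1}{|\Phi_N|}\sum_{q \in \Phi_N} f_u(T^qa) \cdot e_{\Q}(P(q)\alpha_k) + E_N,
\end{equation*}
with $\norm{L^2(\mu)}{E_N} \le 2\norm{\infty}{f} \cdot \delta$ uniformly in $N$. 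Theorem~\ref{thm: Abramov partially characteristic} applied to $f_u \in C(X)$ bounds each inner scalar average by $\norm{L^1(\mu)}{\E{f_u}{\mA_d}} < \eta$ in the limsup, producing an overall limsup bound of $\eta \sum_{k=1}^K |c_k| + 2\norm{\infty}{f} \cdot \delta$. Fixing $\delta$ first so that $2\norm{\infty}{f} \cdot \delta < \eps/2$ pins down $K$ and the constants $c_k$; then selecting $\eta < \min(\eps, \eps/(2\sum_{k=1}^K |c_k|))$ yields simultaneously $\norm{L^1(\mu)}{\E{f_u}{\mA_d}} < \eps$ and the limsup bound in~\eqref{eq: approximate characteristic factor}.

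The principal technical point—and the reason the topological Abramov hypothesis is needed—is the tension between requiring $f_s$ to be continuous (so that $f_u \in C(X)$ is admissible in Proposition~\ref{prop: Kronecker partially characteristic} and Theorem~\ref{thm: Abramov partially characteristic}, both of which require continuity to access the orbit of the generic point $a$) and requiring $\norm{L^1(\mu)}{\E{f_u}{\mA_d}}$ to be small (an $L^1$ rather than a uniform constraint). Continuity of the factor map $\pi_d$ is precisely what permits lifting $L^1$-approximations on $A_d$ to continuous functions on $X$, reconciling these two requirements.
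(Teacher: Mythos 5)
Your proof is correct and follows the same overall strategy as the paper: split $f = f_s\circ\pi_d + f_u$ with $f_s \in C(A_d)$ a good $L^1$-approximation, use Proposition~\ref{prop: Kronecker partially characteristic} to replace $g$ by $\E{g}{\mZ}$, and then control the remaining term $\frac{1}{|\Phi_N|}\sum_{q}f_u(T^qa)\,T^{P(q)}\E{g}{\mZ}$ by expanding $\E{g}{\mZ}$ in eigenfunctions and invoking Theorem~\ref{thm: Abramov partially characteristic}. The one place you diverge is the final bound: the paper keeps the full orthonormal eigenfunction expansion $\E{g}{\mZ} = \sum_\alpha c_\alpha h_\alpha$ and uses orthogonality (i.e.\ $\norm{L^2}{A_N(f_u,\E{g}{\mZ})}^2 = \sum_\alpha |c_\alpha|^2 \bigl|\tfrac{1}{|\Phi_N|}\sum_q f_u(T^qa)e_{\Q}(P(q)\alpha)\bigr|^2$) together with Fatou, so the bound $\limsup_N\norm{L^2}{\cdot}^2 \le \eps^2\sum_\alpha|c_\alpha|^2$ comes out with no auxiliary parameters; you instead truncate to a finite eigenfunction sum, apply the triangle inequality (an $\ell^1$ rather than $\ell^2$ bound on the coefficients), and then run a two-stage parameter choice $\delta\to\eta$ to close the loop. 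Your version works and is a mild overcomplication; the Parseval route avoids the truncation entirely because the error can be absorbed into the countable $\ell^2$ sum.
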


\begin{proof}
	We will assume $\norm{L^2(\mu)}{g} = 1$.
	To deduce the general case, one can scale and apply the theorem with $\eps' = \norm{L^2(\mu)}{g}^{-1} \cdot \eps$.
	
	Since the space of continuous functions $C(A_d)$ is dense in $L^1(m_{A_d})$, we may find $f_s \in C(A_d)$ such that $\norm{L^1(m_{A_d})}{\E{f}{A_d} - f_s} < \eps$.
	Let $f_u = f - f_s \circ \pi_d$.
	The factor map $\pi_d$ is continuous by assumption, so $f_u$ is continuous.
	Moreover, $\norm{L^1(\mu)}{\E{f_u}{\mA_d}} < \eps$.
	It remains to check \eqref{eq: approximate characteristic factor}.
	
	For functions $h \in C(X)$ and $k \in L^2(\mu)$, let
	\begin{equation*}
		A_N(h,k; x) = \frac{1}{|\Phi_N|} \sum_{q \in \Phi_N} h(T^qa) \cdot k(T^{P(q)}x).
	\end{equation*}
	We want to show
	\begin{equation*}
		\limsup_{N \to \infty} \norm{L^2(\mu)}{A_N(f,g) - A_N(f_s, \E{g}{\mZ})} < \eps.
	\end{equation*}
	By Proposition \ref{prop: Kronecker partially characteristic}, we have
	\begin{equation*}
		\limsup_{N \to \infty} \norm{L^2(\mu)}{A_N(f,g) - A_N(f, \E{g}{\mZ})} = 0,
	\end{equation*}
	so it is enough to show that
	\begin{equation*}
		\limsup_{N \to \infty} \norm{L^2(\mu)}{\underbrace{A_N(f, \E{g}{\mZ}) - A_N(f_s, \E{g}{\mZ})}_{A_N(f_u, \E{g}{\mZ})}} < \eps.
	\end{equation*}
	
	Let $	\sigma \subseteq \A/\Q$ be the countable set of eigenvalues of $T$, and let $h_{\alpha} : X \to S^1$ be an eigenfunction with eigenvalue $\alpha$ for each $\alpha \in \sigma$, i.e. $T^q h_{\alpha} = e_{\Q}(q\alpha) \cdot h$.
	By item (iii) in Proposition \ref{prop: Kronecker} and orthogonality of eigenfunctions, we may write $g = \sum_{\alpha \in \sigma} c_{\alpha} h_{\alpha}$ for some coefficients $c_{\alpha}$ satisfying $\sum_{\alpha \in \sigma} |c_{\alpha}|^2 = 1$.
	Then
	\begin{equation*}
		A_N(f_u, \E{g}{\mZ}) = \frac{1}{|\Phi_N|} \sum_{q \in \Phi_N} f_u(T^qa) \cdot \sum_{\alpha \in \sigma} c_{\alpha} e_{\Q}(P(q)\alpha) h_{\alpha},
	\end{equation*}
	so by Theorem \ref{thm: Abramov partially characteristic} and Fatou's lemma,
	\begin{equation*}
		\limsup_{N \to \infty} \norm{L^2(\mu)}{A_N(f_u, \E{g}{\mZ})}^2 = \limsup_{N \to \infty} \sum_{\alpha \in \sigma} |c_{\alpha}|^2 \left| \frac{1}{|\Phi_N|} \sum_{q \in \Phi_N} f_u(T^qa) \cdot e_{\Q}(P(q)\alpha) \right|^2 < \eps^2.
	\end{equation*}
\end{proof}


\section{Equidistribution of polynomial orbits in Abramov systems}

Theorem \ref{thm: characteristic factor} shows that the Abramov factors of $(X, \mu, T)$ play a fundamental role in the behavior of polynomial orbits in $X$ and hence in the structure of the space of Erd\H{o}s--Furstenberg--S\'{a}rk\"{o}zy progressions.
The goal of this section is to show that polynomial orbits in Abramov systems have a simple algebraic description.
This will later allow us to construct a measure supported on Erd\H{o}s--Furstenberg--S\'{a}rk\"{o}zy progressions that we can lift back to the original space $X$.

The main tool that allows us to get a handle on polynomial orbits in Abramov systems is the following structure theorem.

\begin{theorem} \label{thm: Abramov structure}
	Suppose $(X, \mu, T)$ is an ergodic Abramov $\Q$-system of order at most $k$.
	Then there exists a compact $\Q$-vector space\footnote{The notion of a compact $\Q$-vector space may sound rather exotic, but we have in fact already encountered an example of one in Section \ref{sec: characteristic factor}: the space $\A/\Q$.
	Indeed, $\A/\Q$ is a compact group that is both torsion-free (since $\Q$ is divisible) and divisible (since $\Q$ is torsion-free), so there is a natural multiplicative action of $\Q$ on $\A/\Q$ that makes $\A/\Q$ into a vector space over $\Q$.} $V$ and an element $\alpha \in V$ such that $(X, \mu, T)$ is a factor of the system $(V^k, m_V^k, S)$, where
	\begin{equation*}
		S^q(v_1, \ldots, v_k) = \left( v_1 + q\alpha, v_2 + qv_1 + \binom{q}{2} \alpha, \ldots, v_k + q v_{k-1} + \ldots + \binom{q}{k-1} v_1 + \binom{q}{k} \alpha \right).
	\end{equation*}
	Moreover, up to replacing $(X, \mu, T)$ by a measurably isomorphic system $(\tilde{X}, \tilde{\mu}, \tilde{T})$, we may assume that the factor map is continuous.
\end{theorem}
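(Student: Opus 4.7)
The plan is to prove Theorem \ref{thm: Abramov structure} by induction on the order $k$, realizing $(X, \mu, T)$ as a factor of a compact $\Q$-vector space $(V^k, m_V^k, S)$ with the prescribed polynomial action, step by step through the tower of Abramov factors $\mA_1 \subseteq \mA_2 \subseteq \ldots \subseteq \mA_k = \mX$.

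For the base case $k = 1$, one uses that $\mA_1 = \mZ$ is the Kronecker factor, which by the Halmos--von Neumann theorem is measurably isomorphic to a compact abelian group rotation $(Z, m_Z, R)$ with $R^q z = z + \psi(q)$ for a homomorphism $\psi : \Q \to Z$ with dense image. One then enlarges $Z$ to a compact $\Q$-vector space $V$ via Pontryagin duality: embed the discrete group $\hat{Z}$ into a torsion-free divisible abelian group (e.g.\ the divisible hull, or equivalently $\hat{Z} \otimes_\Z \Q$), and dualize to obtain $V$ together with a continuous surjection $V \twoheadrightarrow Z$. Lifting $\psi(1)$ to an element $\alpha \in V$ and using the $\Q$-vector-space structure gives the homomorphism $q \mapsto q\alpha$, which intertwines the actions $S^q v = v + q\alpha$ and $R^q$.

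For the inductive step, assume the result holds for order $k-1$. The order-$k$ Abramov factor is a compact abelian group extension of the order-$(k-1)$ factor: there is a compact abelian group $U_k$ and a cocycle $\sigma : \Q \times A_{k-1} \to U_k$ such that $A_k \cong A_{k-1} \times U_k$ with the skew-product action. The characters $\chi \in \hat{U_k}$ that lift to phase polynomials of degree $\le k$ on $A_k$ are precisely those for which $\chi \circ \sigma$ differs by a coboundary from a phase polynomial of degree $\le k-1$ on $A_{k-1}$. Pulling $\sigma$ back along the inductive factor map $V^{k-1} \twoheadrightarrow A_{k-1}$ and exploiting the explicit polynomial structure of $S_{k-1}^q$, one shows (after modification by coboundaries) that the cocycle takes the explicit form
\begin{equation*}
    \sigma(q, v_1, \ldots, v_{k-1}) = q v_{k-1} + \binom{q}{2} v_{k-2} + \ldots + \binom{q}{k-1} v_1 + \binom{q}{k} \alpha_k
\end{equation*}
for some $\alpha_k \in U_k$. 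Enlarging $U_k$ to a compact $\Q$-vector space by the same dualization trick as in the base case, and absorbing everything into one common $V$, produces the desired factor map $V^k \twoheadrightarrow A_k$ with $S^q$ of the prescribed shape.

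The main obstacle is the cocycle rigidification in the inductive step. Combining the cocycle identity $\sigma(q_1 + q_2, v) = \sigma(q_1, S_{k-1}^{q_2} v) + \sigma(q_2, v)$ with the polynomial form of $S_{k-1}^q$ imposes a system of functional equations on $\sigma$ that forces its dependence on $v$ to be polynomial of the correct degree in each coordinate; the divisibility of $\Q$ is what allows one to solve for consistent $n$-th roots across all $n \in \N$ and thereby to pin down the binomial coefficients $\binom{q}{j}$ that appear. Finally, the assertion about replacing $(X, \mu, T)$ by a measurably isomorphic system with continuous factor maps follows by passing to the standard topological model obtained as the inverse limit of the topological Abramov factors constructed above; in this model each projection $\pi_d : \tilde{X} \to A_d$ is continuous by construction.
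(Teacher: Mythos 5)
Your approach is genuinely different from the paper's. You propose an induction along the tower of Abramov factors, realizing each $A_k$ as a compact abelian group extension of $A_{k-1}$ by a group $U_k$ with a cocycle $\sigma$, and then ``rigidifying'' the cocycle into the binomial-coefficient form. The paper instead avoids cocycle language entirely: it proves Theorem~\ref{thm: labeling of polynomial phases}, which constructs an explicit injective homomorphism $\phi_k : \mE_k(T) \to S^1 \times (\A/\Q)^k$ compatible with taking multiplicative derivatives, shows the phase polynomials form an orthonormal basis (Lemma~\ref{lem: phase polynomials orthogonal}), takes $V$ to be the Pontryagin dual of the $\Q$-vector space spanned by the ``coefficients'' appearing in $\phi_k(\mE_k(T))$, and then produces the factor map by matching Hilbert space bases via Lemma~\ref{lem: unitary factor map}. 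Your route resembles the classical Abramov/Hahn--Parry treatment for $\Z$-actions and is not unreasonable, but it commits you to more measure-theoretic machinery (Mackey/Zimmer theory of extensions, cohomology of cocycles) than the paper's direct Fourier-analytic argument, and it requires assembling all the $U_k$'s into a single $V$ at the end, a step you wave at but do not carry out.

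The real gap is the cocycle rigidification. You write that ``one shows (after modification by coboundaries) that the cocycle takes the explicit form $\sigma(q, v_1, \ldots, v_{k-1}) = q v_{k-1} + \binom{q}{2} v_{k-2} + \cdots + \binom{q}{k}\alpha_k$.'' That sentence is the entire content of the theorem; it is not a step one cites, it is the thing to be proven. Concretely, from the cocycle identity plus the polynomial form of $S_{k-1}^q$ you get a family of functional equations, but turning those into the conclusion requires (i) proving the $v$-dependence of $\sigma(q, \cdot)$ is exactly affine in each coordinate $v_j$ with the stated binomial weights, with no lower-order correction terms that are merely cohomologous to zero rather than zero, (ii) solving a Moore-cohomology problem over the divisible group $\Q$, where the standard tools (Hilbert's Theorem 90, vanishing of $H^2$ for divisible acting groups) have to be deployed with care because the target $U_k$ is a compact group rather than the circle, and (iii) verifying that $\hat{U_k}$ is torsion-free so the dualization trick that enlarges $U_k$ to a compact $\Q$-vector space is even available. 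Your sentence about ``consistent $n$-th roots'' gestures at (ii) but gives no argument for (i) or (iii). In effect you have replaced one hard step (the paper's proof of the Claim inside Theorem~\ref{thm: labeling of polynomial phases}) by another hard step of essentially the same difficulty, without proving either. There is also a circularity worry in your final paragraph: you obtain continuity by ``passing to the inverse limit of the topological Abramov factors,'' but those topological models are what your construction is supposed to furnish; you would need to say explicitly that the inverse limit is taken over the explicit skew-products $V^d$ produced in the induction, not over the abstract $A_d$'s.
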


A similar structure theorem was established by Abramov \cite{abramov} for totally ergodic $\Z$-systems.
We give a proof of Theorem \ref{thm: Abramov structure} in Section \ref{sec: Abramov}.

Observe that for a system of the form given in Theorem \ref{thm: Abramov structure}, the orbit of any point along any polynomial sequence will have polynomial coordinates.
Consequently, we can prove an equidistribution theorem for orbits in Abramov systems, which will be essential for defining a measure on the space of Erd\H{o}s--Furstenberg--S\'{a}rk\"{o}zy progressions.
Before giving a statement of the relevant equidistribution results, we recall some necessary terminology and notation.
Given a function $v : \Q \to V$ taking values in a topological vector space $V$, we say that the \emph{uniform Ces\`{a}ro limit} of $v$ is equal to $L \in V$ if
\begin{equation*}
	\lim_{N \to \infty} \frac{1}{|\Phi_N|} \sum_{q \in \Phi_N} v(q) = L
\end{equation*}
for every F{\o}lner sequence $\Phi$.
If the uniform Ces\`{a}ro limit exists, we denote it by $\UClim_{q \in \Q} v(q)$.
A function $x : \Q \to X$ taking values in a compact metric space $X$ is said to be \emph{well-distributed} with respect to a Borel probability measure $\mu$ if $\UClim_{q \in \Q} \delta_{x(q)} = \mu$ in the weak* topology.
That is, for every F{\o}lner sequence $\Phi$ and every continuous function $f \in C(X)$,
\begin{equation*}
	\lim_{N \to \infty} \frac{1}{|\Phi_N|} \sum_{q \in \Phi_N} f(x(q)) = \int_X f~d\mu.
\end{equation*}
When $X$ is a group and $\mu$ is the Haar measure on a subgroup $H \subseteq X$, we will simply say that $x$ is well-distributed in $H$.

We have the following variant of Weyl's equidistribution theorem for polynomials over $\Q$:

\begin{proposition} \label{prop: polynomial equidistribution}
	Let $V$ be a compact $\Q$-vector space, and let $k \in \N$.
	Suppose $\alpha_1, \ldots, \alpha_k \in V$, and let $H_j \subseteq V$ be the closed $\Q$-vector subspace generated by $\alpha_j$ for each $j \in \{1, \ldots, k\}$.
	Given any family $P_1(x), \ldots, P_k(x) \in \Q[x]$ of nonconstant linearly independent polynomials with zero constant term, the $\Q$-sequence
	\begin{equation*}
		\left( P_1(q) \alpha_1, P_2(q) \alpha_2, \ldots, P_k(q) \alpha_k \right)
	\end{equation*}
	is well-distributed in the closed subspace $H_1 \times H_2 \times \ldots \times H_k \subseteq V^k$.
\end{proposition}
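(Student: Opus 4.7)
The plan is to prove well-distribution via the Weyl-type criterion for compact abelian groups: the $\Q$-sequence $(P_1(q)\alpha_1, \ldots, P_k(q)\alpha_k)$ is well-distributed in $H_1 \times \cdots \times H_k$ if and only if
\begin{equation*}
	\UClim_{q \in \Q} \chi(P_1(q)\alpha_1, \ldots, P_k(q)\alpha_k) = 0
\end{equation*}
for every nontrivial continuous character $\chi$ of the product group. Such a $\chi$ factors as $\chi = \chi_1 \otimes \cdots \otimes \chi_k$ with each $\chi_j$ a character of $V$, so the first step is to absorb the trivial factors $\chi_j|_{H_j} \equiv 1$ (which contribute the value $1$ identically) and reduce to the case where every $\chi_j|_{H_j}$ is nontrivial.

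Next I would identify characters with elements of $\A/\Q$. For each $j$, the map $q \mapsto \chi_j(q\alpha_j)$ is a continuous character of $\Q$, hence of the form $q \mapsto e_{\Q}(q\beta_j)$ for a unique $\beta_j \in \A/\Q$ via Pontryagin duality for $\Q$; moreover $\beta_j \ne 0$, because the $\Q$-orbit of $\alpha_j$ is dense in $H_j$. Since $P_j(q) \in \Q$ acts on $\alpha_j$ via the $\Q$-scalar action on $V$, this gives
\begin{equation*}
	\chi(P_1(q)\alpha_1, \ldots, P_k(q)\alpha_k) = \prod_{j=1}^k e_{\Q}(P_j(q)\beta_j) = e_{\Q}(Q(q)),
\end{equation*}
where $Q(x) = \sum_j P_j(x)\beta_j \in (\A/\Q)[x]$. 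Writing $P_j(x) = \sum_{i \ge 1} a_{ij} x^i$, the matrix $(a_{ij})$ has $\Q$-rank $k$ by the hypothesized linear independence of the $P_j$, so the coefficient vector $\big(\sum_j a_{ij}\beta_j\big)_{i \ge 1}$ of $Q$ cannot vanish identically (all $\beta_j$ are nonzero in the $\Q$-vector space $\A/\Q$); since $P_j(0) = 0$ forces $Q(0) = 0$, we conclude that $Q$ has a nonzero coefficient $c_d \in \A/\Q$ in some positive degree $d$.

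The final step is to show $\UClim_{q \in \Q} e_{\Q}(Q(q)) = 0$ for every such $Q$. I would argue by induction on $d = \deg Q$ using the van der Corput lemma (Lemma \ref{lem: vdC}). For $d = 1$, $q \mapsto e_{\Q}(c_1 q)$ is a nontrivial continuous character of the countable amenable group $\Q$, whose uniform Ces\`aro average vanishes by the standard F{\o}lner-shift argument. For $d \ge 2$ and $r \in \Q \setminus \{0\}$, the polynomial $Q(q+r) - Q(q)$ has degree $d-1$ in $q$ with leading coefficient $d\, c_d\, r$, which is nonzero in $\A/\Q$ since $\A/\Q$ is a $\Q$-vector space (in particular torsion-free and divisible); the induction hypothesis yields $\UClim_q e_{\Q}(Q(q+r) - Q(q)) = 0$ for every $r \ne 0$, and Lemma \ref{lem: vdC} with $K = 0$ produces the desired vanishing for $Q$.

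The main obstacle is really just the linear-algebra bookkeeping ensuring that $Q \not\equiv 0$ as a polynomial with coefficients in $\A/\Q$; the rest of the argument is a faithful transcription of the classical Weyl equidistribution proof from $\T$ to $\A/\Q$, using that continuous characters of $\Q$ are parametrized by $\A/\Q$ and that multiplication by nonzero rationals is injective on this $\Q$-vector space.
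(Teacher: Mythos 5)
Your proof is correct. Both you and the paper dualize the problem to characters of $\Q$ parametrized by $\A/\Q$ and reduce to showing that a nontrivial $\A/\Q$-valued polynomial exponential sum has vanishing uniform Ces\`aro average, but you arrive there along a different path. The paper first performs a separate reduction to the monomial case $P_j(x) = x^j$ (passing to an auxiliary space $V' = V^k$ and a linear projection, and then checking that the image of the product of diagonals equals $H_1 \times \cdots \times H_k$), and only afterward dualizes; you instead keep the general $P_j$ throughout and handle the linear independence directly, observing that the $d \times k$ coefficient matrix $A = (a_{ij})$ has rank $k$, hence is left-invertible over $\Q$, hence injective on $(\A/\Q)^k$, so $Q = \sum_j P_j \beta_j$ cannot have all positive-degree coefficients vanish once $\beta \ne 0$. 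This avoids the auxiliary-space bookkeeping and is arguably cleaner. The other difference is that the paper cites the terminal exponential sum estimate ($\UClim_q e_{\Q}(\sum_j q^j\beta_j) = 0$ when the top coefficient is nonzero) to [Theorem 5.2, bm\_vdC], whereas you re-derive it via van der Corput induction, relying on the fact that $\A/\Q$ is a torsion-free divisible group so that $d\,c_d\,r \ne 0$ when $c_d \ne 0$ and $r \ne 0$; this makes your argument more self-contained at the cost of replicating a known result. One small remark on your write-up: the parenthetical ``(all $\beta_j$ are nonzero)'' is more than what the linear algebra uses --- rank $k$ plus $\beta \ne 0$ as a vector suffices --- and on its own wouldn't be enough without the rank hypothesis; since you state both, the argument is fine, but the phrasing slightly obscures which hypothesis does which work.
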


\begin{proof}
	First we reduce to the case $P_j(x) = x^j$.
	Suppose Proposition \ref{prop: polynomial equidistribution} holds for all $k \in \N$ for the polynomials $P_j(x) = x^j$.
	Now for a family $P_1, \ldots, P_k$ of nonconstant linearly independent polynomials with zero constant term, let us write $P_i(x) = \sum_{j=1}^d a_{ij} x^j$ with $d = \max_{1 \le i \le k} \deg{P_i} \ge k$.
	The condition that $P_1, \ldots, P_k$ are linearly independent means that the matrix $A = (a_{ij})_{1 \le i \le k, 1 \le j \le d}$ has trivial cokernel, so $A(\Q^d) = \Q^k$ by the fundamental theorem of linear algebra.
	
	Let $V' = V^k$ and $\alpha' = (\alpha_1, \ldots, \alpha_k) \in V'$.
	Let $H' \subseteq V'$ be the closed vector space generated by $\alpha'$.
	By assumption, the sequence
	\begin{equation*}
		s(q) = (q \alpha', q^2 \alpha', \ldots, q^d \alpha')
	\end{equation*}
	is well-distributed in $(H')^d$.
	Let $\pi : (V')^d \to V^k$ be the linear map
	\begin{equation*}
		\pi(\bm{v}^1, \ldots, \bm{v}^d) = \left( \sum_{j=1}^d a_{ij} v^j_i \right)_{i=1}^k.
	\end{equation*}
	Then
	\begin{equation*}
		\left( P_1(q) \alpha_1, P_2(q) \alpha_2, \ldots, P_k(q) \alpha_k \right) = \pi(s(q))
	\end{equation*}
	is well-distributed in $\pi((H')^d)$.
	It remains to check that $\pi((H')^d) = H_1 \times \ldots \times H_k$.
	On the one hand, if $(q_1, \ldots, q_d) \in \Q^d$, then
	\begin{equation*}
		\pi(q_1\alpha', \ldots, q_d\alpha') = \left( \left( \sum_{j=1}^d a_{ij}q_j \right) \alpha_i \right)_{i=1}^k \in H_1 \times \ldots \times H_k,
	\end{equation*}
	so by continuity of $\pi$, we have $\pi((H')^d) \subseteq H_1 \times \ldots \times H_k$.
	On the other hand, if $(q_1, \ldots, q_k) \in \Q^k$, then there exist $(r_1, \ldots, r_d) \in \Q^d$ such that
	\begin{equation*}
		q_i = \sum_{j=1}^d a_{ij} r_j
	\end{equation*}
	for every $i \in \{1, \ldots, k\}$.
	Hence, $(q_1\alpha_1, \ldots, q_k\alpha_k) = \pi(r_1\alpha', \ldots, r_d\alpha') \in \pi((H')^d)$.
	Thus, since $\pi((H')^d)$ is closed, we have $H_1 \times \ldots \times H_k \subseteq \pi((H')^d)$. \\
	
	Now we want to show: if $f_1, \ldots, f_d : V \to \C$ are continuous functions, then
	\begin{equation} \label{eq: equidistribution definition}
		\UClim_{q \in \Q} \prod_{j=1}^d f_j(q^j\alpha_j) = \prod_{j=1}^d \int_{H_j} f_j~dm_{H_j}.
	\end{equation}
	Approximating the functions $f_j$ by linear combinations of group characters, it suffices to prove \eqref{eq: equidistribution definition} for $f_j = \chi_j \in \hat{V}$.
	Note that if $\chi_j \in \hat{V}$, then $\lambda_j(q) = \chi_j(q\alpha_j)$ is a group character on $\Q$, and $\lambda_j$ is the trivial character if and only if $\chi_j \in H_j^{\perp}$.
	Therefore, writing $\lambda_j(q) = e_{\Q}(q\beta_j)$ and disregarding any high degree trivial terms, it suffices to prove: if $\beta_1, \ldots, \beta_d \in \A/\Q$ and $\beta_d \ne 0$, then
	\begin{equation*}
		\UClim_{q \in \Q} e_{\Q} \left( \sum_{j=1}^d q^j \beta_j \right) = 0.
	\end{equation*}
	This was proved in \cite[Theorem 5.2]{bm_vdC}.
\end{proof}

Using the observation that (polynomial) orbits in Abramov systems have polynomial coordinates, we obtain the following corollary.

\begin{corollary} \label{cor: lambda exists}
	Let $(X, \mu, T)$ be an Abramov $\Q$-system with Kronecker factor $(Z, m_Z, R)$.
	Assume that the factor map provided by Theorem \ref{thm: Abramov structure} is continuous.
	Then the weak* limit
	\begin{equation*}
		\UClim_{q \in \Q} \delta_{T^qx} \times \delta_{R^{P(q)} z}
	\end{equation*}
	exists for every $(x, z) \in X \times Z$ and every polynomial $P$.
\end{corollary}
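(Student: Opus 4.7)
The plan is to pass to the concrete algebraic model of Abramov systems supplied by Theorem \ref{thm: Abramov structure}, recognize the joint orbit $q \mapsto (T^q x, R^{P(q)}z)$ as a polynomial sequence in a compact $\Q$-vector space, and apply Proposition \ref{prop: polynomial equidistribution}.

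First, I would apply Theorem \ref{thm: Abramov structure} to realize $(X, \mu, T)$ as the image under a continuous factor map $\pi \colon (V^k, m_V^k, S) \to (X, \mu, T)$ and identify the Kronecker factor $(Z, m_Z, R)$ with the first-coordinate factor $(V, m_V, v \mapsto v + q\alpha)$ of the $V^k$ model. Fix $(x,z) \in X \times Z$ and choose a lift $\tilde{x} = (v_1, \ldots, v_k) \in V^k$ with $\pi(\tilde{x}) = x$. Setting $v_0 := \alpha$ and using the explicit formula from Theorem \ref{thm: Abramov structure}, the $j$-th coordinate of $S^q \tilde{x}$ is $\sum_{i=0}^{j} \binom{q}{i} v_{j-i}$, while $R^{P(q)} z = z + P(q)\alpha$. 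Hence the $V^{k+1}$-valued map
\[
F(q) := (S^q\tilde{x},\, R^{P(q)}z)
\]
is polynomial in $q$, and expanding in the basis $\left\{ 1,\, \binom{y}{i} : i \ge 1 \right\}$ of $\Q[y]$ gives
\[
F(q) \;=\; F(0) + \sum_{i=1}^{d'} \binom{q}{i}\, c_i
\]
for some fixed $c_1, \ldots, c_{d'} \in V^{k+1}$, where $d' = \max(k, \deg P)$.

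Next, the polynomials $\binom{y}{1}, \ldots, \binom{y}{d'}$ are linearly independent, nonconstant, and have zero constant term. Applying Proposition \ref{prop: polynomial equidistribution} in the compact $\Q$-vector space $V^{k+1}$ with $\alpha_i = c_i$, the sequence $\left( \binom{q}{1} c_1, \ldots, \binom{q}{d'} c_{d'} \right)$ is well-distributed in $\prod_{i=1}^{d'} \overline{\Q c_i}$. The continuous addition map $(V^{k+1})^{d'} \to V^{k+1}$ is a surjective group homomorphism onto the closed $\Q$-vector subspace $W = \overline{\mathrm{span}_\Q(c_1, \ldots, c_{d'})}$, and the push-forward of the Haar measure along this homomorphism is the Haar measure on $W$. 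It follows that $F$ is well-distributed in the coset $F(0) + W \subseteq V^{k+1}$.

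Finally, pushing forward along the continuous map $\pi \times \id_V \colon V^{k+1} \to X \times Z$ produces a Borel probability measure $\lambda$ on $X \times Z$, and $q \mapsto (T^q x, R^{P(q)}z) = (\pi \times \id_V)(F(q))$ is well-distributed with respect to $\lambda$. In particular, the weak-$^*$ limit $\UClim_{q \in \Q} \delta_{T^q x} \times \delta_{R^{P(q)}z}$ exists and equals $\lambda$. I expect the main obstacle to be the bookkeeping in the first step---arranging compatible continuous realizations of $X$ and of the Kronecker factor $Z$ inside the $V^k$ model so that the Kronecker factor map corresponds to first-coordinate projection---but once this is in place, the rest of the argument is a straightforward application of polynomial equidistribution together with the fact that well-distribution is preserved under continuous pushforward.
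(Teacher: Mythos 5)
Your proposal is essentially correct and matches the approach the paper has in mind: the paper states the corollary without a formal proof, attributing it to the observation that orbits in the explicit $(V^k,S)$ model of Theorem~\ref{thm: Abramov structure} have polynomial coordinates, combined with Proposition~\ref{prop: polynomial equidistribution}. One technical slip, which you half-flag yourself at the end: the Kronecker factor $Z$ of $X$ is in general only a \emph{factor} of the first coordinate $V$ (since $X$ is a factor of $V^k$ whose Kronecker factor is $V$), not equal to it, so you cannot take $z \in V$ and write $R^{P(q)}z = z + P(q)\alpha$ directly. The fix is to lift $z$ to some $\tilde z \in V$ via the factor map $\pi_1 \colon V \to Z$ (which is continuous, being the map induced on the quotient by the continuous composition $V^k \to X \to Z$ through the open quotient $V^k \to V$), run your polynomial well-distribution argument for $(S^q\tilde x, \tilde z + P(q)\alpha) \in V^{k+1}$, and push forward via $\pi \times \pi_1$ rather than $\pi \times \id_V$. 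Everything else in the argument is sound.
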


Let $(X, \mu, T)$ be an ergodic $\Q$-system, and fix a polynomial $P$ of degree $d$.
We fix a topological model for the Abramov factor $A_d$ such that the factor map in Theorem \ref{thm: Abramov structure} is continuous.
Assume moreover that there is a continuous factor map $\pi_d : X \to A_d$.
Let $\pi_1$ be the factor map $\pi_1 : X \to Z$.
This factor map will automatically be continuous, since it can be written as a composition of $\pi_d$ with a topological factor map $\pi^d_1 : A_d \to Z$.
For points $(u, z) \in A_d \times Z$, define (by Corollary \ref{cor: lambda exists})
\begin{equation*}
	\tilde{\lambda}^P_{(u,z)} = \UClim_{q \in \Q} \delta_{T_d^qu} \times \delta_{R^{P(q)} z}.
\end{equation*}
Then define for $(x_1, x_2) \in X$ a measure $\lambda^P_{(x_1, x_2)}$ on $X^2$ by
\begin{equation*}
	\int_{X \times X} f \otimes g~d\lambda^P_{(x_1, x_2)} = \int_{A_d \times Z} \E{f}{A_d} \otimes \E{g}{Z}~d\tilde{\lambda}^P_{(\pi_d(x_1),\pi_1(x_2))}
\end{equation*}
for $f, g \in C(X)$.
In order for this expression to be well-defined (since the conditional expectation is only determined almost everywhere), we require both marginals of the measure $\tilde{\lambda}_{(\pi_d(x_1), \pi_1(x_2))}^P$ to be absolutely continuous (with respect to the Haar measures on $A_d$ and $Z$, respectively).
The first marginal is given by $\UClim_{q \in \Q} \delta_{T_d^q \pi_d(x)}$.
This is equal to $m_{A_d}$, since $(A_d, T_d)$ is uniquely ergodic (see \cite[Theorem 3.12]{furstenberg_book} for a proof for a closely related $\Z$-system that is easily adapted to our context of $\Q$-systems).
The second marginal is given by $\UClim_{q \in \Q} \delta_{R^{P(q)} \pi_1(x)}$, which is equal to $m_Z$ unless $P(q)$ is constant by Proposition \ref{prop: polynomial equidistribution}.

\begin{theorem} \label{thm: limit formula}
	Let $(X, \mu, T)$ be an ergodic $\Q$-system with topological Abramov factors, and let $a \in \gen(\mu, \Phi)$.
	Let $f \in C(X)$ and $g \in L^2(\mu)$.
	Then for any nonconstant polynomial $P$,
	\begin{equation*}
		\lim_{N \to \infty} \frac{1}{|\Phi_N|} \sum_{q \in \Phi_N} f(T^qa) g \left( T^{P(q)}x \right) = \int_{X^2} f \otimes g~d\lambda^P_{(a,x)}
	\end{equation*}
	in $L^2(\mu)$.
\end{theorem}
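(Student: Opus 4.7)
The plan is to combine Theorem~\ref{thm: characteristic factor} (characteristic factor reduction) with Corollary~\ref{cor: lambda exists} (pointwise equidistribution in the Abramov and Kronecker factors), using bounded convergence on the finite space $(X,\mu)$ to upgrade pointwise convergence in $x$ to convergence in $L^2(\mu)$. Throughout, write $A_N(f,g)(x)$ for the ergodic average on the left-hand side and $L(f,g)(x):=\int f\otimes g\,d\lambda^P_{(a,x)}$ for the putative limit. A preliminary density reduction allows one to assume $g\in L^\infty(\mu)$: both $g\mapsto A_N(f,g)$ and $g\mapsto L(f,g)$ are bounded on $L^2(\mu)$ with norm at most $\|f\|_\infty$, where for $L(f,g)$ this follows from Cauchy--Schwarz together with the fact that the two marginals of $\tilde{\lambda}^P_{(\pi_d(a),\pi_1(x))}$ are $m_{A_d}$ and $m_Z$ respectively (the latter holds because $P$ is nonconstant, via Proposition~\ref{prop: polynomial equidistribution}).

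Fix $\varepsilon>0$. Theorem~\ref{thm: characteristic factor} furnishes a decomposition $f=f_s\circ\pi_d+f_u$ with $f_s\in C(A_d)$, $\|f_s\|_\infty\le\|f\|_\infty$ (via truncation), $\|\E{f_u}{\mA_d}\|_{L^1(\mu)}<\varepsilon$, and
\[
\limsup_{N\to\infty}\|A_N(f,g)-A_N(f_s\circ\pi_d,\E{g}{\mZ})\|_{L^2(\mu)}<\varepsilon.
\]
Since $\E{g}{\mZ}\in L^\infty(\mu)$ is $\mZ$-measurable, write it as $g_Z\circ\pi_1$ with $g_Z\in L^\infty(m_Z)$, and pick $g_1\in C(Z)$ with $\|g_1-g_Z\|_{L^2(m_Z)}<\varepsilon$ and $\|g_1\|_\infty\le\|g\|_\infty$; replacing $\E{g}{\mZ}$ by $g_1\circ\pi_1$ inside the average costs at most $\|f\|_\infty\varepsilon$ in $L^2(\mu)$.

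Now apply Corollary~\ref{cor: lambda exists} pointwise: for every $x\in X$,
\[
\frac{1}{|\Phi_N|}\sum_{q\in\Phi_N}f_s(T_d^q\pi_d(a))\,g_1(R^{P(q)}\pi_1(x))\longrightarrow L_0(x):=\int f_s\otimes g_1\,d\tilde{\lambda}^P_{(\pi_d(a),\pi_1(x))},
\]
and since the integrands are uniformly bounded by $\|f\|_\infty\|g\|_\infty$, the dominated convergence theorem upgrades this to $L^2(\mu)$ convergence. Unfolding the definition of $\lambda^P$ gives $L(f,g)(x)=\int\E{f}{A_d}\otimes\E{g}{Z}\,d\tilde{\lambda}^P_{(\pi_d(a),\pi_1(x))}$, so the difference $L(f,g)(x)-L_0(x)$ decomposes as $\int f_s\otimes(g_Z-g_1)\,d\tilde{\lambda}^P+\int\E{f_u}{A_d}\otimes g_Z\,d\tilde{\lambda}^P$. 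Using the Haar marginal identities to trivialize one variable in each integrand, both terms are bounded uniformly in $x$ by constants times $\varepsilon$ (namely $\|f\|_\infty\varepsilon$ and $\|g\|_\infty\varepsilon$, respectively). Summing all error estimates and letting $\varepsilon\to0$ yields the theorem.

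The main technical burden is the bookkeeping across three layers of approximation ($f\to f_s\circ\pi_d$; $g\to\E{g}{\mZ}$ via the characteristic factor reduction; $\E{g}{\mZ}\to g_1\circ\pi_1$ via continuous approximation on $Z$) while controlling all error terms in $L^2(\mu)$ uniformly in $N$. The key structural inputs enabling this are the Haar property of both marginals of $\tilde{\lambda}^P_{(\pi_d(a),\pi_1(x))}$ (from nonconstancy of $P$) and the continuity of the factor maps $\pi_d,\pi_1$ guaranteed by the topological Abramov factors hypothesis; the latter ensures that the approximate averages are continuous in $x$, so that $L_0$ is a measurable pointwise limit and the bounded convergence step is legal.
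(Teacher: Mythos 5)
Your proposal is correct and takes essentially the same route as the paper's proof: reduce to the structured parts via Theorem~\ref{thm: characteristic factor}, invoke Corollary~\ref{cor: lambda exists} on the continuous approximants, and control the error using the Haar marginals of $\tilde\lambda^P$. The only difference is that you make explicit (the preliminary $L^\infty$ reduction, the intermediate continuous approximation $g_1$ of $g_Z$, and the dominated-convergence upgrade) what the paper compresses into the remark that the identity ``holds pointwise for continuous $g_s$.''
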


\begin{proof}
	By scaling if necessary, we may assume $\norm{L^2(\mu)}{g} = 1$.
	Let $\eps > 0$.
	By Theorem \ref{thm: characteristic factor}, we may decompose $f = f_s \circ \pi_d + f_u$ and $g = g_s \circ \pi_1 + g_u$ such that $f_s \in C(A_d)$, $f_u \in C(X)$ with $\norm{L^1(\mu)}{\E{f_u}{\mA_d}} < \frac{\eps}{2}$, $g_s = \E{g}{Z}$, and
	\begin{equation*}
		\limsup_{N \to \infty} \norm{L^2(\mu)}{\frac{1}{|\Phi_N|} \sum_{q \in \Phi_N} \left( f(T^qa) \cdot T^{P(q)}g - f_s(T_d^q\pi_d(a)) \cdot T^{P(q)}(g_s \circ \pi_1) \right)} < \frac{\eps}{2}.
	\end{equation*}
	
	Now, by the definition of the measures $\tilde{\lambda}^P$, we have
	\begin{equation*}
		\lim_{N \to \infty} \frac{1}{|\Phi_N|} \sum_{q \in \Phi_N} f_s(T_d^q\pi_d(a)) \cdot g_s(T^{P(q)}x) = \int_{A_d \times Z} f_s \otimes g_s~d\tilde{\lambda}^P_{(\pi_d(a),\pi_1(x))}
	\end{equation*}
	in $L^2(\mu)$, since this identity holds pointwise for continuous $g_s$.
	From the definition of $\lambda^P$, we also have
	\begin{equation*}
		\int_{X^2} f \otimes g~d\lambda^P_{(a,x)} = \int_{A_d \times Z} \E{f}{A_d} \otimes g_s~d\tilde{\lambda}^P_{(\pi_d(a),\pi_1(x))},
	\end{equation*}
	so
	\begin{multline*}
		\limsup_{N \to \infty} \norm{L^2(\mu)}{\frac{1}{|\Phi_N|} \sum_{q \in \Phi_N} f(T^qa) g \left( T^{P(q)}x \right) - \int_{X^2} f \otimes g~d\lambda^P_{(a,x)}} \\
		 \le \frac{\eps}{2} + \norm{L^2(m_{A_d})}{\int_{A_d \times Z} (f_s - \E{f}{A_d}) \otimes g_s~d\tilde{\lambda}^P_{(\pi_d(a),\pi_1(x))}} \\
		 \le \frac{\eps}{2} + \norm{L^1(\mu)}{\E{f_u}{\mA_d}} < \eps.
	\end{multline*}
\end{proof}


\section{Progressive measures} \label{sec: progressive measure}

Motivated by the method in \cite{kmrr_FS}, we define a notion of progressive measures for detecting Erd\H{o}s--Furstenberg--S\'{a}rk\"{o}zy progressions.
For a compact metric space $Y$, we denote by $\mM(Y)$ the space of Borel probability measures on $Y$.

\begin{definition}
	Let $(X, \mu, T)$ be an ergodic $\Q$-system, and let $P(x) \in \Q[x]$.
	A probability measure $\tau \in \mM(X^2)$ is \emph{$P$-progressive from a point $a \in X$} if $\supp{\tau} \subseteq \overline{\EFS^P_X(a)}$.
\end{definition}

The goal of this section is to construct a progressive measure.
Let $(X, \mu, T)$ be an ergodic $\Q$-system with topological Abramov factors, and let $P$ be a polynomial of degree $d$.
For a point $u \in A_d$, we define a measure  $\tilde{\sigma}^P_u$ by
\begin{equation*}
	\tilde{\sigma}^P_u = \UClim_{q \in \Q} \delta_{T_d^q u} \times \delta_{R^{P(q)+q} \pi^d_1(u)}.
\end{equation*}
This measure is well-defined by Corollary \ref{cor: lambda exists}.
We then define $\sigma^P_x \in \mM(X^2)$ for $x \in X$ by
\begin{equation*}
	\int_{X \times X} f \otimes g~d\sigma^P_x = \int_{A_d \times Z} \E{f}{A_d} \otimes \E{g}{Z}~d\tilde{\sigma}^P_{\pi_d(x)}
\end{equation*}
for $f, g \in C(X)$.
Once again, we need to ensure that the marginals of $\tilde{\sigma}^P_{\pi_d(x)}$ are absolutely continuous so that $\sigma^P_x$ is well-defined.
Noting that $\tilde{\sigma}^P_{\pi_d(x)} = \tilde{\lambda}^Q_{(\pi_d(x),\pi_1(x))}$ for the polynomial $Q(x) = P(x) + x$, we see that $\sigma^P_x$ is well-defined whenver $P(x) + x$ is nonconstant.

The main result of this section is:

\begin{theorem} \label{thm: sigma is progressive}
	Let $(X, \mu, T)$ be an ergodic $\Q$-system with topological Abramov factors, and let $P(x) \in \Q[x]$ such that both $P(x)$ and $P(x) + x$ are nonconstant.
	If $a \in \gen(\mu, \Phi)$ for some F{\o}lner sequence $\Phi$, then $\sigma^P_a$ is $P$-progressive from $a$.
\end{theorem}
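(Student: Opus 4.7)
The plan is to prove the inclusion $\supp \sigma^P_a \subseteq \overline{\EFS^P_X(a)}$ by factoring through the set of diagonal orbit pairs
\[
\Omega_a := \bigl\{ (T^q a,\, T^{P(q)+q} a) : q \in \Q \bigr\} \subseteq X^2.
\]
Writing $Q(x) := P(x) + x$ (nonconstant by hypothesis), a direct check from the definitions shows that $\sigma^P_a$ coincides with $\lambda^Q_{(a,a)}$ in the notation of Section \ref{sec: characteristic factor}. The argument then splits into an equidistribution step ($\supp \sigma^P_a \subseteq \overline{\Omega_a}$) and a recurrence step ($\Omega_a \subseteq \EFS^P_X(a)$).

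For the equidistribution step, I would show that the empirical measures $\nu_N := \frac{1}{|\Phi_N|} \sum_{q \in \Phi_N} \delta_{(T^q a,\, T^{Q(q)} a)}$ converge weak-$*$ to $\sigma^P_a$. Applied to continuous $f, g \in C(X)$, this reduces to showing that $\frac{1}{|\Phi_N|} \sum_q f(T^q a)\, g(T^{Q(q)} a)$ converges to $\int f \otimes g \, d\sigma^P_a$. Theorem \ref{thm: characteristic factor} applied with the polynomial $Q$ decomposes $f = f_s \circ \pi_d + f_u$ with $f_s \in C(A_d)$, $f_u \in C(X)$, and $\norm{L^1(\mu)}{\E{f_u}{\mA_d}} < \eps$. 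The structured term $\frac{1}{|\Phi_N|} \sum f_s(T_d^q \pi_d(a)) \cdot \E{g}{\mZ}(R^{Q(q)} \pi_1(a))$ is the Ces\`{a}ro average of a bounded function along the polynomial orbit of $(\pi_d(a), \pi_1(a))$ in $A_d \times Z$, and it converges to $\int f_s \otimes \E{g}{\mZ}\, d\tilde{\sigma}^P_{\pi_d(a)}$ by Corollary \ref{cor: lambda exists}, after approximating $\E{g}{\mZ}$ by continuous functions on $Z$ (using the well-distribution of $(Q(q)\alpha)_q$ in $Z$ provided by Proposition \ref{prop: polynomial equidistribution}). The residual $f_u$-term at the specific point $a$ is handled by expanding $\E{g}{\mZ}$ as a series of eigenfunctions and applying Theorem \ref{thm: Abramov partially characteristic} to each polynomial-twisted average $\frac{1}{|\Phi_N|} \sum f_u(T^q a) \cdot e_\Q(Q(q)\alpha)$, yielding a pointwise-at-$a$ limsup bounded by $\norm{L^1(\mu)}{\E{f_u}{\mA_d}} < \eps$. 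Summing the eigenfunction contributions (truncating via $\norm{L^2(\mu)}{\E{g}{\mZ}} < \infty$) and letting $\eps \to 0$ delivers the desired convergence, whence $\supp \sigma^P_a \subseteq \overline{\Omega_a}$.

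For the recurrence step, write an orbit pair as $(T^q a,\, T^{P(q)+q} a) = (y,\, T^{P(q)} y)$ with $y = T^q a$. I seek distinct witness rationals $s_n = q + r_n$ satisfying $T^{s_n} a \to y$ and $T^{P(s_n)} y \to T^{P(q)} y$. Using $y = T^q a$ and continuity of each $T^r$, these reduce to the joint recurrence conditions $T^{r_n} a \to a$ and $T^{R_q(r_n)} a \to a$, where $R_q(r) := P(q+r) - P(q) \in \Q[r]$ has zero constant term and positive degree (since $P$ is nonconstant). Polynomial multiple recurrence at the generic point $a$---which follows from a polynomial Furstenberg--S\'{a}rk\"{o}zy-type ergodic theorem applied to a sufficiently small neighborhood of $a$ together with $a \in \gen(\mu, \Phi)$---produces infinitely many distinct $r_n$ satisfying both conditions simultaneously, so $(y, T^{P(q)} y) \in \EFS^P_X(a)$. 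Chaining the inclusions yields $\supp \sigma^P_a \subseteq \overline{\Omega_a} \subseteq \overline{\EFS^P_X(a)}$.

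The principal technical obstacle is the pointwise-at-$a$ convergence in the equidistribution step: Theorem \ref{thm: characteristic factor} supplies only $L^2(\mu)$-level control of the reduction to structured functions, which does not localize to the single generic point $a$. Overcoming this requires the pointwise-at-$a$ bound on polynomial-twisted averages from Theorem \ref{thm: Abramov partially characteristic} (which itself rests on the Wiener--Wintner theorem, Theorem \ref{thm: polynomial ww adelic}); this is precisely what upgrades the $L^2$-reduction to a genuine numerical limit at the specific generic point $a$.
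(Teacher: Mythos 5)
Your proposal takes a genuinely different route from the paper's, and both halves of it contain gaps that the paper's argument is specifically engineered to avoid.

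\textbf{The equidistribution step.} You want to show that the empirical measures $\nu_N = \frac{1}{|\Phi_N|}\sum_{q \in \Phi_N} \delta_{(T^q a,\, T^{Q(q)}a)}$ converge weak-$*$ to $\sigma^P_a$, which would require the numerical averages $\frac{1}{|\Phi_N|}\sum_q f(T^q a)\,g(T^{Q(q)}a)$ to converge for continuous $f,g$. The decisive obstacle is in the $g$-slot, not the $f$-slot. Theorem~\ref{thm: characteristic factor} and Proposition~\ref{prop: Kronecker partially characteristic} control the error $g(T^{Q(q)}\cdot) - \E{g}{\mZ}(R^{Q(q)}\pi_1(\cdot))$ only in $L^2(\mu)$ as a function of the second argument; they say nothing about the value of this error at the single point~$a$. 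Your invocation of Theorem~\ref{thm: Abramov partially characteristic} addresses only the $f_u$-part, i.e.\ the evaluation of the continuous function $f_u$ along $(T^q a)_q$, which is indeed a pointwise-at-$a$ statement. It does not, and cannot, upgrade Proposition~\ref{prop: Kronecker partially characteristic} to a pointwise bound on $g(T^{Q(q)}a) - \E{g}{\mZ}(R^{Q(q)}\pi_1(a))$. If $X$ has a nontrivial weakly mixing component over its Kronecker factor, the sequence $T^{Q(q)}a$ need not equidistribute in that component for the specific point $a$, so the limit defining $\nu_N$ may not even exist, let alone equal $\sigma^P_a$.

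\textbf{The recurrence step.} The inclusion $\Omega_a \subseteq \EFS^P_X(a)$ requires, for each $q$, infinitely many distinct $r_n$ with $T^{r_n}a \to a$ \emph{and} $T^{R_q(r_n)}a \to a$ simultaneously, where $R_q(r) = P(q+r)-P(q)$. This is a simultaneous topological polynomial recurrence statement for the specific point~$a$. Genericity of $a$ does not supply it: genericity controls linear Birkhoff averages of continuous functions, while what you need is that the polynomial orbit $(T^{R_q(r)}a)_r$ returns near $a$ within the already-sparse return set $\{r : T^r a \text{ near } a\}$. The measure-theoretic Furstenberg--S\'{a}rk\"{o}zy theorem gives many returns for sets of positive measure, but a positive-measure return set gives no information about one orbit at one point.

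\textbf{What the paper does instead.} The paper's proof of Theorem~\ref{thm: sigma is progressive} circumvents both problems by never attempting to identify $\supp\sigma^P_a$ with an orbit closure and never requiring pointwise recurrence. It applies the criterion of Proposition~\ref{prop: progressive criterion}, whose hypothesis is an integrated positivity condition
\begin{equation*}
\limsup_{N\to\infty} \frac{1}{|\Phi_N|}\sum_{q\in\Phi_N} f(T^q a)\int_{X^2}(f\cdot T^{P(q)}g)\otimes g~d\sigma^P_a > 0,
\end{equation*}
in which the only pointwise-at-$a$ evaluation is that of the continuous function $f$. The $g$-slot and the $T^{P(q)}g$-slot both appear integrated against $\sigma^P_a$, so the $L^2$-level control of Theorem~\ref{thm: limit formula} suffices (using that the first marginal of $\sigma^P_a$ is $\mu$). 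The positivity is then supplied quantitatively by Proposition~\ref{prop: uniform bound on factor}, a uniform Ajtai--Szemer\'{e}di bound over a compact abelian group. The resulting $\EFS^P$-progression $(a,x_1,x_2)$ has $x_1,x_2$ produced by the nested-open-set construction of Proposition~\ref{prop: progressive criterion sets}; they are not orbit points $(T^q a, T^{Q(q)}a)$, which is precisely why no pointwise recurrence at $a$ is needed.

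Your strategy would be valid in the special case where $X$ is itself an Abramov system (so that everything is uniquely ergodic and continuous and polynomial orbits equidistribute pointwise), but it does not extend to general ergodic systems, which is where the pointwise/integral distinction becomes essential.
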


For the remainder of this section, we fix an ergodic $\Q$-system $(X, \mu, T)$ with topological Abramov factors, a F{\o}lner sequence $\Phi$, a generic point $a \in \gen(\mu,\Phi)$, and a polynomial $P(x) \in \Q[x]$ such that $P(x)$ and $P(x)+x$ are nonconstant.

We have the following criterion for a measure to be progressive:

\begin{proposition} \label{prop: progressive criterion sets}
	Let $\tau \in \mM(X^2)$ and $a \in X$.
	Suppose that for any open sets $U, V \subseteq X$ with $\tau(U \times V) > 0$, there are infinitely many $q \in \Q$ such that $T^q a \in U$ and
	\begin{equation*}
		\tau \left( (U \times V) \cap (T^{-P(q)}V \times X) \right) > 0.
	\end{equation*}
	Then $\tau$ is $P$-progressive from $a$.
\end{proposition}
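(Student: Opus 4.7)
The plan is to reduce the assertion $\supp{\tau} \subseteq \overline{\EFS^P_X(a)}$ to a statement about open rectangles: it suffices to show that whenever $U, V \subseteq X$ are open with $\tau(U \times V) > 0$, the closed rectangle $\overline{U} \times \overline{V}$ contains a point of $\EFS^P_X(a)$. Indeed, given any $(y_1, y_2) \in \supp{\tau}$ and any $\eps > 0$, choosing $U, V$ of diameter less than $\eps$ around $y_1, y_2$ yields $\tau(U \times V) > 0$, and the resulting Erd\H{o}s--Furstenberg--S\'{a}rk\"{o}zy point is within $\eps$ of $(y_1, y_2)$.

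To exhibit such a point, I would carry out a nested-neighborhood construction. Starting from $U_0 = U$ and $V_0 = V$, I would inductively choose open sets $U_n \subseteq U_{n-1}$, $V_n \subseteq V_{n-1}$ with $\diam(U_n), \diam(V_n) < 1/n$, distinct rationals $q_n$, and points $(z_1^{(n)}, z_2^{(n)}) \in \supp{\tau}$ so that at each stage:
\begin{enumerate}[(i)]
\item $\tau(U_n \times V_n) > 0$;
\item $T^{q_n} a \in U_{n-1}$;
\item $U_n \subseteq U_{n-1} \cap T^{-P(q_n)} V_{n-1}$.
\end{enumerate}
The inductive step invokes the hypothesis on $(U_{n-1}, V_{n-1})$ to produce infinitely many $q$ satisfying (ii) and $\tau\bigl((U_{n-1} \cap T^{-P(q)} V_{n-1}) \times V_{n-1}\bigr) > 0$; pick one, $q_n$, outside the finite set $\{q_1, \ldots, q_{n-1}\}$. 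Then select a point $(z_1^{(n)}, z_2^{(n)})$ in the support of $\tau$ lying inside $(U_{n-1} \cap T^{-P(q_n)} V_{n-1}) \times V_{n-1}$, and let $U_n, V_n$ be small enough open balls around these points, contained in that rectangle, to ensure the diameter bounds. The positivity (i) for the next step is automatic since the new rectangle contains a point of $\supp{\tau}$.

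For the extraction step, condition (ii) and the nesting $U_m \subseteq U_{n-1}$ for $m \ge n$ force $(T^{q_n} a)_{n}$ to be Cauchy, so it converges to some $x_1 \in \bigcap_n \overline{U_n}$. By (iii) and continuity of $T^{P(q_n)}$, we have $T^{P(q_n)} x_1 \in T^{P(q_n)} \overline{U_n} \subseteq \overline{V_{n-1}}$, so $(T^{P(q_n)} x_1)_n$ is also Cauchy with limit $x_2 \in \bigcap_n \overline{V_n}$. Since the $q_n$ are distinct, $(x_1, x_2) \in \EFS^P_X(a)$, and by construction $(x_1, x_2) \in \overline{U} \times \overline{V}$, completing the argument.

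The only subtle point is arranging, at each inductive step, that the refined rectangle (shrunk both by the constraint $U_n \subseteq T^{-P(q_n)} V_{n-1}$ and by shrinking to small diameter) still has positive $\tau$-mass; this is handled by centering the new rectangle on a point of $\supp{\tau}$ inside the larger refined rectangle, which is where the hypothesis is used essentially. Everything else is topological bookkeeping using that $X$ is compact metric and $T^q$ is a homeomorphism.
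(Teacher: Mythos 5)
Your proposal is correct and follows essentially the same nested-rectangle construction as the paper's proof, with only cosmetic differences: you shrink diameters by $1/n$ rather than halving, you aim for the closed rectangle $\overline{U}\times\overline{V}$ rather than insisting on $\overline{U}_n\subseteq U_{n-1}$ to land in the open rectangle (which, as you correctly note, still suffices), and you locate the next rectangle by centering on a point of $\supp\tau$ rather than invoking regularity of $\tau$ — two equivalent ways of saying the same thing.
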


\begin{proof}
	A closely related result is proved in \cite[Proposition 3.2]{kmrr_FS}.
	We adapt the argument to our setting of polynomial progressions.
	
	In order to show that $\supp{\tau} \subseteq \overline{\EFS^P_X(a)}$, it suffices to prove that if $U, V \subseteq X$ are open sets and $\tau(U \times V) > 0$, then $\EFS^P_X(a) \cap (U \times V) \ne \es$.
	Let $U, V \subseteq X$ be open set with $\tau(U \times V) > 0$.
	We will construct sequences $(U_n)_{n \ge 0}$ and $(V_n)_{n \ge 0}$ of open subsets of $X$ with $U_0 = U$ and $V_0 = V$ and a sequence $(s_n)_{n \in \N}$ of distinct elements of $\Q$ such that the following properties hold for each $n \in \N$:
	\begin{enumerate}[(1)]
		\item	$\overline{U}_n \subseteq U_{n-1}$ and $\overline{V}_n \subseteq V_{n-1}$,
		\item	$\diam{U_n} \le \frac{1}{2} \diam{U_{n-1}}$ and $\diam{V_n} \le \frac{1}{2} \diam{V_{n-1}}$,
		\item	$\tau(U_n \times V_n) > 0$,
		\item	$T^{s_n}a \in U_{n-1}$, and
		\item	$U_n \subseteq T^{-P(s_n)}V_{n-1}$.
	\end{enumerate}
	
	First we handle $n = 1$.
	Since $\tau(U_0 \times V_0) = \tau(U \times V) > 0$, we use the hypothesis for $\tau$ to find $s_1 \in \Q$ such that $T^{s_1}a \in U_0$ and
	\begin{equation*}
		\tau \left( (U_0 \times V_0) \cap (T^{-P(s_1)}V_0 \times X) \right) > 0.
	\end{equation*}
	We can then use regularity of the measure $\tau$ to find open sets $U_1, V_1 \subseteq X$ such that $\overline{U}_1 \subseteq U_0 \cap T^{-P(s_1)}V_0$ and $\overline{V}_1 \subseteq V_0$ with $\diam{U_1} \le \frac{1}{2} \diam{U_0}$, $\diam{V_1} \le \frac{1}{2} \diam{V_0}$, and $\tau(U_1 \times V_1) > 0$.
	
	Suppose now that we have constructed $U_1, \ldots, U_n$, $V_1, \ldots, V_n$, and $s_1, \ldots s_n$ for some $n \in \N$.
	By the induction hypothesis, $\tau(U_n \times V_n) > 0$, so by assumption, there exists $s_{n+1} \in \Q \setminus \{s_1, \ldots, s_n\}$ such that $T^{s_{n+1}}a \in U_n$ and
	\begin{equation*}
		\tau \left( (U_n \times V_n) \cap (T^{-P(s_{n+1})}V_n \times X) \right) > 0.
	\end{equation*}
	As in the base case, we may therefore find open sets $U_{n+1}, V_{n+1} \subseteq X$ such that $\overline{U}_{n+1} \subseteq U_n \cap T^{-P(s_{n+1})}V_n$ and $\overline{V}_{n+1} \subseteq V_n$ with $\diam{U_{n+1}} \le \frac{1}{2} \diam{U_n}$, $\diam{V_{n+1}} \le \frac{1}{2} \diam{V_n}$, and $\tau(U_{n+1} \times V_{n+1}) > 0$. \\
	
	By (1) and (2), the sets $\bigcap_{n \ge 0} U_n$ and $\bigcap_{n \ge 0} V_n$ are singletons.
	Let $x_1$ be the unique element of $\bigcap_{n \ge 0} U_n$ and $x_2$ the unique element of $\bigcap_{n \ge 0} V_n$.
	Then by (4), $T^{s_n}a \to x_1$ as $n \to \infty$.
	Moreover, by (5),
	\begin{equation*}
		T^{P(s_n)}x_1 \in V_{n-1}
	\end{equation*}
	for every $n \in \N$, so $T^{P(s_n)}x_1 \to x_2$ as $n \to \infty$.
	That is, $(a, x_1, x_2) \in \EFS^P_X$.
	Since $x_1 \in U_0 = U$ and $x_2 \in V_0 = V$, this proves $\EFS^P_X(a) \cap (U \times V) \ne \es$ as desired.
\end{proof}

The following variant of Proposition \ref{prop: progressive criterion sets} will be more directly applicable with the methods we employ.

\begin{proposition} \label{prop: progressive criterion}
	Let $\tau \in \mM(X^2)$ and $a \in X$.
	Let $\Phi$ be a F{\o}lner sequence in $\Q$.
	Suppose the following property holds: for all $f, g \in C(X)$, if $f, g \ge 0$ and $\int_{X^2} f \otimes g~d\tau > 0$, then
	\begin{equation*}
		\limsup_{N \to \infty} \frac{1}{|\Phi_N|} \sum_{q \in \Phi_N} f(T^q a) \int_{X^2} (f \cdot T^{P(q)} g) \otimes g~d\tau > 0.
	\end{equation*}
	Then $\tau$ is $P$-progressive from $a$.
\end{proposition}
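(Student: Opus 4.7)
The plan is to derive this proposition from Proposition \ref{prop: progressive criterion sets} by a standard approximation argument that converts the set-level condition in the latter into the function-level hypothesis assumed here. So I start with open sets $U, V \subseteq X$ satisfying $\tau(U \times V) > 0$ and aim to exhibit infinitely many $q \in \Q$ for which $T^q a \in U$ and $\tau\bigl((U \times V) \cap (T^{-P(q)}V \times X)\bigr) > 0$.

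The first step is to regularize. Since $\tau$ is a Radon probability measure on the compact metric space $X^2$, I can choose a compact set $K \subseteq U \times V$ with $\tau(K) > 0$. Its projections $\pi_1(K)$ and $\pi_2(K)$ are compact subsets of $U$ and $V$ respectively, so by Urysohn's lemma there exist continuous functions $f, g : X \to [0,1]$ with $f \equiv 1$ on $\pi_1(K)$, $\supp(f) \subseteq U$, $g \equiv 1$ on $\pi_2(K)$, and $\supp(g) \subseteq V$. Then $f \otimes g \ge \ind_K$, so $\int_{X^2} f \otimes g \, d\tau \ge \tau(K) > 0$, and the hypothesis of the proposition applies.

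The second step is to unpack what the hypothesis gives us. We obtain
\begin{equation*}
\limsup_{N \to \infty} \frac{1}{|\Phi_N|} \sum_{q \in \Phi_N} f(T^q a) \int_{X^2} (f \cdot T^{P(q)} g) \otimes g \, d\tau > 0,
\end{equation*}
which forces the set $Q$ of $q \in \Q$ with $f(T^q a) > 0$ and $\int_{X^2} (f \cdot T^{P(q)} g) \otimes g \, d\tau > 0$ to be infinite (in fact, to have positive upper density along $\Phi$). For each such $q$, on the one hand $f(T^q a) > 0$ and $\supp(f) \subseteq U$ give $T^q a \in U$. On the other hand, the integrand $f(x_1) \cdot g(T^{P(q)} x_1) \cdot g(x_2)$ is positive on a set of positive $\tau$-measure, and this set is contained in $\{f > 0\} \times \{g > 0\} \cap (T^{-P(q)} \{g > 0\} \times X) \subseteq (U \times V) \cap (T^{-P(q)}V \times X)$, yielding the required positivity $\tau\bigl((U \times V) \cap (T^{-P(q)}V \times X)\bigr) > 0$.

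Having produced infinitely many such $q$, I can invoke Proposition \ref{prop: progressive criterion sets} to conclude that $\tau$ is $P$-progressive from $a$. I do not expect any real obstacle here: the passage from compact sets to continuous bump functions is routine, and the support-tracking chain above is the only place requiring care, specifically the observation that the three positive factors in the integrand carve out a subset of $(U \cap T^{-P(q)} V) \times V$. The substantive content of the proposition lies entirely in Proposition \ref{prop: progressive criterion sets}; this result is a convenient functional-analytic repackaging designed to interface with the limit-formula technology from Theorem \ref{thm: limit formula}.
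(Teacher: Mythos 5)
Your argument is correct and takes essentially the same route as the paper: reduce to Proposition \ref{prop: progressive criterion sets} by constructing continuous bump functions $f, g$ supported in $U, V$ with $\int f \otimes g \, d\tau > 0$, apply the hypothesis to get infinitely many $q$ with a positive summand, and then track supports to verify the set-level condition. The only difference is that you spell out the construction of $f$ and $g$ (inner regularity plus Urysohn), which the paper leaves implicit.
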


\begin{proof}
	We reduce to Proposition \ref{prop: progressive criterion sets}.
	Let $U, V \subseteq X$ be open sets with $\tau(U \times V) > 0$.
	Then there exist continuous functions $f, g : X \to [0,1]$ such that $\supp{f} \subseteq U$, $\supp{g} \subseteq V$, and $\int_{X^2} f \otimes g~d\tau > 0$.
	By assumption, we have
	\begin{equation*}
		\limsup_{N \to \infty} \frac{1}{|\Phi_N|} \sum_{q \in \Phi_N} f(T^q a) \int_{X^2} (f \cdot T^{P(q)} g) \otimes g~d\tau > 0.
	\end{equation*}
	In particular,
	\begin{equation} \label{eq: positive with recurrence}
		f(T^q a) \int_{X^2} (f \cdot T^{P(q)} g) \otimes g~d\tau > 0.
	\end{equation}
	for infinitely many $q \in \Q$.
	Suppose $q$ satisfies \eqref{eq: positive with recurrence}.
	Then $f(T^qa) > 0$, so $T^qa \in U$.
	Moreover,
	\begin{equation*}
		\tau \left( (U \times V) \cap (T^{-P(q)}V \times X) \right) = \int_{X^2} (\ind_U \cdot T^{P(q)} \ind_V) \otimes \ind_V~d\tau \ge \int_{X^2} (f \cdot T^{P(q)} g) \otimes g~d\tau > 0.
	\end{equation*}
	Thus, $\tau$ satisfies the hypothesis of Proposition \ref{prop: progressive criterion sets}, so $\tau$ is $P$-progressive from $a$.
\end{proof}

The main combinatorial input required to prove Theorem \ref{thm: sigma is progressive} is the following version of the Ajtai--Szemer\'{e}di theorem in compact abelian groups:

\begin{theorem} \label{thm: uniform Ajtai Szemeredi}
	Let $G$ be a compact abelian group.
	For any $\eps > 0$, there exists $\delta > 0$ with the following property: if $F : G^2 \to [0,1]$ is a measurable function such that $\int_{G^2} F~dm_{G^2} \ge \eps$, then
	\begin{equation*}
		\int_{G^3} F(x,y) F(x+t,y) F(x,y+t)~dx~dy~dt \ge \delta.
	\end{equation*}
\end{theorem}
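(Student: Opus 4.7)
The plan is to reduce to an indicator-function setting and then realize the triple integral as a multiple-recurrence average for two commuting translation actions of $G$ on $G^2$.

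By a standard thresholding argument, it suffices to treat $F = \ind_A$ for a measurable set $A \subseteq G^2$. Given $F : G^2 \to [0,1]$ with $\int_{G^2} F \, dm_{G^2} \ge \eps$, the superlevel set $A = \{F \ge \eps/2\}$ satisfies $m_{G^2}(A) \ge \eps/2$ by Markov's inequality, and $F \ge (\eps/2)\ind_A$ pointwise, so the cubic integral for $F$ is bounded below by $(\eps/2)^3$ times the analogous corners integral $I(A)$ for $\ind_A$.

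Next, I would reformulate $I(A)$ dynamically. Setting $T_1^t(x,y) = (x+t,y)$ and $T_2^t(x,y) = (x,y+t)$ gives two commuting measure-preserving $G$-actions on $(G^2, m_{G^2})$, and
\begin{equation*}
	I(A) = \int_G m_{G^2}\!\left(A \cap T_1^{-t}A \cap T_2^{-t}A\right) dt.
\end{equation*}
Since compact groups are amenable, the Furstenberg--Katznelson multiple recurrence theorem for two commuting measure-preserving actions of an amenable group yields positivity of this integral. In the present setting the characteristic-factor analysis simplifies considerably: the ambient system $(G^2, m_{G^2})$ is itself a translation action on a compact abelian group, so the relevant Kronecker factor is already the whole system, and one can perform the standard Koopman--von~Neumann / Fourier decomposition of $\ind_A$ against characters of $G^2$ directly. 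After isolating the diagonal term $m_{G^2}(A)^3 \ge (\eps/2)^3$ from the Fourier expansion of $I(A)$, an application of Plancherel and the Cauchy--Schwarz inequality controls the off-diagonal terms in terms of the remaining Fourier mass; in the Fourier-uniform case this already gives the bound, and otherwise a standard density-increment step passed to a Bohr subgroup (or equivalently, to a quotient by the annihilator of a large character) closes the argument.

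The step I expect to be the main obstacle is ensuring that the resulting bound $\delta$ depends only on $\eps$ and not on the compact abelian group $G$, particularly through groups of arbitrarily large rank such as products $\T^k$ or $\prod_p \Z_p$. I plan to handle this either by bookkeeping through the Fourier/density-increment argument to produce an explicit polynomial dependence $\delta \gtrsim \eps^C$, or by a compactness/ultraproduct argument: if no uniform bound existed, we would extract a sequence $(G_n, A_n)$ with $m_{G_n^2}(A_n) \ge \eps$ yet vanishing corners counts, and the limiting measure-preserving system would contradict the qualitative recurrence theorem applied to the ultralimit of $A_n$.
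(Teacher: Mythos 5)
Your thresholding step is essentially identical to the paper's: pass to the superlevel set $E = \{F \ge \eps/2\}$, note $m_{G^2}(E) \ge \eps/2$ by Markov, and bound the functional integral below by $(\eps/2)^3$ times the corners count of $E$. That reduction is correct. But the paper then simply invokes a known uniform corners theorem for compact abelian groups (it cites Berger's popular-differences theorem, \cite[Theorem 1.1]{berger}), which already provides a lower bound $ASz(\eps)$ depending on $\eps$ alone; the entire difficulty of the statement is pushed into that citation. Your proposal instead tries to reprove the uniform corners theorem from scratch, and this is where there is a genuine gap.

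Specifically, the Fourier-analytic sketch does not close. The corners count is \emph{not} controlled by the sup of the two-dimensional Fourier coefficients of $\ind_A - m(A)$: after writing $F = m(A) + f$ and expanding, the error terms in
\begin{equation*}
\int_{G^3} F(x,y)F(x+t,y)F(x,y+t)\,dx\,dy\,dt
\end{equation*}
include cross terms such as $\int f(x,y)\,m(A)\,f(x,y+t)$, which reduce to marginals of $f$ and are not small merely because $\widehat{f}$ is small. The trilinear form is genuinely controlled by a box norm $\|f\|_{\Box^2(G\times G)}$, not by a linear $U^2$-type quantity, so ``the Kronecker factor is the whole system, hence Fourier suffices'' is a non sequitur: the obstruction is not higher pronilfactors but the bipartite/box structure, which persists even in a Kronecker system. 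Consequently isolating the diagonal term $m(A)^3$ and applying Plancherel plus Cauchy--Schwarz does not dominate the remainder, and the density-increment step you gesture at is not the standard Roth-type increment --- it is the substantially more delicate Ajtai--Szemer\'edi/Shkredov iteration on products of Bohr sets. Your alternative compactness/ultraproduct route is plausible in spirit, but as written it leaves open what group acts on the Loeb limit, why Furstenberg--Katznelson applies to it, and why the Loeb integral over the ultraproduct group coincides with the relevant F{\o}lner average; these points need to be addressed before the contradiction is valid. The cleanest fix is to do what the paper does: cite a quantitative corners theorem for compact abelian groups with a bound depending only on the density (Berger's result, or alternatively a triangle-removal-based proof), which makes the whole argument two lines beyond your thresholding reduction.
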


\begin{proof}
	Let us first prove a corresponding statement for sets: for any $\eps > 0$, there exists a positive constant $ASz(\eps) > 0$ such that if $E \subseteq G^2$ is a measurable subset of measure at least $\eps$, then the set
	\begin{equation*}
		C(E) = \{(x,y,t) \in G^3 : \{(x,y), (x+t,y), (x,y+t)\} \subseteq E\}
	\end{equation*}
	has measure at least $ASz(\eps)$.
	(The notation $ASz$ stands for ``Ajtai--Szemer\'{e}di'' and $C$ stands for ``corners.'')
	This is essentially a statement of the Ajtai--Szemer\'{e}di theorem \cite{asz} and can be proved using standard regularity methods.
	For a reference, the exact result claimed here follows from the much stronger popular difference variant in \cite[Theorem 1.1]{berger}.
	
	Now we deduce the functional statement.
	Let $F : G^2 \to [0,1]$ be a measurable function with $\int_{G^2} F~d\mu \ge \eps$.
	Let $E = \left\{ (x,y) : F(x,y) \ge \frac{\eps}{2} \right\}$.
	Then $m_{G^2}(E) \ge \frac{\eps}{2}$ by Markov's inequality:
	\begin{equation*}
		m_{G^2}{(G^2 \setminus E)} \le \frac{1}{1 - \frac{\eps}{2}} \int_{G^2} (1 - F)~dm_{G^2} = \frac{1 - \eps}{1 - \frac{\eps}{2}} = 1 - \frac{\frac{\eps}{2}}{1 - \frac{\eps}{2}} \le 1 - \frac{\eps}{2}.
	\end{equation*}
	Therefore,
	\begin{align*}
		\int_{G^3} F(x,y) F(x+t,y) F(x,y+t)~dx~dy~dt & \ge \left( \frac{\eps}{2} \right)^3 \int_{G^3} \ind_E(x,y) \ind_E(x+t,y) \ind_E(x,y+t)~dx~dy~dt \\
		 & = \left( \frac{\eps}{2} \right)^3 \cdot m_{G^3}(C(E)) \\
		 & \ge \left( \frac{\eps}{2} \right)^3 ASz \left( \frac{\eps}{2} \right).
	\end{align*}
	Thus, we can take $\delta = \left( \frac{\eps}{2} \right)^3 ASz \left( \frac{\eps}{2} \right)$.
\end{proof}

\begin{proposition} \label{prop: uniform bound on factor}
	For any $\eps > 0$, there exists $\delta > 0$ such that the following holds: if $f : A_d \to [0,1]$ and $g : Z \to [0,1]$ are measurable functions, $b \in A_d$, and
	\begin{equation*}
		\int_{A_d \times Z} f \otimes g~d\tilde{\sigma}^P_b \ge \eps,
	\end{equation*}
	then
	\begin{equation*}
		\int_{A_d \times Z} f(u) g(z) \left( \int_{A_d \times Z} f \otimes g~d\tilde{\lambda}^P_{(b, \pi^d_1(u))} \right)~d\tilde{\sigma}^P_b(u,z) \ge \delta.
	\end{equation*}
\end{proposition}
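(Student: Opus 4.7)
I would reduce this to an application of the functional Ajtai--Szemer\'{e}di theorem (Theorem \ref{thm: uniform Ajtai Szemeredi}). The first step uses two structural identities: $\tilde{\sigma}^P_b = \xi_* \tilde{\lambda}^P_{(b, 0)}$, where $\xi(u, z) = (u, u_1 + z)$, and $\tilde{\lambda}^P_{(b, y)}$ is simply the translate of $\tilde{\lambda}^P_{(b, 0)}$ by $(0, y)$ in the second coordinate. Substituting these into the statement, the hypothesis becomes
\begin{equation*}
    \int f(u)\, g(u_1 + z)~d\tilde{\lambda}^P_{(b, 0)}(u, z) \ge \eps,
\end{equation*}
while the target unfolds as the double integral
\begin{equation*}
    \iint f(u)\, f(u')\, g(u_1 + z)\, g(u_1 + z')~d\tilde{\lambda}^P_{(b, 0)}(u, z)~d\tilde{\lambda}^P_{(b, 0)}(u', z'),
\end{equation*}
in which the two independent draws $(u, z)$ and $(u', z')$ are coupled only through the coordinate $u_1$ of the first draw.

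Next, I would invoke Theorem \ref{thm: Abramov structure} to work in the $V^d$ model for $A_d$ (so that $u_1 = \pi^d_1(u)$ is the first coordinate). The orbit $(T_d^q b, P(q)\alpha)$ is polynomial in $q$ with coefficients in $V$, so by a natural extension of Proposition \ref{prop: polynomial equidistribution} to polynomial orbits with vector-valued coefficients, $\tilde{\lambda}^P_{(b, 0)}$ is the Haar measure on (a translate of) a closed $\Q$-subgroup $\Gamma \subseteq V^{d+1}$. Performing the change of variables $x = u_1$, $y = u_1 + z$, $t = u_1 - u'_1$, $y' = u_1 + z'$, and integrating out the auxiliary coordinates $u_2, \ldots, u_d$ and $u'_2, \ldots, u'_d$ against the corresponding conditional measures, should recast the target as a corners integral
\begin{equation*}
    \int_G F(x, y)\, F(x + t, y)\, F(x, y + t)~dx~dy~dt
\end{equation*}
on a suitable compact abelian group $G$, for a single kernel $F : G \to [0, 1]$ built from $f$ and $g$ with $\int_G F\, dm_G \ge \eps$. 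An intermediate Cauchy--Schwarz step may be needed in order to collapse the four functions appearing in the double integral into three copies of the same function $F$. Theorem \ref{thm: uniform Ajtai Szemeredi} applied in $G$ then delivers the required $\delta = \delta(\eps) > 0$.

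The main obstacle is the reduction to the exact corners form: one must carefully track the polynomial structure of the orbit (dictated by Theorem \ref{thm: Abramov structure}) to ensure the substituted variables $(x, y, t)$ genuinely decouple into the configuration required by Ajtai--Szemer\'{e}di. Both nonconstancy assumptions on $P$ enter here: nonconstancy of $P$ controls the second marginal of $\tilde{\lambda}^P_{(b, 0)}$ via Proposition \ref{prop: polynomial equidistribution}, while nonconstancy of $P + x$ plays the same role for $\tilde{\sigma}^P_b$. Handling the case where $\Gamma$ is a proper subgroup of $V^{d+1}$ (coming from algebraic relations between $\alpha, b_1, \ldots, b_d$) is where the uniformity of Theorem \ref{thm: uniform Ajtai Szemeredi} across arbitrary compact abelian groups is essential; in the generic case $\Gamma = V^{d+1}$, a direct computation already yields $\delta \ge \eps^2$ without needing Ajtai--Szemer\'{e}di.
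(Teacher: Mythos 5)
Your identification of the key ingredients is correct and matches the paper: the functional Ajtai--Szemer\'{e}di theorem (Theorem \ref{thm: uniform Ajtai Szemeredi}), the structural identities relating $\tilde{\sigma}^P_b$ to $\tilde{\lambda}^P$, the passage to the $V^d$ model from Theorem \ref{thm: Abramov structure}, and the use of Proposition \ref{prop: polynomial equidistribution} to realize the orbit-closure measures as Haar measures on a closed subgroup. Your aside that the fully generic case gives $\delta \ge \eps^2$ directly is also correct; the uniform Ajtai--Szemer\'{e}di bound is only needed because of the degenerate subgroups.

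The gap is in the reduction to a corners count, and your ``Cauchy--Schwarz'' hedge is a wrong turn: Cauchy--Schwarz produces upper bounds, but you need a \emph{lower} bound on the target. What the paper does instead is (i) parametrize the orbit closure by the tuple of polynomial coefficients $\bm{x} = (\bm{x}^1, \ldots, \bm{x}^d) \in H^d$, with $H = \overline{\Q\cdot(\alpha, v_1, \ldots, v_{d-1})} \subseteq V^d$, rather than by your raw variables $(u_1, z, u'_1, z')$; and (ii) exploit positivity. Concretely, the target unfolds as $\int \varphi(\bm{x})\psi(\bm{x},\bm{x}^1)\varphi(\bm{y})\psi(\bm{y},\bm{x}^1)\,d\bm{x}\,d\bm{y}$ where $\varphi$ encodes $f$ and $\psi$ encodes $g$; after the substitution $\bm{y}^1 = \bm{x}^1 + (1+c_1^{-1})\bm{t}^1$, $\bm{y}^j = \bm{x}^j + \bm{t}^j$ ($j \ge 2$), one has $\psi(\bm{y},\bm{x}^1) = \psi(\bm{x}+\bm{t},\bm{x}^1+\bm{t}^1)$, and then simply multiplying in the two missing factors $\varphi(\bm{x}+\bm{t}), \psi(\bm{y},\bm{y}^1) \in [0,1]$ shows the integrand is $\ge \xi(\bm{x})\xi(\bm{x}+\bm{t})\xi(\bm{y})$ with $\xi = \varphi\psi$, which is precisely a corners count on $G = H^d$. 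Your proposed change $x=u_1,\ y=u_1+z,\ t=u_1-u'_1,\ y'=u_1+z'$ leaves four essentially free variables and the pattern $\tilde f(x)\tilde f(x-t)g(y)g(y')$, in which the four functions do not align into three evaluations at corner points; without moving to the coefficient coordinates and applying the positivity trick, the corners structure does not appear.
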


\begin{proof}
	We claim that it suffices to prove the proposition for continuous functions $f$ and $g$.
	Indeed, once we have the proposition for continuous functions, we may approximate measurable $f$ and $g$ by continuous functions in $L^2$ to deduce the general case.
	
	We assume from now on that $f$ and $g$ are continuous.
	Expanding the definition of $\tilde{\sigma}^P_b$, we have
	\begin{equation*}
		\int_{A_d \times Z} f \otimes g~d\tilde{\sigma}^P_b = \UClim_{q \in \Q} f \left( T_d^q b \right) \cdot g \left( R^{P(q)+q} \pi^d_1(b) \right).
	\end{equation*}
	In order to compute this limit, we work with the more convenient extension provided by Theorem \ref{thm: Abramov structure}.
	Let $V$ be a compact $\Q$-vector space and $\alpha \in V$ such that $(A_d, m_{A_d}, T_d)$ is a topological factor of $(V^d, m_V^d, S)$, where
	\begin{equation*}
		S^q(v_1, \ldots, v_d) = \left( v_1 + q\alpha, v_2 + qv_1 + \binom{q}{2} \alpha, \ldots, v_d + q v_{d-1} + \ldots + \binom{q}{d-1} v_1 + \binom{q}{d} \alpha \right).
	\end{equation*}
	Let $\rho : V^d \to A_d$ be the factor map, and let $\bm{v} = (v_1, \ldots, v_d) \in V^d$ such that $\rho(\bm{v}) = b$.
	Expand $P(x) = \sum_{j=0}^d c_j \binom{x}{j}$.
	Then we can write the point $\left( S^q \bm{v}, v_1 + (P(q) + q)\alpha \right)$ as
	\begin{equation*}
		(\bm{v}, v_1 + c_0\alpha) + q \left( \alpha, v_1, \ldots, v_{d-1}, (c_1 + 1)\alpha \right) + \binom{q}{2} \left( 0, \alpha, v_1, \ldots, v_{d-2}, c_2 \alpha \right) + \ldots + \binom{q}{d} (0, \ldots, \alpha, c_d\alpha).
	\end{equation*}
	Let
	\begin{equation*}
		H = \overline{\left\{ q(\alpha, v_1, \ldots, v_{d-1}) : q \in \Q \right\}} \subseteq V^d.
	\end{equation*}
	We write points $\bm{x} \in H$ as $\bm{x} = (x_0, x_1, \ldots, x_{d-1})$ in order to match indices from $\bm{v}$.
	In the product space $H^k$, $k \in \N$, we write coordinates with superscripts, reserving the usage of subscripts for the coordinates of points in $H \subseteq V^d$.
	That is, $\bm{x} \in H^k$ takes the form $\bm{x} = (\bm{x}^1, \ldots, \bm{x}^k)$ with $\bm{x}^j = (x^j_0, x^j_1, \ldots, x^j_{d-1}) \in H$.
	By Proposition \ref{prop: polynomial equidistribution}, the sequence $\left( S^q \bm{v}, v_1 + (P(q) + q)\alpha \right)_{q \in \Q}$ is well-distributed in the space
	\begin{equation*}
		\left\{ (\bm{v}, v_1 + c_0\alpha) + \left( \bm{x}^1, (c_1+1)x^1_0 \right) + \left( 0, x^2_0, \ldots, x^2_{d-2}, c_2 x^2_0 \right) + \ldots + (0, \ldots, x^d_0, c_d x^d_0) : \bm{x} \in H^d \right\}.
	\end{equation*}
	Define $\varphi : H^d \to [0,1]$ by
	\begin{equation*}
		\varphi(\bm{x}^1, \ldots, \bm{x}^d) = (f \circ \rho) \left( \bm{v} + \bm{x}^1 + (0, x^2_0, \ldots, x^2_{d-2}) + \ldots + (0, \ldots, x^d_0) \right)
	\end{equation*}
	and $\psi : H^{d+1} \to [0,1]$ by
	\begin{equation*}
		\psi(\bm{x}^1, \ldots, \bm{x}^d, \bm{y}) = (g \circ \rho') \left( v_1 + c_0 \alpha + y^1_0 + \sum_{j=1}^d c_j x^j_0 \right),
	\end{equation*}
	where $\rho'$ is the factor map $\rho' : V \to Z$.
	Let $\xi(\bm{x}) = \varphi(\bm{x}) \psi(\bm{x}, \bm{x}^1)$.
	Then by construction,
	\begin{equation*}
		\int_{H^d} \xi~dm_{H^d} = \int_{A_d \times Z} f \otimes g~d\tilde{\sigma}^P_b \ge \eps.
	\end{equation*}
	
	Expanding
	\begin{equation*}
		I(f,g) = \int_{A_d \times Z} f(u) g(z) \left( \int_{A_d \times Z} f \otimes g~d\tilde{\lambda}^P_{(b, \pi^d_1(u))} \right)~d\tilde{\sigma}^P_b(u,z)
	\end{equation*}
	as an average
	\begin{equation*}
		\UClim_{(q,r) \in \Q^2} f \left( T_d^q b \right) \cdot g \left( R^{P(q)+q} \pi^d_1(b) \right) \cdot f \left( T_d^r b \right) \cdot g \left( R^{P(r)+q} \pi^d_1(b) \right)
	\end{equation*}
	and repeating the previous calculations, we have
	\begin{equation*}
		I(f,g) = \int_{H^d \times H^d} \varphi(\bm{x}) \psi(\bm{x}, \bm{x}^1) \varphi(\bm{y}) \psi(\bm{y}, \bm{x}^1)~d\bm{x}~d\bm{y}.
	\end{equation*}
	Making a change of variables $\bm{y} = (\bm{x}^1 + (1 + c_1^{-1})\bm{t}^1, \bm{x}^2 + \bm{t}^2, \ldots, \bm{x}^d + \bm{t}^d)$ and using the definition of $\psi$, we have
	\begin{align*}
		I(f,g) & = \int_{H^d \times H^d} \varphi(\bm{x}) \psi(\bm{x}, \bm{x}^1) \varphi(\bm{x}^1 + (1 + c_1^{-1})\bm{t}^1, \bm{x}^2 + \bm{t}^2, \ldots, \bm{x}^d + \bm{t}^d) \psi(\bm{x} + \bm{t}, \bm{x}^1 + \bm{t}^1)~d\bm{x}~d\bm{t} \\
		 & \ge \int_{H^d \times H^d} \xi(\bm{x}) \xi(\bm{x} + \bm{t}) \xi(\bm{x}^1 + (1 + c_1^{-1})\bm{t}^1, \bm{x}^2 + \bm{t}^2, \ldots, \bm{x}^d + \bm{t}^d)~d\bm{x}~d\bm{t}.
	\end{align*}
	Now letting $p : H^d \times H^d \to H^d$ be the map
	\begin{equation*}
		p(\bm{x}, \bm{y}) = \left( \bm{x}^1 + (1+c_1^{-1}) \bm{y}^1, \bm{x}^2 + \bm{y}^2, \ldots, \bm{x}^d + \bm{y}^d \right)
	\end{equation*}
	and defining $F : H^d \times H^d \to [0,1]$ by $F = \xi \circ p$, we have
	\begin{equation*}
		I(f,g) \ge \int_{H^d \times H^d \times H^d} F(\bm{x}, \bm{y}) F(\bm{x} + \bm{t}, \bm{y}) F(\bm{x}, \bm{y} + \bm{t})~d\bm{x}~d\bm{y}~d\bm{t}
	\end{equation*}
	and
	\begin{equation*}
		\int_{H^d \times H^d} F~dm_{H^d}^2 = \int_{H^d} \xi~dm_{H^d} \ge \eps.
	\end{equation*}
	Applying Theorem \ref{thm: uniform Ajtai Szemeredi} for the group $G = H^d$, we are done.
\end{proof}

\begin{proof}[Proof of Theorem \ref{thm: sigma is progressive}]
	We use the criterion given in Proposition \ref{prop: progressive criterion}.
	Suppose $f, g \in C(X)$, $f, g \ge 0$, and
	\begin{equation*}
		\int_{X^2} f \otimes g~d\sigma^P_a > 0.
	\end{equation*}
	Then by Theorem \ref{thm: limit formula},
	\begin{equation*}
		\lim_{N \to \infty} \frac{1}{|\Phi_N|} \sum_{q \in \Phi_N} f(T^q a) \int_{X^2} (f \cdot T^{P(q)} g) \otimes g~d\sigma^P_a = \int_{X^2} f(x_1) g(x_2) \left( \int_{X^2} f \otimes g~d\lambda^P_{(a,x)} \right)~d\sigma^P_a(x_1,x_2).
	\end{equation*}
	Applying Proposition \ref{prop: uniform bound on factor} with the functions $\E{f}{A_d}$ and $\E{g}{Z}$ and the point $b = \pi_d(a) \in A_d$ completes the proof.
\end{proof}


\section{Proof of Theorem \ref{thm: density dynamical}} \label{sec: proof finish density}

We now wish to combine the ingredients from the previous sections to prove Theorem \ref{thm: density dynamical}.
Severals results of the previous sections were contingent on a system having topological Abramov factors.
The following technical result will allow us to reduce to systems with this property.

\begin{lemma} \label{lem: topological factors}
	Let $(X, \mu, T)$ be an ergodic $\Q$-system, $\Phi$ a F{\o}lner sequence, and $a \in X$ a transitive point such that $a \in \gen(\mu, \Phi)$.
	Then there exists an extension $\pi : (\tilde{X}, \tilde{\mu}, \tilde{T}) \to (X, \mu, T)$, a F{\o}lner sequence $\Psi$, and a transitive point $\tilde{a} \in \tilde{X}$ such that
	\begin{itemize}
		\item	$\pi$ is a topological factor map and a measurable isomorphism,
		\item	$\pi(\tilde{a}) = \pi(a)$,
		\item	$\tilde{a} \in \gen(\tilde{\mu}, \Psi)$, and
		\item	$(\tilde{X}, \tilde{\mu}, \tilde{T})$ has topological Abramov factors.
	\end{itemize}
\end{lemma}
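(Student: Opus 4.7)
The plan is to realize $\tilde X$ as an orbit closure inside a product space that explicitly adjoins topological models of each Abramov factor of $(X, \mu, T)$ as extra coordinates. Invoking Theorem~\ref{thm: Abramov structure}, for each $d \in \N$ fix a topological model $(A_d, m_{A_d}, T_d)$ of the order-$d$ Abramov factor of $(X, \mu, T)$, together with a measurable factor map $\rho_d : X \to A_d$. Form the compact metric space
\[
Y := X \times \prod_{d \ge 1} A_d
\]
equipped with the product topology and the diagonal $\Q$-action $S := T \times \prod_d T_d$. The first projection $\pi : Y \to X$ and the coordinate projections $p_d : Y \to A_d$ are continuous and equivariant by construction; these latter will realize continuous factor maps to the Abramov factors on the eventual $\tilde X$. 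The graph map $\Pi : X \to Y$ defined $\mu$-a.e.\ by $\Pi(x) := (x, \rho_1(x), \rho_2(x), \ldots)$ is a measurable equivariant injection, and its pushforward $\tilde\mu := \Pi_*\mu$ is an $S$-invariant ergodic probability measure on $Y$ for which $\pi$ is a measurable isomorphism back to $(X, \mu, T)$.

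To select the transitive generic lift of $a$, I fix base points $b_d \in A_d$, put $\tilde a := (a, b_1, b_2, \ldots) \in Y$, and define $\tilde X$ to be the $S$-orbit closure of $\tilde a$. By a standard diagonal compactness argument applied to a countable dense subset of $C(Y)$, I extract a F{\o}lner subsequence $\Psi$ of $\Phi$ along which the empirical measures $\frac{1}{|\Psi_N|} \sum_{q \in \Psi_N} \delta_{S^q \tilde a}$ converge weak$^*$ to some $S$-invariant probability measure $\tilde\nu$ supported on $\tilde X$. By construction, $\tilde a$ is transitive in $\tilde X$ and generic for $\tilde\nu$ along $\Psi$; continuity of $\pi$ together with $a \in \gen(\mu, \Psi)$ forces $\pi_*\tilde\nu = \mu$.

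The main obstacle is to arrange that $\tilde\nu = \tilde\mu$, which is exactly the condition making $\pi : (\tilde X, \tilde\nu, S) \to (X, \mu, T)$ a measurable isomorphism and hence ensuring that the restrictions $p_d|_{\tilde X}$ are genuine factor maps onto the Abramov factors of $(\tilde X, \tilde\nu, S)$. To achieve this, $\vec b = (b_d)_d$ must be chosen in a coordinated way, essentially to match $\vec b$ with the ``image'' of $a$ under the measurable factor maps $\rho_d$. The plan is to apply Lindenstrauss's pointwise ergodic theorem to $(Y, \tilde\mu, S)$ along a tempered F{\o}lner subsequence of $\Phi$: this produces a $\tilde\mu$-conull set $Y^* \subseteq Y$ of $\tilde\mu$-generic points, corresponding via $\Pi$ to a $\mu$-conull set in $X$. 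Since the orbit $\{T^q a : q \in \Q\}$ is $\mu$-null (as any nontrivial ergodic $\Q$-system is non-atomic) and each $\rho_d$ is defined only up to $\mu$-null modifications, one may $\Q$-equivariantly redefine each $\rho_d$ on the orbit of $a$ so that the modified $\Pi(a) = (a, \rho_1(a), \rho_2(a), \ldots)$ lies in $Y^*$; taking $b_d := \rho_d(a)$ for these modified maps and diagonally extracting a common F{\o}lner subsequence $\Psi$ across $d \in \N$ then yields $\tilde a \in \gen(\tilde\mu, \Psi)$, and the four bulleted properties of the lemma follow directly from the construction.
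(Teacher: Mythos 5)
The paper's own proof is a two-sentence citation to \cite[Lemma 5.8]{kmrr1} and \cite[Section 3.3]{cm}, plus a verification (via Theorem~\ref{thm: Abramov structure}) that Abramov systems are topologically distal, which is the hypothesis those cited arguments require. Your attempt is a self-contained reconstruction, which is in the same general spirit as the cited arguments (a graph-joining construction in a product with topological models of the factors), but it has a genuine gap at the crucial step, and the distality input — which the paper explicitly flags as the key thing to check — does not appear anywhere in your argument.

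The gap is in the final paragraph. The set $Y^*$ produced by Lindenstrauss's theorem is the set of points of $Y$ that are generic for $\tilde\mu$ along the chosen tempered F{\o}lner sequence; this set depends only on $(Y, S, \tilde\mu)$ and the F{\o}lner sequence. It does not depend on which $\mu$-a.e.\ representative of the factor maps $\rho_d$ you choose. So ``$\Q$-equivariantly redefining $\rho_d$ on the orbit of $a$'' leaves $Y^*$ untouched, and the claim you actually need is simply: \emph{there exists $\vec b \in \prod_d A_d$ with $(a, \vec b) \in Y^*$}. Nothing in your argument establishes this. Conullity of $Y^*$ only tells you that $\Pi(x) \in Y^*$ for $\mu$-a.e.\ $x$; it says nothing about the $\mu$-null fibre $\{a\} \times \prod_d A_d$. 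To see why this is a real issue and not a technicality: membership of $(a, \vec b)$ in $Y^*$ requires the joint orbit $(T^q a, T_1^q b_1, T_2^q b_2, \ldots)$ to equidistribute with respect to the \emph{graph} joining $\tilde\mu$, and for a generic-but-otherwise-arbitrary transitive point $a$ there is no a priori reason any choice of $\vec b$ accomplishes this (the individual marginals equidistribute, but the joint empirical measure can converge, along subsequences, to off-graph joinings). Controlling which joinings can arise as such limits is exactly where the distality of the factor enters in \cite{kmrr1} and \cite{cm} — a relatively distal factor admits, in the relevant sense, only the graph joining once the marginals are pinned down — and this is the mechanism your proposal is missing. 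If you want a self-contained proof, you would need to choose $\vec b$ first, pass to a subsequence of $\Phi$ along which the empirical measures of $(S^q\tilde a)$ converge to some joining $\tilde\nu$, and then prove $\tilde\nu = \tilde\mu$ using the distal/isometric-extension structure of $A_d$ provided by Theorem~\ref{thm: Abramov structure}, rather than trying to land $\tilde a$ in a pre-fabricated conull set.
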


\begin{proof}
	A related result was shown in \cite[Lemma 5.8]{kmrr1} for $\Z$-systems with the conclusion that the extension $(\tilde{X}, \tilde{\mu}, \tilde{T})$ has topological \emph{pronilfactors}.
	For more general group actions, the arguments were generalized in \cite[Section 3.3]{cm}.
	The only thing we need to check in order to apply the same argument to obtain topological Abramov factors is that the Abramov factors are topologically distal\footnote{A topological dynamical $\Q$-system $(X, T)$ is \emph{distal} if for every pair of distinct points $x \ne y \in X$, one has $\inf_{q \in \Q} d(T^qx, T^y) > 0$, where $d$ is the metric on $X$.} (see the remark at the end of \cite[Lemma 3.5]{cm}).
	The structure theorem (Theorem \ref{thm: Abramov structure}) reveals that an Abramov system of order $k$ can be built as a height $k$ tower of isometric extensions of the trivial system, so Abramov systems are indeed distal.
\end{proof}

For the proof of Theorem \ref{thm: density dynamical}, we will use a variant of the Furstenberg--S\'{a}rk\"{o}zy theorem over $\Q$:

\begin{theorem} \label{thm: FS rationals}
	Let $(X, \mu, T)$ be an ergodic $\Q$-system.
	Then for any nonconstant polynomial $P(x) \in \Q[x]$ and any $f \in L^2(\mu)$,
	\begin{equation*}
		\UClim_{q \in \Q} T^{P(q)}f = \int_X f~d\mu,
	\end{equation*}
	where the limit is taken in $L^2(\mu)$.
\end{theorem}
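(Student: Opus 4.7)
The plan is to prove the theorem by a standard PET-style induction on $d = \deg P$ using the van der Corput lemma (Lemma \ref{lem: vdC}), in the same spirit as the proof of Proposition \ref{prop: Kronecker partially characteristic}. Since $(X,\mu,T)$ is ergodic, the conditional expectation of $f$ onto the invariant $\sigma$-algebra is the constant $\int_X f~d\mu$, so by linearity it suffices to show that if $g \in L^2(\mu)$ has $\int_X g~d\mu = 0$, then $\UClim_{q \in \Q} T^{P(q)}g = 0$ in $L^2(\mu)$ for every nonconstant $P(x) \in \Q[x]$.

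First I would handle the base case $d=1$: write $P(q) = aq + b$ with $a \ne 0$. For any F{\o}lner sequence $\Phi$ in $\Q$, the dilate $a\Phi_N$ is again a F{\o}lner sequence of the same cardinality, so the mean ergodic theorem for the amenable group $\Q$ gives
\begin{equation*}
	\frac{1}{|\Phi_N|} \sum_{q \in \Phi_N} T^{P(q)}g = T^b \left( \frac{1}{|a\Phi_N|} \sum_{q' \in a\Phi_N} T^{q'}g \right) \longrightarrow T^b \int_X g~d\mu = 0
\end{equation*}
in $L^2(\mu)$, as required.

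For the inductive step, assume the result for nonconstant polynomials of degree less than $d$, and suppose $\deg P = d \ge 2$. Fix an arbitrary F{\o}lner sequence $\Phi$ and set $u(q) = T^{P(q)}g \in L^2(\mu)$. By $T$-invariance of $\mu$,
\begin{equation*}
	\innprod{u(q+r)}{u(q)} = \innprod{T^{Q_r(q)} g}{g}, \qquad Q_r(q) := P(q+r) - P(q),
\end{equation*}
and $Q_r$ is a nonconstant polynomial of degree $d-1 \ge 1$ whenever $r \ne 0$. The induction hypothesis therefore yields $\UClim_{q} T^{Q_r(q)} g = 0$ in $L^2(\mu)$, so taking inner product with $g$ gives $z(r) := \UClim_{q} \innprod{u(q+r)}{u(q)} = 0$ for every $r \ne 0$, while $z(0) = \norm{L^2(\mu)}{g}^2$ is finite. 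Consequently, for any F{\o}lner sequence $\Psi$ in $\Q$,
\begin{equation*}
	\frac{1}{|\Psi_M|} \left| \sum_{r \in \Psi_M} z(r) \right| \le \frac{\norm{L^2(\mu)}{g}^2}{|\Psi_M|} \longrightarrow 0,
\end{equation*}
so Lemma \ref{lem: vdC} with $K = 0$ gives $\frac{1}{|\Phi_N|} \sum_{q \in \Phi_N} u(q) \to 0$ in $L^2(\mu)$. Since $\Phi$ was arbitrary, this is precisely the uniform Ces\`{a}ro convergence asserted by the theorem.

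I do not expect any real obstacle: the argument is essentially a replay of the van der Corput reduction used in Proposition \ref{prop: Kronecker partially characteristic}, but without the weight $f(T^q a)$, which actually simplifies matters. The only point needing a little care is maintaining the distinction between convergence along a fixed F{\o}lner sequence (what Lemma \ref{lem: vdC} delivers) and uniform Ces\`{a}ro convergence (what the theorem demands); this is handled cleanly by invoking the inductive hypothesis in its uniform form and keeping $\Phi$ arbitrary throughout.
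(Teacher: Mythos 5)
Your proof is correct, and it takes a genuinely different and more self-contained route than the paper. The paper first normalizes to $P(0)=0$ and then invokes Larick's polynomial ergodic theorem for field actions, which identifies the uniform Ces\`{a}ro limit as the orthogonal projection onto the space $I_P$ of $P$-invariant functions; the remaining work is the algebraic observation that the subgroup $G_P = \langle P(q) : q \in \Q\rangle$ equals $\Q$, so that $I_P$ consists only of constants by ergodicity. Your argument avoids the external reference entirely, proceeding by PET-style induction on $\deg P$ with the van der Corput lemma (Lemma \ref{lem: vdC}) supplied in the paper: the base case $d=1$ reduces to the mean ergodic theorem for the amenable group $\Q$ after observing that nonzero dilates of a F{\o}lner sequence are F{\o}lner of the same cardinality, and the inductive step uses that $r \mapsto P(\cdot + r) - P(\cdot)$ drops the degree by one so that $z(r)=0$ for all $r\ne 0$ by the inductive hypothesis in its uniform form. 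Each approach has its advantages: the paper's argument is shorter given Larick's theorem as a black box and yields the more conceptual description of the limit as a conditional expectation, while yours is elementary, internal to the tools already developed in the paper, and mirrors the van der Corput differencing already carried out for Proposition \ref{prop: Kronecker partially characteristic}. You also handle cleanly the only subtle point, namely that the van der Corput lemma delivers convergence along a fixed F{\o}lner sequence rather than uniform Ces\`{a}ro convergence, by keeping $\Phi$ arbitrary throughout and invoking the inductive hypothesis in uniform form.
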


\begin{proof}
	This result can be proved using spectral methods and the Weyl-type equidistribution theorem for polynomials over $\Q$ provided by \cite[Theorem 5.2]{bm_vdC}.
	We give a different proof here based on a general polynomial ergodic theorem for field actions.
	
	Writing $\UClim_{q \in \Q} T^{P(q)}f = \UClim_{q \in \Q} T^{P(q) - P(0)}(T^{P(0)}f)$ and noting that $\int_X T^{P(0)}f~d\mu = \int_X f~d\mu$, we may assume without loss of generality that $P(0) = 0$.
	Then the polynomial ergodic theorem for actions of fields due to Larick \cite[Theorem 1.1.1]{larick} shows that $\UClim_{q \in \Q} T^{P(q)}f$ is equal to the orthogonal projection of $f$ onto the space of $P$-invariant functions $I_P = \{g \in L^2(\mu) : T^{P(q)}g = g~\text{for every}~q \in \Q\}$.
	We want to show that $I_P$ is the space of constant functions.
	
	For a given $g \in L^2(\mu)$, the set of $r \in \Q$ for which $T^r g = g$ defines a group.
	Hence, if $g \in I_P$, then $T^r g = g$ for every $r$ belonging to the subgroup $G_P = \left\langle P(q) : q \in \Q \right\rangle \le (\Q,+)$ generated by the values of the polynomial $P$.
	Since $T$ is ergodic by assumption, it suffices to show that $G_P = \Q$.
	
	Suppose $d = \deg{P}$, and write $P(x) = a_d x^d + \ldots + a_1 x$ with $a_d \ne 0$.
	Let $P_0 = P$, and define $P_j(x) = P_{j-1}(x+1) - P_{j-1}(x) - P_{j-1}(1)$ for $j \in \N$.
	By construction, we have $G_{P_j} \subseteq G_P$ for every $j \in \N$.
	One can check that $\deg{P_j} = d-j$ for $j \le d$, and in particular, $P_{d-1}(x) = d! a_d x$.
	Therefore, for each $q \in \Q$, we have $q = P_{d-1} \left( \frac{q}{d! a_d} \right) \in G_P$, so $G_P = \Q$ as desired.
\end{proof}

We can now prove Theorem \ref{thm: density dynamical}.

\begin{proof}[Proof of Theorem \ref{thm: density dynamical}]
	Since $a \in \gen(\mu, \Phi)$, the measure $\mu$ is supported on the orbit-closure $\mathcal{O}(a) = \overline{\{T^q a : q \in \Q\}}$.
	Hence, replacing $X$ by $\mathcal{O}(a)$ if needed, we may assume that $a$ is a transitive point.
	Then after passing to an extension if necessary, we may assume by Lemma \ref{lem: topological factors} that $(X, \mu, T)$ has topological Abramov factors.
	
	For each $t \in \Q$, the measure $\sigma^P_{T^ta}$ is $P$-progressive from $T^ta$ by Theorem \ref{thm: sigma is progressive}.
	Our goal is thus to show $\sigma^P_{T^ta}(E \times E) > 0$ for some $t \in \Q$.
	Averaging over $t$ and applying the Fubini property of uniform Ces\`{a}ro limits (see \cite[Lemma 1.1]{bl_cubic}), we have
	\begin{align*}
		\UClim_{t \in \Q} \tilde{\sigma}^P_{T^ta} & = \UClim_{t \in \Q} \UClim_{q \in \Q} \delta_{T_d^{q+t} \pi_2(a)} \times \delta_{R^{P(q)+q+t}\pi_1(a)} \\
		 & = \UClim_{q \in \Q} \UClim_{t \in \Q} \delta_{T_d^{q+t} \pi_2(a)} \times \delta_{R^{P(q)+q+t}\pi_1(a)} \\
		 & = \UClim_{q \in \Q} (T_d^q \times R^{P(q)+q})_* m_{\Delta},
	\end{align*}
	where $m_{\Delta}$ is the Haar measure on the ``diagonal''
	\begin{equation*}
		\Delta = \{(u,z) \in A_d \times Z : \pi^d_1(u) = z\}.
	\end{equation*}
	The set $\Delta$ is $(T_d \times R)$-invariant, so
	\begin{equation*}
		\UClim_{t \in \Q} \tilde{\sigma}^P_{T^ta} = \UClim_{q \in \Q} (\id_{A_d} \times R^{P(q)})_* m_{\Delta}.
	\end{equation*}
	Given continuous functions $f \in C(A_d)$ and $g \in C(Z)$, we apply Theorem \ref{thm: FS rationals} to compute
	\begin{align*}
		\UClim_{q \in \Q} \int_{A_d \times Z} f \otimes R^{P(q)}g~dm_{\Delta} & = \UClim_{q \in \Q} \int_{A_d} f \cdot T_d^{P(q)} (g \circ \pi^d_1)~dm_{A_d} \\
		 & = \left( \int_{A_d} f~dm_{A_d} \right) \left( \int_{A_d} g \circ \pi^d_1~dm_{A_d} \right) \\
		 & = \left( \int_{A_d} f~dm_{A_d} \right) \left( \int_Z g~dm_Z \right).
	\end{align*}
	Thus,
	\begin{equation*}
		\UClim_{t \in \Q} \tilde{\sigma}^P_{T^ta} = m_{A_d} \times m_Z.
	\end{equation*}
	It follows that
	\begin{equation*}
		\UClim_{t \in \Q} \sigma^P_{T^ta} = \mu \times \mu.
	\end{equation*}
	Hence, by the portmanteau lemma (see, e.g., \cite[Theorem 2.1]{billingsley}),
	\begin{equation*}
		\liminf_{N \to \infty} \frac{1}{|\Psi_N|} \sum_{t \in \Psi_N} \sigma^P_{T^ta}(E \times E) \ge \mu(E)^2
	\end{equation*}
	for every F{\o}lner sequence $\Psi$ in $\Q$.
	In particular, there exists $t \in \Q$ such that $\sigma^P_{T^ta}(E \times E) > 0$.
\end{proof}

\begin{remark} \label{rem: shift}
	If one is interested in finding the pattern $\{P(b_i) + b_j : i < j\}$ for an infinite set $B = \{b_n : n \in \N\}$ without worrying about the location of the set $B$, then the shift $t$ in Theorem \ref{thm: density dynamical} is no longer needed.
	This follows easily from the observation (using Theorem \ref{thm: FS rationals}) that the second marginal of $\sigma^P_a$ is equal to $\mu$, so $\sigma^P_a(X \times E) = \mu(E) > 0$.
	
	However, for the full configuration $\{b_i, P(b_i) + b_j : i < j\}$, the shift $t$ cannot be omitted, which we show via the following example.
	Consider the space $X = (\A/\Q)^2$ with Haar measure $\mu = m_{\A/\Q}^2$ and the skew-product action
	\begin{equation*}
		T^q(x,y) = (x + q\alpha, y + 2qx + q^2\alpha)
	\end{equation*}
	for some $\alpha \in \A \setminus \Q$.
	Then $(X, \mu, T)$ is a uniquely ergodic system.
	Let $E \subseteq X$ be the set
	\begin{equation*}
		E = U \times V,
	\end{equation*}
	where $U$ is a small open neighborhood of $0 \in \A/\Q$, and $V$ is an open set away from 0 so that $(V + U) \cap U = \es$.
	We take $a = (0,0) \in X$, which is generic for $\mu$ along every F{\o}lner sequence by unique ergodicity of $(X, T)$.
	Put $A = \{q \in \Q : T^q a \in E\}$.
	We can compute $T^q a = (q\alpha, q^2\alpha)$, so $A = \{q \in \Q : q\alpha \in U, q^2 \alpha \in V\}$.
	Thus, if $b_1, b_2 \in A$, then $(b_1^2 + b_2)\alpha = b_1^2\alpha + b_2\alpha \in V + U$.
	Therefore, $b_1^2 + b_2 \notin A$.
	It follows that $\EFS^{q^2}_X(a) \cap (E \times E) = \es$.
	Given a polynomial $P(x) \in \Q[x]$ of degree $d$, it is possible to generate similar examples with skew-product actions on $(\A/\Q)^d$.
\end{remark}


\section{Proof of Theorem \ref{thm: coloring dynamical}} \label{sec: proof finish coloring}

We now turn to proving the main coloring result, which we formulated in topological dynamical terms in Theorem \ref{thm: coloring dynamical}, reproduced here to aid in the subsequent discussion.

\ColoringDynamical*

Given a topological dynamical $\Q$-system $(X, T)$ and a transitive point $a \in X$, we can find an ergodic $T$-invariant measure $\mu$ and a F{\o}lner sequence $\Phi$ such that $a \in \gen(\mu, \Phi)$; see \cite[Proposition 3.9]{furstenberg_book} and \cite[Proposition 2.7]{bf_d*}.
Using the measure-theoretic tools from the previous sections, we can further reduce Theorem \ref{thm: coloring dynamical} to a statement about the progressive measure $\sigma^P_a$.

\begin{theorem} \label{thm: sigma measure of diagonal covering}
	Let $P(x) \in \Q[x]$ with $\deg{P} \ge 2$.
	Let $(X, \mu, T)$ be an ergodic $\Q$-system with topological Abramov factors.
	Let $a \in \gen(\mu, \Phi)$.
	Let $E_1, \ldots, E_r$ be a finite open cover of $X$.
	Then $\sigma^P_a \left( \bigcup_{k=1}^r (E_k \times E_k) \right) > 0$.
\end{theorem}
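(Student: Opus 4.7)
The plan is to use pigeonhole to reduce the positivity of $\sigma^P_a$ on $\bigcup_k (E_k \times E_k)$ to the positivity of the ``diagonal'' sum $\sum_k \sigma^P_a(E_k \times E_k)$, then express the latter as an explicit integral over the factor $A_d \times Z$, and finally extract positivity via a combination of polynomial equidistribution and a Ramsey-theoretic combinatorial argument.

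First, from $\sum_k \ind_{E_k \times E_k} \le r \cdot \ind_{\bigcup_k (E_k \times E_k)}$ one immediately gets
\begin{equation*}
	\sigma^P_a\left(\bigcup_k (E_k \times E_k)\right) \ge \frac{1}{r} \sum_k \sigma^P_a(E_k \times E_k),
\end{equation*}
so it suffices to show this sum is positive. Writing $b = \pi_d(a)$, $F_k = \E{\ind_{E_k}}{A_d}$, and $G_k = \E{\ind_{E_k}}{Z}$, the definition of $\sigma^P_a$ turns the task into showing
\begin{equation*}
	\sum_k \int_{A_d \times Z} F_k \otimes G_k \, d\tilde\sigma^P_b > 0,
\end{equation*}
where $\sum_k F_k \ge 1$ on $A_d$, $\sum_k G_k \ge 1$ on $Z$, and $\int F_k \, dm_{A_d} = \int G_k \, dm_Z = \mu(E_k)$.

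Next I would isolate a clean product-type contribution coming from the Kronecker factor. Under the hypothesis $\deg P \ge 2$, the polynomials $q$ and $P(q)+q$ are nonconstant and linearly independent (after subtracting constant terms), so Proposition \ref{prop: polynomial equidistribution} yields $(\pi^d_1 \times \id)_* \tilde\sigma^P_b = m_Z \times m_Z$. Splitting $F_k = F_k^Z + F_k^\perp$ with $F_k^Z = \E{F_k}{Z} \circ \pi^d_1$ and $\E{F_k^\perp}{Z} = 0$, the $Z$-measurable pieces decouple on account of this product structure, and Cauchy--Schwarz applied to $\sum_k \mu(E_k) \ge 1$ gives
\begin{equation*}
	\sum_k \int F_k^Z \otimes G_k \, d\tilde\sigma^P_b = \sum_k \mu(E_k)^2 \ge \frac{1}{r}.
\end{equation*}

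The main work, and the expected obstacle, is then to control the orthogonal residue $R := \sum_k \int F_k^\perp \otimes G_k \, d\tilde\sigma^P_b$ well enough (for instance $R \ge -\frac{1}{2r}$) to preserve the positivity just obtained. For this I would pass to the extension $(V^d, m_V^d, S)$ provided by Theorem \ref{thm: Abramov structure} and follow the explicit calculation from the proof of Proposition \ref{prop: uniform bound on factor} to rewrite $R$ as an integral over the product $H^d$ of closed $\Q$-vector subspaces generated by the vectors $\tilde\beta_j$, with integrand of explicit polynomial form in the arguments. The finite-coloring structure then manifests as a combinatorial overlap count of the color classes $\{F_k > 0\}$ and $\{G_k > 0\}$ in the compact abelian group $H^d$, and a Ramsey-type argument on this hypergraph of color classes, in the spirit of the Ajtai--Szemer\'{e}di-style Theorem \ref{thm: uniform Ajtai Szemeredi}, is expected to produce the required lower bound on $R$. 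This Ramsey step plays the role that the shift-averaging $\UClim_{t \in \Q} \sigma^P_{T^t a}$ played in the proof of Theorem \ref{thm: density dynamical}; its unavailability here (the transitive point $a$ is fixed) is precisely why a combinatorial input is needed in the coloring setting.
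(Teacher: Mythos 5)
Your opening reduction $\sigma^P_a\bigl(\bigcup_k (E_k \times E_k)\bigr) \ge \frac{1}{r} \sum_k \sigma^P_a(E_k \times E_k)$ is correct and matches the paper's starting point, and your computation that the Kronecker-measurable piece contributes $\sum_k \mu(E_k)^2 \ge 1/r$ (via $(\pi^d_1 \times \id)_* \tilde\sigma^P_b = m_Z \times m_Z$ and Cauchy--Schwarz) is also sound, given the linear independence of $q$ and $P(q)+q$ when $\deg P \ge 2$. The problem is the second half of your plan: there is no reason why the orthogonal residue $R = \sum_k \int F_k^\perp \otimes G_k \, d\tilde\sigma^P_b$ should be bounded below by $-1/(2r)$ or any other constant that preserves positivity. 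The decomposition $F_k = F_k^Z + F_k^\perp$ destroys nonnegativity of $F_k$, which is precisely what the Ramsey-theoretic machinery requires. Your appeal to Theorem \ref{thm: uniform Ajtai Szemeredi} is misplaced here: that result is used in the proof of Theorem \ref{thm: sigma is progressive} (the progressivity of $\sigma^P_a$), not in controlling the diagonal measure. You are conflating two different Ramsey-type inputs from the paper.

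The paper's actual route is structurally different: it does \emph{not} split off the Kronecker contribution. Instead, it first reduces to the uniform quantitative statement of Theorem \ref{thm: uniform bound in Abramov systems} (a lower bound $c(d,r)$ depending only on $d$ and $r$; this uniformity is essential because several approximation steps follow -- replacing the $f_k$ by continuous functions, passing to the extension of Theorem \ref{thm: Abramov structure}, and approximating along the inverse limit to reach $V=(\A/\Q)^l$). It then linearizes the polynomial orbit by a careful change of variables, arriving at the expression $M = \sum_k \int_{Y^{2d-1}} h_k(y_1,\ldots,y_d)\, h_k(y_d,\ldots,y_{2d-1})\, d\bm{y}$ with the \emph{full}, nonnegative color functions $h_k$ satisfying $\sum_k h_k = 1$. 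Finally it applies a measure-theoretic Ramsey theorem for ordered hypergraphs (Theorem \ref{thm: measure theory Ramsey}) -- applied to the $d$-uniform hypergraph on $2d-1$ vertices with two edges $\{1,\ldots,d\}$ and $\{d,\ldots,2d-1\}$ -- to the whole expression at once. The point is that the combinatorial input bounds the entire quantity from below, exploiting the probability-distribution structure of $(h_k)_k$, rather than bounding a signed residue after subtracting a main term. Your proposal, as it stands, has the right ingredients in the neighborhood (polynomial equidistribution, a change of variables, Ramsey) but organizes them in a way that isolates a residue that cannot be controlled; you would need to abandon the Kronecker-splitting idea and apply the Ramsey step directly to the unsplit sum.
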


\begin{proof}[Proof that Theorem \ref{thm: sigma measure of diagonal covering}$\implies$Theorem \ref{thm: coloring dynamical}]
	Suppose $(X, T)$ is a topological dynamical $\Q$-system, $a \in X$ is a transitive point, and $E_1, \ldots, E_r$ is a finite open cover of $X$.
	Let $\mu$ be an ergodic $T$-invariant measure and $\Phi$ a F{\o}lner sequence such that $a \in \gen(\mu, \Phi)$.
	After passing to an extension if necessary, we may assume by Lemma \ref{lem: topological factors} that $(X, \mu, T)$ has topological Abramov factors.
	Hence, by Theorem \ref{thm: sigma measure of diagonal covering}, $\sigma^P_a \left( \bigcup_{k=1}^r (E_k \times E_k) \right) > 0$.
	But by Theorem \ref{thm: sigma is progressive}, $\sigma^P_a$ is $P$-progressive from $a$, so $\EFS^P_X(a) \cap \left( \bigcup_{k=1}^r (E_k \times E_k) \right) \ne \es$.
\end{proof}


\subsection{Reducing to averages in skew products}

The measure $\sigma^P_a$ is defined in terms of a polynomial average on the Abramov factor of order $d = \deg{P}$.
In order to prove Theorem \ref{thm: sigma measure of diagonal covering}, it is therefore enough to establish the following variant in Abramov systems with the functions $f_k$ equal to the projection of $\ind_{E_k}$ to the Abramov factor.

\begin{theorem} \label{thm: uniform bound in Abramov systems}
	Let $d, r \in \N$ with $d \ge 2$.
	There exists a constant $c = c(d,r) > 0$ such that the following holds:
	for any polynomial $P(x) \in \Q[x]$ of degree $d$, any order $d$ ergodic Abramov $\Q$-system $(X, \mu, T)$, any $a \in X$, and any measurable functions $f_1, \ldots, f_r : X \to [0,1]$ such that $\sum_{k=1}^r f_r \ge 1$, one has
	\begin{equation*}
		\sum_{k=1}^r \int_{X \times X} f_k \otimes f_k~d\sigma^P_a \ge c(d,r).
	\end{equation*}
\end{theorem}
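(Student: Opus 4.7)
My plan is organized in three stages.

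\medskip

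\emph{Reduction to an explicit model.} The first move is to apply Theorem \ref{thm: Abramov structure} to the Abramov system $(X, \mu, T)$, replacing it (after a measurable isomorphism if necessary) by a continuous factor of the extension $(V^d, m_V^d, S)$ for a compact $\Q$-vector space $V$ and an element $\alpha \in V$. The hypothesis $\sum_k f_k \geq 1$ is preserved under pullback by the topological factor map, and the measures $\tilde{\sigma}^P_a$ and $\sigma^P_a$ commute with factor maps since they are defined via conditional expectations onto the Abramov and Kronecker factors (both respected by factor maps between Abramov systems). Thus I may assume $X = V^d$ and $a = \mathbf{v} = (v_1, \ldots, v_d) \in V^d$, and work directly with the explicit skew-product $S$.

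\medskip

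\emph{Explicit formula for $\sigma^P_a$.} Expanding $P(q) + q = \sum_{j=0}^d c_j \binom{q}{j}$ in the binomial basis (with $c_d \neq 0$ since $\deg P = d$), the vector
\[
\bigl( S^q \mathbf{v},\ v_1 + (P(q)+q)\alpha \bigr) \in V^d \times V
\]
takes the form $B_0 + \sum_{j=1}^d \binom{q}{j} B_j$ for explicit vectors $B_j \in V^d \times V$ computable from $\mathbf{v}$, $\alpha$, and the $c_j$. Applying Proposition \ref{prop: polynomial equidistribution} with $\alpha_j = B_j$ and $P_j(q) = \binom{q}{j}$ (which are linearly independent nonconstant polynomials with zero constant term) identifies $\tilde{\sigma}^P_\mathbf{v}$ with the Haar measure on the coset $B_0 + H$, where $H = H_1 + \cdots + H_d$ and $H_j = \overline{\Q \cdot B_j}$. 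Combining this with the relatively independent lift over the Kronecker factor gives a completely explicit description of $\sigma^P_a$ on $V^d \times V^d$ as Haar measure on a coset of a closed subgroup.

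\medskip

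\emph{Combinatorial lower bound via Ramsey.} After thresholding — replacing $f_k$ by $\mathbf{1}_{E_k}$ with $E_k = \{x : f_k(x) \ge 1/r\}$, which covers $X$ by pigeonhole — it suffices to prove a uniform lower bound $\sum_k \sigma^P_a(E_k \times E_k) \geq c'(d, r)$ over all such covers. Using the explicit formula from the previous step, this reduces to a combinatorial statement about the density of monochromatic pairs in the equidistributed polynomial orbit. The plan is to fix $N$ to be a sufficiently large Ramsey number, pick $N$ distinct rationals $q_1, \ldots, q_N$, and color each $d$-subset $\{i_1 < \cdots < i_d\} \subseteq \{1, \ldots, N\}$ by the cell $E_k$ containing an appropriate polynomial combination of the orbit points $S^{q_{i_1}} \mathbf{v}, \ldots, S^{q_{i_d}} \mathbf{v}$ (built using the $B_j$ from Step 2). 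A monochromatic $K_{d+1}^{(d)}$ produced by Ramsey's theorem for $d$-uniform hypergraphs then, via the polynomial structure encoded in the $B_j$, forces a uniformly positive fraction of $q$ to satisfy $S^q \mathbf{v} \in E_k$ and $R^{P(q)+q} v_1 \in \pi_1(E_k)$ simultaneously, yielding the desired bound.

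\medskip

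\emph{Main obstacle.} The principal difficulty is the last step: designing the hypergraph coloring so that Ramsey monochromaticity genuinely translates into a quantitative lower bound on $\sum_k \sigma^P_a(E_k \times E_k)$, rather than merely producing some algebraic coincidence. This requires carefully tracking the polynomial dependencies between coordinates of $S^q \mathbf{v}$ and the second component $v_1 + (P(q)+q)\alpha$, and accounting for the relatively independent lift over $Z$. The resulting constant $c(d, r)$ depends on Ramsey numbers and is therefore not effective, but this suffices for the qualitative conclusion needed in Section \ref{sec: proof finish coloring}. The hypothesis $d \geq 2$ is essential here: the polynomial coupling provides $d$ genuinely independent "degrees of freedom" via $B_1, \ldots, B_d$, which is precisely what makes $d$-uniform hypergraph Ramsey applicable.
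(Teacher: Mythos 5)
Your overall architecture — reduce to the explicit model $V^d$ via Theorem \ref{thm: Abramov structure}, compute $\tilde{\sigma}^P_a$ explicitly using Proposition \ref{prop: polynomial equidistribution}, and finish with a Ramsey argument for hypergraphs — matches the paper's approach. The reduction and equidistribution steps are correct in spirit, though the paper also passes first to \emph{continuous} functions and then approximates $V$ by $(\A/\Q)^l$ in order to work with a concrete finite-dimensional model (Theorem \ref{thm: coloring skew product average}); you should make sure your reduction survives both of those steps.

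The genuine gap is in Stage 3, which you yourself flag as the ``main obstacle'' without resolving it, and the guess you make there does not work as stated. Finding a single monochromatic copy of $K^{(d)}_{d+1}$ from a pointwise choice of rationals $q_1, \ldots, q_N$ gives no \emph{quantitative} lower bound on $\sum_k \sigma^P_a(E_k \times E_k)$; one monochromatic clique does not yield positive measure. The paper resolves this with two ideas you are missing. First, the linearization and a change of variables (crucially using $c_d \ne 0$, i.e.\ $\deg P = d$) bring the target quantity into the clean closed form
\begin{equation*}
	M = \sum_{k=1}^r \int_{Y^{2d-1}} h_k(y_1, \ldots, y_d)\, h_k(y_d, \ldots, y_{2d-1})\; d\bm{y},
\end{equation*}
which is a ``monochromatic pair of $d$-edges sharing one vertex'' count — this is an ordered $d$-uniform hypergraph $H$ on $2d-1$ vertices with exactly two edges, not the complete $K^{(d)}_{d+1}$. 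Second, the lower bound is extracted not by picking concrete points but by an expectation argument (Theorem \ref{thm: measure theory Ramsey}): sample $n = R_<(H,r)$ i.i.d.\ points from the probability space, randomly color each $d$-edge with color $k$ with probability $\phi_k$, and note that Ramsey guarantees at least one monochromatic ordered copy of $H$, so the expected count is at least $1$; dividing by $\binom{n}{2d-1}$ gives the constant. This crucially uses $\sum_k \phi_k = 1$ so the coloring is a genuine probability distribution. Your thresholding to $\ind_{E_k}$ with $E_k = \{f_k \ge 1/r\}$ destroys that normalization (the sets cover but may overlap), so the probabilistic coloring model does not directly apply; you would need instead to keep the functions (or produce a genuine partition) to run the expectation argument.
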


Imposing a lower bound $c(d,r)$ depending only on the degree $d$ of $P$ and the number of functions $r$ appears to make the problem more challenging.
However, it affords us significantly more flexibility to make additional simplifications.
First, we may assume that the functions $f_k$ are continuous using a standard approximation argument.
(Note that this would not be possible if we were only establishing positivity, but the uniformity of the lower bound $c(d,r)$ ensures that there is a main term to dominate the error terms.)
Next, by lifting the functions $f_k$ to a more convenient extension, we may assume that $(X, \mu, T)$ is of the form described by Theorem \ref{thm: Abramov structure}.
That is, $X = V^d$ for some compact $\Q$-vector space $V$, and $T$ is an affine skew-product action.
The way the vector space $V$ is constructed is as the dual of a countable vector subspace of $\A/\Q$.
Approximating the countable subspace by finite dimensional spaces gives $V$ as an inverse limit of vector spaces of the form $(\A/\Q)^l$ with $l \in \N$.
Finally, approximating the functions $f_k$ by functions defined on the spaces making up this inverse limit, we may assume that $V = (\A/\Q)^l$ for some $l \in \N$.
It therefore suffices to prove the special case of Theorem \ref{thm: uniform bound in Abramov systems} stated in Theorem \ref{thm: coloring skew product average} below.

\begin{theorem} \label{thm: coloring skew product average}
	Let $d, r \in \N$ with $d \ge 2$.
	There exists a constant $c = c(d,r) > 0$ such that the following holds:
	for any polynomial $P(x) \in \Q[x]$ of degree $d$, any $l \in \N$, any $\alpha \in (\A/\Q)^l$ with $\overline{\{q\alpha : q \in \Q\}} = (\A/\Q)^l$, any $v \in (\A/\Q)^{ld}$, and any continuous functions $f_1, \ldots, f_r : (\A/\Q)^{ld} \to [0,1]$ with $\sum_{k=1}^r f_r \ge 1$, one has
	\begin{equation*}
		\sum_{k=1}^r \UClim_{q \in \Q} f_k \left( \left( v_j + \sum_{i=1}^{j-1} \binom{q}{j-i} v_i + \binom{q}{j} \alpha \right)_{j=1}^d \right) \tilde{f}_k \left( v_1 + (P(q)+q)\alpha \right) \ge c(d,r),
	\end{equation*}
	where $\tilde{f}_k : (\A/\Q)^l \to [0,1]$ is defined by $\tilde{f}_k(v) = \int_{(\A/\Q)^{l(d-1)}} f_k(v, w)~dw$.
\end{theorem}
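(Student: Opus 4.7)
The plan is to reduce Theorem \ref{thm: coloring skew product average} to a density statement for configurations in a compact abelian group, in the spirit of Theorem \ref{thm: uniform Ajtai Szemeredi}, which can then be established by Ramsey-type arguments for hypergraphs.

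First, I would expand $P(q) + q = \sum_{j=0}^d c_j \binom{q}{j}$ in the binomial basis, with $c_d \ne 0$ since $\deg P = d$. Both $S^q v$ and $v_1 + (P(q)+q)\alpha$ are polynomials in $q$ of degree $d$, so the joint sequence $(S^q v, v_1 + (P(q)+q)\alpha) \in (\A/\Q)^{l(d+1)}$ has the form $\Gamma_0 + \sum_{j=1}^d \binom{q}{j} \Gamma_j$ for explicit vectors $\Gamma_0, \ldots, \Gamma_d$ depending on $v$, $\alpha$, and the coefficients of $P$. Since $\alpha$ generates $(\A/\Q)^l$ densely and the polynomials $\binom{q}{j}$ for $j = 1, \ldots, d$ are linearly independent nonconstant polynomials with zero constant term, Proposition \ref{prop: polynomial equidistribution} shows that this $\Q$-sequence is well-distributed in a coset $Y$ of an explicit closed subgroup of $(\A/\Q)^{l(d+1)}$. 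The uniform Ces\`{a}ro limit in the statement is therefore an integral of $f_k \otimes \tilde f_k$ against the Haar measure on $Y$.

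Second, a change of variables should rewrite the sum $\sum_{k=1}^r \int_Y f_k \otimes \tilde f_k \, dm_Y$ as a configuration-counting integral. Using Fubini's theorem, the defining identity $\tilde f_k(y) = \int f_k(y, w) \, dw$, and the triangular dependence of the coordinates of $S^q v$ on $v_1, \ldots, v_{j-1}$, I expect to obtain an expression of the shape
\begin{equation*}
	\sum_{k=1}^r \int_{G^M} F_k(\bm{x}) \, F_k(L\bm{x}) \, d\bm{x},
\end{equation*}
where $G$ is a compact abelian quotient of $(\A/\Q)^l$, $M$ is an integer determined by $d$, $L : G^M \to G^M$ is a specific linear map arising from the polynomial relations, and $F_k : G^M \to [0, 1]$ are continuous functions derived from $f_k$ and still satisfying $\sum_{k=1}^r F_k \ge 1$.

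Third, apply a density bound for the configuration defined by $L$. By pigeonhole, some $F_k$ has $\int_{G^M} F_k \, d\bm{x} \ge 1/r$. A corners-type theorem in compact abelian groups---provable using Ramsey's theorem for hypergraphs (or the hypergraph removal lemma) in the same spirit as Theorem \ref{thm: uniform Ajtai Szemeredi}---then bounds $\int_{G^M} F_k(\bm{x}) F_k(L\bm{x}) \, d\bm{x}$ below by a quantity depending only on $1/r$ and $d$, yielding the constant $c(d, r) > 0$.

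The main obstacle is step 2: carrying out the change of variables to arrive at a clean configuration-counting integral requires careful analysis of the closed subgroup supporting the Haar measure $m_Y$ and the relations induced among the $F_k$'s. The degree $d$ of the polynomial $P$ forces the configuration to involve roughly $d$ independent parameters, which in turn dictates that a Ramsey-type theorem for $(d+1)$-uniform hypergraphs be invoked in step 3. The uniformity of the bound in $l$, $\alpha$, $v$, and $P$ reflects the fact that after the reduction, what remains is a purely combinatorial statement on the compact abelian group $G$, whose specific algebraic structure (beyond being a compact abelian group) plays no further role.
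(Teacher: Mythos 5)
Your steps 1 and 2 track the paper's ``Linearizing the average'' subsection reasonably well: you correctly identify the role of Proposition \ref{prop: polynomial equidistribution} and the need for a change of variables exploiting the triangular structure, eventually arriving (after some nontrivial linear-algebraic bookkeeping with the dependencies among $\alpha, v_1, \ldots, v_{d-1}$ that you should not underestimate) at an expression of the form \eqref{eq: M Ramsey count}:
\begin{equation*}
	M = \sum_{k=1}^r \int_{Y^{2d-1}} h_k(y_1, \ldots, y_d)\, h_k(y_d, \ldots, y_{2d-1})~d\bm{y},
\end{equation*}
with $h_k : Y^d \to [0,1]$ and $\sum_k h_k = 1$. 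However, your step 3 contains a genuine gap. You propose to use pigeonhole to extract a single $k$ with $\int h_k \ge 1/r$ and then apply a ``corners-type'' density theorem. This cannot work: for the configuration above, a single function $h_k$ with large integral can give $\int h_k(y_1,\ldots,y_d)\,h_k(y_d,\ldots,y_{2d-1})~d\bm{y} = 0$. For instance, on $Y = [0,1]$ take $h_k(y_1,\ldots,y_d) = \ind\{y_1 < a,\ y_d > 1-a\}$; then the first factor forces $y_d > 1-a$ while the second forces $y_d < a$, so the integral vanishes even though $\int h_k = a^2$ can exceed $1/r$. The two evaluations of $h_k$ share only the coordinate $y_d$, which appears in opposite positions, and this is a hypergraph-overlap configuration, not a group-translate (corners) configuration.

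What actually makes the argument close is that you must \emph{not} discard the partition structure $\sum_k h_k = 1$. The paper's final step (Theorem \ref{thm: measure theory Ramsey}) is a measure-theoretic restatement of Ramsey's theorem for ordered hypergraphs, applied to the ordered $d$-uniform hypergraph $H$ on $2d-1$ vertices with edges $\{1,\ldots,d\}$ and $\{d,\ldots,2d-1\}$. One samples $n = R_<(H,r)$ i.i.d.\ points from $Y$ and, given the sample, colors each increasing $d$-tuple of indices with color $k$ with probability $h_k$ at the corresponding sample points; Ramsey's theorem guarantees at least one monochromatic ordered copy of $H$, and taking expectations yields the lower bound $c(d,r) = \binom{R_<(H,r)}{2d-1}^{-1}$ for the sum over $k$. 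This is a coloring/Ramsey argument that uses the competition among all $r$ color classes, not a density argument on the largest one, and it makes no use of the group structure on $Y$---so the ``corners-type theorem in compact abelian groups'' you invoke is both the wrong tool and stronger than what is needed here.
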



\subsection{Linearizing the average}

Let $d, r \in \N$ with $d \ge 2$.
Fix a degree $d$ polynomial $P(x) \in \Q[x]$, a number $l \in \N$, an element $\alpha \in (\A/\Q)^l$ satisfying $\overline{\{q\alpha : q \in \Q\}} = (\A/\Q)^l$, and an element $v \in (\A/\Q)^{ld}$.
Let $f_1, \ldots, f_r : (\A/\Q)^{ld} \to [0,1]$ with $\sum_{k=1}^r f_k \ge 1$.
By subtracting appropriately in places where the sum exceeds 1, we may assume $\sum_{k=1}^r f_k = 1$ everywhere.
We define a quantity
\begin{equation*}
	M = \sum_{k=1}^r \UClim_{q \in \Q} f_k \left( \left( v_j + \sum_{i=1}^{j-1} \binom{q}{j-i} v_i + \binom{q}{j} \alpha \right)_{j=1}^d \right) \tilde{f}_k \left( v_1 + (P(q)+q)\alpha \right).
\end{equation*}
Our goal is to show $M \ge c(d,r)$.

Grouping terms by degree and rearranging, we can write
\begin{equation*}
	\left( v_j + \sum_{i=1}^{j-1} \binom{q}{j-i} v_i + \binom{q}{j} \alpha \right)_{j=1}^d = (v_1, \ldots, v_d) + q(\alpha, v_1, \ldots, v_{d-1}) + \ldots + \binom{q}{d} (0, \ldots, 0, \alpha).
\end{equation*}
For notational convenience, let $v_0 = \alpha$.
Expand $v_i = (v_{i,1}, \ldots, v_{i,l})$ in coordinates for each $i \in \{0, 1, \ldots, d\}$.
Put $F_0 = \{1, \ldots, l\}$.
The assumption $\overline{\{q\alpha : q \in \Q\}} = (\A/\Q)^l$ means that $v_{0,1}, \ldots, v_{0,l}$ are linearly independent over $\Q$.
By an inductive construction, we can choose sets $F_i \subseteq \{1, \ldots, l\}$ such that for each $k \in \{0, 1, \ldots, d-1\}$,
\begin{equation*}
	\{v_{i,j} : 0 \le i \le k, j \in F_i\}
\end{equation*}
is a basis for the subspace of $(\A/\Q)$ spanned by $\{v_{i,j} : 0 \le i \le k, 1 \le j \le l\}$.
By construction, for each $i \in \{0, 1, \ldots, d-1\}$, there is a linear map $L_i : (\A/\Q)^{F_0 \times \ldots \times F_i} \to (\A/\Q)^{\{1, \ldots, l\} \setminus F_i}$ such that $(v_{i,j})_{j \notin F_i} = L_i \left( (v_{i',j'})_{i' \le i, j' \in F_{i'}} \right)$.
We can thus rewrite
\begin{multline*}
	(\alpha, v_1, \ldots, v_{d-1}) = \left( v_0, \left( (v_{1,j})_{j \in F_1}, L_1(v_0, (v_{1,j})_{j \in F_1}) \right), \ldots \right. \\
	 \left. \ldots, \left( (v_{d-1,j})_{j \in F_{d-1}}, L_{d-1}(v_0, (v_{1,j})_{j \in F_1}, \ldots, (v_{d-1,j})_{j \in F_{d-1}}) \right) \right).
\end{multline*}

Write $P(x) + x = \sum_{j=0}^d a_j \binom{x}{j}$.
Then applying Proposition \ref{prop: polynomial equidistribution},
\begin{multline*}
	M = \sum_{k=1}^r \int_{(\A/\Q)^{F_0 \times \ldots \times F_{d-1}} \times \ldots \times (\A/\Q)^{F_0}} g_k\left( x^1_0, x^2_0 + \left( x^1_1, L_1(x^1_0, x^1_1) \right), \ldots \right. \\
	 \left. \ldots, x^d_0 + \left( x^{d-1}_1, L_1(x^{d-1}_0, x^{d-1}_1) \right) + \ldots + \left( x^1_{d-1}, L_{d-1}(x^1_0, x^1_1, \ldots, x^1_{d-1}) \right) \right) \cdot \\
	 \cdot \int_{(\A/\Q)^{l(d-1)}} g_k \left( a_0 \alpha + \sum_{j=1}^d a_j x^j_0, z \right)~dz~d\bm{x},
\end{multline*}
where $g_k(x_1, \ldots, x_d) = f_k(v_1+x_1, \ldots, v_d + x_d)$.
Performing a change of variables to replace $a_0 \alpha + \sum_{j=1}^d a_j x^j_0$ by $x^d_0$, we have
\begin{multline*}
	M = \sum_{k=1}^r \int_{(\A/\Q)^{F_0 \times \ldots \times F_{d-1}} \times \ldots \times (\A/\Q)^{F_0}} g_k\left( x^1_0, x^2_0 + \left( x^1_1, L_1(x^1_0, x^1_1) \right), \ldots \right. \\
	 \left. \ldots, \frac{1}{a_d} x^d_0 + \left( x^{d-1}_1, L_1(x^{d-1}_0, x^{d-1}_1) \right) + \ldots + \left( x^1_{d-1}, L_{d-1}(x^1_0, x^1_1, \ldots, x^1_{d-1}) \right)  - \frac{a_0}{a_d} \alpha - \sum_{j=1}^{d-1} \frac{a_j}{a_d} x^j_0 \right) \cdot \\
	 \cdot \int_{(\A/\Q)^{l(d-1)}} g_k \left( x^d_0, z \right)~dz~d\bm{x}.
\end{multline*}

Define functions $h_k : \left( (\A/\Q)^{F_0 \times \ldots \times F_{d-1}} \right)^d \to [0,1]$ by
\begin{multline*}
	h_k(\bm{x}^1, \bm{x}^2, \ldots, \bm{x}^d) = g_k\left( x^1_0, x^2_0 + \left( x^1_1, L_1(x^1_0, x^1_1) \right), \ldots \right. \\
	 \left. \ldots, \frac{1}{a_d} x^d_0 + \left( x^{d-1}_1, L_1(x^{d-1}_0, x^{d-1}_1) \right) + \ldots + \left( x^1_{d-1}, L_{d-1}(x^1_0, x^1_1, \ldots, x^1_{d-1}) \right)  - \frac{a_0}{a_d} \alpha - \sum_{j=1}^{d-1} \frac{a_j}{a_d} x^j_0 \right).
\end{multline*}
Using translation invariance in the $z$-coordinate and writing $Y = (\A/\Q)^{F_0 \times \ldots \times F_{d-1}}$, the expression for $M$ simplifies to
\begin{equation} \label{eq: M Ramsey count}
	M = \sum_{k=1}^r \int_{Y^{2d-1}} h_k(y_1, \ldots, y_d) h_k(y_d, \ldots, y_{2d-1})~d\bm{y}.
\end{equation}


\subsection{Applying Ramsey's theorem}

We will apply a form of Ramsey's theorem for ordered hypergraphs to bound the right hand side of \eqref{eq: M Ramsey count} from below (in terms of $r$ and $d$).
Given a set $S$ and a number $l \in \N$, we denote by $\binom{S}{l}$ the family of all $l$-element subsets of $S$.
An \emph{$l$-uniform hypergraph} is a pair $H = (V, E)$, where $V$ is a finite set of \emph{vertices} and $E \subseteq \binom{V}{l}$ is the set of \emph{edges}.
If the vertex set is totally ordered (we will take $V = \{1, \ldots, m\}$ for some $m \in \N$), we say that $H$ is an \emph{ordered $l$-uniform hypergraph}.
The \emph{complete ordered $l$-uniform hypergraph on $n$ vertices}, which we denote by $K^{(l)}_n$, is the hypergraph with vertex set $\{1, \ldots, n\}$ and edge set $\binom{\{1, \ldots, n\}}{l}$.

Suppose $H$ is an ordered $l$-uniform hypergraph on the vertex set $\{1, \ldots, m\}$ and $G$ is an ordered $l$-uniform hypergraph on the vertex set $\{1, \ldots, n\}$ for some $m < n \in \N$.
We say that $G$ \emph{contains an ordered copy} of $H$ if there is a strictly increasing map $f : \{1, \ldots, m\} \to \{1, \ldots, n\}$ such that every edge $\{i,j\}$ in $H$ maps to an edge $\{f(i), f(j)\}$ in $G$.
The $r$-color \emph{ordered Ramsey number} of an ordered $l$-uniform hypergraph $H$, denoted by $R_{<}(H,r)$, is the smallest number $n$ such that every $r$-coloring of the edges of $K^{(l)}_n$ contains a monochromatic ordered copy of $H$.
It is an easy exercise to check that the ordered Ramsey number of a complete ordered hypergraph coincides with the usual Ramsey number of the unordered version of the complete hypergraph.
In particular, $R_{<}(H,r)$ exists and is finite by Ramsey's theorem.

Ordered Ramsey numbers were formally introduced only in the last decade (see \cite{cfls, bckk}), but several early results in Ramsey theory such as the Erd\H{o}s--Szekeres theorem on monotone paths and the ``happy ending problem,'' also of Erd\H{o}s and Szekeres, have natural formulations in the language of ordered Ramsey numbers of graphs or hypergraphs.
We refer the reader to the introduction in \cite{cfls} and \cite[Section 1.2]{bckk} for more discussion.

The following measure-theoretic version of Ramsey's theorem for ordered hypergraphs (applied to the hypergraph on $2d-1$ vertices with edges $\{1, \ldots, d\}$ and $\{d, \ldots, 2d-1\}$) gives a lower bound on $M$ via \eqref{eq: M Ramsey count} and thus establishes Theorem \ref{thm: coloring skew product average}.

\begin{theorem} \label{thm: measure theory Ramsey}
	Let $H$ be an ordered $l$-uniform hypergraph on vertex set $\{1, \ldots, m\}$, and let $r \in \N$.
	There exists a constant $c = c(H,r) > 0$ satisfying the following:
	for any probability space $(X, \mu)$ and any measurable functions $\phi_1, \ldots, \phi_r : X^l \to [0,1]$ satisfying $\sum_{k=1}^r \phi_k = 1$, one has
	\begin{equation*}
		\sum_{k=1}^r \int_{X^m} \prod_{e \in E(H)} \phi_k((x_j)_{j \in e})~d\mu^m(x_1, \ldots, x_m) \ge c.
	\end{equation*}
\end{theorem}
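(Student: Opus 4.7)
The plan is to interpret the functions $\phi_1, \ldots, \phi_r$ as a randomized edge-coloring of a complete ordered hypergraph, apply the ordered Ramsey theorem pointwise, and then de-randomize by integration.

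First, I would fix $N = R_<(H, r)$, which is finite by the classical Ramsey theorem. Given a point $x = (x_1, \ldots, x_N) \in X^N$, I would independently color each $l$-subset $S = \{i_1 < \ldots < i_l\} \subseteq \{1, \ldots, N\}$ by a color $c(S) \in \{1, \ldots, r\}$ with conditional probability
\[
    \Pr[c(S) = k \mid x] = \phi_k(x_{i_1}, \ldots, x_{i_l}),
\]
which is legitimate since $\phi_k \ge 0$ and $\sum_{k=1}^r \phi_k = 1$.

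By the definition of $N$, for every realization of $c$ and every $x$ the resulting coloring of $K^{(l)}_N$ contains at least one monochromatic ordered copy of $H$. Writing $\mathcal{F}$ for the set of strictly increasing maps $f : \{1, \ldots, m\} \to \{1, \ldots, N\}$, this pointwise guarantee translates to the almost sure inequality
\[
    \sum_{k=1}^r \sum_{f \in \mathcal{F}} \prod_{e \in E(H)} \mathbf{1}_{c(f(e)) = k} \ge 1.
\]
Taking expectation over the edge-colors (which are independent given $x$) and using that each edge-color has marginal probability $\phi_k$ of being $k$, one obtains the pointwise estimate
\[
    \sum_{k=1}^r \sum_{f \in \mathcal{F}} \prod_{e \in E(H)} \phi_k\bigl((x_{f(j)})_{j \in e}\bigr) \ge 1 \quad \text{for every } x \in X^N.
\]

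Finally I would integrate against $\mu^N$. For each fixed $f \in \mathcal{F}$ the integrand depends only on the $m$ coordinates $x_{f(1)}, \ldots, x_{f(m)}$, so Fubini together with the fact that the remaining coordinates integrate to $1$ gives that the $\mu^N$-integral equals $\int_{X^m} \prod_{e \in E(H)} \phi_k\bigl((y_j)_{j \in e}\bigr) \, d\mu^m$, independent of $f$. Since $|\mathcal{F}| = \binom{N}{m}$, this yields the theorem with explicit constant $c(H, r) = 1/\binom{R_<(H, r)}{m}$. I do not anticipate a real obstacle here: once the $\phi_k$ are interpreted as a random coloring, the argument is essentially a transcription of the ordered Ramsey theorem into measure-theoretic language.
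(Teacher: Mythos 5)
Your proposal is correct and follows essentially the same argument as the paper: both interpret $\phi_1,\ldots,\phi_r$ as a randomized edge-coloring of $K^{(l)}_{R_<(H,r)}$, invoke the ordered Ramsey theorem to guarantee a monochromatic copy of $H$ almost surely, and then compute the expected number of such copies to extract the bound $c(H,r) = \binom{R_<(H,r)}{m}^{-1}$. The only cosmetic difference is that you condition on the vertices first and then integrate over $\mu^N$, whereas the paper takes the total expectation in one step; the computation is identical.
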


\begin{proof}
	The idea of the proof of Theorem \ref{thm: measure theory Ramsey} is to view the functions $\phi_1, \ldots, \phi_r$ as a model for a random $r$-coloring of the edges of a complete $l$-uniform hypergraph in such a way that
	\begin{equation*}
		\sum_{k=1}^r \int_{X^m} \prod_{e \in E(H)} \phi_k((x_j)_{j \in e})~d\mu^m(x_1, \ldots, x_m)
	\end{equation*}
	computes the expected proportion of ordered copies of $H$ that are monochromatic.
	
	Let $n = R_<(H,r)$.
	Define a random $r$-coloring of the edges of $K^{(l)}_n$ as follows.
	First, we let $v_1, \ldots, v_n \in X$ be i.i.d. random variables sampled according to the probability measure $\mu$.
	Then, for each $1 \le i_1 < i_2 < \dots < i_l \le n$, we assign the color of the edge $c(\{i_1, \ldots, i_l\}) \in \{1, \ldots, r\}$ as the random variable taking value $k$ with probability $\phi_k(v_{i_1}, \ldots, v_{i_l})$, with the color assignment done independently for the different subsets $\{i_1, \ldots, i_l\}$.
	
	For each $k \in \{1, \ldots, r\}$, let
	\begin{equation*}
		M_k = \{(i_1, \ldots, i_m) : 1 \le i_1 < \ldots < i_m \le n, \forall e \in E(H), c(\{i_j : j \in e\}) = k\}
	\end{equation*}
	be the (random) set of monochromatic ordered copies of $H$ of color $k$, and let $M = \bigcup_{k=1}^r M_k$.
	By the choice of $n$ via Ramsey's theorem,
	\begin{equation*}
		|M| \ge 1.
	\end{equation*}
	On the other hand, we can compute the expected number of monochromatic ordered copies of $H$ by
	\begin{multline*}
		\mathbb{E}[|M|] = \sum_{k=1}^r \mathbb{E}[|M_k|] = \sum_{k=1}^r \sum_{1 \le i_1 < \ldots < i_m \le n} \mathbb{P}[\forall e \in E(H), c(\{i_j : j \in e\}) = k] \\
		 = \sum_{k=1}^r \sum_{1 \le i_1 < \ldots < i_m \le n} \int_{X^m} \prod_{e \in E(H)} \mathbb{P}[c(\{i_j : j \in e\}) = k \mid (v_{i_1}, \ldots, v_{i_m}) = (x_1, \ldots, x_m)]~d\mu^m(x_1, \ldots, x_m) \\
		 = \binom{n}{m} \sum_{k=1}^r \int_{X^m} \prod_{e \in E(H)} \phi_k((x_j)_{j \in e})~d\mu^m(x_1, \ldots, x_m)
	\end{multline*}
	Thus,
	\begin{equation*}
		\sum_{k=1}^r \int_{X^m} \prod_{e \in E(H)} \phi_k((x_j)_{j \in e})~d\mu^m(x_1, \ldots, x_m) \ge \binom{R_{<}(H,r)}{m}^{-1}.
	\end{equation*}
\end{proof}


\section{A polynomial Wiener--Wintner theorem for $\Q$-actions} \label{sec: ww}

The goal of this section is to prove Theorem \ref{thm: polynomial ww adelic}, restated below for convenience.

{\renewcommand\footnote[1]{}\WW*}

Our method of proof is based on a proof of a corresponding result for totally ergodic $\Z$-systems due to Lesigne \cite{lesigne}.


\subsection{Spectral disjointness}

Given an ergodic $\Q$-system, we denote by $\sigma(T)$ the group of eigenvalues of $T$:
\begin{equation*}
	\sigma(T) = \left\{ t \in \A/\Q : \exists g \in L^2(\mu), g \ne 0~\text{such that}~\forall q \in \Q, T^q g = e_{\Q}(qt) g \right\}
\end{equation*}
We then let $\sigma'(T) = \{qt : q \in \Q, t \in \sigma(T)\}$ be the $\Q$-vector space spanned by $\sigma(T)$.
The sets $\sigma(T)$ and $\sigma'(T)$ are countable, since $L^2(\mu)$ is a separable Hilbert space.

\begin{lemma} \label{lem: spectral disjointness}
	Let $(X, \mu, T)$ be an ergodic $\Q$-system.
	Let $\Phi$ be a tempered F{\o}lner sequence in $\Q$.
	Let $f \in L^1(\mu)$.
	Then there exists a set $X_0 \subseteq X$ with $\mu(X_0) = 1$ satisfying the following property: if $d \in \N$, $\alpha \notin \sigma'(T)$, and $Q(x) \in \A[x]$ with $\deg{Q} < d$, then
	\begin{equation*}
		\lim_{N \to \infty} \frac{1}{|\Phi_N|} \sum_{q \in \Phi_N} f(T^qx) \cdot e_{\Q} \left( q^d \alpha + Q(q) \right) = 0.
	\end{equation*}
	for every $x \in X_0$.
\end{lemma}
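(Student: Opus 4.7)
I will prove the lemma by induction on $d$, using the van der Corput lemma (Lemma~\ref{lem: vdC}) to reduce the polynomial degree at each step. By a standard density argument using the maximal ergodic inequality (valid via Lindenstrauss's pointwise theorem for tempered F{\o}lner sequences), it is enough to treat $f \in L^\infty(\mu)$, which I assume from now on.

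\textbf{Base case ($d=1$).} Here $Q$ is constant, so the claim reduces to a Wiener--Wintner-type statement: there is a full-measure set $X_0$ such that for every $\alpha \notin \sigma'(T)$,
\begin{equation*}
\frac{1}{|\Phi_N|} \sum_{q \in \Phi_N} f(T^qx) e_\Q(q\alpha) \to 0, \qquad x \in X_0.
\end{equation*}
I would decompose $f = f_s + f_u$, where $f_s$ is measurable with respect to the Kronecker factor $\mZ$ and $f_u \perp \mZ$. The $f_u$ contribution is controlled by van der Corput applied to $u(q) = f_u(T^qx) e_\Q(q\alpha)$: one has $\innprod{u(q+r)}{u(q)} = (\overline{f_u} \cdot T^r f_u)(T^qx) \cdot e_\Q(r\alpha)$, and Lindenstrauss's pointwise ergodic theorem yields $z(r) = e_\Q(r\alpha) \int \overline{f_u} \cdot T^r f_u\,d\mu$. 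The modulus of $z(r)$ is independent of $\alpha$, and it averages to $0$ in $r$ because $f_u \perp \mZ$ (item (vii) of Proposition~\ref{prop: Kronecker}); thus vdC delivers a null set that is \emph{independent of} $\alpha$. The $f_s$ contribution is handled by eigenfunction expansion $f_s = \sum_\beta c_\beta h_\beta$ indexed by $\beta \in \sigma(T)$: since $\sigma(T)$ is a subgroup of $\A/\Q$ and $\alpha \notin \sigma'(T) \supseteq \sigma(T)$, every $\alpha+\beta$ is nonzero, and the resulting character average $\frac{1}{|\Phi_N|} \sum_q e_\Q(q(\alpha+\beta))$ vanishes along any F{\o}lner sequence.

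\textbf{Inductive step.} Suppose the lemma holds up to degree $d-1$, with $d \ge 2$. For each $r \in \Q$, the inductive hypothesis applied to $\overline{f} \cdot T^r f \in L^\infty(\mu)$ yields a full-measure set $X_0(r)$. Let $X_0$ be the intersection of these sets over the countable set $\Q$, further intersected with the full-measure set on which Lindenstrauss's pointwise theorem applies to $|f|^2$. Fix $x \in X_0$, $\alpha \notin \sigma'(T)$, and $Q \in \A[x]$ with $\deg Q < d$, and set $u(q) = f(T^qx) e_\Q(q^d\alpha + Q(q))$. A direct computation gives
\begin{equation*}
\innprod{u(q+r)}{u(q)} = (\overline{f} \cdot T^r f)(T^qx) \cdot e_\Q\bigl(dr q^{d-1}\alpha + \tilde Q_r(q)\bigr),
\end{equation*}
where $\tilde Q_r \in \A[x]$ has degree strictly less than $d-1$. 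For $r \ne 0$ the leading coefficient $dr\alpha$ still lies outside $\sigma'(T)$, because $\sigma'(T)$ is by construction a $\Q$-vector space; hence the inductive hypothesis, applied pointwise at $x \in X_0(r)$, gives $z(r) = 0$. Since $z(0) \le \norm{L^\infty(\mu)}{f}^2$ is bounded, a single application of Lemma~\ref{lem: vdC} with $K = 0$ completes the argument for this $x$ and this pair $(\alpha, Q)$.

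\textbf{Main obstacle.} The principal difficulty is the base case: arranging the exceptional null set to be independent of the uncountable family of $\alpha$. This is exactly why I split off the Kronecker part separately and rely on the $\alpha$-independence of the autocorrelation bounds $|\int \overline{f_u} \cdot T^r f_u\,d\mu|$ coming from $f_u \perp \mZ$. Once the base case is in hand, the induction proceeds cleanly, because the $\Q$-vector space structure of $\sigma'(T)$ guarantees that differencing the phase preserves the spectral-avoidance hypothesis at each step.
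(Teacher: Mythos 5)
Your proof follows the same overall induction-plus-van-der-Corput skeleton as the paper's, and the inductive step is essentially identical: fix $r \ne 0$, compute $u(q+r)\overline{u(q)}$, observe that the new leading coefficient is a nonzero rational multiple of $\alpha$ and hence still lies outside the $\Q$-subspace $\sigma'(T)$, then intersect over $r \in \Q$ and invoke Lemma~\ref{lem: vdC}. Where you diverge is in the base case. The paper handles $d=1$ by a soft joining argument: reduce to $f$ continuous, take $X_0$ to be the set of generic points (which is then automatically preserved under differencing, since $\Delta_r f$ is still continuous, so a single $X_0$ works at every level of the induction), and observe that any weak$^*$ limit of $\frac{1}{|\Phi_{N_k}|}\sum_q \delta_{T^qx}\times\delta_{q\alpha}$ is a joining of $(X,\mu,T)$ with a rotation whose discrete spectrum avoids $\sigma(T)$, so spectral disjointness forces it to be a product and the twisted average vanishes. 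You instead decompose $f = f_s + f_u$ relative to the Kronecker factor, handle $f_u$ via vdC with an $\alpha$-independent autocorrelation bound (which is correct and a clean way to sidestep the uncountability of $\alpha$), and handle $f_s$ by eigenfunction expansion. Both routes work in principle; the paper's joining argument is somewhat cleaner because it removes the need to keep track of how the exceptional set accumulates across degrees, and it avoids the technical issue flagged below.

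There is a genuine (though fixable) gap in your treatment of $f_s$. You write $f_s = \sum_{\beta\in\sigma(T)} c_\beta h_\beta$ and argue termwise that each character average $\frac{1}{|\Phi_N|}\sum_q e_\Q(q(\alpha+\beta))$ vanishes; but this series converges only in $L^2(\mu)$, so you cannot simply exchange the infinite sum with the limit $N\to\infty$ pointwise. To make this rigorous you need to truncate: write $f_s = f_s^{(M)} + (f_s - f_s^{(M)})$ with $f_s^{(M)}$ a finite partial sum, handle the finite part termwise, and control the tail by
\begin{equation*}
\limsup_{N\to\infty}\left|\frac{1}{|\Phi_N|}\sum_{q\in\Phi_N}(f_s-f_s^{(M)})(T^qx)\,e_\Q(q\alpha)\right| \le \limsup_{N\to\infty}\frac{1}{|\Phi_N|}\sum_{q\in\Phi_N}\bigl|(f_s-f_s^{(M)})(T^qx)\bigr|,
\end{equation*}
which by Lindenstrauss's pointwise ergodic theorem converges, for $x$ in a full-measure set \emph{independent of $\alpha$}, to $\norm{L^1(\mu)}{f_s-f_s^{(M)}}$; intersecting over a countable sequence of $M$ and letting $M\to\infty$ closes the gap. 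A related omission: since the lemma asks for a single $X_0$ valid for all $d\in\N$, you must also intersect your sets $X_0(r)$ over $d$ as well as over $r$ — still a countable intersection, so not an obstruction, but it should be said. Also note the eigenfunction identity $h_\beta(T^qx)=e_\Q(q\beta)h_\beta(x)$ holds only almost everywhere, so these relations for all $q\in\Q$ and all $\beta\in\sigma(T)$ contribute another countable intersection to $X_0$.
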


\begin{remark}
	Taking $f = 1$, Lemma \ref{lem: spectral disjointness} says that polynomial sequences are equidistributed in $\A/\Q$.
	This was previously observed (along with a multidimensional generalization) in \cite[Theorem 5.2]{bm_vdC}.
\end{remark}

\begin{proof}
	Note that
	\begin{equation*}
		\sup_{N \in \N} \sup_{P(x) \in \A[x]} \left| \frac{1}{|\Phi_N|} \sum_{q \in \Phi_N} f(T^qx) \cdot e_{\Q}(P(q)) \right| \le \sup_{N \in \N} \frac{1}{|\Phi_N|} \sum_{q \in \Phi_N} |f(T^qx)|.
	\end{equation*}
	Therefore, by the maximal ergodic theorem (see \cite[Theorem 3.2]{lindenstrauss}), it suffices to prove the lemma under the assumption $f \in C(X)$.
	
	Let $X_0$ be the set of generic points for $\mu$ along $\Phi$.
	That is, $x \in X_0$ if and only if
	\begin{equation*}
		\lim_{N \to \infty} \frac{1}{|\Phi_N|} \sum_{q \in \Phi_N} g(T^qx) = \int_X g~d\mu
	\end{equation*}
	for every $g \in C(X)$.
	By Lindenstrauss's pointwise ergodic theorem (see \cite[Theorem 1.2]{lindenstrauss}), $\mu(X_0) = 1$.
	We will prove that this set $X_0$ has the desired property.
	
	We induct on $d$.
	Suppose $d=1$.
	We want to show: if $\alpha \notin \sigma'(T)$ and $x \in X_0$, then
	\begin{equation*}
		\lim_{N \to \infty} \frac{1}{|\Phi_N|} \sum_{q \in \Phi_N} f(T^qx) \cdot e_{\Q}(q\alpha) = 0.
	\end{equation*}
	Let $(\Phi_{N_k})_{k \in \N}$ be an arbitrary subsequence such that
	\begin{equation*}
		\lim_{k \to \infty} \frac{1}{|\Phi_{N_k}|} \sum_{q \in \Phi_{N_k}} f(T^qx) \cdot e_{\Q}(q\alpha)
	\end{equation*}
	exists.
	Passing to a further subsequence if necessary, we may assume that the sequence
	\begin{equation*}
		\frac{1}{|\Phi_{N_k}|} \sum_{q \in \Phi_{N_k}} \delta_{T^q x} \times \delta_{q\alpha}
	\end{equation*}
	converges in the weak* topology to a probability measure $\lambda$ on $X \times \A/\Q$.
	The measure $\lambda$ is a joining of $(X, \mu, T)$ and $(\A/\Q, m_{\A/\Q}, R_{\alpha})$, where $R_{\alpha}^q z = z + q\alpha$ for $z \in \A/\Q$ and $q \in \Q$.
	The system $(\A/\Q, m_{\A/\Q}, R_{\alpha})$ is a system with discrete spectrum, and $\sigma(R_{\alpha}) = \{q\alpha : q \in \Q\}$ is disjoint from $\sigma(T)$.
	Therefore, the systems $(X, \mu, T)$ and $(\A/\Q, m_{\A/\Q}, R_{\alpha})$ are spectrally disjoint, hence disjoint in the sense of Furstenberg (this implication was shown in \cite[Theorem 2.1]{hp} for $\Z$-systems and generalizes to actions of abelian groups; see, e.g., \cite[p. 114]{kl}).
	Thus, $\lambda = \mu \times m_{\A/\Q}$, so
	\begin{equation*}
		\lim_{k \to \infty} \frac{1}{|\Phi_{N_k}|} \sum_{q \in \Phi_{N_k}} f(T^qx) \cdot e_{\Q}(q\alpha) = \int_{X \times \A/\Q} f \otimes e_{\Q}~d\lambda = \left( \int_X f~d\mu \right) \left( \int_{\A/\Q} e_{\Q}~dm_{\A/\Q} \right) = 0.
	\end{equation*}
	
	Suppose now that the lemma holds for some $d \in \N$.
	Let $\alpha \in \A/\Q$ with $\alpha \notin \sigma'(T)$, and suppose $Q(x) \in \A[x]$ is a polynomial of degree at most $d$.
	Let $u(q) = f(T^qx) \cdot e_{\Q} (q^{d+1} \alpha + Q(q)) \in \C$ for $q \in \Q$.
	Fix $r \in \Q \setminus \{0\}$, and note that
	\begin{equation*}
		u(q+r) \overline{u(q)} = (\Delta_r f)(T^qx) \cdot e_{\Q} \left( q^d r \alpha + Q_r(q) \right)
	\end{equation*}
	for some polynomial $Q_r$ of degree $\deg{Q_r} \le d-1$.
	We have $r\alpha \notin \sigma'(T)$ and $\Delta_r f \in C(X)$, so by the induction hypothesis,
	\begin{equation*}
		\lim_{N \to \infty} \frac{1}{|\Phi_N|} \sum_{q \in \Phi_N} u(q+r) \overline{u(q)} = 0.
	\end{equation*}
	Hence, by Lemma \ref{lem: vdC},
	\begin{equation*}
		\lim_{N \to \infty} \frac{1}{|\Phi_N|} \sum_{q \in \Phi_N} u(q) = 0
	\end{equation*}
	as desired.
\end{proof}


\subsection{Systems with divisible discrete spectrum and a skew-product construction}

\begin{lemma} \label{lem: divisible extension}
	Let $(X, \mu, T)$ be an ergodic $\Q$-system.
	There exists an ergodic extension $(X', \mu', T')$ of $(X, \mu, T)$ such that $\sigma(T') = \sigma'(T)$.
\end{lemma}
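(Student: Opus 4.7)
The plan is to build $(X', \mu', T')$ as a fiber product over the Kronecker factor. By the Halmos--von Neumann theorem, the Kronecker factor of $(X, \mu, T)$ is isomorphic to a rotation $R^q z = z + \psi(q)$ on $Z = \widehat{\sigma(T)}$ (the Pontryagin dual of the discrete eigenvalue group), where $\psi(q)$ is the character $t \mapsto e_{\Q}(qt)$ on $\sigma(T)$; denote the factor map by $\pi_Z : X \to Z$. Analogously, set $Z' = \widehat{\sigma'(T)}$ with rotation $R'^q z' = z' + \psi'(q)$. Because $\sigma(T) \subseteq \sigma'(T)$ as discrete subgroups of $\A/\Q$, dualizing produces a surjective continuous homomorphism $\pi : Z' \to Z$ with kernel $K = \widehat{\sigma'(T)/\sigma(T)}$, and $\psi'$ is a lift of $\psi$. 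I then take
\[
	X' = X \times_Z Z' = \{(x, z') \in X \times Z' : \pi_Z(x) = \pi(z')\},
\]
equipped with the relative product measure $\mu' = \int_Z (\mu_z \times m_{\pi^{-1}(z)})\, dm_Z(z)$ (using the disintegration $\mu = \int_Z \mu_z\, dm_Z$ along $\pi_Z$ and the Haar measure on the $K$-torsor $\pi^{-1}(z) \subseteq Z'$) and the action $T'^q(x, z') = (T^q x, z' + \psi'(q))$. One checks that $\mu'$ is $T'$-invariant and that the first projection realizes $(X, \mu, T)$ as a factor of $(X', \mu', T')$. Moreover, each $t \in \sigma'(T) = \widehat{Z'}$ yields an eigenfunction (namely $\chi_t$ composed with the second projection) of $T'$ with eigenvalue $t$, so $\sigma'(T) \subseteq \sigma(T')$.

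For both the reverse inclusion and the ergodicity of $(X', \mu', T')$, I would fix a Borel section $s : Z \to Z'$ of $\pi$ and transport the construction to $X \times K$ via $(x, z') \mapsto (x, z' - s(\pi_Z(x)))$. Under this measure-space isomorphism $(X', \mu') \cong (X \times K, \mu \times m_K)$, the action becomes a skew-product $T'^q(x, k) = (T^q x, k + \phi_q(x))$ with cocycle $\phi_q(x) = s(\pi_Z(x)) + \psi'(q) - s(\pi_Z(T^q x)) \in K$. Given any eigenfunction $f \in L^2(\mu')$ of $T'$ with eigenvalue $t$, Fourier expanding along $K$ as $f(x,k) = \sum_{\chi \in \widehat{K}} f_\chi(x) \chi(k)$ decouples the eigenfunction equation into $f_\chi(T^q x)\, \chi(\phi_q(x)) = e_{\Q}(qt) f_\chi(x)$ for each $\chi \in \widehat{K}$. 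Lifting $\chi \in \widehat{K} = \sigma'(T)/\sigma(T)$ to a character $\chi_{t_\chi} \in \widehat{Z'}$ with $t_\chi \in \sigma'(T)$ and evaluating $\chi_{t_\chi}$ on the explicit cocycle formula yields
\[
	\chi(\phi_q(x)) = \frac{H(x)}{H(T^q x)} \cdot e_{\Q}(q t_\chi), \qquad H = \chi_{t_\chi} \circ s \circ \pi_Z : X \to S^1,
\]
so the eigenfunction equation rearranges to $(f_\chi/H)(T^q x) = e_{\Q}(q(t - t_\chi))(f_\chi/H)(x)$. Thus whenever $f_\chi \not\equiv 0$, the function $f_\chi/H$ is an eigenfunction of $T$ with eigenvalue $t - t_\chi$, forcing $t - t_\chi \in \sigma(T)$ and therefore $t \in t_\chi + \sigma(T) \subseteq \sigma'(T)$, which proves $\sigma(T') \subseteq \sigma'(T)$.

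Specializing to $t = 0$ completes the ergodicity argument: for a $T'$-invariant $f$, every nontrivial $\chi \in \widehat{K}$ with $f_\chi \not\equiv 0$ would force $t_\chi \in \sigma(T)$, i.e. $\chi$ trivial on $K$, a contradiction; hence only $f_0$ survives, and $f_0 \in L^2(\mu)$ is $T$-invariant, hence constant by ergodicity of $T$. The main obstacle is the bookkeeping inside the Fourier-on-the-fiber calculation, and in particular verifying that the conclusion $t - t_\chi \in \sigma(T)$ does not depend on the chosen lift of $\chi$ to $\widehat{Z'}$ (two lifts differ by an element of $\sigma(T)$, which shifts $t_\chi$ within its $\sigma(T)$-coset and multiplies $H$ by a $T$-eigenfunction on $X$, leaving the coset $t - t_\chi + \sigma(T)$ unchanged), and that all of the intermediate functions remain in the correct function spaces so that the Fourier decomposition and the division by $H$ are legitimate.
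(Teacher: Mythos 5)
Your proof is correct in its measure-theoretic content, and the construction matches in spirit what the paper actually does: the paper cites \cite[Theorem~3.2]{a_etds}, which (as the paper's own remark explains) builds $(X',\mu',T')$ as a relatively independent joining of $(X,\mu,T)$ with a group rotation; your relative product $\mu' = \int_Z (\mu_z \times m_{\pi^{-1}(z)})\,dm_Z(z)$ is precisely that joining, and your Fourier-mode-by-mode analysis on the skew-product model $(X \times K,\ \mu \times m_K)$ correctly establishes both ergodicity of $T'$ and the inclusion $\sigma(T') \subseteq \sigma'(T)$. Where the paper leaves all of this implicit through the citation, you supply a self-contained argument, which is a worthwhile expansion.

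The one thing to fix is topological bookkeeping. You take $X' = X \times_Z Z' = \{(x,z') : \pi_Z(x) = \pi(z')\}$ as the underlying space, but the Kronecker factor map $\pi_Z : X \to Z$ is in general only Borel measurable, not continuous. Consequently $X \times_Z Z'$ is merely a measurable (not necessarily closed) subset of $X \times Z'$, so it need not be a compact metric space; under the paper's definition, a $\Q$-system must live on a compact metric space with a continuous $\Q$-action. The paper's remark signals exactly this: one should take the full product $X \times Z'$ as the topological space (which is compact metric, with continuous action $T \times R'$ and continuous projection to $X$), and view $\mu'$ as a Borel probability measure on $X \times Z'$ that happens to be concentrated on the fiber product. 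With that one substitution, everything else in your argument—including the Borel section $s : Z \to Z'$, the skew-product coordinates $(x,k) \in X \times K$, and the cocycle $\phi_q$—goes through unchanged, since those manipulations are all measure-theoretic.
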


\begin{proof}
	This follows from \cite[Theorem 3.2]{a_etds}.
	There is one subtlety to point out: \cite[Theorem 3.2]{a_etds} is stated for abstract measure-preserving systems without any underlying topological structure.
	However, the extension $(X', \mu', T')$ is constructed as a joining of $(X, \mu, T)$ with an ergodic group rotation, so forgetting temporarily the measure $\mu'$, we can construct $(X',T')$ as a product of the topological system $(X, T)$ with a rotation on a compact group, so $(X',T')$ is a meaningful topological dynamical $\Q$-system, and $\mu'$ is a Borel measure on $X'$.
	Moreover, the canonical projection from the product space $X'$ onto $X$ is a continuous factor map.
\end{proof}

\begin{definition}
	We say that an ergodic $\Q$-system $(X, \mu, T)$ has \emph{divisible discrete spectrum} if the discrete spectrum $\sigma(T)$ is a divisible group.
\end{definition}

Note that $(X, \mu, T)$ has divisible discrete spectrum if and only if $\sigma(T) = \sigma'(T)$.
Hence, Lemma \ref{lem: divisible extension} shows that every ergodic $\Q$-system has an ergodic extension with divisible discrete spectrum.

\begin{lemma} \label{lem: adelic eigenfunction}
	Let $(X, \mu, T)$ be an ergodic $\Q$-system with divisible discrete spectrum.
	Suppose $\alpha \in \sigma(T)$.
	Then there exists a measurable function $g : X \to \A/\Q$ such that
	\begin{equation*}
		T^q g = g + q\alpha
	\end{equation*}
	for every $q \in \Q$.
\end{lemma}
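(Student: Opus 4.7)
The approach is to construct $g$ via Pontryagin duality. Since $\A/\Q$ is the Pontryagin dual of $\Q$ via the pairing $(r, z) \mapsto e_\Q(rz)$, an element $g(x) \in \A/\Q$ is uniquely determined by the character $r \mapsto e_\Q(rg(x))$ of $\Q$. Thus it suffices to produce, for each $r \in \Q$, a unit eigenfunction $h_r \colon X \to S^1$ with eigenvalue $r\alpha$ such that the family $(h_r)_{r \in \Q}$ is \emph{pointwise} multiplicative, meaning $h_{r_1+r_2}(x) = h_{r_1}(x) h_{r_2}(x)$ holds on a full-measure set; one then defines $g(x)$ by $e_\Q(r g(x)) = h_r(x)$.

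The first step is to construct such a coherent family. Divisibility of $\sigma(T)$ ensures $r\alpha \in \sigma(T)$ for every $r \in \Q$, so unit eigenfunctions exist for each such eigenvalue. Writing $\Q = \bigcup_{n \ge 1} \frac{1}{n!}\Z$, I fix a unit eigenfunction $h_1$ for $\alpha$ and inductively build $h_{1/n!}$ as a unit eigenfunction for $\alpha/n!$ satisfying $(h_{1/n!})^n = h_{1/(n-1)!}$ almost everywhere. This normalization is possible because, by ergodicity, each eigenspace is one-dimensional, so any unit eigenfunction $h'$ for $\alpha/n!$ satisfies $(h')^n = c \cdot h_{1/(n-1)!}$ for some $c \in S^1$; replacing $h'$ by $c^{-1/n} h'$ (any $n$-th root works) achieves the desired identity. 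Extending by $h_{k/n!} := h_{1/n!}^k$ produces a well-defined family of unit eigenfunctions that is multiplicative in $r$ almost surely and satisfies $T^q h_r = e_\Q(qr\alpha) h_r$ almost surely for each $q, r \in \Q$.

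Since only countably many almost-sure identities are in play, they can all be arranged to hold pointwise on a single $T$-invariant set $X_0$ of full measure. For $x \in X_0$, the map $r \mapsto h_r(x)$ is a bona fide group homomorphism $\Q \to S^1$, so Pontryagin duality produces a unique $g(x) \in \A/\Q$ with $e_\Q(rg(x)) = h_r(x)$ for all $r \in \Q$; I extend $g$ arbitrarily on $X \setminus X_0$. Measurability of $g$ follows because $x \mapsto (h_r(x))_{r \in \Q}$ is a measurable map to $(S^1)^\Q$ taking values in the Borel subset corresponding to $\hat{\Q} \cong \A/\Q$. The transformation identity $T^q g = g + q\alpha$ is then immediate on $X_0$, since both sides pair to $e_\Q(qr\alpha) h_r(x)$ against every $r \in \Q$. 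The main technical subtlety lies in the second step, namely upgrading the abstract divisibility of $\sigma(T)$ to a \emph{coherent} pointwise-multiplicative family of eigenfunctions; this is what forces the explicit normalization in the inductive construction together with the countable-intersection argument that consolidates all the almost-sure identities into pointwise ones on $X_0$.
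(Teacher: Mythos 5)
Your proof is correct, but it follows a genuinely different path from the paper's. The paper invokes the Halmos--von Neumann theorem to realize the Kronecker factor concretely as the rotation on $Z = \widehat{\sigma(T)}$, and then produces $g$ \emph{globally} by composing the factor map $X \to Z$ with the restriction-of-characters map $\pi_H \colon Z \to \hat{H}$ (where $H = \Q\alpha \subseteq \sigma(T)$, using divisibility) and the duality isomorphism $\hat{H} \cong \A/\Q$. You instead build $g$ \emph{pointwise}: you first construct a coherent, multiplicatively compatible family of unit eigenfunctions $(h_r)_{r \in \Q}$ with eigenvalues $r\alpha$ (using the factorial filtration $\Q = \bigcup_n \frac{1}{n!}\Z$, divisibility to guarantee existence, one-dimensionality of eigenspaces by ergodicity, and a constant normalization at each stage), and then apply Pontryagin duality $\hat{\Q} \cong \A/\Q$ fiberwise, after consolidating the countably many a.e.\ identities onto a single $T$-invariant full-measure set. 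The paper's route is shorter once the Halmos--von Neumann machinery is in hand and avoids the explicit bookkeeping of eigenfunction normalizations; your route is more elementary and self-contained, makes the a.e.-to-everywhere upgrade completely explicit, and does not require passing to a topological model of the Kronecker factor. Both are implementations of the same underlying duality, but yours would be easier to verify for a reader who does not want to unpack the Halmos--von Neumann isomorphism.
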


\begin{proof}
	The lemma states that there is a factor map $g : (X, \mu, T) \to (\A/\Q, m_{\A/\Q}, R_{\alpha})$, where $R_{\alpha}^q$ is the rotation by $q\alpha$ for $q \in \Q$.
	This can be seen as a consequence of the Halmos--von Neumann theorem together with the assumption that $\sigma(T)$ is divisible.
	We give a sketch below.
	
	If $\alpha = 0$, we can take $g$ to be a constant function.
	Suppose $\alpha \ne 0$.
	By the Halmos--von Neumann theorem (see, e.g., \cite[pp. 46--48]{halmos}), the Kronecker factor of $(X, \mu, T)$ is isomorphic to the rotational system $(Z, m_Z, R)$, where $Z$ is the dual group of $\sigma(T)$ and $R$ is the $\Q$-action given by
	\begin{equation*}
		(R^qz)(\beta) = e_{\Q}(q\beta) z(\beta)
	\end{equation*}
	for $q \in \Q$, $z \in Z$ and $\beta \in \sigma(T)$.
	Since $\sigma(T)$ is divisible, we have $H = \{q \alpha : q \in \Q\} \subseteq \sigma(T)$.
	The map $\pi_H : z \mapsto z|_H$ is a surjective group homomorphism from $Z$ to $\hat{H}$.
	The rotational action $R$ induces a rotation $S$ on $\hat{H}$ given by
	\begin{equation*}
		(S^q w)(r\alpha) = e_{\Q}(qr\alpha) w(r\alpha)
	\end{equation*}
	for $q, r \in \Q$ and $w \in \hat{H}$.
	Now, the map $q \mapsto q\alpha$ is an isomorphism from $\Q$ to $H$, which induces an isomorphism $\phi : \hat{H} \to \A/\Q$ determined by
	\begin{equation*}
		e_{\Q}(q\phi(w)) = w(q\alpha)
	\end{equation*}
	for $w \in \hat{H}$ and $q \in \Q$.
	
	Let us now consider the map $\phi \circ \pi_H : Z \to \A/\Q$.
	First, for any $z \in Z$, the element $(\phi \circ \pi_H)(z) \in \A/\Q$ is determined by the identity
	\begin{equation} \label{eq: map without rotation}
		e_{\Q} \left( r \cdot (\phi \circ \pi_H)(z) \right) = z(r\alpha) \qquad (\forall r \in \Q).
	\end{equation}
	Next, for any $q \in \Q$ and $z \in Z$, we have $(\phi \circ \pi_H)(R^q z) = \phi(S^q \pi_H(z))$, so for $r \in \Q$,
	\begin{equation} \label{eq: map with rotation}
		e_{\Q} \left( r \cdot (\phi \circ \pi_H)(R^q z) \right) = e_{\Q} \left( r \cdot \phi(S^q \pi_H(z)) \right) = (S^q \pi_H(z))(r\alpha) = (R^q z)(r\alpha) = e_{\Q}(qr\alpha) z(r\alpha).
	\end{equation}
	Combining \eqref{eq: map without rotation} and \eqref{eq: map with rotation}, we have
	\begin{equation*}
		(\phi \circ \pi_H)(R^qz) = (\phi \circ \pi_H)(z) + q\alpha
	\end{equation*}
	for $z \in Z$ and $q \in \Q$.
	We can then take $g$ to be the composition of $\phi \circ \pi_H$ with the factor map from $(X, \mu, T)$ to the Kronecker factor $(Z, m_Z, R)$.
\end{proof}

\begin{lemma} \label{lem: ergodic skew-product}
	Let $(X, \mu, T)$ be an ergodic $\Q$-system.
	Suppose $g : X \to \A/\Q$ satisfies $T^q g = g + q\alpha$ for every $q \in \Q$.
	Then there exists $c \in \A/\Q$ such that for every $k \in \N$, the action
	\begin{multline*}
		S^q(x, y_1, \ldots, y_k) = \left( T^qx, y_1 + 2q (g(x)+c) + q^2\alpha, y_2 + 3q y_1 + 3q^2 (g(x)+c) + q^3\alpha, \ldots \right. \\
		 \left. \ldots, y_k + \sum_{j=1}^{k-1} \binom{k+1}{j} q^j y_{k-j} + (k+1) q^k (g(x)+c) + q^{k+1} \alpha \right)
	\end{multline*}
	is ergodic on $\left( X \times (\A/\Q)^k, \mu \times m_{\A/\Q}^k \right)$.
\end{lemma}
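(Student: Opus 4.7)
The plan is to prove this by induction on $k$, using at each step the Anzai--Halmos--Rokhlin criterion for ergodicity of compact abelian group extensions: if $(Y,\nu,R)$ is an ergodic $\Q$-system and $\phi \colon \Q\times Y \to \A/\Q$ is a measurable cocycle, then the skew product on $Y\times\A/\Q$ with cocycle $\phi$ is ergodic iff for every $r\in\Q\setminus\{0\}$ and every $\beta\in\A/\Q$, the relation $h(R^qy)/h(y)=e_\Q(r\phi(q,y)+q\beta)$ admits no measurable $h\colon Y\to S^1$. The case $k=0$ is the hypothesis. For the inductive step I view $S$ on level $k$ as a $\A/\Q$-extension of the level $k-1$ system via the cocycle
\[
\phi_k(q,x,\bm{y}) \;=\; \sum_{i=1}^{k-1}\binom{k+1}{i}q^i y_{k-i} \;+\; (k+1)q^k(g(x)+c) \;+\; q^{k+1}\alpha,
\]
and proceed to rule out cohomology of $e_\Q(r\phi_k)$ with any $\Q$-character for each $r\ne 0$.

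Suppose for contradiction that $h\colon X\times(\A/\Q)^{k-1}\to S^1$ satisfies the cohomology equation. Fourier-expand $h$ in the fiber coordinates, $h(x,\bm{y})=\sum_{\bm{s}\in\Q^{k-1}} h_{\bm{s}}(x)\, e_\Q(\bm{s}\cdot\bm{y})$. Because $\phi_k$ is $\Q$-linear in $\bm{y}$ with coefficients polynomial in $q$, substituting into the cohomology equation intertwines the coefficients $h_{\bm{s}}$ only along orbits of a unipotent action on $\Q^{k-1}$ whose top coordinate $s_{k-1}$ is invariant. Reading off characters with fixed top coordinate and descending induction on the top nonzero index reduces the problem to a single scalar cohomology equation on the base: for each $r\ne 0$ and some $\beta\in\A/\Q$, there should exist $h_0\colon X\to S^1$ with
\[
\frac{h_0(T^qx)}{h_0(x)} \;=\; e_\Q\!\left(r(k+1)q^k(g(x)+c) + rq^{k+1}\alpha + q\beta\right) \qquad\forall q\in\Q.
\]

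To rule this out, I use that $e_\Q(s(g+c))$ is a $T$-eigenfunction with eigenvalue $e_\Q(s\alpha)$ for every $s\in\Q$. Evaluating the hypothetical equation at $q=1,2,\dots$ produces a tower of compatible eigenfunction identities; combining them forces $h_0$ to be (up to a $T$-character) a specific expression in $g$ and $c$, and comparing the $q^{k+1}\alpha$-coefficient on both sides yields an explicit linear relation of the form $2rc\in L_{k,r,\beta}$ for some proper closed $\Q$-subspace $L_{k,r,\beta}\subsetneq\A/\Q$ (the relation reflects the fact that $\A/\Q$ is torsion-free while any solution $h_0$ forces a divisibility identity involving $r\alpha$ and $\sigma(T)$). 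Finally, I choose $c$ outside the countable union $\bigcup_{k,r,\beta}\bigl((2r)^{-1}L_{k,r,\beta}\bigr)$; each summand is a coset of a proper closed $\Q$-subspace and therefore of Haar measure zero, so such $c$ exists, and with this single choice all levels $k\ge 1$ are ergodic simultaneously.

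The main obstacle will be the Fourier decoupling in the inductive step: the $y$-dependent summands $\binom{k+1}{i}q^iy_{k-i}$ in $\phi_k$ mean that the coefficient equations for distinct $\bm{s}$'s are coupled via the unipotent $\Q^{k-1}$-action and cannot be separated by a one-shot Fourier inversion. The key point that keeps the argument tractable is that this action is unipotent with invariant top coordinate, so a descending induction on $\bm{s}$ — together with the inductive ergodicity at level $k-1$, which lets us absorb lower Fourier-order corrections into an adjustment of $h_0$ — reduces the full problem to the scalar base-case equation solved in the previous paragraph.
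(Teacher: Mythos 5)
Your inductive cocycle-criterion strategy is a legitimate alternative framing, but two steps do not hold up.

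First, the scalar equation you write at level $k$,
\begin{equation*}
\frac{h_0(T^qx)}{h_0(x)} = e_{\Q}\bigl(r(k+1)q^k(g(x)+c) + rq^{k+1}\alpha + q\beta\bigr),
\end{equation*}
is not what the Fourier analysis produces when $k\ge 2$. If you Fourier-expand $h(x,\bm{y}) = \sum_{\bm{s}} h_{\bm{s}}(x)e_{\Q}(\bm{s}\cdot\bm{y})$ over the level-$(k-1)$ base and match modes in the cohomology equation, the $\bm{y}$-linear part $r\sum_{i}\binom{k+1}{i}q^iy_{k-i}$ of $\phi_k$ forces a norm identity $\|h_{\bm{s}}\|_{L^2(\mu)}=\|h_{\Psi_q(\bm{s})}\|_{L^2(\mu)}$ in which the top index of $\Psi_q(\bm{s})$ equals $s_{k-1}+r(k+1)q$ — unlike the unipotent change of variables $\bm{s}\mapsto\bm{s}'_q$, which fixes the top index. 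Since $r\ne 0$ this shift has infinite orbit, so Parseval forces every coefficient $h_{\bm{s}}$, including $h_{\bm 0}$, to vanish. Thus for $k\ge 2$ the cohomology obstruction at level $k$ over level $k-1$ is already vacuous for \emph{every} $c$; the only level carrying a genuine obstruction is $k=1$, where the power of $q$ in front of $g+c$ is $1$, not $k$. Consistent with this, when $k\ge 2$ the right-hand side of your proposed equation does not even satisfy the $\Q$-cocycle identity (the coefficient $q^k$ of $g+c$ is not additive in $q$), so no measurable $h_0$ could solve it. The paper avoids the tower entirely: it Fourier-expands an $S$-invariant $f\in L^2(X\times(\A/\Q)^k)$ in all $k$ fiber coordinates at once, Parseval kills every mode except $\bm{r}=(r,0,\dots,0)$, and the surviving equation $f_r(x)=f_r(T^qx)e_{\Q}(2qr(g(x)+c)+q^2r\alpha)$ is the same for all $k$.

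Second, the assertion that $2rc$ lands in a proper closed $\Q$-subspace $L_{k,r,\beta}$ is not justified, and the proposed union $\bigcup_{k,r,\beta}(2r)^{-1}L_{k,r,\beta}$ is not obviously countable since $\beta$ ranges over the uncountable group $\A/\Q$. The paper drops $\beta$ entirely (only $\beta=0$ matters when hunting for $S$-invariant functions) and handles the choice of $c$ with a short orthogonality argument: if $h_1,h_2$ solve the level-1 equation for distinct $c_1\ne c_2$, then $\langle h_1,h_2\rangle=e_{\Q}(2qr(c_1-c_2))\langle h_1,h_2\rangle$ for all $q$, so $h_1\perp h_2$; separability of $L^2(\mu)$ then makes the set of bad $c$ countable for each $r$, and $\bigcup_{r\ne 0}C_r$ remains countable. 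That separability-based counting, not a ``divisibility identity,'' is what makes the choice of $c$ possible, and it is the ingredient your sketch is missing.
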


\begin{proof}
	A similar statement for $\Z$-actions appears in \cite[Lemma 3]{lesigne}.
	We follow essentially the same strategy of proof.
	
	Suppose $f \in L^2(X \times (\A/\Q)^k)$ is $S$-invariant.
	We do a Fourier expansion in the adelic coordinates.
	Given $\bm{r} = (r_1, \ldots, r_k) \in \Q^k$, let $f_{\bm{r}} : X \to \C$ be the function
	\begin{equation*}
		f_{\bm{r}}(x) = \int_{(\A/\Q)^k} f(x,\bm{y}) e_{\Q}(-\bm{r} \cdot \bm{y})~d\bm{y},
	\end{equation*}
	where $\bm{r} \cdot \bm{y} = \sum_{j=1}^k r_j y_j \in \A/\Q$.
	Then $f(x,\bm{y}) = \sum_{\bm{r} \in \Q^k} f_{\bm{r}}(x) e_{\Q}(\bm{r} \cdot \bm{y})$.
	
	Applying $S$, we have
	\begin{equation*}
		\sum_{\bm{r} \in \Q^k} f_{\bm{r}}(x) e_{\Q}(\bm{r} \cdot \bm{y}) = \sum_{\bm{r} \in \Q^k} f_{\bm{r}}(T^qx) e_{\Q} \left( \sum_{j=1}^k (j+1)q^j r_j (g(x)+c) + \sum_{j=1}^k q^{j+1}r_j \alpha \right) e_{\Q} (\bm{r}'_q \cdot \bm{y}),
	\end{equation*}
	where
	\begin{equation*}
		\bm{r}'_{q,j} = \sum_{i=j}^k \binom{i+1}{j+1} q^{i-j} r_i.
	\end{equation*}
	Then matching Fourier coefficients,
	\begin{equation*}
		f_{\bm{r}'_q}(x) = f_{\bm{r}}(T^qx) e_{\Q} \left( \sum_{j=1}^k (j+1)q^j r_j (g(x)+c) + \sum_{j=1}^k q^{j+1}r_j \alpha \right).
	\end{equation*}
	In particular, $\norm{L^2(\mu)}{f_{\bm{r}'_q}} = \norm{L^2(\mu)}{f_{\bm{r}}}$.
	By Parseval's identity,
	\begin{equation*}
		\sum_{\bm{r}} \norm{L^2(\mu)}{f_{\bm{r}}}^2 = \norm{L^2\left( \mu \times m_{\A/\Q}^k \right)}{f}^2 < \infty,
	\end{equation*}
	so $\norm{L^2(\mu)}{f_{\bm{r}}} \to 0$ as $|\bm{r}| \to \infty$.
	It follows that $f_{\bm{r}} = 0$ unless $r_2 = \ldots = r_k = 0$.
	
	Put $f_r = f_{(r, \bm{0})}$ for $r \in \Q$.
	Then $f(x,\bm{y}) = \sum_{r \in \Q} f_r(x) e_{\Q}(ry_1)$ and using $S$-invariance,
	\begin{equation*}
		\sum_{r \in \Q} f_r(x) e_{\Q}(ry_1) = \sum_{r \in \Q} f_r(T^qx) e_{\Q}(2 q r (g(x)+c) + q^2 r \alpha) e_{\Q}(ry_1).
	\end{equation*}
	Thus, if $f_r \ne 0$, we have
	\begin{equation*}
		f_r(x) = f_r(T^qx) e_{\Q} (2qr (g(x)+c) + q^2 r \alpha).
	\end{equation*}
	Our goal is thus to choose $c \in \A/\Q$ such that the equation
	\begin{equation} \label{eq: functional equation for ergodicity}
		\forall q \in \Q, \Delta_q h = e_{\Q} (2qr (g(x)+c) + q^2 r \alpha)
	\end{equation}
	has no solutions with $h : X \to S^1$ measurable and $r \ne 0$.
	Fix $r \ne 0$.
	We observe that if \eqref{eq: functional equation for ergodicity} has solutions $h_1$ and $h_2$ corresponding to distinct values $c_1, c_2 \in \A/\Q$, then
	\begin{equation*}
		\innprod{h_1}{h_2} = \innprod{T^qh_1}{T^qh_2} = e_{\Q}(2qr(c_1-c_2)) \innprod{h_1}{h_2}
	\end{equation*}
	for every $q \in \Q$, from which we deduce that $h_1$ and $h_2$ are orthogonal.
	Since $L^2(\mu)$ is separable, it follows that for each $r \in \Q \setminus \{0\}$, the set of values $C_r \subseteq \A/\Q$ for which \eqref{eq: functional equation for ergodicity} has solutions is countable.
	Therefore, $\bigcup_{r \in \Q \setminus \{0\}} C_r$ is countable, and for any choice of $c \in \A/\Q \setminus \bigcup_{r \in \Q \setminus \{0\}} C_r$, the action $S$ is ergodic.
\end{proof}


\subsection{Key induction step}

\begin{lemma} \label{lem: phase polynomial induction step}
	Let $(X, \mu, T)$ be an ergodic $\Q$-system, and let $k \in \N$.
	Suppose $g : X \to \A/\Q$ satisfies $T^q g = g + q\alpha$ for every $q \in \Q$.
	Assume that
	\begin{multline*}
		S^q(x, y_1, \ldots, y_k) = \left( T^qx, y_1 + 2q g(x) + q^2\alpha, y_2 + 3q y_1 + 3q^2 g(x) + q^3\alpha, \ldots \right. \\
		 \left. \ldots, y_k + \sum_{j=1}^{k-1} \binom{k+1}{j} q^j y_{k-j} + (k+1) q^k g(x) + q^{k+1} \alpha \right)
	\end{multline*}
	is ergodic on $\left( X \times (\A/\Q)^k, \mu \times m_{\A/\Q}^k \right)$.
	If $h \in \mE_j(S)$ for some $j \in \{1, \ldots, k-1\}$, then there exist $(t_1, \ldots, t_j) \in \Q^j$ and $h_0 \in \mE_{j+1}(T)$ such that $h(x, \bm{y}) = h_0(x) e_{\Q} \left( \sum_{i=1}^j t_i y_i \right)$.
\end{lemma}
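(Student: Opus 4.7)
The plan is to prove the lemma by induction on $j$, handling $j = 1$ via Fourier analysis in the $\bm{y}$ coordinates together with the ergodicity of $S$, and then proceeding for $j \geq 2$ by iteratively eliminating the top $\bm{y}$-variables from $h$ using rotation arguments combined with the inductive hypothesis applied to $\Delta_q h$.

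For the base case $j = 1$, one has an $S$-eigenfunction, $S^q h = \chi(q) h$. Fourier-expanding $h(x, \bm{y}) = \sum_{\bm{r} \in \Q^k} h_{\bm{r}}(x) e_{\Q}(\bm{r} \cdot \bm{y})$ and comparing coefficients in the eigenvalue equation, one finds that $S^q$ acts on Fourier modes via $\bm{r} \mapsto \bm{r}^q$ where $(\bm{r}^q)_i = r_i + \sum_{l > i} \binom{l+1}{l-i} q^{l-i} r_l$---an upper-triangular linear map with $1$'s on the diagonal. The eigenvalue equation forces $\|h_{\bm{r}^q}\|_{L^2(\mu)} = \|h_{\bm{r}}\|_{L^2(\mu)}$ for all $q$; since $\sum_{\bm{r}} \|h_{\bm{r}}\|_{L^2(\mu)}^2 = \|h\|^2 < \infty$, any $\bm{r}$ whose orbit under $\bm{r} \mapsto \bm{r}^q$ is infinite must have $h_{\bm{r}} = 0$. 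A top-down analysis of the triangular action then yields $r_k = \ldots = r_2 = 0$ for every nonzero mode. Ergodicity of $S$ then pins down a unique surviving mode $r_1 = t_1$, giving $h(x, \bm{y}) = h_0(x) e_{\Q}(t_1 y_1)$. Substituting back produces a cocycle equation for $h_0$ which, combined with $T^q g = g + q\alpha$, yields $h_0 \in \mE_2(T)$ upon iterated differencing.

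For $j \geq 2$, I apply the inductive hypothesis to $\Delta_q h \in \mE_{j-1}(S)$ for each fixed $q$; this gives $\Delta_q h(x, \bm{y}) = H_q(x) e_{\Q}(\sum_{i=1}^{j-1} u_i(q) y_i)$ with $H_q \in \mE_j(T)$. In particular, $\Delta_q h$ is independent of $y_l$ for every $l \geq j$. The main structural step is to iteratively eliminate the dependence of $h$ on $y_l$ for $l = k, k-1, \ldots, j+1$. Once $y_{l+1}, \ldots, y_k$ have already been removed, the translation $R_c^{(l)}$ acting by $y_l \mapsto y_l + c$ commutes with $S$ (because the shifts in lower coordinates depend only on $y_1, \ldots, y_{l-1}$ and $x$), and since $\Delta_q h$ is $R_c^{(l)}$-invariant, the ratio $R_c^{(l)} h / h$ is $S$-invariant; ergodicity then forces $R_c^{(l)} h = e_{\Q}(r_l c) h$ for some $r_l \in \Q$. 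Writing $h = e_{\Q}(r_l y_l) h'$, I observe that the linear $y_{l-1}$-coefficient of the $y_l$-shift under $S^q$ equals $(l+1) q$, so the $y_{l-1}$-Fourier coefficient of $\Delta_q h$ picks up a contribution of $r_l (l+1) q$ (the corresponding $y_{l-1}$-coefficient from $\Delta_q h'$ vanishes by the standard cancellation that occurs in any multiplicative derivative). Since $l - 1 \geq j > j - 1$, this coefficient must vanish by the inductive hypothesis, forcing $r_l = 0$.

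After the iteration, $h$ depends only on $(x, y_1, \ldots, y_j)$. One more application of the rotation argument in the $y_j$-direction gives $h = e_{\Q}(t_j y_j) h^*(x, y_1, \ldots, y_{j-1})$ with $t_j \in \Q$ now allowed to be nonzero (since $l - 1 = j - 1$ is inside the allowed Fourier-support of $\Delta_q h$). Substituting into the form of $\Delta_q h$ yields a recursive equation for $h^*$ on the reduced $(j-1)$-dimensional tower of the same shape as for $h$, so I would iterate the peeling through $y_{j-1}, \ldots, y_1$, accumulating the phases $t_i$ along the way, until reaching the base function $h_0(x)$; iterated differencing on the final cocycle equation together with $T^q g = g + q\alpha$ then verifies $h_0 \in \mE_{j+1}(T)$. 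The main obstacle I anticipate is the bookkeeping in this recursive peeling: after each step the ``tower-height minus phase-polynomial degree'' can remain at the boundary, so the lemma cannot be naively reapplied on the smaller tower; instead, one must repeat the Fourier/rotation analysis at each level in the style of the base case, verifying that the structural form is preserved under the reduction.
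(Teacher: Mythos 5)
Your base case ($j=1$) is correct and matches the paper's proof verbatim: Fourier expansion in $\bm{y}$, preservation of $L^2$-norms of Fourier modes under $\bm{r}\mapsto\bm{r}^q$, $\ell^2$-finiteness forcing $r_k=\ldots=r_2=0$, and then ergodicity of $S$ pinning down the unique surviving mode $r_1 = t_1$. The paper carries out exactly this analysis.

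For the inductive step $j\ge 2$, however, your proposal diverges from the paper's proof and contains a genuine gap. The step that fails is the parenthetical ``the corresponding $y_{l-1}$-coefficient from $\Delta_q h'$ vanishes by the standard cancellation that occurs in any multiplicative derivative.'' There is no such cancellation. Writing $h' = \sum_{s} h'_s(x,y_1,\ldots,y_{l-2})\,e_{\Q}(s\,y_{l-1})$, the multiplicative derivative is $\Delta_q h' = \overline{h'}\cdot(h'\circ S^q)$, whose $y_{l-1}$-Fourier spectrum lies in the difference set $\{s'-s\}$ of frequencies of $h'$. Unless $h'$ has a single $y_{l-1}$-Fourier mode---which is exactly what one is trying to establish---this difference set is not a single point, and in particular contains $0$ with coefficient $\sum_s \overline{h'_s}\,(h'_s\circ T^q)\cdot(\text{phase})$, which is not identically zero. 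So the asserted vanishing cannot be invoked, and the conclusion $r_l=0$ does not follow from the rotation argument alone. The rotation argument itself ($\overline{h}\cdot R_c^{(l)}h$ is $S$-invariant, hence constant by ergodicity) is fine once you observe that $R_c^{(l)}$ commutes with $S^q$ when acting on functions that do not depend on $y_{l+1},\ldots,y_k$; but it only yields $h = e_{\Q}(r_l y_l)\,h'$ with some $r_l\in\Q$, not $r_l=0$.

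The paper's proof for $j\ge 2$ avoids this issue entirely by staying in the global Fourier picture: it applies the inductive hypothesis to $\Delta_q h$, matches Fourier coefficients to obtain the recursion
\begin{equation*}
	h_{\bm{r}}(T^qx)\,e_{\Q}\!\left(\sum_{l}(l+1)q^l r_l g(x) + \sum_l q^{l+1}r_l\alpha\right) = \psi_q(x)\,h_{\bm{r}''_q}(x),
\end{equation*}
and then uses the same Parseval/$\ell^2$-finiteness rigidity as in the base case to conclude that $h_{\bm{r}}=0$ unless $\bm{r}''_q = \bm{r}$ for \emph{every} $q$. Unwinding that fixed-point condition gives $r_{j+1}=\ldots=r_k=0$ (which is precisely what your coordinate-peeling is trying but failing to prove) and also pins down $r_2,\ldots,r_j$ as explicit linear combinations of the data $t_{1,1},\ldots,t_{1,j-1}$. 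It then verifies $h_r\in\mE_{j+1}(T)$ by expressing $\Delta_q h_r$ explicitly as (constant)$\cdot$(eigenfunction)$\cdot$(phase polynomial of degree $\le j$). To repair your argument you would in any case need this rigidity input; once you add it, the iterative peeling becomes redundant and you recover the paper's one-shot Fourier computation. You flagged ``the bookkeeping in this recursive peeling'' as the main obstacle; the real obstacle is that the vanishing of $r_l$ is an $\ell^2$-rigidity statement, not a cancellation in a multiplicative derivative, and the proposal as written does not supply it.
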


\begin{proof}
	We prove the lemma by induction on $j$, adapting the argument from \cite[pp. 782--783]{lesigne}.
	Suppose $h \in \mE_1(S)$, say with $T^qh = e_{\Q}(q\beta) h$.
	Expanding $h(x,\bm{y}) = \sum_{\bm{r} \in \Q^k} h_{\bm{r}}(x) e_{\Q}(\bm{r} \cdot \bm{y})$ as a Fourier series in the $\bm{y}$ coordinates as in the proof of Lemma \ref{lem: ergodic skew-product}, the Fourier coefficients satisfy
	\begin{equation*}
		h_{\bm{r}'_q}(x) e_{\Q}(q\beta) = h_{\bm{r}}(T^qx) e_{\Q} \left( \sum_{j=1}^k (j+1)q^j r_j g(x) + \sum_{j=1}^k q^{j+1}r_j \alpha \right)
	\end{equation*}
	for
	\begin{equation*}
		\bm{r}'_{q,j} = \sum_{i=j}^k \binom{i+1}{j+1} q^{i-j} r_i.
	\end{equation*}
	In particular, $\norm{L^2(\mu)}{h_{\bm{r}'_q}} = \norm{L^2(\mu)}{h_{\bm{r}}}$, so arguing as in Lemma \ref{lem: ergodic skew-product}, $h_{\bm{r}} = 0$ whenever $(r_2, \ldots, r_k) \ne \bm{0}$.
	
	Now let $h_r = h_{(r,\bm{0})}$ for $r \in \Q$.
	If $r, s \in \Q$ and $h_r, h_s \ne 0$, then
	\begin{equation} \label{eq: h_r/h_s behavior}
		\frac{h_r}{h_s} = T^q \left( \frac{h_r}{h_s} \right) e_{\Q} \left( 2q (r-s)g + q^2 (r-s)\alpha \right).
	\end{equation}
	Let $\psi(x,\bm{y}) = e_{\Q}((r-s)y_1)$.
	Then
	\begin{equation*}
		\psi \left( S^q(x, \bm{y}) \right) = e_{\Q} \left( 2q(r-s)g(x) + q^2(r-s)\alpha \right) \psi(x,\bm{y}).
	\end{equation*}
	Hence, the function $\left( \frac{h_r}{h_s} \circ \pi_X \right) \cdot \psi$ is $S$-invariant.
	By ergodicity of $S$, it follows that $r = s$.
	Thus, there is a unique $t_1 \in \Q$ such that $h_{t_1} \ne 0$, whence $h(x,\bm{y}) = h_{t_1}(x) e_{\Q}(t_1y_1)$ as desired.
	
	Suppose now that $j \in \{2, \ldots, k-1\}$ and the lemma holds for $j-1$.
	Let $h \in \mE_j(S)$.
	By definition, this means that for every $q \in \Q$, the multiplicative derivative $\Delta_q h$ belongs to $\mE_{j-1}(S)$, so by the induction hypothesis, there exist functions $\psi_q \in \mE_j(T)$ and elements $(t_{q,1}, \ldots, t_{q,j-1}) \in \Q^{j-1}$ such that
	\begin{equation} \label{eq: derivative is a quasi-eigenfunction}
		h \left( S^q(x, \bm{y}) \right) = \psi_q(x) e_{\Q} \left( t_{q,1}y_1 + \ldots + t_{q,j-1}y_{j-1} \right) h(x,\bm{y}).
	\end{equation}
	We once again express $h$ as a Fourier series $h(x,\bm{y}) = \sum_{\bm{r} \in \Q^k} h_{\bm{r}}(x) e_{\Q}(\bm{r} \cdot \bm{y})$.
	The identity \eqref{eq: derivative is a quasi-eigenfunction} then yields
	\begin{multline} \label{eq: derivative is a quasi-eigenfunction Fourier}
		\sum_{\bm{r} \in \Q^k} h_{\bm{r}}(T^qx) e_{\Q} \left( \sum_{l=1}^k (l+1)q^l r_l g(x) + \sum_{l=1}^k q^{l+1}r_l \alpha \right) e_{\Q}(\bm{r}'_q \cdot \bm{y}) \\
		 = \psi_q(x) e_{\Q} \left( t_{q,1}y_1 + \ldots + t_{q,l-1}y_{l-1} \right) \sum_{\bm{r} \in \Q^k} h_{\bm{r}}(x) e_{\Q}(\bm{r} \cdot \bm{y}),
	\end{multline}
	where
	\begin{equation*}
		\bm{r}'_{q,l} = \sum_{i=l}^k \binom{i+1}{l+1} q^{i-l} r_i.
	\end{equation*}
	Let $\bm{r}''_q = (r'_{q,1} - t_{q,1}, \ldots r'_{q,j-1} - t_{q,j-1}, r'_{q,j}, \ldots r'_{q,k})$.
	Then multiplying both sides of \eqref{eq: derivative is a quasi-eigenfunction Fourier} by $e_{\Q}(-t_{q,1}y_1 - \ldots - t_{q,j-1}y_{j-1})$, we have
	\begin{equation*}
		\sum_{\bm{r} \in \Q^k} h_{\bm{r}}(T^qx) e_{\Q} \left( \sum_{l=1}^k (l+1)q^l r_l g(x) + \sum_{l=1}^k q^{l+1}r_l \alpha \right) e_{\Q}(\bm{r}''_q \cdot \bm{y}) \\
		 = \psi_q(x) \sum_{\bm{r} \in \Q^k} h_{\bm{r}}(x) e_{\Q}(\bm{r} \cdot \bm{y}),
	\end{equation*}
	so
	\begin{equation} \label{eq: Fourier coefficient relation}
		h_{\bm{r}}(T^qx) e_{\Q} \left( \sum_{l=1}^k (l+1)q^l r_l g(x) + \sum_{l=1}^k q^{l+1}r_l \alpha \right) = \psi_q(x) h_{\bm{r}''_q}(x).
	\end{equation}
	In particular, $\norm{L^2(\mu)}{h_{\bm{r}}} = \norm{L^2(\mu)}{h_{\bm{r}''_q}}$, so as in Lemma \ref{lem: ergodic skew-product}, $h_{\bm{r}} = 0$ unless $\bm{r}''_q = \bm{r}$ for every $q \in \Q$.
	The equation $\bm{r}''_q = \bm{r}$ is equivalent to the system of equations
	\begin{equation*}
		\sum_{i=l+1}^k \binom{i+1}{l+1} q^{i-l} r_i = \begin{cases}
			t_{q,l}, & \text{if}~l \le j-1 \\
			0, & \text{if}~l \ge j.
		\end{cases}
	\end{equation*}
	Therefore, if $\bm{r}''_q = \bm{r}$ for every $q \in \Q$,then $r_{j+1} = \ldots = r_k = 0$, and then taking $q = 1$, we have
	\begin{equation} \label{eq: binomial relationship}
		t_{1,l} = \sum_{i=l+1}^k \binom{i+1}{l+1} r_i
	\end{equation}
	for $l \le j-1$, which uniquely determines the values of $r_2, \ldots, r_j$ as certain linear combinations of $t_{1,1}, \ldots, t_{1,j-1}$.
	Let $s_2, \ldots, s_j$ be the unique solution $(r_2, \ldots, r_j) = (s_2, \ldots, s_j)$ to the system of equations \eqref{eq: binomial relationship}.
	For $r \in \Q$, let $h_r = h_{(r, \bm{s})}$.
	Then \eqref{eq: Fourier coefficient relation} becomes
	\begin{equation} \label{eq: derivative is good}
		h_r(T^qx) e_{\Q} \left( 2qrg(x) + q^2 r \alpha + \sum_{l=2}^j (l+1)q^l s_l g(x) + \sum_{l=2}^j q^{l+1} s_l \alpha \right) = \psi_q(x) h_r(x).
	\end{equation}
	Let
	\begin{equation*}
		\phi_q(x) = e_{\Q} \left( g(x) \left( 2qr + \sum_{l=2}^j (l+1)q^l s_l \right) \right).
	\end{equation*}
	Then for $t \in \Q$,
	\begin{equation*}
		\phi_q(T^tx) = e_{\Q} \left( (g(x) + t\alpha) \left( 2qr + \sum_{l=2}^j (l+1)q^l s_l \right) \right) = e_{\Q} \left( t\alpha \left( 2qr + \sum_{l=2}^j (l+1)q^l s_l \right) \right) \phi_q(x),
	\end{equation*}
	so $\phi_q$ is an eigenfunction of $T$ with eigenvalue
	\begin{equation*}
		\left( 2qr + \sum_{l=2}^j (l+1)q^l s_l \right) \alpha.
	\end{equation*}
	Thus, \eqref{eq: derivative is good} expresses $\Delta_q h_r$ as a product of the form
	\begin{equation*}
		(\text{constant}) \cdot (\text{eigenfunction}~\phi_q) \cdot (\text{phase polynomial}~\psi_q~\text{of degree at most}~j),
	\end{equation*}
	so $h_r \in \mE_{j+1}(T)$.
	
	If $r, s \in \Q$ and $h_r, h_s \ne 0$, then dividing \eqref{eq: derivative is good} for $r$ by \eqref{eq: derivative is good} for $s$, we have
	\begin{equation*}
		\left( \frac{h_r}{h_s} \right) (T^qx) e_{\Q} \left( 2q(r-s)g(x) + q^2(r-s)\alpha \right) = \left( \frac{h_r}{h_s} \right)(x).
	\end{equation*}
	This is the same equation \eqref{eq: h_r/h_s behavior} encountered in the $j=1$ case above, so we again conclude $r = s$.
	Hence, letting $r$ be the unique value for which $h_r \ne 0$ and taking $(t_1, t_2, \ldots, t_j) = (r, s_2, \ldots, s_j)$, we have $h(x, \bm{y}) = h_r(x) e_{\Q}(t_1y_1 + \ldots + t_jy_j)$, which expresses $h$ in the desired form.
\end{proof}


\subsection{Phases with spectral frequency}

\begin{proposition} \label{prop: convergence in joining}
	Let $(X, \mu, T)$ be an ergodic $\Q$-system with divisible discrete spectrum, and let $d \in \N$.
	Let $\Phi$ be a tempered F{\o}lner sequence in $\Q$.
	Let $f \in L^1(\mu)$, and suppose $\E{f}{\mA_d} = 0$.
	For each $\alpha \in \sigma(T)$, there exists a set $X_{\alpha} \subseteq X$ with $\mu(X_{\alpha}) = 1$ satisfying the following property: if $Q(x) \in \A[x]$ with $\deg{Q} < d$, then
	\begin{equation*}
		\lim_{N \to \infty} \frac{1}{|\Phi_N|} \sum_{q \in \Phi_N} f(T^qx) \cdot e_{\Q} \left( q^d \alpha + Q(q) \right) = 0.
	\end{equation*}
	for every $x \in X_{\alpha}$.
\end{proposition}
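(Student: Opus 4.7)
The plan is to follow Lesigne's skew-product strategy, adapted to the $\Q$-adelic setting. First, observe that if $\alpha = 0$, the statement reduces by induction on $d$, using Lemma~\ref{lem: spectral disjointness} to handle leading coefficients of $Q$ lying outside $\sigma'(T)$ and the fact that $\E{f}{\mA_d} = 0$ implies $\E{f}{\mA_{d-1}} = 0$ to descend. So assume $\alpha \neq 0$. Invoke Lemma~\ref{lem: adelic eigenfunction} to produce a measurable $g : X \to \A/\Q$ with $T^q g = g + q\alpha$, and Lemma~\ref{lem: ergodic skew-product} with $k = d-1$ to choose $c \in \A/\Q$ making the skew-product action $S$ on $(Y, \nu) = (X \times (\A/\Q)^{d-1}, \mu \times m_{\A/\Q}^{d-1})$ ergodic.

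Next, apply Lindenstrauss's pointwise ergodic theorem for tempered F{\o}lner sequences to $(Y, \nu, S)$, and use Fubini to extract a full-measure set $X_\alpha \subseteq X$ such that for every $x \in X_\alpha$, the point $(x, \bm{y})$ is generic for $\nu$ along $\Phi$ for $m_{\A/\Q}^{d-1}$-almost every $\bm{y}$. Fix $x \in X_\alpha$. For any polynomial $Q(q) = \sum_{j=0}^{d-1} a_j q^j \in \A[q]$, choose $\bm{y}^* = \bm{y}^*(Q) \in (\A/\Q)^{d-1}$ with $y^*_{d-1-j} = a_j / \binom{d}{j}$ for $1 \le j \le d-2$; this matches the coefficients of $q^1, \ldots, q^{d-2}$ in the top skew-product coordinate
\[
[S^q(x, \bm{y}^*)]_{d-1} = y^*_{d-1} + \sum_{j=1}^{d-2} a_j q^j + d q^{d-1}(g(x) + c) + q^d \alpha.
\]

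With the choice $F(x, \bm{y}) = f(x) \cdot e_\Q(y_{d-1})$, one has $\int F \, d\nu = 0$ because the $y_{d-1}$-integral of a nontrivial character vanishes; hence, for generic $\bm{y}$, the pointwise ergodic theorem yields $\frac{1}{|\Phi_N|} \sum_{q \in \Phi_N} F(S^q(x, \bm{y})) \to 0$. Comparing phases,
\[
f(T^q x) \cdot e_\Q(q^d \alpha + Q(q)) = \textup{const} \cdot F(S^q(x, \bm{y}^*)) \cdot e_\Q(\beta_x q^{d-1}),
\]
where $\beta_x = a_{d-1} - d(g(x)+c) \in \A/\Q$. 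The remaining task is to show that the residual polynomial phase $e_\Q(\beta_x q^{d-1})$ does not prevent the overall average from vanishing: this is addressed by a secondary iteration using a new eigenfunction associated to $\beta_x$ when $\beta_x \in \sigma'(T)$, and by Lemma~\ref{lem: spectral disjointness} when $\beta_x \notin \sigma'(T)$. Lemma~\ref{lem: phase polynomial induction step} is invoked to ensure that the phase polynomial factors introduced at each iteration remain controlled by $\mA_d$, so that $\E{f}{\mA_d} = 0$ continues to supply the required orthogonality.

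The main obstacle is that $\beta_x$ depends on $x$ through $g(x)$, complicating the task of controlling the residual phase uniformly across the uncountable family of polynomials $Q$. A delicate point is that the specific $\bm{y}^*(Q)$ may fail to be generic; this requires a perturbation argument, shifting $\bm{y}^*$ by a small generic amount and noting that the resulting change in the phase affects only lower-order terms that one can reabsorb into the choice of $F$ and into the leading multiplicative constant. Managing the interplay between the generic set in $Y$, the parameter $\bm{y}^*(Q)$, and the $x$-dependence of $\beta_x$ is the technical crux of the argument.
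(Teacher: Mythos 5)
Your outline assembles the right ingredients (Lemma~\ref{lem: adelic eigenfunction}, Lemma~\ref{lem: ergodic skew-product}, the function $F(x,\bm{y}) = f(x)e_{\Q}(y_{d-1})$), but the central step is wrong, and the difficulty you flag at the end is not a delicate point to manage -- it is a sign that the strategy needs to change. You apply Lindenstrauss's pointwise ergodic theorem (with no polynomial twist) to $(Y,\nu,S)$ and then try to \emph{solve for $\bm{y}^*$} so that the skew-product orbit starting from $(x,\bm{y}^*)$ reproduces the given phase $q^d\alpha + Q(q)$. This forces $\bm{y}^*$ to depend on $Q$, and there is no reason for $\bm{y}^*(Q)$ to lie in the generic set; the ``perturbation argument'' you sketch would have to repair a point-dependent choice against a null set, uniformly over an uncountable family of $Q$, and also clean up the residual $\beta_x q^{d-1}$ term, which depends on $g(x)$. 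None of this is carried out, and it is genuinely hard; the proposed ``secondary iteration'' is not a proof.

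The paper's argument sidesteps this entirely by using the proposition itself, at level $d-1$, \emph{applied to the skew-product system $(Y,\nu,S)$ with polynomial twists}. After checking that $\tilde{f}(x,\bm{y}) = f(x)e_{\Q}(y_{d-1})$ satisfies $\E{\tilde{f}}{\mA_{d-1}(S)} = 0$ -- this is where Lemma~\ref{lem: phase polynomial induction step} is genuinely needed, to show every $h \in \mE_{d-1}(S)$ has the form $h_0(x)e_{\Q}(\sum t_iy_i)$ with $h_0\in\mE_d(T)$, so that orthogonality to $\mE_d(T)$ transfers -- one obtains (combining the induction hypothesis for $\beta\in\sigma(S)$ with Lemma~\ref{lem: spectral disjointness} for $\beta\notin\sigma(S)$) a single full-measure set $Y\subseteq X\times(\A/\Q)^{d-1}$ on which
\[
\frac{1}{|\Phi_N|}\sum_{q\in\Phi_N}\tilde{f}(S^q(x,\bm{y}))\,e_{\Q}(R(q)) \longrightarrow 0
\]
for \emph{every} polynomial $R$ of degree at most $d-1$. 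Then one fixes, by Fubini, any $\bm{y}$ with $(x,\bm{y})\in Y$; the identity $\tilde{f}(S^q(x,\bm{y})) = f(T^qx)e_{\Q}(q^d\alpha + Q_{(x,\bm{y})}(q))$ holds for a polynomial $Q_{(x,\bm{y})}$ of degree $\le d-1$ that you \emph{do not control}, but that does not matter: the target average is recovered by taking $R = Q - Q_{(x,\bm{y})}$, a polynomial of degree $\le d-1$, which is exactly what the twisted estimate covers. So the mismatch between $Q$ and $Q_{(x,\bm{y})}$ is absorbed into the polynomial twist rather than into the choice of $\bm{y}$. The gap in your proposal is precisely the failure to recognize that the induction hypothesis must be invoked on the skew-product with a free polynomial phase (and that this, in turn, requires the orthogonality check via Lemma~\ref{lem: phase polynomial induction step}); without that, you are forced into the uncontrollable $\bm{y}^*(Q)$ dependence.
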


\begin{proof}
	We induct on $d$.
	Suppose $d = 1$.
	We want to show that if $\E{f}{\mZ} = 0$, then there exists a full measure subset $X_{\alpha} \subseteq X$ such that
	\begin{equation*}
		\lim_{N \to \infty} \frac{1}{|\Phi_N|} \sum_{q \in \Phi_N} f(T^qx) \cdot e_{\Q}(q\alpha) = 0.
	\end{equation*}
	for every $x \in X_{\alpha}$.
	We will take $X_{\alpha}$ to be the set of points $x \in X$ such that
	\begin{equation*}
		\lim_{N \to \infty} \frac{1}{|\Phi_N|} \sum_{q \in \Phi_N} (\Delta_rf)(T^qx) = \int_X \Delta_r f~d\mu
	\end{equation*}
	for every $r \in \Q$.
	By Lindenstrauss's pointwise ergodic theorem (see \cite[Theorem 1.2]{lindenstrauss}), the set $X_{\alpha}$ has $\mu(X_{\alpha}) = 1$.
	Now fix $x \in X_{\alpha}$, and let $u(q) = f(T^qx) \cdot e_{\Q}(q\alpha) \in \C$ for $q \in \Q$.
	Note that
	\begin{equation*}
		z(r) = \lim_{N \to \infty} \frac{1}{|\Phi_N|} \sum_{q \in \Phi_N} u(q+r) \overline{u(q)} = e_{\Q}(r\alpha) \cdot \lim_{N \to \infty} \frac{1}{|\Phi_N|} \sum_{q \in \Phi_N} (\Delta_r f)(T^qx) = e_{\Q}(r\alpha) \int_X \Delta_r f~d\mu.
	\end{equation*}
	Therefore, for any F{\o}lner sequence $\Psi$,
	\begin{equation*}
		\lim_{M \to \infty} \frac{1}{|\Psi_M|} \sum_{r \in \Psi_M} |z(r)| = \lim_{M \to \infty} \frac{1}{|\Psi_M|} \sum_{r \in \Psi_M} \left| \innprod{T^rf}{f} \right| = 0
	\end{equation*}
	by Proposition \ref{prop: Kronecker}(vii).
	Applying Lemma \ref{lem: vdC}, we conclude that
	\begin{equation*}
		\lim_{N \to \infty} \frac{1}{|\Phi_N|} \sum_{q \in \Phi_N} f(T^qx) \cdot e_{\Q}(q\alpha) = \lim_{N \to \infty} \frac{1}{|\Phi_N|} \sum_{q \in \Phi_N} u(q) = 0.
	\end{equation*}

	Suppose $d \ge 2$ and the proposition holds up to degree $d-1$.
	Fix $\alpha \in \sigma(T)$.
	By Lemma \ref{lem: adelic eigenfunction}, let $g : X \to \A/\Q$ be measurable with $T^q g = g + q\alpha$.
	Up to modification by a constant, we may assume that the skew-product action
	\begin{multline*}
		S^q(x, y_1, \ldots, y_{d-1}) = \left( T^qx, y_1 + 2q g(x) + q^2\alpha, y_2 + 3q y_1 + 3q^2 g(x) + q^3\alpha, \ldots \right. \\
		 \left. \ldots, y_{d-1} + \sum_{j=1}^{d-2} \binom{k+1}{j} q^j y_{d-1-j} + d q^{d-1} g(x) + q^d \alpha \right)
	\end{multline*}
	is ergodic on $\left( X \times (\A/\Q)^{d-1}, \mu \times m_{\A/\Q}^{d-1} \right)$ by Lemma \ref{lem: ergodic skew-product}.
	
	Define a function $\tilde{f} : X \times (\A/\Q)^{d-1} \to S^1$ by $\tilde{f}(x, \bm{y}) = f(x) e_{\Q}(y_{d-1})$.
	We claim the $\tilde{f}$ is orthogonal to $\mE_{d-1}(S)$.
	Indeed, given any function $h \in \mE_{d-1}(S)$, we may write $h(x, \bm{y}) = h_0(x) e_{\Q}(t_1y_1 + \ldots + t_{d-1}y_{d-1})$ for some $h_0 \in \mE_d(T)$ and $(t_1, \ldots, t_{d-1}) \in \Q^{d-1}$ by Lemma \ref{lem: phase polynomial induction step}, so
	\begin{equation*}
		\innprod{\tilde{f}}{h}_{L^2 \left( \mu \times m_{\A/\Q}^{d-1} \right)} = \innprod{f}{h_0}_{L^2(\mu)} \cdot \ind\{t_1 = \ldots = t_{d-2} = 0, t_{d-1} = 1\} = 0,
	\end{equation*}
	since $f$ is orthogonal to $\mE_d(T)$.
	Thus, by the induction hypothesis, there exists, for each $\beta \in \sigma(S)$, a full measure subset $Y_{\beta} \subseteq X \times (\A/\Q)^{d-1}$ such that
	\begin{equation*}
		\lim_{N \to \infty} \frac{1}{|\Phi_N|} \sum_{q \in \Phi_N} \tilde{f}(S^q(x, \bm{y})) \cdot e_{\Q} \left( q^{d-1} \beta + Q(q) \right) = 0.
	\end{equation*}
	for every $(x, \bm{y}) \in Y_{\beta}$ and every polynomial $Q$ of degree at most $d-2$.
	Also, by Lemma \ref{lem: spectral disjointness}, there exists a set $Y' \subseteq X \times (\A/\Q)^{d-1}$ of full measure such that if $\beta \notin \sigma(S)$, then
	\begin{equation*}
		\lim_{N \to \infty} \frac{1}{|\Phi_N|} \sum_{q \in \Phi_N} \tilde{f}(S^q(x, \bm{y})) \cdot e_{\Q} \left( q^{d-1} \beta + Q(q) \right) = 0
	\end{equation*}
	for every $(x, \bm{y}) \in Y'$ and every polynomial $Q$ of degree at most $d-2$.
	Let
	\begin{equation*}
		Y = Y' \cap \bigcap_{\beta \in \sigma(S)} Y_{\beta},
	\end{equation*}
	Then
	\begin{equation} \label{eq: induction hypothesis in skew-product}
		\lim_{N \to \infty} \frac{1}{|\Phi_N|} \sum_{q \in \Phi_N} \tilde{f}(S^q(x, \bm{y})) \cdot e_{\Q} \left( Q(q) \right) = 0
	\end{equation}
	for every $(x, \bm{y}) \in Y$ and every polynomial $Q$ of degree at most $d-1$.
	
	Note that
	\begin{equation} \begin{split} \label{eq: orbit in skew-product twists by polynomial}
		\tilde{f}(S^q(x, \bm{y})) & = f(T^qx) e_{\Q} \left( y_{d-1} + \sum_{j=1}^{d-2} \binom{k+1}{j} q^j y_{d-1-j} + d q^{d-1} g(x) + q^d \alpha \right) \\
		 & = f(T^qx) e_{\Q}(q^d \alpha + Q_{(x,\bm{y})}(q)),
	\end{split} \end{equation}
	where $Q_{(x,\bm{y})}$ is a polynomial of degree at most $d-1$ depending on $(x, \bm{y}) \in X \times (\A/\Q)^{d-1}$.
	
	By Fubini's theorem, there exists a set $X_{\alpha} \subseteq X$ with $\mu(X_{\alpha}) = 1$ such that $(\{x\} \times \A/\Q) \cap Y \ne \es$ for each $x \in X_{\alpha}$.
	Suppose $x \in X_{\alpha}$, and let $\bm{y} \in \Q^{d-1}$ such that $(x, \bm{y}) \in Y$.
	Let $Q$ be an arbitrary polynomial of degree at most $d-1$.
	Then
	\begin{equation*}
		\lim_{N \to \infty} \frac{1}{|\Phi_N|} \sum_{q \in \Phi_N} f(T^qx) \cdot e_{\Q} \left( q^d\alpha + Q(q) \right)
		 = \lim_{N \to \infty} \frac{1}{|\Phi_N|} \sum_{q \in \Phi_N} \tilde{f}(S^q(x,\bm{y})) \cdot e_{\Q} \left( \underbrace{Q(q) - Q_{(x, \bm{y})}(q)}_{\text{degree at most}~d-1}  \right) = 0
	\end{equation*}
	by \eqref{eq: orbit in skew-product twists by polynomial} and \eqref{eq: induction hypothesis in skew-product}.
\end{proof}


\subsection{Finishing the proof}

\begin{proof}[Proof of Theorem \ref{thm: polynomial ww adelic}]
	We want to reduce to a system with divisible discrete spectrum in order to apply the tools we have developed.
	Let $f \in L^1(\mu)$ with $\E{f}{\mA_d} = 0$.
	By Lemma \ref{lem: divisible extension}, we may find an extension $\pi : (X', \mu', T') \to (X, \mu, T)$ such that $\sigma(T') = \sigma'(T)$.
	Let $f' : X' \to \C$ be the function $f' = f \circ \pi$.
	Then $\E{f'}{\mA'_d} = \E{f}{\mA_d} = 0$.
	Define $Y_0$ to be the set of $x' \in X'$ such that
	\begin{equation*}
		\lim_{N \to \infty} \frac{1}{|\Phi_N|} \sum_{q \in \Phi_N} f'((T')^qx') \cdot e_{\Q} \left( P(q) \right) = 0
	\end{equation*}
	for every polynomial $P$ of degree at most $d$.
	Similarly, let $X_0$ be the set of $x \in X$ such that
	\begin{equation*}
		\lim_{N \to \infty} \frac{1}{|\Phi_N|} \sum_{q \in \Phi_N} f(T^qx) \cdot e_{\Q} \left( P(q) \right) = 0
	\end{equation*}
	for every polynomial $P$ of degree at most $d$.
	The sets $X_0$ and $Y_0$ are measurable by construction, and since $f' = f \circ \pi$, we have $Y_0 = \pi^{-1}(X_0)$.
	Therefore, $\mu'(Y_0) = \mu(X_0)$, so it suffices to show that $\mu'(Y_0) = 1$.
	
	By Lemma \ref{lem: spectral disjointness}, let $X'' \subseteq X'$ with $\mu'(X'') = 1$ such that
	\begin{equation*}
		\lim_{N \to \infty} \frac{1}{|\Phi_N|} \sum_{q \in \Phi_N} f'((T')^qx') \cdot e_{\Q} \left( q^d\alpha + Q(q) \right) = 0
	\end{equation*}
	for every $x' \in X''$, every $\alpha \notin \sigma(T')$, and every polynomial $Q$ of degree at most $d-1$.
	Then by Proposition \ref{prop: convergence in joining}, for each $\alpha \in \sigma(T')$, let $X'_{\alpha} \subseteq X'$ with $\mu'(X'_{\alpha}) = 1$ such that
	\begin{equation*}
		\lim_{N \to \infty} \frac{1}{|\Phi_N|} \sum_{q \in \Phi_N} f'((T')^qx') \cdot e_{\Q} \left( q^d \alpha + Q(q) \right) = 0
	\end{equation*}
	for $x' \in X'_{\alpha}$ and every polynomial $Q$ of degree at most $d-1$.
	Taking an intersection, we have that if $x' \in X'' \cap \bigcap_{\alpha \in \sigma(T')} X'_{\alpha}$, then
	\begin{equation*}
		\lim_{N \to \infty} \frac{1}{|\Phi_N|} \sum_{q \in \Phi_N} f'((T')^qx') \cdot e_{\Q} \left( P(q) \right) = 0
	\end{equation*}
	for every polynomial $P$ of degree at most $d$.
	That is, $X'' \cap \bigcap_{\alpha \in \sigma(T')} X'_{\alpha} \subseteq Y_0$.
	Moreover, the discrete spectrum $\sigma(T')$ is countable, so $\mu'(X'' \cap \bigcap_{\alpha \in \sigma(T')} X'_{\alpha}) = 1$.
	Thus, $\mu'(Y_0) = 1$ as desired.
\end{proof}


\section{Abramov $\Q$-systems} \label{sec: Abramov}

The goal of this section to prove Theorem \ref{thm: Abramov structure} about the structure of Abramov systems.


\subsection{Skew-products}

We begin by analyzing the skew-product systems of the form appearing in the statement of Theorem \ref{thm: Abramov structure}.
Consider a $\Q$-vector space $V$, an element $\alpha \in V$ generating a dense subspace, and let $S$ be the $\Q$-action on $V^k$ given by
\begin{equation*}
	S^q(v_1, \ldots, v_k) = \left( v_1 + q\alpha, v_2 + qv_1 + \binom{q}{2} \alpha, \ldots, v_k + q v_{k-1} + \ldots + \binom{q}{k-1} v_1 + \binom{q}{k} \alpha \right).
\end{equation*}
Let $W = \hat{V}$.
Then $W$ is a countable discrete abelian group (in fact, $\Q$-vector space), and by the Stone--Weierstrass theorem, $L^2(m_V)$ has an orthonormal basis $(g_{\bm{w}})_{\bm{w} \in W^k}$ given by $g_{\bm{w}} = w_1 \otimes \ldots \otimes w_k$.
We will write $W$ with additive notation.

Let us compute the multiplicative derivatives of the functions $g_{\bm{w}}$, $\bm{w} \in W^k$.
For $\bm{v} \in V^k$ and $q \in \Q$, we have
\begin{align*}
	g_{\bm{w}}(S^q \bm{v}) & = \prod_{j=1}^k w_j \left( v_j + \sum_{i=1}^{j-1} \binom{q}{j-i} v_i + \binom{q}{j} \alpha \right) \\
	 & = \left( qw_1 + \ldots + \binom{q}{k}w_k \right)(\alpha) \cdot \prod_{i=1}^k \left( \sum_{j=i}^k \binom{q}{j-i} w_j \right)(v_i).
\end{align*}
Hence,
\begin{equation*}
	\Delta_q g_{\bm{w}} = \left( qw_1 + \ldots + \binom{q}{k}w_k \right)(\alpha) \cdot g_{\bm{w}'(q)},
\end{equation*}
where
\begin{equation} \label{eq: w'}
	w'_i(q) = \sum_{j=i+1}^k \binom{q}{j-i} w_j.
\end{equation}

The upshot of this calculation is that there is a natural association between the functions $g_{\bm{w}}$ and polynomial maps from $\Q$ to $W$.
Namely, if we let $P_{\bm{w}} : \Q \to W$ be the map
\begin{equation*}
	P_{\bm{w}}(t) = \sum_{j=1}^k \binom{t}{j} w_j,
\end{equation*}
then using the Vandermonde identity for binomial coefficients, we have
\begin{equation*}
	P_{\bm{w}}(t+q) = \sum_{j=1}^k \sum_{i=0}^j \binom{t}{i} \binom{q}{j-i} w_j = \sum_{i=0}^k \binom{t}{i} \sum_{j=i}^k \binom{q}{j-i} w_j.
\end{equation*}
Therefore, $P_{\bm{w}}(t + q) - P_{\bm{w}}(t) = P_{\bm{w}'(q)}(t) + \sum_{j=0}^k \binom{q}{j}w_j$ with $\bm{w}'$ as in \eqref{eq: w'}.
Write $\partial_q$ for the discrete differencing operator $\partial_q P(t) = P(t+q) - P(t)$.
Then the map $\psi$ from the space of polynomial functions $\Q \to W$ to $L^2(m_V)$ sending $P_{\bm{w}} + w_0$ to $w_0(\alpha) g_{\bm{w}}$ is a group homomorphism satisfying
\begin{equation*}
	\psi(\partial_q P) = \Delta_q \psi(P).
\end{equation*}

In order to realize an Abramov system as a factor of a system of the form above, we therefore want to find a space of functions with a similar encoding in terms of polynomial functions on $\Q$.
For technical reasons, it will be easier to work with polynomials in terms of their coefficients.
Given $k \in \N$ we may associated to a tuple $(c, t_1, \ldots, t_k) \in S^1 \times (\A/\Q)^k$ the polynomial $q \mapsto c \cdot e_{\Q} \left( \sum_{j=1}^k t_j \binom{q}{j} \right)$.
(We will show in Corollary \ref{cor: polynomial representation} below that every polynomial function from $\Q$ to $S^1$ is uniquely representable in this form.)
The precise result we will prove is the following.

\begin{restatable}{theorem}{Encoding} \label{thm: labeling of polynomial phases}
	Let $(X, \mu, T)$ be an ergodic $\Q$-system.
	There is a family of injective group homomorphisms $\phi_k : \mE_k(T) \to S^1 \times (\A/\Q)^k$ for $k \ge 0$ such that:
	\begin{enumerate}[(1)]
		\item	$\phi_0(c) = c$ for every $c \in S^1$;
		\item	for every $j \le k$ and $f \in \mE_j(T)$,
			\begin{equation*}
				\phi_k(f) = (\phi_j(f), \bm{0});
			\end{equation*}
			and
		\item	for every $k \in \N$ and $f \in \mE_k(T)$, if $\phi_k(f) = (c, t_1, \ldots, t_k)$, then for every $q \in \Q$,
			\begin{equation*}
				\phi_{k-1}(\Delta_q f) = (e_{\Q}(t'_0(q)), t'_1(q), \ldots, t'_{k-1}(q)),
			\end{equation*}
			where
			\begin{equation*}
				t'_i(q) = \sum_{j=i+1}^k \binom{q}{j-i} t_j.
			\end{equation*}
	\end{enumerate}
\end{restatable}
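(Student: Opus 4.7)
The plan is to construct the maps $\phi_k$ by induction on $k$, extracting the coordinates $(c, t_1, \ldots, t_k) = \phi_k(f)$ from the way $\phi_{k-1}$ behaves on the multiplicative derivatives $\Delta_q f \in \mE_{k-1}(T)$. The base case $k=0$ is trivial: set $\phi_0 = \id_{S^1}$, and properties (1)--(3) hold vacuously. For the inductive step, suppose $\phi_{k-1}$ is already constructed. Given $f \in \mE_k(T)$, write $\phi_{k-1}(\Delta_q f) = (c_{k-1}(\Delta_q f), s_1(q), \ldots, s_{k-1}(q))$, and aim to produce $t_2(f), \ldots, t_k(f)$ from the $\A/\Q$-coordinates, then $t_1(f)$ from the $S^1$-coordinate, and finally the ``free'' constant $c_k(f)$ by a splitting argument.

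First I will establish the polynomial structure of the $s_i$'s: there are unique $t_2(f), \ldots, t_k(f) \in \A/\Q$ such that $s_i(q) = \sum_{j=i+1}^k \binom{q}{j-i} t_j(f)$ for every $i$ and every $q$. The argument uses the cocycle identity $\Delta_{q+r} f = \Delta_q f \cdot \Delta_r f \cdot \Delta_q \Delta_r f$, together with the fact that $\Delta_q \Delta_r f \in \mE_{k-2}(T)$ (so property (2) of $\phi_{k-1}$ identifies its last $\A/\Q$-coordinate with zero) and property (3) of $\phi_{k-1}$ (which evaluates the $\A/\Q$-coordinates of $\phi_{k-2}(\Delta_q \Delta_r f)$ in terms of the $s_j(r)$'s). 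A downward induction on $i$ from $k-1$ to $1$, combined with the Vandermonde identity for binomial coefficients, reduces the resulting functional equation to the statement that $s_i(q) - \sum_{l=i+2}^k \binom{q}{l-i} t_l(f)$ is additive in $q$; since $\A/\Q$ is a $\Q$-vector space, any additive map from $\Q$ is automatically $\Q$-linear, which pins down each $t_{i+1}(f) \in \A/\Q$ uniquely.

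Next I will define $t_1(f)$. Set $\mu_f(q) := c_{k-1}(\Delta_q f) \cdot e_\Q(-\sum_{j=2}^k \binom{q}{j} t_j(f))$. By reapplying the cocycle identity, using property (2) to reduce $c_{k-1}(\Delta_q \Delta_r f) = c_{k-2}(\Delta_q \Delta_r f)$, and then property (3) of $\phi_{k-1}$ to rewrite this as $e_\Q(\sum_{j=1}^{k-1} \binom{q}{j} s_j(r))$, a second Vandermonde cancellation shows that $\mu_f$ is a character of $\Q$. By Pontryagin duality (characters of $\Q$ correspond bijectively to elements of $\A/\Q$ via $t \mapsto [q \mapsto e_\Q(qt)]$), there is a unique $t_1(f) \in \A/\Q$ with $\mu_f(q) = e_\Q(qt_1(f))$, and property (3) for the constructed $\phi_k$ will follow by construction. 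Finally, take $c_k : \mE_k(T) \to S^1$ to be any homomorphism extending $c_{k-1}$; such an extension exists because $S^1$ is divisible, hence an injective object in the category of abelian groups. Set $\phi_k(f) := (c_k(f), t_1(f), \ldots, t_k(f))$.

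Verification that $\phi_k$ is a well-defined injective homomorphism satisfying properties (1)--(3) is then a routine unwinding of the construction: the maps $t_j$ for $j \ge 2$ are homomorphisms by uniqueness of the coefficients, $t_1$ is a homomorphism because $\mu_{f_1 f_2} = \mu_{f_1} \mu_{f_2}$, and injectivity follows from the injectivity of $\phi_{k-1}$ together with ergodicity (if $\Delta_q f$ is trivial for every $q$ then $f$ is $T$-invariant and hence constant). The main obstacle will be the combinatorial bookkeeping in the first step, where one must carefully track how the polynomial coefficients of lower-order derivatives combine through the cocycle. A subtler conceptual point is that $e_\Q : \A/\Q \to S^1$ is \emph{not} injective, so $t_1(f)$ cannot be read off from $c_{k-1}(\Delta_q f)$ at a single $q$; it must instead be recovered via Pontryagin duality from the entire character $\mu_f$, which is precisely what the Vandermonde cancellation in the second step arranges.
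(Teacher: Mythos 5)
Your proposal is correct and establishes the theorem, but the technical execution of the inductive step differs from the paper's in a few interesting ways. Both proofs proceed by induction on $k$, extract the $\A/\Q$-coordinates of $\phi_k(f)$ from the tuples $\phi_{k-1}(\Delta_q f)$, and then produce the $S^1$-coordinate using the injectivity (divisibility) of $S^1$ to split off a complement; your retraction $c_k : \mE_k(T) \to S^1$ extending $c_{k-1}$ and the paper's section $\vartheta : H \to \mE_{k+1}(T)$ are dual formulations of the same splitting argument. The substantive divergence is in the combinatorial heart. The paper anchors the top coefficient as $t_{k+1} = \delta_{k+1}(f)$ using the multilinearization machinery of Section 8.3 (the maps $\Delta^k f \in \mathcal{M}_k$, Proposition 8.6, and Proposition 8.9), and derives the remaining coordinate identities from the \emph{symmetry} of $\Delta^2 f$: applying property (3) of $\phi_{k-1}$ to both $\Delta_r(\Delta_q f)$ and $\Delta_q(\Delta_r f)$ and equating, then setting $r = 1$. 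You instead drive the whole computation off the cocycle identity $\Delta_{q+r}f = \Delta_q f \cdot \Delta_r f \cdot \Delta_q \Delta_r f$, deducing from the downward induction that each $s_i(q) - \sum_{l \ge i+2}\binom{q}{l-i}t_l$ is additive and hence $\Q$-linear (because $\A/\Q$ is uniquely divisible). This bypasses the entire $\delta_k$/multilinearization apparatus. For the $S^1$-coordinate, where the paper uses the same cocycle identity to show $c \in \mathcal{P}_{k+1}$ and then invokes Corollary 8.8, you correctly note that after subtracting the higher-degree terms what remains is a bona fide character of $\Q$, so you can read off $t_1$ directly from Pontryagin duality without the $\mathcal{P}_{k+1}$ classification. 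Your route is arguably leaner (it needs only the classification of characters of $\Q$, not the full classification of polynomial phases in Corollary 8.8); the paper's approach makes the role of the leading coefficient more visible, which is convenient elsewhere in Section 8. One small caveat: you should make explicit, as I believe you intend, that the extension $c_k \supseteq c_{k-1}$ combined with the vanishing $t_{j+1}(f) = \dots = t_k(f) = 0$ for $f \in \mE_j(T)$ is what gives compatibility property (2); the vanishing follows from the downward induction applied with the observation that $\phi_{k-1}(\Delta_q f)$ has zero trailing coordinates when $f$ has lower order.
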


The main effort of this section will be devoted to proving Theorem \ref{thm: labeling of polynomial phases}.
Let us first show that Theorem \ref{thm: Abramov structure} follows from Theorem \ref{thm: labeling of polynomial phases}.
We will need two small lemmas for this deduction.
The first lemma, which will be proved in Subsection \ref{sec: phase polynomials} below, shows that phase polynomials form natural bases for the Abramov factors.

\begin{restatable}{lemma}{Orthogonal} \label{lem: phase polynomials orthogonal}
	Let $(X, \mu, T)$ be an ergodic $\Q$-system, and let $k \in \N$.
	If $f, g \in \mE_k(T)$, then either $f$ and $g$ are constant multiples of one another or $f$ and $g$ are orthogonal.
\end{restatable}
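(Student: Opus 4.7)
The plan is to reduce the lemma, by induction on $k$, to the following statement:
\begin{equation*}
\text{if } h \in \mE_k(T) \text{ is not essentially constant, then } \int_X h \, d\mu = 0.
\end{equation*}
This reduction is immediate: $\mE_k(T)$ is closed under products and conjugation, and every phase polynomial satisfies $|h| = 1$ a.e.\ (setting $q_1 = \cdots = q_{k+1} = 0$ in the defining identity gives $|h|^{2^{k+1}} = 1$). So for $f, g \in \mE_k(T)$ I would apply the reduced statement to $h := f \bar g \in \mE_k(T)$: if $h$ is essentially constant then $f$ is a constant multiple of $g$, and otherwise $\innprod{f}{g} = \int_X h \, d\mu = 0$.

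The base case $k = 0$ is immediate, since $\Delta_q h = 1$ forces $T^q h = h$ for every $q$ and ergodicity yields that $h$ is constant. For the inductive step, the key identity is
\begin{equation*}
\left| \int_X h \, d\mu \right|^2 = \lim_{N \to \infty} \innprod{h}{\tfrac{1}{|\Phi_N|} \sum_{q \in \Phi_N} T^q h} = \lim_{N \to \infty} \frac{1}{|\Phi_N|} \sum_{q \in \Phi_N} \overline{\int_X \Delta_q h \, d\mu},
\end{equation*}
obtained by combining the mean ergodic theorem (the average converges to $\int h \, d\mu$ in $L^2(\mu)$ by ergodicity) with the identity $\innprod{h}{T^q h} = \overline{\int \Delta_q h \, d\mu}$. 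Since $\Delta_q h \in \mE_{k-1}(T)$ for every $q$, the induction hypothesis says that the integrand on the right vanishes unless $\Delta_q h$ is essentially constant. A routine check shows that $S := \{q \in \Q : \Delta_q h \text{ is essentially constant}\}$ is a subgroup of $\Q$, that the assignment $c : S \to S^1$ recording the constant value of $\Delta_q h$ is a character, and that $T^q h = c(q) h$ for $q \in S$.

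It therefore remains to show that $\lim_{N \to \infty} \frac{1}{|\Phi_N|} \sum_{q \in \Phi_N \cap S} c(q) = 0$ whenever $h$ is not essentially constant. If $S = \Q$, then $h$ is itself an eigenfunction, $c$ is necessarily a non-trivial character, and $\int h \, d\mu = c(q) \int h \, d\mu = 0$ upon picking any $q$ with $c(q) \ne 1$. The main obstacle is the remaining case $S \subsetneq \Q$. Here I would exploit divisibility of $\Q$: the quotient $\Q/S$ is a non-trivial divisible abelian group, hence infinite, so $[\Q : S] = \infty$. Given this, a standard F{\o}lner coset-counting argument yields $|\Phi_N \cap S| / |\Phi_N| \to 0$: for any $n \in \N$, picking representatives $x_0, \ldots, x_{n-1} \in \Q$ of distinct cosets of $S$ and using translation invariance of the F{\o}lner limit gives $|\Phi_N \cap (x_i + S)| = |\Phi_N \cap S| + o(|\Phi_N|)$, while disjointness of the cosets forces $n \cdot |\Phi_N \cap S| / |\Phi_N| \le 1 + o(1)$. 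Letting $n \to \infty$ kills the average and completes the induction.
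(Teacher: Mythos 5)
Your proof is correct, and it takes a genuinely different route from the paper. The paper reduces to the same claim (if $h \in \mE_k(T)$ is nonconstant then $\int_X h\, d\mu = 0$), but then chooses $k$ minimal with $h \in \mE_k(T) \setminus \mE_{k-1}(T)$, computes the Host--Kra uniformity seminorm $\seminorm{U^k}{h}^{2^k} = \UClim_{\bm q \in \Q^k} e_{\Q}(q_1\cdots q_k\,\delta_k(h)) = 0$ using the ``leading-coefficient'' map $\delta_k : \mE_k(T) \to \A/\Q$ developed via Lemma \ref{lem: derivative is multilinear} and Proposition \ref{prop: multilinear}, and then invokes monotonicity of the Host--Kra seminorms to conclude $|\int h\,d\mu| = \seminorm{U^1}{h} \le \seminorm{U^k}{h} = 0$. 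Your argument instead inducts on $k$ directly, using only the mean ergodic theorem identity $|\int h|^2 = \lim_N \frac{1}{|\Phi_N|}\sum_{q\in\Phi_N} \overline{\int \Delta_q h\,d\mu}$ together with the dichotomy on $S = \{q : \Delta_q h \text{ essentially constant}\}$, disposing of $S = \Q$ via the nontrivial-eigenvalue argument and $S \subsetneq \Q$ via F{\o}lner coset counting. What the paper's route buys is brevity given the $\delta_k$ machinery already built in Section \ref{sec: Abramov} and a structural payoff ($\delta_k(h)\ne 0$ is precisely the obstruction). What your route buys is self-containedness: no Host--Kra seminorms and no appeal to $\delta_k$. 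It is worth noting that your argument leans on a $\Q$-specific fact — that every proper subgroup of $\Q$ has infinite index, by divisibility — whereas the paper's seminorm argument works verbatim for any abelian acting group (which matters because in $\Z$, the analogous lemma requires total ergodicity or extra care with finite-index $S$). Two very minor cosmetic points: you should cite or reprove that $\mE_k(T)$ is a group under multiplication (the paper has this as a standalone lemma, which justifies $f\bar g \in \mE_k(T)$), and the sum in your key identity is over $\overline{c(q)}$ rather than $c(q)$, though this does not affect the absolute-value bound.
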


The second lemma allows us to reduce from the difficult task of finding a factor map to the easier task of constructing a map between Hilbert spaces.

\begin{lemma} \label{lem: unitary factor map}
	Let $(X, \mu)$ and $(Y, \nu)$ be standard Borel probability spaces, and let $\Phi : L^2(\mu) \to L^2(\nu)$ be a linear map.
	Suppose $\Phi$ satisfies:
	\begin{enumerate}[(a)]
		\item	for any $f, f' \in L^2(\mu)$,
			\begin{equation*}
				\innprod{f}{f'}_{L^2(\mu)} = \innprod{\Phi(f)}{\Phi(f')}_{L^2(\nu)};
			\end{equation*}
		\item	for any $f \in L^2(\mu)$,
			\begin{equation*}
				f \in L^{\infty}(\mu) \iff \Phi(f) \in L^{\infty}(\nu);
			\end{equation*}
			and
		\item	for any $f, f' \in L^{\infty}(\mu)$,
			\begin{equation*}
				\Phi(ff') = \Phi(f) \cdot \Phi(f').
			\end{equation*}
	\end{enumerate}
	Then there exists a measure-preserving map $\tilde{\Phi} : Y \to X$ such that $\Phi(f) = f \circ \tilde{\Phi}$.
\end{lemma}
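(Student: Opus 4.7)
The plan is to construct $\tilde\Phi$ by first producing an associated measure-preserving Boolean $\sigma$-algebra homomorphism between the measure algebras of $(X,\mu)$ and $(Y,\nu)$, and then invoking the classical point-realization theorem for standard Borel probability spaces. The starting observation is that $\Phi$ maps indicator functions to indicator functions: if $f = \ind_A$ for a measurable $A \subseteq X$, then $f^2 = f$ and $f \in L^{\infty}(\mu)$, so hypotheses (b) and (c) give $\Phi(f) \in L^{\infty}(\nu)$ with $\Phi(f)^2 = \Phi(f)$. Any idempotent in $L^{\infty}(\nu)$ is almost-everywhere $\{0,1\}$-valued, so $\Phi(\ind_A) = \ind_{\Psi(A)}$ for some measurable set $\Psi(A) \subseteq Y$, determined modulo null sets.

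Next I would verify that $\Psi$ descends to a measure-preserving Boolean $\sigma$-algebra homomorphism on measure algebras. Taking $f = f' = \ind_A$ in (a) yields
\[
\nu(\Psi(A)) = \norm{L^2(\nu)}{\Phi(\ind_A)}^2 = \norm{L^2(\mu)}{\ind_A}^2 = \mu(A),
\]
so $\Psi$ is measure-preserving. The identity $\ind_{A \cap B} = \ind_A \cdot \ind_B$ together with (c) gives $\Psi(A \cap B) = \Psi(A) \cap \Psi(B)$. Since $\Phi(1)^2 = \Phi(1)$ and $\norm{L^2(\nu)}{\Phi(1)} = 1$, we must have $\Phi(1) = 1$ a.e., and then $\ind_{A^c} = 1 - \ind_A$ combined with linearity yields $\Psi(A^c) = \Psi(A)^c$. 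Countable additivity is a consequence of the $L^2$-continuity of $\Phi$ (implied by (a)) applied to $\ind_{A_n} \to \ind_A$ whenever $A_n \uparrow A$.

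At this point I would invoke the standard point-realization theorem: any measure-preserving Boolean $\sigma$-algebra homomorphism between the measure algebras of two standard Borel probability spaces is induced, modulo null sets, by a measure-preserving point map. This produces a measure-preserving $\tilde\Phi : Y \to X$ with $\Psi(A) = \tilde\Phi^{-1}(A)$ mod $\nu$-null sets. This is the main ingredient of the proof and is the only place where the standard Borel hypothesis is used; without it, one obtains only an abstract measure algebra isomorphism and not an honest pointwise map. This step is essentially classical (cf.\ the treatment of factor maps in \cite{furstenberg_book}), but it is the key technical input.

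Finally, the identity $\Phi(f) = f \circ \tilde\Phi$ holds for indicators by construction of $\tilde\Phi$, extends to simple functions by linearity, and extends to all of $L^2(\mu)$ because both sides, viewed as maps $L^2(\mu) \to L^2(\nu)$, are $L^2$-isometries ($f \mapsto f \circ \tilde\Phi$ preserves $L^2$-norms since $\tilde\Phi$ is measure-preserving, and $\Phi$ is an isometry by (a)), while simple functions are dense in $L^2(\mu)$.
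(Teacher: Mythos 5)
Your proof is correct and follows the same route as the paper: the paper cites Walters's Theorems 2.4 and 2.2 to obtain, respectively, the measure algebra homomorphism and its point realization, and your argument simply unpacks the first citation explicitly (indicators map to indicators, $\Psi$ is a measure-preserving Boolean $\sigma$-homomorphism) before invoking the classical point-realization theorem for standard Borel spaces as the second step.
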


\begin{proof}
	A map $\Phi$ satisfying (a), (b), and (c) is always induced by a homomorphism of measure algebras (see \cite[Theorem 2.4]{walters}).
	The assumption that our probability spaces are standard Borel ensures that the measure algebra homomorphism is realized by a measure-preserving map (see \cite[Theorem 2.2]{walters}).
\end{proof}

\begin{proof}[Proof that Theorem \ref{thm: labeling of polynomial phases}$\implies$Theorem \ref{thm: Abramov structure}]
	Let $(X, \mu, T)$ be an Abramov system of order $k$.
	Let $\phi_k : \mE_k(T) \to S^1 \times (\A/\Q)^k$ be the map provided by Theorem \ref{thm: labeling of polynomial phases}.
	Note that $\phi_k(cf) = \phi_k(c) \phi_k(f) = (c, \bm{0}) \phi_k(f)$, so the image of the map $\phi_k$ is of the form $S^1 \times H$ for some subgroup $H \subseteq (\A/\Q)^k$.
	By Lemma \ref{lem: phase polynomials orthogonal}, if we let $f_{\bm{t}} \in \mE_k(T)$ be the function with $\phi_k(f_{\bm{t}}) = (1,\bm{t})$ for $\bm{t} \in H$, then $(f_{\bm{t}})_{\bm{t} \in H}$ is an orthonormal basis in $L^2(\mu)$.
	In particular, $H$ is countable.
	Let $W \subseteq \A/\Q$ be the countable $\Q$-vector space generated by the coordinate projections of $H$.
	That is, $W$ is the smallest $\Q$-vector space such that $H \subseteq W^k$.
	
	Let $V = \hat{W}$, where we treat $W$ as a discrete group.
	We will write the $\Q$-vector space $V$ with additive notation.
	Let $\alpha \in V$ be the element defined by $\alpha(w) = e_{\Q}(w)$ for $w \in W$.
	We then define an action $S$ on $V^k$ by
	\begin{equation*}
		S^q(v_1, \ldots, v_k) = \left( v_1 + q\alpha, v_2 + qv_1 + \binom{q}{2} \alpha, \ldots, v_k + q v_{k-1} + \ldots + \binom{q}{k-1} v_1 + \binom{q}{k} \alpha \right).
	\end{equation*}
	For $\bm{w} \in W^k$, we let $g_{\bm{w}}$ be the function $g_{\bm{w}}(\bm{v}) = \prod_{j=1}^k v_j(w_j)$.
	As described at the beginning of this subsection, the functions $(g_{\bm{w}})_{\bm{w} \in W^k}$ form an orthonormal basis in $L^2(m_V^k)$ and satisfy
	\begin{equation} \label{eq: derivative of character}
		\Delta_q g_{\bm{w}} = e_{\Q} \left( \sum_{j=1}^k \binom{q}{j} w_j \right) \cdot g_{\bm{w}'(q)}
	\end{equation}
	with $\bm{w}'$ given by \eqref{eq: w'}.
	
	Define $\Phi : L^2(\mu) \to L^2(m_V^k)$ by
	\begin{equation*}
		\Phi \left( \sum_{\bm{t} \in H} c(\bm{t}) f_{\bm{t}} \right) = \sum_{\bm{t} \in H} c(\bm{t}) g_{\bm{t}}
	\end{equation*}
	for $c \in \ell^2(H)$.
	Since $(f_{\bm{t}})_{\bm{t} \in H}$ and $(g_{\bm{t}})_{\bm{t} \in H}$ are orthonormal systems in $L^2(\mu)$ and $L^2(m_V^k)$ respectively, $\Phi$ is a linear isometry.
	Let us check that $\Phi$ also satisfies conditions (b) and (c) from Lemma \ref{lem: unitary factor map}.
	
	Observe that for $\bm{t}, \bm{t}' \in H$, $f_{\bm{t}} \cdot f_{\bm{t}'} = f_{\bm{t} + \bm{t}'}$ and $g_{\bm{t}} \cdot g_{\bm{t}'} = g_{\bm{t} + \bm{t}'}$.
	It follows that
	\begin{equation*}
		\Phi(f f') = \Phi(f) \cdot \Phi(f')
	\end{equation*}
	when $f$ and $f'$ are finite linear combinations of the basis functions $(f_{\bm{t}})_{\bm{t} \in H}$.
	But finite linear combinations are dense in $L^{\infty}(\mu)$ with respect to the $L^2$ norm and $\Phi$ is continuous, so (c) holds.
	
	It remains to check (b).
	This property is more subtle than the others, and we proceed in stages.
	We will first prove the property: if $f \in L^2(\mu)$ and $f \ge 0$, then $\Phi(f) \ge 0$.
	To see this, note that every nonnegative function can be approximated by a nonnegative simple function, so by linearity of $\Phi$, it suffices to show that $\Phi(\ind_E) \ge 0$ for every measurable set $E \subseteq X$.
	But using property (c), the function $g = \Phi(\ind_E)$ satisfies $g = g^2$, so $g$ is almost everywhere $\{0,1\}$-valued.
	
	As a consequence, if $f \in L^{\infty}(\mu)$ is real-valued, then by linearity of $\Phi$, we have
	\begin{equation*}
		0 \le \Phi(\norm{\infty}{f} - f) = \norm{\infty}{f} - \Phi(f),
	\end{equation*}
	so $\norm{\infty}{\Phi(f)} \le \norm{\infty}{f}$.
	The map $\Phi^{-1} : \Phi(L^2(\mu)) \to L^2(\mu)$ is a linear isometry satisfying the multiplication property (c), so the same argument applied to $\Phi^{-1}$ shows that $\norm{\infty}{f} \le \norm{\infty}{\Phi(f)}$.
	Thus, (b) holds.
	
	By Lemma \ref{lem: unitary factor map}, there exists a measure-preserving map $\pi : Y \to X$ such that $\Phi(f) = f \circ \pi$.
	To show that $\pi$ is a factor map, we need to check the additional property: for every $q \in \Q$ and $f \in L^2(\mu)$,
	\begin{equation} \label{eq: intertwine}
		\Phi(T^qf) = S^q \Phi(f).
	\end{equation}
	Applying property (3) from Theorem \ref{thm: labeling of polynomial phases},
	\begin{equation*}
		T^q f_{\bm{t}} = f_{\bm{t}} \cdot e_{\Q} \left( \sum_{j=1}^k \binom{q}{j} t_j \right) \cdot f_{\bm{t}'(q)},
	\end{equation*}
	so by property (c),
	\begin{equation*}
		\Phi(T^q f_{\bm{t}}) = g_{\bm{t}} \cdot e_{\Q} \left( \sum_{j=1}^k \binom{q}{j} t_j \right) \cdot g_{\bm{t}'(q)}.
	\end{equation*}
	By \eqref{eq: derivative of character}, this is equal to $S^q g_{\bm{t}} = S^q \Phi(f_{\bm{t}})$.
	Therefore, \eqref{eq: intertwine} holds for $f = f_{\bm{t}}$ and hence for general $f$ by applying linearity and continuity of $\Phi$. \\
	
	To see that we have the topological statement claimed at the end of Theorem \ref{thm: Abramov structure}, we make the following observation.
	The space $\Phi(L^2(\mu)) \subseteq L^2(m_V^k)$ is spanned by the functions $\{g_{\bm{t}} : \bm{t} \in H\}$.
	If we let $\tilde{X} = \hat{H}$, then there is a continuous surjective group homomorphism $\pi : V \to \tilde{X}$.
	Using the map $\Phi$, it is not hard to check that $(X, \mu, T)$ is measurably isomorphic to the system $(\tilde{X}, \tilde{\mu}, \tilde{T})$, where $\tilde{\mu} = m_{\tilde{X}}$ is the Haar measure on $\tilde{X}$ and $\tilde{T}$ is the action induced by $S$.
\end{proof}


\subsection{Polynomials over $\Q$}

The goal for the remainder of the section is to prove Theorem \ref{thm: labeling of polynomial phases}.
An important intermediate step is to classify polynomial functions from $\Q$ to $S^1$ using a similar labeling by tuples in $S^1 \times (\A/\Q)^k$.

We define a differencing operator on functions from $\Q$ to $S^1$ by $\partial_q \varphi : t \mapsto \varphi(t+q) \overline{\varphi(t)}$.
We then recursively define $\partial^{k+1}_{q_1, \ldots, q_k, q_{k+1}}$ by $\partial^{k+1}_{q_1, \ldots, q_k, q_{k+1}} = \partial_{q_{k+1}} \circ \partial^k_{q_1, \ldots, q_k}$ for $k \in \N$ and $(q_1, \ldots, q_k, q_{k+1}) \in \Q^{k+1}$.
For $k \ge 0$, we say $\varphi : \Q \to S^1$ is a \emph{polynomial of degree at most $k$} if $\partial^{k+1}_{q_1, \ldots, q_k, q_{k+1}} \varphi(t) = 1$ for every $q_1, \ldots, q_k, q_{k+1}, t \in \Q$.
Let $\mathcal{P}_k$ denote the space of polynomials of degree at most $k$ for each $k \ge 0$.

We want to produce a canonical form for polynomials over $\Q$.
We start by producing a canonical form for multilinear functions, which we will then lift to polynomials.
For $k \in \N$, let $\mathcal{M}_k$ be the space of $\Z$-multilinear functions from $\Q^k$ to $S^1$.
That is $\eta \in \mathcal{M}_k$ if and only if for each $j \in \{1, \ldots, k\}$ and $(q_1, \ldots, q_{j-1}, q_{j+1}, \ldots, q_k)$, the map $q \mapsto \eta(q_1, \ldots, q_{j-1}, q, q_{j+1}, \ldots, q_k)$ is a group homomorphism.

For $k \ge 0$ and $\varphi : \Q \to S^1$, we define $D^k \varphi : \Q^k \to S^1$ by
\begin{equation*}
	D^k \varphi(q_1, \ldots, q_k) = \partial^k_{q_1, \ldots, q_k} \varphi(0) = \sum_{J \subseteq \{1, \ldots, k\}} C^{k-|J|} \varphi \left( \sum_{j \in J} q_j \right),
\end{equation*}
where $C$ is the complex conjugation map.

\begin{lemma} \label{lem: multilinearization}
	Let $\varphi : \Q \to S^1$, and let $k \in \N$.
	Then
	\begin{enumerate}[(1)]
		\item	$\varphi \in \mathcal{P}_k$ if and only if $D^k \varphi \in \mathcal{M}_k$, and
		\item	the map $D^k : \mathcal{P}_k \to \mathcal{M}_k$ is a group homomorphism with $\ker(D^k) = \mathcal{P}_{k-1}$.
	\end{enumerate}
\end{lemma}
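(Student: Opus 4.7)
The plan is to prove both parts simultaneously by induction on $k$, using two structural features that hold for any $\varphi : \Q \to S^1$. First, because $S^1$ is abelian, the operators $\partial_q$ pairwise commute, so $\partial^k_{q_1, \ldots, q_k}$ is symmetric in its $k$ differencing arguments; in particular, $D^k\varphi$ is symmetric in $q_1, \ldots, q_k$. Second, $\partial_q$ is multiplicative in $\varphi$, i.e. $\partial_q(\varphi\psi) = (\partial_q\varphi)(\partial_q\psi)$, and iterating this yields $D^k(\varphi\psi) = D^k\varphi \cdot D^k\psi$, which immediately gives the homomorphism claim in (2).

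The central identity, obtained by unrolling one layer of the definition, is
\[
    D^k\varphi(q_1, \ldots, q_k) = D^{k-1}(\partial_{q_k}\varphi)(q_1, \ldots, q_{k-1}),
\]
valid for every $\varphi$ without hypotheses. With this in hand, both directions of (1) follow cleanly. For the forward direction, assume $\varphi \in \mathcal{P}_k$. Then $\partial_{q_k}\varphi \in \mathcal{P}_{k-1}$ for each fixed $q_k$, so by the inductive hypothesis $D^{k-1}(\partial_{q_k}\varphi) \in \mathcal{M}_{k-1}$; hence $D^k\varphi$ is multilinear in $q_1, \ldots, q_{k-1}$ for each fixed $q_k$. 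The symmetry of $D^k\varphi$ then lets any of the $k$ coordinates play the role of ``last,'' so $D^k\varphi$ is multilinear in all $k$ variables. For the reverse direction, assume $D^k\varphi \in \mathcal{M}_k$; fixing $q_k$ and applying the central identity, $D^{k-1}(\partial_{q_k}\varphi)$ is $(k-1)$-multilinear, so by induction $\partial_{q_k}\varphi \in \mathcal{P}_{k-1}$ for every $q_k$, which is precisely the statement $\varphi \in \mathcal{P}_k$. The base case $k = 1$ reduces to the familiar observation that $D^1\varphi(q) = \varphi(q)\overline{\varphi(0)}$ is a character of $\Q$ iff $\varphi$ is a constant multiple of a character.

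For the kernel statement in (2), the inclusion $\mathcal{P}_{k-1} \subseteq \ker D^k$ is immediate from $\partial^k\varphi \equiv 1$ for $\varphi \in \mathcal{P}_{k-1}$. Conversely, suppose $\varphi \in \mathcal{P}_k$ and $D^k\varphi \equiv 1$. The condition $\varphi \in \mathcal{P}_k$ is equivalent to $\partial^k_{q_1, \ldots, q_k}\varphi(t)$ being independent of $t$ (its discrete derivative in $t$ is $\partial^{k+1}\varphi$, which vanishes), so $\partial^k\varphi(t)$ equals its value at $t = 0$, namely $D^k\varphi(q_1, \ldots, q_k) = 1$. Thus $\partial^k\varphi \equiv 1$ everywhere and $\varphi \in \mathcal{P}_{k-1}$.

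I do not anticipate any real obstacle: the entire argument is carried by the symmetry of the discrete derivative in its differencing variables and the one-layer identity relating $D^k\varphi$ to $D^{k-1}$ applied to $\partial_{q_k}\varphi$. Both are immediate from the definitions, and the rest is bookkeeping through the induction.
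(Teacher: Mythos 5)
Your proof is correct and follows essentially the same strategy as the paper: both arguments hinge on the symmetry of $D^k\varphi$ in its differencing arguments, a peel-off identity relating $D^k\varphi$ to a lower-order difference, and the $k=1$ case as the elementary base. The paper reduces directly to $k=1$ in a single step (peeling off the first $k-1$ variables to form $\xi = \partial^{k-1}_{q_1,\ldots,q_{k-1}}\varphi$ and writing $\psi(q) = \partial_q\xi(0)$), where you run a formal induction peeling off the last variable; and for the kernel statement the paper invokes the identity $\partial^2_{q,r}\varphi(t) = \partial^2_{t+q,r}\varphi(0)\,\overline{\partial^2_{t,r}\varphi(0)}$ where you use the cleaner observation that $\partial^{k+1}\varphi \equiv 1$ forces $\partial^k\varphi$ to be constant in $t$ — but these are organizational rather than substantive differences.
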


\begin{proof}
	Let us first prove (1).
	The function $D^k\varphi$ is symmetric, so it suffices to prove that for fixed $(q_1, \ldots, q_{k-1})$, the function
	\begin{equation*}
		\psi(q) = D^k\varphi(q_1, \ldots, q_{k-1}, q)
	\end{equation*}
	is a homomorphism.
	Letting $\xi = \partial^{k-1}_{q_1, \ldots, q_{k-1}} \varphi$, we can write $\psi(q) = \partial_q \xi(0)$.
	Note that $\xi \in \mathcal{P}_1$ for every choice of $(q_1, \ldots, q_{k-1}) \in \Q^{k-1}$ if and only if $\varphi \in \mathcal{P}_k$.
	Thus, we are reduced to showing that the function $\psi(q) = \partial_q \xi(0)$ is a homomorphism if and only if $\xi \in \mathcal{P}_1$.
	In other words, it suffices to prove the $k=1$ case of the lemma.
	
	Observe that for $q, r \in \Q$,
	\begin{equation*}
		D\varphi(q+r) = \varphi(q+r) \overline{\varphi(0)} = \varphi(q) \overline{\varphi(0)} \cdot \varphi(r) \overline{\varphi(0)} \cdot \varphi(q+r) \overline{\varphi(q)} \overline{\varphi(r)} \varphi(0) = D\varphi(q) \cdot D\varphi(r) \cdot \partial^2_{q,r}\varphi(0).
	\end{equation*}
	Hence, $D\varphi$ is a homomorphism if and only if $\partial^2_{q,r}\varphi(0) = 1$ for every $q, r \in \Q$.
	Recalling that $\varphi \in \mathcal{P}_1$ if and only if $\partial^2_{q,r}\varphi(t) = 1$ for every $q, r, t \in \Q$, we are now reduced to showing that if $\partial^2_{q,r}\varphi(0) = 1$ for every $q, r \in \Q$, then $\partial^2_{q,r}\varphi$ is the constant 1 function for $q, r \in \Q$.
	This follows immediately from the identity
	\begin{equation} \label{eq: determined by values at 0}
		\partial^2_{q,r} \varphi(t) = \partial^2_{t+q,r} \varphi(0) \cdot \overline{\partial^2_{t,r}\varphi(0)}.
	\end{equation}
	
	Now we prove (2).
	By definition, it is clear that $D^k(\varphi \psi) = D^k \varphi \cdot D^k \psi$, so $D^k$ is a group homomorphism.
	Observe that $\varphi \in \ker(D^k)$ if and only if for every $(q_1, \ldots, q_k) \in \Q^k$,
	\begin{equation*}
		D^k \varphi(q_1, \ldots, q_k) = \partial^k_{q_1, \ldots, q_k} \varphi(0) = 1.
	\end{equation*}
	Using the identity \eqref{eq: determined by values at 0}, this is equivalent to having $\partial^k_{q_1, \ldots, q_k} \varphi(t) = 1$ for every $(q_1, \ldots, q_k) \in \Q^k$ and $t \in \Q$, but this is exactly what it means for $\varphi$ to be a polynomial of degree at most $k-1$.
\end{proof}

We now want to better understand the multilinear map $D^k \varphi$.
The following proposition characterizes multilinear maps.

\begin{proposition} \label{prop: multilinear}
	Let $k \in \N$, and let $\eta \in \mathcal{M}_k$.
	Then there exists $\alpha \in \A/\Q$ such that for every $(q_1, \ldots, q_k) \in \Q$,
	\begin{equation*}
		\eta(q_1, \ldots, q_k) = e_{\Q} \left( q_1 \cdot \ldots \cdot q_k \alpha \right).
	\end{equation*}
\end{proposition}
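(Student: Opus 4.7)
The strategy is to factor $\eta$ through the multiplication map $\Q^k \to \Q$, thereby reducing the claim to the classification of characters of $\Q$ already recalled in Section \ref{sec: Abramov characteristic}. Concretely, I would establish the chain of isomorphisms $\mathcal{M}_k \cong \mathrm{Hom}(\Q^{\otimes k}, S^1) \cong \mathrm{Hom}(\Q, S^1) \cong \A/\Q$, where the tensor product is taken over $\Z$, and then trace an arbitrary $\eta$ through this chain to obtain the prescribed form.

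The first isomorphism is simply the universal property of the tensor product: a $\Z$-multilinear map $\Q^k \to S^1$ is the same data as a group homomorphism $\tilde\eta : \Q^{\otimes k} \to S^1$ satisfying $\tilde\eta(q_1 \otimes \cdots \otimes q_k) = \eta(q_1, \ldots, q_k)$. The second isomorphism is the algebraic heart of the argument: the multiplication map $\mu(q_1 \otimes \cdots \otimes q_k) = q_1 q_2 \cdots q_k$ is a bijection $\Q^{\otimes k} \to \Q$. To verify this I would treat the case $k = 2$ first: given any element of $\Q \otimes_\Z \Q$, writing each tensor factor $q_i$ with a common denominator $N$ lets one rewrite $p \otimes (n/N) = (np) \otimes (1/N)$, so every element can be expressed in the form $r \otimes (1/N)$ for some $r \in \Q$ and $N \in \N$. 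Surjectivity of $\mu$ is clear from $\mu(q \otimes 1) = q$, while injectivity follows because $\mu(r \otimes 1/N) = r/N$ vanishes only when $r = 0$. An easy induction on $k$ then gives the general case.

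With these two reductions in hand, $\eta$ factors uniquely as $\eta = \chi \circ \mu$ for some character $\chi \in \hat\Q$. The third isomorphism $\hat\Q \cong \A/\Q$ was already invoked in the discussion preceding Theorem \ref{thm: polynomial ww adelic}: every character of $\Q$ is of the form $\chi(q) = e_\Q(q\alpha)$ for a unique $\alpha \in \A/\Q$. Substituting yields
\[
\eta(q_1, \ldots, q_k) = \chi(q_1 q_2 \cdots q_k) = e_\Q(q_1 \cdots q_k \alpha),
\]
as required.

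The only substantive step is the tensor-product computation $\Q^{\otimes k} \cong \Q$; everything else is formal bookkeeping. I do not anticipate any serious obstacle, although some care is needed to justify the reduction to simple tensors of the form $r \otimes (1/N)$ cleanly without invoking heavier module-theoretic machinery than necessary.
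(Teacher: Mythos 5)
Your argument is correct but takes a genuinely different route from the paper's. The paper proves the proposition by induction on $k$: the base case $k=1$ is the identification $\hat\Q \cong \A/\Q$, and the inductive step fixes the last variable $q_{k+1}$, applies the induction hypothesis to get a uniquely determined element $\alpha(q_{k+1}) \in \A/\Q$, and then uses multilinearity to observe that $q_{k+1} \mapsto \alpha(q_{k+1})$ is a group homomorphism from $\Q$ into the $\Q$-vector space $\A/\Q$, hence automatically $\Q$-linear and so of the form $q_{k+1}\alpha(1)$. Your approach instead packages the multilinearity structurally via the universal property of the tensor product, reducing everything to the single algebraic fact that multiplication $\Q^{\otimes_\Z k} \to \Q$ is an isomorphism (which you verify directly and which also follows from $\Q$ being a localization of $\Z$). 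The net computation is similar in spirit -- in both cases one must exploit that elements of $\Q$ can be cleared to a common denominator -- but your version makes the structural reason for the result more transparent and avoids the explicit induction, at the cost of invoking the tensor-product formalism. One small imprecision: when you say ``writing each tensor factor $q_i$ with a common denominator $N$,'' you should make clear that this is done across all the simple tensors appearing in a general (finite) sum, so that the whole sum collapses to a single tensor $r \otimes (1/N)$; as stated it reads as if you are handling only a simple tensor. With that clarified, the argument is complete.
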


\begin{proof}
	When $k = 1$, this is a statement that the Pontryagin dual of $\Q$ (as a discrete group) is isomorphic to $\A/\Q$, which we discussed above in Subsection \ref{sec: Abramov characteristic}.
	
	Suppose the proposition holds for some $k \in \N$, and let $\eta \in \mathcal{M}_{k+1}$.
	By the induction hypothesis, there exists a map $\alpha : \Q \to \A/\Q$ such that
	\begin{equation} \label{eq: multilinear IH}
		\eta(q_1, \ldots, q_k, q_{k+1}) = e_{\Q} \left( q_1 \cdot \ldots \cdot q_k \alpha(q_{k+1}) \right)
	\end{equation}
	for every $(q_1, \ldots, q_k, q_{k+1}) \in \Q^{k+1}$.
	For fixed $q_{k+1}$, since \eqref{eq: multilinear IH} holds for $q_1 = \ldots = q_{k-1} = 1$ and arbitrary $q_k$, the element $\alpha(q_{k+1})$ is uniquely determined.
	Moreover, since $\eta$ is multilinear, $\alpha$ is in fact a homomorphism from $\Q$ to $\A/\Q$.
	Thus, $\alpha(q_{k+1}) = q_{k+1} \alpha(1)$ and we are done.
\end{proof}

As a consequence, we obtain an analogue of Theorem \ref{thm: labeling of polynomial phases} for polynomial functions defined on $\Q$.

\begin{corollary} \label{cor: polynomial representation}
	Let $k \ge 0$, and let $\varphi \in \mathcal{P}_k$.
	Then there exists a unique tuple $(c, a_1, \ldots, a_k) \in S^1 \times (\A/\Q)^k$ such that for every $q \in \Q$,
	\begin{equation*}
		\varphi(q) = c \cdot e_{\Q} \left( \sum_{j=1}^k a_j \binom{q}{j} \right).
	\end{equation*}
	Moreover, the map that sends $\varphi$ to $(c, a_1, \ldots, a_k)$ is a group isomorphism, and $\partial_q \varphi$ corresponds to the tuple $(\varphi(q) \cdot \overline{c}, a'_1(q), \ldots, a'_{k-1}(q), 0)$ with
	\begin{equation*}
		a'_i(q) = \sum_{j=i+1}^k a_j \binom{q}{j-i}.
	\end{equation*}
\end{corollary}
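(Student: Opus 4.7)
The plan is to prove Corollary \ref{cor: polynomial representation} by induction on $k$, using Lemma \ref{lem: multilinearization} together with Proposition \ref{prop: multilinear} and the basis $\binom{q}{k}$ for integer-valued polynomials on $\Q$. The base case $k=0$ is immediate, since $\mathcal{P}_0$ consists precisely of the constant functions $\varphi \equiv c$ with $c \in S^1$.

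For the inductive step, suppose the result holds up to $k-1$ and let $\varphi \in \mathcal{P}_k$. By Lemma \ref{lem: multilinearization}(1), $D^k \varphi \in \mathcal{M}_k$, so Proposition \ref{prop: multilinear} yields a unique $a_k \in \A/\Q$ with $D^k\varphi(q_1,\ldots,q_k) = e_{\Q}(q_1 \cdots q_k \, a_k)$. The key computational step is to verify the combinatorial identity
\begin{equation*}
	D^k\!\left(\binom{q}{k}\right)(q_1,\ldots,q_k) = q_1 q_2 \cdots q_k,
\end{equation*}
which I would establish via the Vandermonde identity $\binom{q+r}{k} = \sum_{i=0}^k \binom{q}{i}\binom{r}{k-i}$ (iterated $k$ times, with the only surviving term in $D^k$ being the one where each difference strips off one binomial factor). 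Consequently, if $\psi(q) := e_{\Q}\!\left(a_k \binom{q}{k}\right)$, then $D^k \psi = D^k \varphi$, so $D^k(\varphi \overline{\psi}) \equiv 1$. Lemma \ref{lem: multilinearization}(2) then gives $\varphi \overline{\psi} \in \mathcal{P}_{k-1}$, and the inductive hypothesis supplies a unique tuple $(c, a_1, \ldots, a_{k-1}) \in S^1 \times (\A/\Q)^{k-1}$ representing $\varphi \overline{\psi}$, hence a representation $(c, a_1, \ldots, a_k)$ of $\varphi$.

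Uniqueness proceeds by the same induction: if $(c, a_1, \ldots, a_k)$ produces the constant $1$, evaluation at $q=0$ forces $c = 1$, application of $D^k$ combined with the uniqueness in Proposition \ref{prop: multilinear} forces $a_k = 0$, and the inductive hypothesis kills the remaining coefficients. The group isomorphism property is built into the construction, since both $D^k$ and the map $(c, \bm{a}) \mapsto c \cdot e_{\Q}\!\left(\sum a_j \binom{q}{j}\right)$ are group homomorphisms. Finally, the transformation formula for $\partial_q \varphi$ follows from another Vandermonde manipulation: writing $\binom{t+q}{j} - \binom{t}{j} = \sum_{i=0}^{j-1} \binom{t}{i}\binom{q}{j-i}$ and swapping the order of summation,
\begin{equation*}
	\sum_{j=1}^k a_j \left( \binom{t+q}{j} - \binom{t}{j} \right) = \sum_{j=1}^k a_j \binom{q}{j} + \sum_{i=1}^{k-1} \binom{t}{i} \sum_{j=i+1}^k a_j \binom{q}{j-i}.
\end{equation*}
Recognizing the constant term as $e_{\Q}\!\left(\sum_{j=1}^k a_j \binom{q}{j}\right) = \varphi(q)\overline{c}$ and noting that $\partial_q \varphi \in \mathcal{P}_{k-1}$ forces the top coefficient to be $0$, this identifies the tuple corresponding to $\partial_q \varphi$ as $(\varphi(q)\overline{c}, a'_1(q), \ldots, a'_{k-1}(q), 0)$ with the claimed $a'_i(q)$.

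The main obstacle is bookkeeping rather than any deep conceptual difficulty: the combinatorial identity $D^k \binom{q}{k} = q_1 \cdots q_k$ and the subsequent index-swapping in the Vandermonde expansion need to be carried out with care, and one must consistently track the conjugation convention for $\partial_q$ defined multiplicatively by $\partial_q \varphi(t) = \varphi(t+q)\overline{\varphi(t)}$. Once these identities are pinned down, every other ingredient is either immediate from the preceding lemmas or a straightforward induction.
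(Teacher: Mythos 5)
Your proposal is correct and follows essentially the same argument as the paper: induct on $k$, use Lemma \ref{lem: multilinearization} together with Proposition \ref{prop: multilinear} to peel off the top coefficient $a_k$ via the auxiliary function $\psi(q) = e_{\Q}(a_k \binom{q}{k})$, and apply the induction hypothesis to $\varphi\overline{\psi} \in \mathcal{P}_{k-1}$. You supply the explicit Vandermonde computations (for $D^k\binom{q}{k} = q_1\cdots q_k$ and for the $\partial_q$ formula) that the paper compresses into ``follows by a straightforward computation,'' but the structure of the proof is identical.
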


\begin{proof}
	The ``moreover'' statement follows from the first statement by a straightforward computation, so we prove only the first part of the corollary.
	
	We induct on $k$.
	The space $\mathcal{P}_0$ is the space of constant functions, so there is nothing to prove in the base case $k=0$.
	
	Suppose the corollary holds for some $k \ge 0$, and let $\varphi \in \mathcal{P}_{k+1}$.
	By Lemma \ref{lem: multilinearization}(1) and Proposition \ref{prop: multilinear}, there exists a unique $a_{k+1} \in \A/\Q$ such that for every $(q_1, \ldots, q_k, q_{k+1}) \in \Q^{k+1}$,
	\begin{equation*}
		D^{k+1} \varphi(q_1, \ldots, q_k, q_{k+1}) = e_{\Q} \left( q_1 \cdot \ldots \cdot q_k \cdot q_{k+1} a_{k+1} \right)
	\end{equation*}
	Define $\psi : \Q \to S^1$ by $\psi(q) = e_{\Q} \left( a_{k+1} \binom{q}{k+1} \right)$.
	Then $D^{k+1} \psi = D^{k+1} \varphi$, so $\varphi \cdot \overline{\psi} \in \mathcal{P}_k$ by Lemma \ref{lem: multilinearization}(2).
	Thus, by the induction hypothesis, there is a unique choice $(c, a_1, \ldots, a_k) \in S^1 \times (\A/\Q)^k$ such that
	\begin{equation*}
		(\varphi \cdot \overline{\psi})(q) = c \cdot e_{\Q} \left( \sum_{j=1}^k a_j \binom{q}{j} \right).
	\end{equation*}
	Multiplying both sides by $\psi$, we obtain the desired expression for $\varphi$.
\end{proof}


\subsection{Phase polynomials} \label{sec: phase polynomials}

We now turn our attention to the spaces of phase polynomials $\mE_k(T)$ as preparation for proving Theorem \ref{thm: labeling of polynomial phases}.

\begin{lemma}
	Let $(X, \mu, T)$ be an ergodic $\Q$-system, and let $k \ge 0$.
	Then the set $\mE_k(T)$ of phase polynomials of degree at most $k$ is a group under pointwise multiplication.
\end{lemma}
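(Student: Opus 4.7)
The plan is to proceed by induction on $k$, after first observing that every $g \in \mE_k(T)$ automatically satisfies $|g| = 1$ almost everywhere. This modulus fact is what makes the derivative $\Delta_q$ behave multiplicatively, and it follows by setting all the $q_i$'s equal to $0$ in the defining equation: since $\Delta_0 g = \overline{g} \cdot g = |g|^2$, induction gives $\Delta^{j}_{0,\ldots,0} g = |g|^{2^j}$, so the relation $\Delta^{k+1}_{0,\ldots,0}g = 1$ forces $|g| = 1$ a.e.

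For the base case $k = 0$, a function $g \in \mE_0(T)$ satisfies $\Delta_q g = 1$ a.e.\ for every $q$, which together with $|g|=1$ is equivalent to $T^q g = g$ a.e. By ergodicity of $(X, \mu, T)$, such $g$ are the a.e.\ constant functions of modulus one, which form a group under multiplication (isomorphic to $S^1$).

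For the inductive step, suppose $\mE_{k-1}(T)$ is a group. The key computational identities are
\begin{equation*}
  \Delta_q(fg) \;=\; \Delta_q(f) \cdot \Delta_q(g) \qquad \text{and} \qquad \Delta_q(\overline{f}) \;=\; \overline{\Delta_q(f)},
\end{equation*}
both of which are straightforward expansions using $\overline{fg} = \overline{f}\,\overline{g}$ and $T^q\overline{f} = \overline{T^q f}$, and neither of which requires any hypothesis beyond $f,g \in L^\infty$. Now, given $f, g \in \mE_k(T)$, by the definition of $\mE_k(T)$ the derivatives $\Delta_q f$ and $\Delta_q g$ lie in $\mE_{k-1}(T)$ for every $q \in \Q$. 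The first identity together with closure of $\mE_{k-1}(T)$ under products yields $\Delta_q(fg) \in \mE_{k-1}(T)$ for all $q$, i.e.\ $fg \in \mE_k(T)$. The second identity, combined with closure of $\mE_{k-1}(T)$ under inversion (which, since elements of $\mE_{k-1}(T)$ have modulus $1$, is the same as closure under complex conjugation), gives $\Delta_q(\overline{f}) \in \mE_{k-1}(T)$ and hence $\overline{f} \in \mE_k(T)$. Since $\overline{f} = f^{-1}$ for modulus-one $f$, this supplies inverses.

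There is no serious obstacle; the only subtlety is the preliminary observation that phase polynomials have unit modulus, which is what allows $\Delta_q$ to act as a group homomorphism on pointwise products and thereby reduces the statement at level $k$ to the one at level $k-1$.
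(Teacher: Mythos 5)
Your proof is correct and follows essentially the same induction-on-$k$ strategy as the paper, with the inductive step resting on the multiplicativity of $\Delta_q$. You are somewhat more thorough than the paper's proof, which only verifies closure under products: your preliminary observation that $|g|=1$ a.e.\ (via $\Delta^{k+1}_{0,\ldots,0}g = |g|^{2^{k+1}}$) and the explicit treatment of inverses via $\Delta_q(\overline{f}) = \overline{\Delta_q f}$ fill in details the paper takes for granted.
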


\begin{proof}
	We induct on $k$.
	For $k = 0$, $\mE_0(T)$ is the set of constant functions taking values in the unit circle, so $\mE_0(T)$ is isomorphic to the group $S^1$.
	
	Suppose that $\mE_k(T)$ is a group for some $k \ge 0$.
	Let $f, g \in \mE_{k+1}(T)$.
	Then there are functions $\zeta_q, \xi_q \in \mE_k(T)$ such that $T^q f = \zeta_q \cdot f$ and $T^q g = \xi_q \cdot g$ for $q \in \Q$.
	Hence, $T^q (fg) = (\zeta_q \xi_q) \cdot fg$, and $\zeta_q \xi_q \in \mE_k(T)$ by the induction hypothesis, so $fg \in \mE_{k+1}$.
\end{proof}

For a function $f : X \to S^1$ and $k \in \N$, let $\Delta^k f : \Q^k \times X \to S^1$ be the map defined by $(\Delta^k f)(q_1, \ldots, q_k; x) = \Delta^k_{q_1, \ldots, q_k} f(x)$.
If $(X, \mu, T)$ is ergodic and $f \in \mE_k(T)$, then $\Delta^k f$ is equal almost everywhere to a constant function in $x$, so we may view $\Delta^k f$ as a function from $\Q^k$ to $S^1$.

\begin{lemma} \label{lem: derivative is multilinear}
	Let $(X, \mu, T)$ be an ergodic $\Q$-system, and let $k \in \N$.
	If $f \in \mE_k(T)$, then $\Delta^k f \in \mathcal{M}_k$.
\end{lemma}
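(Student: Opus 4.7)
The plan is a direct computation exploiting two basic facts about multiplicative derivatives. First, I would observe that for $f \in \mE_k(T)$ the function $\Delta^k_{q_1,\ldots,q_k} f$ is $T$-invariant: by the very definition of $\mE_k(T)$, for every $q_{k+1} \in \Q$ one has $\Delta_{q_{k+1}}(\Delta^k_{q_1,\ldots,q_k} f) = \Delta^{k+1}_{q_1,\ldots,q_{k+1}} f = 1$ a.e., i.e.\ $T^{q_{k+1}}(\Delta^k_{q_1,\ldots,q_k} f) = \Delta^k_{q_1,\ldots,q_k} f$. Ergodicity of $(X,\mu,T)$ then forces $\Delta^k_{q_1,\ldots,q_k} f$ to be constant a.e., which legitimizes viewing $\Delta^k f$ as a function $\Q^k \to S^1$.

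Second, I would record that the operators $\Delta_q$ commute with each other. This is the one-line verification
\[
\Delta_q \Delta_r h = h \cdot T^r \bar h \cdot T^q \bar h \cdot T^{q+r} h = \Delta_r \Delta_q h.
\]
Consequently $\Delta^k_{q_1,\ldots,q_k} f$ is symmetric in $(q_1,\ldots,q_k)$, and to establish $\Z$-multilinearity it suffices to prove that it is a homomorphism in the final coordinate. For this I would use the ``chain rule'' identity
\[
\Delta_{q+r} h \;=\; \Delta_q h \cdot T^q \Delta_r h,
\]
which is verified by direct expansion: $\Delta_q h \cdot T^q \Delta_r h = \bar h \cdot T^q h \cdot T^q \bar h \cdot T^{q+r} h = \bar h \cdot T^{q+r} h$.

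Applying this identity with $h = \Delta^{k-1}_{q_1,\ldots,q_{k-1}} f$ gives
\[
\Delta^k_{q_1,\ldots,q_{k-1},\,q+r} f \;=\; \Delta^k_{q_1,\ldots,q_{k-1},\,q} f \,\cdot\, T^q\bigl(\Delta^k_{q_1,\ldots,q_{k-1},\,r} f\bigr).
\]
By the first observation, $\Delta^k_{q_1,\ldots,q_{k-1},r} f$ is constant a.e., so $T^q$ acts trivially on it, and we conclude
\[
\Delta^k_{q_1,\ldots,q_{k-1},\,q+r} f \;=\; \Delta^k_{q_1,\ldots,q_{k-1},\,q} f \cdot \Delta^k_{q_1,\ldots,q_{k-1},\,r} f.
\]
Combined with the symmetry noted above, this yields the desired multilinearity, so $\Delta^k f \in \mathcal{M}_k$. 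There is no real obstacle here—the only point requiring any care is making sure that the ``$T^q$ drops out'' step is legitimate, which is where invoking ergodicity (rather than just the definition of $\mE_k$) is crucial.
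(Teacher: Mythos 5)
Your proof is correct, but it takes a genuinely different route from the paper's. The paper argues by induction on $k$: the base case $k=1$ uses that $T$ is a group action (so $T^{q+r}f = T^qT^rf$ makes $\Delta f$ a homomorphism), and the inductive step observes that $\Delta_{q_{k+1}}f \in \mE_k(T)$ and invokes the induction hypothesis, concluding multilinearity in the first $k$ coordinates and then invoking symmetry of $\Delta^{k+1}f$ for the last one. You instead give a direct (non-inductive) argument: you establish constancy of $\Delta^k_{q_1,\ldots,q_k}f$ from ergodicity, symmetry of the iterated derivative from commutativity of the $\Delta_q$'s, and then prove additivity in the last coordinate via the ``multiplicative Leibniz'' identity $\Delta_{q+r}h = \Delta_q h \cdot T^q\Delta_r h$, with the stray $T^q$ killed by the constancy. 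What your approach buys is that it makes the role of ergodicity fully explicit (the paper's induction inherits it a bit silently via the base case), and it avoids the recursive bookkeeping; what the paper's approach buys is that it is slightly shorter because the base case already encodes the chain-rule identity in the form $T^{q+r} = T^qT^r$, so the Leibniz rule never needs to be written down separately. Both proofs tacitly rely on the commutativity of the derivative operators — the paper when asserting $\Delta_{q_{k+1}}f \in \mE_k(T)$ and that $\Delta^{k+1}f$ is symmetric, you when asserting symmetry — and you are the one who actually verifies it.
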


\begin{proof}
	For $k = 1$, we note that if $f \in \mE_1(T)$, then $T_q f = \lambda(q) f$ for some $\lambda : \Q \to S^1$, and $\lambda = \Delta f$ must be a homomorphism, since $T$ is a group action.
	
	Suppose the lemma holds for some $k \in \N$, and let $f \in \mE_{k+1}(T)$.
	Then for each $q_{k+1} \in \Q$, $\Delta_{q_{k+1}} f \in \mE_k(T)$, so $\Delta^k \left( \Delta_{q_{k+1}} f \right) \in \mathcal{M}_k$ by the induction hypothesis.
	Hence, the map
	\begin{equation*}
		\Delta^{k+1} f : (q_1, \ldots, q_k, q_{k+1}) \mapsto \Delta^{k+1}_{q_1, \ldots, q_k, q_{k+1}} f = \left( \Delta^k (\Delta_{q_{k+1}} f) \right)(q_1, \ldots, q_k)
	\end{equation*}
	is multilinear in the first $k$ coordinates.
	But $\Delta^{k+1} f$ is a symmetric function, so $\Delta^{k+1}f \in \mathcal{M}_{k+1}$.
\end{proof}

By Proposition \ref{prop: multilinear} and Lemma \ref{lem: derivative is multilinear}, we may define a map $\delta_k : \mE_k(T) \to \A/\Q$ by
\begin{equation*}
	(\Delta^k f)(q_1, \ldots, q_k) = e_{\Q} \left( q_1 \cdot \ldots \cdot q_k \cdot \delta_k(f) \right).
\end{equation*}
The element $\delta_k(f)$ will serve as the ``leading coefficient'' for the encoding of $f$ as a polynomial in Theorem \ref{thm: labeling of polynomial phases}.

\begin{proposition} \label{prop: polynomials as adeles}
	Let $(X, \mu, T)$ be an ergodic $\Q$-system, and let $k \in \N$.
	Then $\delta_k : \mE_k(T) \to \A/\Q$ is a group homomorphism and $\ker(\delta_k) = \mE_{k-1}(T)$.
\end{proposition}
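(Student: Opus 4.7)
The plan is to unwind the definitions on both sides. The proposition asks for two things: that $\delta_k$ respects the group structure, and that its kernel is exactly $\mE_{k-1}(T)$. Both follow from the basic identity that the multiplicative derivative $\Delta_q$ turns products into products and the uniqueness given by Proposition \ref{prop: multilinear}.

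First I would verify the homomorphism property. For any $f, g \in \mE_k(T)$, the multiplicative derivative satisfies $\Delta_q(fg) = \overline{fg} \cdot T^q(fg) = \Delta_q f \cdot \Delta_q g$, and iterating this gives $\Delta^k(fg) = \Delta^k f \cdot \Delta^k g$ as functions on $\Q^k$. Inserting the definition of $\delta_k$ yields
\begin{equation*}
	e_{\Q}(q_1 \cdots q_k \cdot \delta_k(fg)) = e_{\Q}(q_1 \cdots q_k \cdot \delta_k(f)) \cdot e_{\Q}(q_1 \cdots q_k \cdot \delta_k(g)) = e_{\Q}(q_1 \cdots q_k \cdot (\delta_k(f) + \delta_k(g)))
\end{equation*}
for every $(q_1, \ldots, q_k) \in \Q^k$. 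By the uniqueness of the element $\alpha \in \A/\Q$ representing a multilinear map in Proposition \ref{prop: multilinear} (or simply by specializing to $q_1 = \ldots = q_{k-1} = 1$ and using the isomorphism $\hat{\Q} \cong \A/\Q$), we conclude $\delta_k(fg) = \delta_k(f) + \delta_k(g)$.

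Next I would compute the kernel. By the same uniqueness, $\delta_k(f) = 0$ if and only if $\Delta^k_{q_1, \ldots, q_k}f = 1$ almost everywhere for every $(q_1, \ldots, q_k) \in \Q^k$. Since $f \in \mE_k(T)$, this function is a.e. constant in $x$, so the condition is exactly that $\Delta^k f \equiv 1$ identically on $\Q^k$, which by definition means $f \in \mE_{k-1}(T)$. Conversely, if $f \in \mE_{k-1}(T)$, then $f \in \mE_k(T)$ (the spaces are nested since any degree $k-1$ phase polynomial is also of degree at most $k$) and $\Delta^k f \equiv 1$ by definition, so $\delta_k(f) = 0$.

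There is no real obstacle here; the only thing to be mildly careful about is that $\delta_k$ is a priori only defined on $\mE_k(T)$, so the equality $\ker(\delta_k) = \mE_{k-1}(T)$ implicitly uses $\mE_{k-1}(T) \subseteq \mE_k(T)$, which is immediate from the definition of phase polynomial.
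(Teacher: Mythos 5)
Your proof is correct and follows essentially the same approach as the paper: expand $\Delta^k$ multiplicatively to get the homomorphism property, and unwind the definitions for the kernel. The only difference is that you spell out the uniqueness step (via Proposition \ref{prop: multilinear}) and the nesting $\mE_{k-1}(T) \subseteq \mE_k(T)$, which the paper leaves implicit.
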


\begin{proof}
	Given $f, g \in \mE_k(T)$, we have $\Delta^k (fg) = (\Delta^k f) (\Delta^k g)$, so $\delta_k(f \cdot g) = \delta_k(f) + \delta_k(g)$.
	That is, $\delta_k$ is a group homomorphism.
	
	Note that $\delta_k(f) = 0$ if and only if $\Delta^k f = 1$.
	Thus, by the definition of $\mE_{k-1}(T)$, we have $\ker(\delta_k) = \mE_{k-1}(T)$.
\end{proof}

We can now give a short proof of Lemma \ref{lem: phase polynomials orthogonal}, restated below for convenience.

\Orthogonal*

\begin{proof}
	By considering the function $h = f \overline{g}$, it suffices to prove that if $h \in \mE_k(T)$ for some $k \in \N$ and $h$ is nonconstant, then $\int_X h~d\mu = 0$.
	
	Assume that $k \in \N$ is minimal.
	That is, $h \in \mE_k(T) \setminus \mE_{k-1}(T)$.
	Then $\delta_k(h) \ne 0$, so we may compute the Host--Kra uniformity seminorm
	\begin{equation*}
		\seminorm{U^k}{h}^{2^k} = \UClim_{\bm{q} \in \Q^k} \int_X \Delta^k_{\bm{q}} h~d\mu = \UClim_{\bm{q} \in \Q^k} e_{\Q} \left( q_1 \cdot \ldots \cdot q_k \cdot \delta_k(h) \right) = 0.
	\end{equation*}
	By monotonicity of the Host--Kra seminorms (see \cite[Lemma 3.9]{hk} and \cite[Lemma A.20]{btz}), we conclude that $\int_X h~d\mu = 0$.
\end{proof}


\subsection{Encoding of phases polynomials as $\Q$-polynomials}

We have gathered all of the ingredients to prove Theorem \ref{thm: labeling of polynomial phases}, which we reproduce below for the convenience of the reader.

\Encoding*

\begin{proof}
	We construct the isomorphisms $\phi_k$ by induction on $k$.
	The isomorphism $\phi_0$ is defined by (1).
	
	Suppose we have constructed $\phi_0, \ldots, \phi_k$ for some $k \ge 0$.
	Let $f \in \mE_{k+1}(T)$.
	Put $t_{k+1} = \delta_{k+1}(f)$.
	For each $q \in \Q$, the function $\Delta_q f$ is a phase polynomial of degree at most $k$, so let $(c(q), t_1(q), \ldots, t_k(q)) = \phi_k(\Delta_q f) \in S^1 \times (\A/\Q)^k$. \\
	
	\noindent \textbf{Claim.} There exists a unique tuple $\bm{t} = (t_1, \ldots, t_k) \in (\A/\Q)^k$ such that
	\begin{equation} \label{eq: c polynomial}
		c(q) = e_{\Q} \left( \sum_{j=1}^{k+1} \binom{q}{j} t_j \right)
	\end{equation}
	and
	\begin{equation} \label{eq: t_i polynomial}
		t_i(q) = \sum_{j=i+1}^{k+1} \binom{q}{j-i} t_j
	\end{equation}
	for $q \in \Q$ and $i \in \{1, \ldots, k\}$. \\
	
	Before proving the claim, let us show how it can be used to finish the proof of the theorem.
	For $\bm{t} \in (\A/\Q)^{k+1}$ and $q \in \Q$, let $\bm{s}(\bm{t}, q) \in (\A/\Q)^{k+1}$ be the tuple $(s_0(\bm{t},q), \ldots, s_k(\bm{t},q))$ with coordinates
	\begin{equation*}
		s_i(\bm{t},q) = \sum_{j=i+1}^{k+1} \binom{q}{j-i} t_j.
	\end{equation*}
	Then for each $\bm{t} \in (\A/\Q)^{k+1}$, we consider the space
	\begin{equation*}
		\mathcal{F}_{\bm{t}} = \left\{ f \in \mE_{k+1}(T) : \phi_k(\Delta_q f) = (e_{\Q}(s_0(\bm{t},q)), s_1(\bm{t},q), \ldots, s_k(\bm{t},q))~\text{for every}~q \in \Q \right\}.
	\end{equation*}
	The space $\mathcal{F}_{\bm{t}}$ may be empty.
	If $\mathcal{F}_{\bm{t}} \ne \es$ and $f, g \in \mathcal{F}_{\bm{t}}$, then $f$ and $g$ are constant multiples of each other: indeed, since $\phi_k$ is injective, we have $\Delta_q (f\overline{g}) = 1$, so $f \overline{g}$ is constant by ergodicity of $T$.

	By the claim, $\mE_{k+1}(T) = \bigcup_{\bm{t} \in (\A/\Q)^{k+1}} \mathcal{F}_{\bm{t}}$.
	Moreover, for $\bm{t}, \bm{t}' \in (\A/\Q)^{k+1}$, since $\bm{s}(\bm{t} + \bm{t}',\cdot) = \bm{s}(\bm{t},\cdot) + \bm{s}(\bm{t}',\cdot)$ and $\phi_k$ is a homomorphism, we have $\mathcal{F}_{\bm{t}} \cdot \mathcal{F}_{\bm{t}'} = \mathcal{F}_{\bm{t}+\bm{t}'}$.
	Let $H = \{\bm{t} \in (\A/\Q)^{k+1} : \mathcal{F}_{\bm{t}} \ne \es\}$, and let $K = \{\bm{s} \in (\A/\Q)^k : (\bm{s},0) \in H\}$.
	By Proposition \ref{prop: polynomials as adeles}, $\mE_k(T) = \bigcup_{\bm{s} \in K} \mathcal{F}_{(\bm{s},0)}$, so $\phi_k(\mE_k(T)) = S^1 \times K$.
	Let $\psi_k : S^1 \times K \to \mE_k(T)$ be the inverse map.
	We can extend the map $\psi_k(1, \cdot) : K \to \mE_k(T)$ to a homomorphism $\vartheta : H \to \mE_{k+1}(T)$ such that $\vartheta(\bm{s},0) = \psi_k(1,\bm{s})$ for $\bm{s} \in K$ and $\vartheta(\bm{t}) \in \mathcal{F}_{\bm{t}}$ for each $\bm{t} \in H$.
	For $f \in \mathcal{F}_{\bm{t}}$, we define $\phi_{k+1}(f) = (f \cdot \overline{\vartheta(\bm{t})}, \bm{t})$.
	By the observation that elements of $\mathcal{F}_{\bm{t}}$ differ up to constant multiples, the function $f \cdot \overline{\vartheta(\bm{t})}$ is constant, so $\phi_{k+1}(f) \in S^1 \times H$.
	
	Let us check that $\phi_{k+1}$ is an injective homomorphism.
	Suppose $f, g \in \mE_{k+1}(T)$ are arbitrary.
	Let $\bm{t}, \bm{t}' \in H$ such that $f \in \mathcal{F}_{\bm{t}}$ and $g\in \mathcal{F}_{\bm{t}'}$.
	Then $fg \in \mathcal{F}_{\bm{t} + \bm{t}'}$, so
	\begin{equation*}
		\phi_{k+1}(fg) = (fg \cdot \overline{\vartheta(\bm{t} + \bm{t}')}, \bm{t} + \bm{t}') = ((f \cdot \overline{\vartheta(\bm{t})})(g \cdot \overline{\vartheta(\bm{t}')}), \bm{t} + \bm{t}') = \phi_{k+1}(f) \phi_{k+1}(g).
	\end{equation*}
	Thus, $\phi_{k+1}$ is a homomorphism.
	Now, if $\phi_{k+1}(f) = (1, \bm{0})$, then by (2) and then (1),
	\begin{equation*}
		f = \vartheta(\bm{0}) = \psi_k(1,\bm{0}) = \phi_k^{-1}(1, \bm{0}) = \phi_0^{-1}(1) = 1.
	\end{equation*}
	This proves that $\phi_{k+1}$ is injective.
	
	We must check that $\phi_{k+1}$ satisfies properties (2) and (3).
	Property (3) holds by the claim.
	By the induction hypothesis, it suffices to check that $j = k$ case of (2).
	Suppose $f \in \mE_k(T)$, and let $\phi_k(f) = (c, \bm{s}) \in S^1 \times K$.
	Then by property (1) and property (2) for $\phi_k$, we have
	\begin{equation*}
		f = \psi_k(c, \bm{s}) = \psi_k(c, \bm{0}) \cdot \psi_k(1, \bm{s}) = c \cdot \psi_k(1, \bm{s}).
	\end{equation*}
	Therefore,
	\begin{equation*}
		\phi_{k+1}(f) = (f \cdot \overline{\vartheta(\bm{s},0)}, \bm{s}, 0) = (f \cdot \overline{\psi_k(1, \bm{s})}, \bm{s},0) = (c, \bm{s}, 0) = (\phi_k(f), 0).
	\end{equation*} \\
	
	Now we prove the claim.
	If $k = 0$, then using the definition of $\phi_0$, we can write $\Delta_q f = c(q)$.
	On the other hand, $\Delta_q f = e_{\Q}(q \cdot \delta_1(f)) = e_{\Q}(qt_1)$, so $c(q) = e_{\Q}(qt_1)$.
	This verifies the claim in the case $k = 0$.
	
	To give a sense for the strategy, we first work out the case $k = 1$.
	Taking a second multiplicative derivative and applying property (3), we have
	\begin{equation*}
		e_{\Q}(q_1q_2t_2) = \Delta^2_{q_1, q_2} f = \Delta_{q_2} (\Delta_{q_1} f) = \phi_0(\Delta_{q_2} (\Delta_{q_1} f)) = e_{\Q}(q_2 \cdot t_1(q_1)).
	\end{equation*}
	Therefore, $t_1(q) = q t_2$ as desired.
	Now we analyze $c(q)$.
	On the one hand
	\begin{equation*}
		\phi_1(\Delta_{q+r} f) = (c(q+r), (q+r)t_2).
	\end{equation*}
	On the other hand,
	\begin{equation*}
		\phi_1(\Delta_{q+r} f) = \phi_1(\Delta_q f) \cdot \phi_1(\Delta_r f) \cdot \phi_1(\Delta^2_{q,r} f) = (c(q)c(r)e_{\Q}(qrt_2), (q+r)t_2).
	\end{equation*}
	Hence, $c(q+r) = c(q)c(r) e_{\Q}(qrt_2)$.
	It follows that $c : \Q \to S^1$ is a quadratic polynomial so takes the form $c(q) = e_{\Q}(qt_1 + \binom{q}{2}t_2)$ for some $t_1 \in \A/\Q$ by Corollary \ref{cor: polynomial representation}.
	
	Now we consider general $k \ge 2$.
	Applying property (3) for the lower degree phase polynomial $\Delta_q f$ for some $q \in \Q$ by induction,
	\begin{equation*}
		\phi_{k-1}(\Delta^2_{q,r}f) = \left( e_{\Q} \left( \sum_{j=1}^k \binom{r}{j} t_j(q) \right), \left( \sum_{j=i+1}^k \binom{r}{j-i} t_j(q) \right)_{i=1}^{k-1} \right).
	\end{equation*}
	Since the function $\Delta^2 f$ is symmetric, we thus have
	\begin{equation*}
		\sum_{j=i+1}^k \binom{r}{j-i} t_j(q) = \sum_{j=i+1}^k \binom{q}{j-i} t_j(r)
	\end{equation*}
	for every $i \in \{0, \dots, k-1\}$ and $q, r \in \Q$.
	Taking $r = 1$, we have
	\begin{equation*}
		t_{i+1}(q) = \sum_{j=i+1}^k \binom{q}{j-i} t_j(1).
	\end{equation*}
	Thus, putting $t_j = t_{j-1}(1)$ for $j \in \{2, \ldots, k-1\}$ yields \eqref{eq: t_i polynomial} so long as we can ensure that $t_k(1) = t_{k+1} = \delta_{k+1}(f)$.
	But differentiating and applying (3) repeatedly, we obtain
	\begin{equation*}
		\Delta^{k+1}_{q_1, \ldots, q_k, q} f = \phi_0(\Delta^k_{q_1, \ldots, q_k} \Delta_q f) = e_{\Q}(q_1 \cdot \ldots \cdot q_k \cdot t_k(q)),
	\end{equation*}
	so $t_k(q) = q \cdot \delta_{k+1}(f)$ as desired.
	It remains to check that \eqref{eq: c polynomial} is satisfied for some choice of $t_1 \in \A/\Q$.
	Using the identity $\phi_k(\Delta_{q+r}f) = \phi_k(\Delta_q f) \cdot \phi_k(\Delta_r f) \cdot \phi_k(\Delta^2_{q,r} f)$ as in the $k=1$ case above and focusing on the first coordinate, we have
	\begin{multline} \label{eq: derivative of c}
		c(q+r) = c(q) c(r) e_{\Q} \left( \sum_{j=1}^k \binom{r}{j} t_j(q) \right) \\
		 = c(q) c(r) e_{\Q} \left( \sum_{j=1}^k \binom{r}{j} \sum_{l=j+1}^{k+1} \binom{q}{l-j} t_l \right) \\
		 = c(q) c(r) e_{\Q} \left( \sum_{\substack{1 \le i, j \le k, \\ i+j \le k+1}} \binom{q}{i} \binom{r}{j} t_{i+j} \right).
	\end{multline}
	This show that $c(0) = 1$.
	Moreover, the derivative $\partial_r c(q) = c(q+r) \overline{c(q)}$ is a polynomial of degree at most $k$, so $c \in \mathcal{P}_{k+1}$.
	We can therefore write $c(q) = e_{\Q} \left( \sum_{j=1}^{k+1} a_j \binom{q}{j} \right)$ for a unique choice of coefficients $a_j \in \A/\Q$ by Corollary \ref{cor: polynomial representation}.
	Plugging this expression back into \eqref{eq: derivative of c}, we see that $a_j = t_j$ for $j \ge 2$.
	Letting $t_1$ be the coefficient $a_1$ completes the proof of the claim.
\end{proof}


\section{A set of positive density in the rationals not containing any shifted $\Delta$-set} \label{sec: straus example rationals}

The goal of this short section is to elaborate on the discussion after Theorem \ref{thm: density} and show that polynomials of the form $P(x) = -x + c$ are bad for sumsets.
Given an infinite set $B = \{b_n : n \in \N\} \subseteq \Q$, we call the set of differences $\{b_j - b_i : i < j\}$ a \emph{$\Delta$-set}.
In comparison to sets of the form $\{P(b_i) + b_j : i < j\}$ for polynomials $P(x) \ne -x + c$, $\Delta$-sets possess additional arithmetic structure, which allows for constructions of large subsets of $\Q$ not containing any shifted $\Delta$-set.
Recall that a set $R \subseteq \Q$ is a \emph{set of (measurable) recurrence} if for every $\Q$-system $(X, \mu, T)$ and every measurable set $E \subseteq X$ with $\mu(E) > 0$, there exists $r \in R$ such that $\mu(E \cap T^{-r}E) > 0$.

\begin{proposition}
	Suppose $B = \{b_n : n \in \N\} \subseteq \Q$ is infinite.
	Then the set of differences $\{b_j - b_i : i < j\}$ is a set of recurrence.
\end{proposition}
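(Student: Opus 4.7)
The plan is to adapt the classical Poincaré recurrence argument, which shows that difference sets of infinite sets in $\Z$ are sets of recurrence; the same proof works verbatim for $\Q$-actions since it uses only the fact that $\mu$ is a probability measure invariant under the group action.

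First, I would fix a $\Q$-system $(X, \mu, T)$ and a measurable set $E \subseteq X$ with $\mu(E) > 0$, and consider the countable family of sets $\{T^{-b_n} E : n \in \N\}$. Each of these has measure $\mu(T^{-b_n}E) = \mu(E) > 0$, while the ambient space has total measure $1$. Hence the sets cannot be pairwise disjoint: if they were, then $\sum_{n} \mu(T^{-b_n}E) \le \mu(X) = 1$, contradicting the divergence of $\sum_n \mu(E)$.

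Consequently there exist indices $i < j$ with
\begin{equation*}
	\mu \left( T^{-b_i} E \cap T^{-b_j} E \right) > 0.
\end{equation*}
Applying $T^{b_i}$ and using the $T$-invariance of $\mu$, this gives
\begin{equation*}
	\mu \left( E \cap T^{-(b_j - b_i)} E \right) = \mu \left( T^{-b_i} E \cap T^{-b_j} E \right) > 0,
\end{equation*}
so $r = b_j - b_i$ witnesses recurrence of $E$ along the difference set $\{b_j - b_i : i < j\}$. Since $(X,\mu,T)$ and $E$ were arbitrary, this proves the proposition.

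There is no real obstacle here; the only thing to note is that the argument is purely measure-theoretic and does not require $T$ to be ergodic nor the group to act by homeomorphisms on a compact metric space. This is essentially Furstenberg's observation that infinite $\Delta$-sets are sets of recurrence in any countable amenable (or even arbitrary) group, and the proof transfers without modification from $\Z$ to $\Q$.
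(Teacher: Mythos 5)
Your proof is correct and is essentially the same as the paper's: both run the standard Poincar\'{e} pigeonhole argument on the sets $T^{-b_n}E$ to find $i<j$ with $\mu(T^{-b_i}E\cap T^{-b_j}E)>0$, then translate by $T^{b_i}$. The only tiny imprecision is the phrase ``cannot be pairwise disjoint''---what you actually use (and need) is that they cannot be pairwise disjoint \emph{up to $\mu$-null sets}, i.e.\ some pair must have intersection of positive measure; the finite count $N>1/\mu(E)$ used in the paper makes this cleaner, but your version is fine.
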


\begin{proof}
	This can be seen by carefully examining the standard proof of the Poincar\'{e} recurrence theorem, as noted by Furstenberg in the context of the integers; see \cite[p. 74]{furstenberg_book}, where $\Delta$-sets are given as one of the first examples of \emph{Poincar\'{e} sequences} (Furstenberg's term for sets of recurrence).
	For completeness, we include the argument here.
	Suppose $(X, \mu, T)$ is a measure-preserving $\Q$-system and $A \subseteq X$ is a measurable set with $\mu(A) > 0$.
	Consider the sequence of sets $A_n = T^{-b_n}A$.
	Since $\mu$ is a probability measure and $T$ is measure-preserving, if $N > \frac{1}{\mu(A)}$, then we must have $\mu(A_i \cap A_j) > 0$ for some $1 \le i < j \le N$.
	Using the measure preserving property again, we conclude that $\mu(A \cap T^{-(b_j-b_i)}A) > 0$.
\end{proof}

\begin{theorem} \label{thm: straus example rationals}
	Let $\Phi$ be a F{\o}lner sequence in $\Q$, and let $\eps > 0$.
	There exists a set $A \subseteq \Q$ such that $\underline{d}_{\Phi}(A) > 1 - \eps$ and $A - t$ is a set of nonrecurrence for every $t \in \Q$.
	In particular, $A - t$ does not contain a $\Delta$-set for any $t \in \Q$.
\end{theorem}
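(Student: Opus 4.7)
The plan is to leverage the $\Q$-action on the circle $\T = \R/\Z$ by rotations $R^q x = x + q \bmod 1$, which is uniquely ergodic: any $R$-invariant Borel probability measure on $\T$ is invariant under the dense subgroup $\Q$, and hence under all of $\T$ by continuity, so it must equal the Haar measure $m_\T$. I would enumerate the countable group $\Q/\Z = \{\tau_k : k \in \N\}$ and choose $r_k > 0$ with $\sum_k 2r_k < \eps$. Setting $U := \bigcup_k (\tau_k - r_k, \tau_k + r_k) \subseteq \T$, which is an open set of Haar measure less than $\eps$, I define
\[
    A := \{q \in \Q : q \bmod 1 \notin U\}.
\]

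By unique ergodicity of $(\T, m_\T, R)$ applied at $x = 0$, for every F{\o}lner sequence $\Phi$ in $\Q$ and every $f \in C(\T)$ one has $\frac{1}{|\Phi_N|}\sum_{q \in \Phi_N} f(q \bmod 1) \to \int f\, dm_\T$. Approximating $\ind_U$ from above by a continuous function of integral arbitrarily close to $m_\T(U)$ then yields $\overline{d}_\Phi(A^c) \le m_\T(U) < \eps$, so $\underline{d}_\Phi(A) > 1 - \eps$.

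For the nonrecurrence claim, fix $t \in \Q$ and let $k$ be the index with $\tau_k = t \bmod 1$. I would use the same system $(\T, m_\T, R)$ together with the set $E := (\tau_k - r_k/2, \tau_k + r_k/2)$, of Haar measure $r_k > 0$. For $s \in A - t$, the point $s + t$ lies in $A$, so $d_\T((s+t) \bmod 1,\, \tau_k) \ge r_k$, which reduces to $d_\T(s \bmod 1,\, 0) \ge r_k$ because $t \bmod 1 = \tau_k$. The arcs $E$ and $R^{-s}E = E - s$, each of radius $r_k/2$, thus have centers at distance $\ge r_k$ in $\T$, giving $m_\T(E \cap R^{-s}E) = 0$; so $(\T, m_\T, R)$ with $E$ witnesses that $A - t$ is a set of nonrecurrence. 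The ``In particular'' statement then follows from the earlier observation that every $\Delta$-set is a set of recurrence.

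The main subtlety I anticipate is the density estimate, because countable subadditivity of upper Banach density $\overline{d}_\Phi$ does not hold in general, so one cannot naively bound $\overline{d}_\Phi(\bigcup_k B_k) \le \sum_k \overline{d}_\Phi(B_k)$. Unique ergodicity of the rotation action on $\T$ bypasses this difficulty by converting the density bound into the Haar measure of the single open set $U$, for which countable subadditivity is automatic.
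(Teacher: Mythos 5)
Your construction has a fatal flaw: the set $A$ you define is empty. You set $U = \bigcup_k (\tau_k - r_k, \tau_k + r_k)$ where $\{\tau_k\}_{k \in \N}$ enumerates \emph{all} of $\Q/\Z$, and you define $A = \{q \in \Q : q \bmod 1 \notin U\}$. But every $q \in \Q$ reduces mod $1$ to some $\tau_j \in \Q/\Z$, and $\tau_j \in (\tau_j - r_j, \tau_j + r_j) \subseteq U$. Hence $q \bmod 1 \in U$ for every $q \in \Q$, and $A = \emptyset$. The density claim $\underline{d}_\Phi(A) > 1 - \eps$ therefore cannot possibly hold. The underlying point is that the orbit of $0$ under the $\Q$-rotation on $\T$ is exactly $\Q/\Z$, so any open $U \supseteq \Q/\Z$ necessarily swallows the entire orbit, regardless of how small its Haar measure is.

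This also exposes a second, independent problem in your density estimate. You claim one can approximate $\ind_U$ from above by a continuous function of integral close to $m_\T(U)$. But $U$ is a dense open subset of $\T$ (it contains the dense set $\Q/\Z$), so $\overline{U} = \T$, and any continuous $h \geq \ind_U$ satisfies $h \geq \ind_{\overline{U}} = 1$ everywhere by continuity. The portmanteau inequality you would obtain is $\overline{d}_\Phi(A^c) \leq m_\T(\overline{U}) = 1$, which is useless. For open sets, unique ergodicity only gives the one-sided bound $\liminf \geq m_\T(U)$, not the $\limsup$ bound you need; a $\limsup$ bound requires the complementary set to be open, or the set itself to be closed with small measure of the boundary. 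Neither holds here. More fundamentally, the difficulty you flagged at the end — that countable subadditivity of upper density fails — is not actually bypassed by this construction: it reappears in the form of $\overline{U}$ being much larger than $U$.

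The paper sidesteps these issues entirely by \emph{not} constructing $A$ directly from a single dynamical system. Instead it invokes an abstract bootstrapping result (Theorem \ref{thm: counterexample criterion} from \cite{counterexample}) applied with $\mathcal{C}$ the family of sets of recurrence and $\mathcal{D}$ the family of sets of return times: one only needs to verify (1) intersecting a set of recurrence with a set of return times yields a set of recurrence, and (2) sets of return times can have arbitrarily small upper density. Property (2) is witnessed by a single interval $[0,\delta]$ in the circle rotation — a one-shot estimate where the portmanteau lemma does apply cleanly because the relevant set $\{q : \|q\| < \delta\}$ is controlled by a single arc rather than a dense union. The machinery of Theorem \ref{thm: counterexample criterion} then does the work of combining witnesses across all shifts $t$ without ever having to take a countable union of bad sets inside $\Q$ itself.
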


We will use the following general result from \cite{counterexample}:

\begin{theorem}[{\cite[Theorem 2.2]{counterexample}}] \label{thm: counterexample criterion}
	Let $\Gamma$ be an abelian group, and let $\Phi$ be a F{\o}lner sequence in $\Gamma$.
	Let $\mathcal{C}$ be a family of infinite subsets of $\Gamma$, and suppose there is a family $\mathcal{D}$ of subsets of $\Gamma$ with the following properties:
	\begin{enumerate}[(1)]
		\item	For any $C \in \mathcal{C}$ and any $D \in \mathcal{D}$, one has $C \cap D \in \mathcal{C}$, and
		\item	$\inf_{D \in \mathcal{D}} \overline{d}_{\Phi}(D) = 0$.
	\end{enumerate}
	Then for any $\eps > 0$, there exists $A \subseteq \Gamma$ with $\underline{d}_{\Phi}(A) > 1 - \eps$ such that for any $t \in \Gamma$, $A - t \notin \mathcal{C}$.
\end{theorem}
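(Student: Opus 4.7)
The plan is to apply Theorem \ref{thm: counterexample criterion} with $\Gamma = \Q$, taking $\mathcal{C}$ to be the family of all sets of measurable recurrence in $\Q$ and $\mathcal{D}$ a carefully chosen family of Bohr-type subsets of $\Q$. Once the two hypotheses of Theorem \ref{thm: counterexample criterion} are verified, we obtain a set $A \subseteq \Q$ with $\underline{d}_\Phi(A) > 1 - \eps$ and $A - t \notin \mathcal{C}$ for every $t \in \Q$. The \emph{in particular} clause then follows immediately: if $A - t$ contained a $\Delta$-set, then by the preceding proposition $A - t$ would contain a set of recurrence and would itself be a set of recurrence (since recurrence is inherited by supersets), contradicting $A - t \notin \mathcal{C}$.

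To build $\mathcal{D}$, I will fix any $\alpha \in \A/\Q$ whose $\Q$-orbit $\{q\alpha : q \in \Q\}$ is dense in $\A/\Q$ (such $\alpha$ form a dense $G_\delta$ in the compact group $\A/\Q$), and for each open neighborhood $V$ of $0$ set
\begin{equation*}
    D(\alpha, V) = \{q \in \Q : q\alpha \in V\}.
\end{equation*}
Let $\mathcal{D}$ consist of all such sets as $V$ ranges over open neighborhoods of $0$. Because $\alpha$ has dense $\Q$-orbit, the rotation $R_\alpha^q : z \mapsto z + q\alpha$ on $(\A/\Q, m_{\A/\Q})$ is minimal, hence uniquely ergodic as a $\Q$-action, and so averages of continuous functions along every F{\o}lner sequence in $\Q$ converge uniformly to the integral. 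Sandwiching the indicator $\ind_V$ between continuous functions yields $\overline{d}_\Phi(D(\alpha, V)) \le m_{\A/\Q}(\overline{V})$, which can be made arbitrarily small by shrinking $V$. This establishes property (2) of Theorem \ref{thm: counterexample criterion}.

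The heart of the argument is verifying property (1): for any set of recurrence $C$ and any $D = D(\alpha, V) \in \mathcal{D}$, the intersection $C \cap D$ is again a set of recurrence. Given a $\Q$-system $(X, \mu, T)$ and $E \subseteq X$ with $\mu(E) > 0$, choose a symmetric open neighborhood $U$ of $0 \in \A/\Q$ with $U - U \subseteq V$ and form the product $\Q$-system $(X \times \A/\Q, \mu \times m_{\A/\Q}, T \times R_\alpha)$. Since the set $E \times U$ has positive product measure and $C$ is a set of recurrence, there exists $r \in C$ with
\begin{equation*}
    (\mu \times m_{\A/\Q}) \left( (E \times U) \cap (T \times R_\alpha)^{-r} (E \times U) \right) > 0.
\end{equation*}
This intersection factors as $(E \cap T^{-r}E) \times (U \cap (U - r\alpha))$, so both factors have positive measure; the second condition forces $r\alpha \in U - U \subseteq V$, i.e.\ $r \in D$. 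Thus $r \in C \cap D$ witnesses positive recurrence of $E$.

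The main obstacle is the product-system trick that converts measurable recurrence on $X$ into a Bohr-like constraint on the return time, which is what couples $C$ to $D$ in the right way. The secondary subtlety is the requirement that the density estimate hold along the \emph{given} F{\o}lner sequence $\Phi$ (rather than one adapted to $\alpha$), which is handled by unique ergodicity of $R_\alpha$, guaranteeing simultaneous equidistribution of $(q\alpha)_{q \in \Phi_N}$ along every F{\o}lner sequence.
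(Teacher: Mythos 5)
Your proposal does not prove the stated theorem; it proves a different one. The statement to establish is the abstract criterion of Theorem \ref{thm: counterexample criterion} for an arbitrary abelian group $\Gamma$ and arbitrary families $\mathcal{C}$, $\mathcal{D}$ satisfying hypotheses (1) and (2): from those hypotheses alone one must construct a set $A \subseteq \Gamma$ with $\underline{d}_{\Phi}(A) > 1-\eps$ and $A - t \notin \mathcal{C}$ for every $t$. Your argument, however, opens with ``apply Theorem \ref{thm: counterexample criterion}'' and then spends its effort choosing specific $\mathcal{C}$ (sets of measurable recurrence in $\Q$) and $\mathcal{D}$ (Bohr sets $D(\alpha,V)$) and verifying (1) and (2) for them. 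That is a proof of the downstream application, Theorem \ref{thm: straus example rationals}, not of the criterion; as a proof of the criterion itself it is circular, since the criterion is invoked rather than established.

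What you wrote is a perfectly reasonable proof of Theorem \ref{thm: straus example rationals} — indeed a nice variant of the paper's, replacing the general return-time sets $\{q : \mu(E \cap T^{-q}E) > 0\}$ by concrete Bohr sets and running the intersectivity argument through a product with a rotation on $\A/\Q$. But it contributes nothing toward Theorem \ref{thm: counterexample criterion}, whose proof must be a self-contained combinatorial construction (the paper does not reproduce it; it is cited from \cite{counterexample}). In outline, such a proof would enumerate $\Gamma = \{t_1, t_2, \ldots\}$, choose $D_n \in \mathcal{D}$ with $\overline{d}_{\Phi}(D_n)$ decaying fast enough, and build $A$ so that $A \cap (D_n + t_n)$ is finite for every $n$ while keeping $\underline{d}_{\Phi}(A) > 1 - \eps$; hypothesis (1) together with the requirement that members of $\mathcal{C}$ be infinite then forces $A - t_n \notin \mathcal{C}$ for all $n$. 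None of this machinery appears in your proposal. You should return to the statement and give a direct construction of $A$ from the hypotheses.
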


\begin{proof}[Proof of Theorem \ref{thm: straus example rationals}]
	Let $\mathcal{C}$ be the family of sets of recurrence and $\mathcal{D} = \mathcal{C}^*$ the family of sets of return times.
	It suffices to check that $\mathcal{C}$ and $\mathcal{D}$ satisfy the conditions in Theorem \ref{thm: counterexample criterion} for an arbitrary F{\o}lner sequence $\Phi$.
	
	Property (1) is a standard result in ergodic theory.
	We reproduce a proof here.
	Let $C$ be a set of recurrence, and let $D = \{q \in \Q : \mu(E \cap T^{-q}E) > 0\}$ for some measure-preserving $\Q$-system $(X, \mu, T)$ and measurable set $E \subseteq X$ with $\mu(E) > 0$.
	We want to show $C \cap D$ is a set of recurrence.
	Let $(Y, \nu, S)$ be another measure-preserving $\Q$-system and $F \subseteq Y$ a measurable set with $\nu(Y) > 0$.
	Then since $C$ is a set of recurrence, there exists $c \in C$ such that
	\begin{equation*}
		(\mu \times \nu) \left( (E \times F) \cap (T \times S)^{-c} (E \times F) \right) > 0.
	\end{equation*}
	Therefore,
	\begin{equation*}
		\mu(E \cap T^{-c}E) > 0 \qquad \text{and} \qquad \nu(F \cap S^{-c}F) > 0.
	\end{equation*}
	The first inequality shows that $c \in D$ and the second then establishes that $C \cap D$ is a set of recurrence.
	
	To prove property (2), consider the action of $\Q$ on $\T$ by rotations $T^qx = x + q \bmod{1}$, and let $m_{\T}$ be the Haar measure on $\T$.
	Let $E = [0,\delta] \subseteq \T$.
	Then the set of return times $D = \{q \in \Q : m_{\T}(E \cap T^{-q}E) > 0\}$ can be written as $D = \{q : \|q\| < \delta\}$, where $\|\cdot\|$ is the distance to the nearest integer.
	Given any F{\o}lner sequence $\Phi$, we have
	\begin{equation*}
		d_{\Phi}(D) = 2\delta
	\end{equation*}
	by the portmanteau lemma (see, e.g., \cite[Theorem 2.1]{billingsley}).
	Since $\delta$ was arbitrary, this establishes property (2).
\end{proof}


\section{Polynomial sumsets in the integers} \label{sec: integers}

Theorems \ref{thm: counterexample} and \ref{thm: not ramsey pr} show that the direct analogues of Theorems \ref{thm: density P} and \ref{thm: coloring P} fail over the integers.
There are, nevertheless, nontrivial results that one can establish related to infinite polynomial sumsets in the integers.
After exhibiting an explicit coloring behind Theorem \ref{thm: not ramsey pr} in Subsection \ref{sec: 5-coloring}, we discuss in the remaining subsection various additional conditions on sets of positive density in the integers that guarantee the presence of infinite polynomial sumset configurations.


\subsection{Failure of partition regularity for infinite polynomial sumsets over the integers} \label{sec: 5-coloring}

As promised in the introduction, we give a short proof of Theorem \ref{thm: not ramsey pr} by producing an explicit 5-coloring of the positive integers for which every infinite polynomial sumset of the form $\{b_i, b_i^2 + b_j : i < j\}$ meets at least two color classes.

We build a 5-coloring $\chi : \N \to \{\textbf{odd}, \textbf{even}_{00}, \textbf{even}_{01}, \textbf{even}_{10}, \textbf{even}_{11}\}$ as follows.
If $n \in \N$ is odd, then $\chi(n) = \textbf{odd}$.
Note that the \textbf{odd} color class cannot contain any triple $\{b_1, b_2, b_1^2 + b_2\}$, since the square of an odd number is odd and the sum of two odd numbers is even.
We split the even numbers into four color classes $\textbf{even}_{ij}$ for $(i,j) \in \{0,1\} \times \{0,1\}$ using a pair of 2-colorings.
Let $c(n) \in \{0,1\}$ to be the coefficient of $2^{2v_2(n)}$ in the binary expansion of $n$, where $v_2$ is the 2-adic valuation.
Define a ``dyadic'' coloring $d(n) = \lfloor \log_2(n) \rfloor \bmod{2} \in \{0,1\}$.
The important property of the coloring $d$ is that for every $n \in \N$, $d(2n) \ne d(n)$.
We define $\chi(n) = \textbf{even}_{ij}$ if $n$ is even, $c(n) = i$, and $d(v_2(n)) = j$.

Let $B = \{b_1 < b_2 < \ldots\}$ be an infinite set.
As noted above, if $0 \in v_2(B)$ (i.e., if $B$ contains an odd number), then $\{b_i, b_i^2 + b_j : i < j\}$ contains an even number, so it is not monochromatic.
Assume now that all elements of $B$ are even.
We consider separately the cases that $v_2(B)$ is finite and infinite.

Suppose $v_2(B)$ is finite.
Then by the pigeonhole principle, there exists a value $v \in \N$ and elements $x, y \in B$ with $x < y$ and $v_2(x) = v_2(y) = v$.
Write $x = 2^vs$ and $y = 2^vt$.
Then $s$ and $t$ are both odd numbers, which we expand in binary as $s = 1 + s_1 \cdot 2 + \ldots + s_v \cdot 2^v + \ldots$ and $t = 1 + t_1 \cdot 2 + \ldots + t_v \cdot 2^v + \ldots$.
Note that $c(x) = s_v$ and $c(y) = t_v$.
We compute
\begin{equation*}
	x^2 + y = 2^v(2^v s^2 + t) = 2^v + t_1 \cdot 2^{v+1} + \ldots + t_{v-1} \cdot 2^{2v-1} + (t_v + 1) \cdot 2^{2v} + \ldots.
\end{equation*}
Hence, $c(x^2 + y) \ne t_v = c(y)$, so $y$ and $x^2 + y$ are of different colors.

Now suppose $v_2(B)$ is infinite.
Then we may choose $x, y \in B$ with $x < y$ and $2v_2(x) < v_2(y)$.
The valuation of $x^2 + y$ is then given by $v_2(x^2 + y) = 2v_2(x)$, so $d(v_2(x^2+y)) \ne d(v_2(x))$.
Hence, $x$ and $x^2+y$ are assigned different colors by $\chi$. \\

A key feature of the argument given above is that when the set $v_2(B)$ is infinite, it cannot be bounded from above.
This very simple observation may fail if $B$ is a set of rational numbers instead of integers, so there is no contradiction with Theorem \ref{thm: coloring P}.


\subsection{Totally ergodic sets}

We now begin our discussion of conditions enabling the production of infinite polynomial sumsets in sets of integers.
The first condition relates to the dynamical property of \emph{total ergodicity}.

Fix a set $A \subseteq \N$ with $d^*(A) > 0$.
By the Furstenberg correspondence principle (Theorem \ref{thm: correspondence}), there exists an ergodic $\Z$-system $(X, \mu, T)$, a transitive point $a \in X$, a F{\o}lner sequence $\Phi$ such that $a \in \gen(\mu, \Phi)$, and a clopen set $E \subseteq X$ with $\mu(E) > 0$ such that $A = \{n \in \N : T^n a \in E\}$.
A configuration $\{b_i, P(b_i) + b_j : i < j\}$ can be produced in a shift of $A$ by finding an Erd\H{o}s--Furstenberg--S\'{a}rk\"{o}zy $P$-progression $(T^ta, x_1, x_2)$ with $x_1, x_2 \in E$ for some $t \in \Z$.
We know by Theorem \ref{thm: counterexample} that it is not always possible to find such progressions.
However, we can avoid such counterexamples by imposing an additional condition on the system $(X, \mu, T)$.

Before stating the result, we introduce the requisite terminology.
A $\Z$-system $(X, \mu, T)$ is called \emph{totally ergodic} if $T^n$ is ergodic for every $n \in \N$.
We say that a polynomial $P(x) \in \Q[x]$ is \emph{integer-valued} if $P(\Z) \subseteq \Z$.
Integer-valued polynomials can be written in the form $P(x) = \sum_{j=0}^d c_j \binom{x}{j}$ for some coefficients $c_j \in \Z$.

The next theorem implies that if $A$ has a totally ergodic Furstenberg system\footnote{Such sets are referred to as \emph{totally ergodic sets} in \cite{fish} and \cite{bf}, where it is shown that totally ergodic sets contain many interesting (finite) polynomial patterns that do not exist in arbitrary sets of positive density.}, then a shift of $A$ contains an infinite configuration of the form $\{b_i, b_i^2 + b_j : i < j\}$.

\begin{theorem} \label{thm: TE}
	Let $(X, \mu, T)$ be a totally ergodic $\Z$-system, $\Phi$ a F{\o}lner sequence, and $a \in \gen(\mu, \Phi)$.
	Then for any open set $E \subseteq X$ with $\mu(E) > 0$ and any integer-valued polynomial $P(x) \in \Q[x]$ such that both $P(x)$ and $P(x) + x$ are nonconstant, there exists $t \in \Z$ such that $\EFS^P_X(T^ta) \cap (E \times E) \ne \es$.
\end{theorem}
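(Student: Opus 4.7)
The plan is to mirror the proof of Theorem \ref{thm: density dynamical} in the totally ergodic $\Z$-setting, with the total ergodicity hypothesis playing the role that divisibility of $\Q$ played in the rational case. First I would reduce, via an analogue of Lemma \ref{lem: topological factors} for $\Z$-systems (as developed in \cite{kmrr1}), to the case in which $(X, \mu, T)$ has topological Abramov factors, so that continuous functions on the Abramov factors pull back to continuous functions on $X$.

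Next I would establish $\Z$-analogues of the characteristic factor results of Section \ref{sec: characteristic factor}. Since $(X, \mu, T)$ is assumed totally ergodic, Lesigne's polynomial Wiener--Wintner theorem \cite{lesigne}---which motivated Theorem \ref{thm: polynomial ww adelic}---is directly applicable and yields the $\Z$-analogue of Theorem \ref{thm: Abramov partially characteristic}, and thus of Theorem \ref{thm: characteristic factor}. Abramov's original structure theorem for totally ergodic $\Z$-systems \cite{abramov} provides the skew-product description that Theorem \ref{thm: Abramov structure} gives in the $\Q$ case, and the equidistribution of polynomial orbits in these skew products (the $\Z$-analogue of Proposition \ref{prop: polynomial equidistribution}) follows from Weyl's classical theorem together with total ergodicity. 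With these ingredients in hand, I would define the measure $\sigma^P_a$ on $X^2$ exactly as in Section \ref{sec: progressive measure} and show that it is $P$-progressive from $a$; the proof of Theorem \ref{thm: sigma is progressive} goes through unchanged, since its combinatorial input (Theorem \ref{thm: uniform Ajtai Szemeredi}) is already formulated for arbitrary compact abelian groups.

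The final averaging step of Section \ref{sec: proof finish density} requires the totally ergodic $\Z$-analogue of Theorem \ref{thm: FS rationals}: for any nonconstant integer-valued polynomial $P$ and any $f \in L^2(\mu)$,
\begin{equation*}
	\UClim_{n \in \Z} T^{P(n)}f = \int_X f\,d\mu.
\end{equation*}
This follows from Larick's polynomial ergodic theorem applied to the totally ergodic action: the $P$-invariant subspace is generated by functions invariant under $T^r$ for every $r$ in the subgroup $G_P = \langle P(n) - P(0) : n \in \Z\rangle$, which has finite index in $\Z$, and total ergodicity forces this subspace to consist of constants. Averaging $\sigma^P_{T^ta}$ over $t$ along a F{\o}lner sequence in $\Z$, together with the Fubini property of uniform Ces\`{a}ro limits and the computation in the proof of Theorem \ref{thm: density dynamical}, yields $\UClim_{t \in \Z} \sigma^P_{T^ta} = \mu \times \mu$. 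The portmanteau lemma then produces $t \in \Z$ with $\sigma^P_{T^ta}(E \times E) > 0$, and $P$-progressivity of $\sigma^P_{T^ta}$ from $T^ta$ completes the proof.

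The main conceptual obstacle is precisely where total ergodicity is invoked: in the final averaging step, one needs polynomial orbits in the Kronecker factor to equidistribute with respect to Haar measure, which fails in general $\Z$-systems exactly because of the kind of local obstructions responsible for the counterexamples of \cite{counterexample}. Total ergodicity eliminates these obstructions at the level of the dynamical system, which is what allows the rational-case argument to transfer.
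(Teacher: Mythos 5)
Your proposal follows essentially the same route the paper takes: reduce to topological Abramov factors via \cite[Lemma 5.8]{kmrr1}, replace the adelic Wiener--Wintner theorem with Lesigne's, replace Theorem~\ref{thm: Abramov structure} with Abramov's original structure theorem for totally ergodic $\Z$-systems, replace Proposition~\ref{prop: polynomial equidistribution} with Weyl, define $\sigma^P_a$ exactly as in Section~\ref{sec: progressive measure}, and close with the averaging argument of Section~\ref{sec: proof finish density}. You also correctly pinpoint where total ergodicity is essential (absolute continuity of the marginals on the Kronecker factor).

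The one place where your justification diverges from the paper's and is not correct as stated is the $\Z$-analogue of Theorem~\ref{thm: FS rationals}. You cite Larick's polynomial ergodic theorem and argue via the $P$-invariant subspace $I_P$; but Larick's theorem, as invoked in this paper, is a statement about \emph{field} actions, and the conclusion ``$\UClim_n T^{P(n)}f$ equals the projection onto $I_P$'' is false for general $\Z$-systems. For instance, on the rotation by $1/4$ of $\Z/4\Z$ with $P(n)=n^2$, the values $P(n)\bmod 4$ lie in $\{0,1\}$, so the average of $T^{P(n)}$ is $\tfrac12(\id + T)$; meanwhile $I_P$ is already the constants, so the projection onto $I_P$ is $\int f$, and the two disagree. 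Your group-theoretic observation that $G_P=\langle P(n)-P(0)\rangle$ has finite index in $\Z$ is the right underlying picture, but it does not produce the statement you need through the Larick mechanism. The correct move, and the one the paper makes, is simply to invoke the classical polynomial mean ergodic theorem for totally ergodic $\Z$-systems (e.g.\ \cite[Lemma 3.14]{furstenberg_book} or \cite[Section 2]{ert_update}), whose proof runs through the spectral theorem and Weyl's equidistribution of $(P(n)\theta)_n$ for irrational $\theta$, rather than through a characterization of the limit as a projection onto $P$-invariant functions. With that citation substituted, your argument matches the paper's.
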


\begin{proof}
	The proof of Theorem \ref{thm: TE} is essentially the same as the proof of Theorem \ref{thm: density dynamical}, so we give only a sketch.
	By \cite[Lemma 5.8]{kmrr1}, we may assume that $(X, \mu, T)$ has topological Abramov factors.
	Let $P(x) \in \Q[x]$ such that both $P(x)$ and $P(x) + x$ are nonconstant, and let $d = \deg{P}$.
	Using Lesigne's polynomial Wiener--Wintner theorem \cite{lesigne} in place of Theorem \ref{thm: polynomial ww adelic}, one can establish the following variant of Theorem \ref{thm: characteristic factor}:
	\begin{quote}
		\emph{for any $f \in C(X)$ and $g \in L^2(\mu)$ and any $\eps > 0$, there exists a decomposition $f = f_s \circ \pi_d + f_u$ with $f_s \in C(A_d)$ and $f_u \in C(X)$ such that $\norm{L^1(\mu)}{\E{f_u}{\mA_d}} < \eps$ and}
		\begin{equation} \label{eq: decomposition}
			\limsup_{N \to \infty} \norm{L^2(\mu)}{\frac{1}{|\Phi_N|} \sum_{n \in \Phi_N} \left( f(T^na) \cdot T^{P(n)}g - f_s(T_d^n \pi_d(a)) \cdot T^{P(n)} \E{g}{\mZ} \right)} < \eps.
		\end{equation}
	\end{quote}
	By the assumption that $T$ is totally ergodic, one can find a skew-product transformation
	\begin{equation*}
		S(x_1, \ldots, x_d) = (x_1 + \alpha, x_2 + x_1, \ldots, x_d + x_{d-1})
	\end{equation*}
	for $(x_1, \ldots, x_d) \in G^d$, where $G$ is a finite or countably infinite product of $\T$, such that the Abramov factor $(A_d, m_{A_d}, T_d)$ is a (topological) factor of $(G^d, m_G^d, S)$; see \cite{abramov} and \cite[Lemma 4.1]{frakra}.
	Then using Weyl's equidistribution theorem \cite{weyl}, we can define measures $\tilde{\lambda}^P_{(\pi_d(a), \pi_1(x))}$ by
	\begin{equation*}
		\tilde{\lambda}^P_{(\pi_d(a), \pi_1(x))} = \UClim_{n \in \Z} \delta_{T_d^n \pi_d(a)} \times \delta_{R^{P(n)} \pi_1(x)}
	\end{equation*}
	and then lift these to measures $\lambda^P_{(a,x)}$ on $X \times X$ by putting
	\begin{equation*}
		\int_{X \times X} f \otimes g~d\lambda^P_{(a,x)} = \int_{A_d \times Z} \E{f}{A_d} \otimes \E{f}{Z}~d\tilde{\lambda}^P_{(\pi_d(a), \pi_1(x))}
	\end{equation*}
	for $f, g \in C(X)$.
	The second marginal of $\tilde{\lambda}^P_{(\pi_d(a), \pi_1(x))}$ is the average $\UClim_{n \in \Z} \delta_{R^{P(n)} \pi_1(x)}$.
	In a general group rotational system, such an average may produce a measure that is singular to the Haar measure on $Z$.
	However, since we assume that $T$ is totally ergodic, this average is exactly equal to the Haar measure $m_Z$ as a consequence of Weyl's equidistribution theorem ($T$ being totally ergodic is equivalent to all of the eigenvalues of $T$ being irrational, so the limiting behavior of the polynomial average boils down to equidistribution of sequences $(P(n)t)_{n \in \Z}$ with $t \in \T$ irrational).
	Hence, $\lambda^P_{(a,x)}$ is well-defined.
	The decomposition statement \eqref{eq: decomposition} shows that
	\begin{equation*}
		\lim_{N \to \infty} \frac{1}{|\Phi_N|} \sum_{n \in \Phi_N} f(T^n a) \cdot g(T^{P(n)}x) = \int_{X \times X} f \otimes g~d\lambda^P_{(a,x)}
	\end{equation*}
	in $L^2(\mu)$, where both sides are considered as functions of $x$ (cf. Theorem \ref{thm: limit formula}).
	
	We similarly define a measure $\tilde{\sigma}^P_{\pi_d(a)} = \tilde{\lambda}^{P(x)+x}_{(\pi_d(a), \pi_1(a))}$ and $\sigma^P_a = \lambda^{P(x)+x}_{(a,a)}$.
	Proposition \ref{prop: uniform bound on factor} still holds in the new setting by replacing instances of Proposition \ref{prop: polynomial equidistribution} with Weyl's equidistribution theorem, so $\sigma^P_a$ is $P$-progressive from $a$.
	To finish the proof, we then compute the average $\UClim_{t \in \N} \sigma^P_{T^ta}$.
	We use the fact that in a totally ergodic system,
	\begin{equation*}
		\UClim_{n \in \Z} T^{P(n)}f = \int_X f~d\mu
	\end{equation*}
	in $L^2(\mu)$ for $f \in L^2(\mu)$; see \cite[Lemma 3.14]{furstenberg_book} and \cite[Section 2]{ert_update}.
	Arguing as in the proof of Theorem \ref{thm: density dynamical}, we deduce that $\UClim_{t \in \N} \sigma^P_{T^ta} = \mu \times \mu$.
	Thus, for any F{\o}lner sequence $\Psi$,
	\begin{equation*}
		\liminf_{N \to \infty} \frac{1}{|\Psi_N|} \sum_{t \in \Psi_N} \sigma^P_{T^ta}(E \times E) \ge \mu(E)^2,
	\end{equation*}
	so there exists $t \in \Q$ such that $\sigma^P_{T^ta}(E \times E) > 0$, whence $\EFS^P_X(T^ta) \cap (E \times E) \ne \es$.
\end{proof}

A natural follow-up to Theorem \ref{thm: TE} is to describe sets $A \subseteq \N$ having a totally ergodic Furstenberg system.
Essentially what this says is that for any bounded function $f : \N \to \C$ in the algebra generated by $\ind_A$ and its shifts (i.e., $f$ is a linear combination of indicator functions of sets of the form $(A - t_1) \cap \ldots \cap (A - t_k)$ for some $k \in \N$ and $t_1, \ldots, t_k \in \Z$), then $f$ does not correlate with any periodic function.
A precise statement can be found in \cite[Theorem 1.2]{bf}.

One may ask what happens if we impose only the condition that $\ind_A$ does not correlate with periodic functions, rather than imposing such a condition on the entire algebra generated by $\ind_A$ and its shifts.
Interpreting ``correlation'' along an appropriately chosen F{\o}lner sequence, this corresponds dynamically to the set $E$ from the Furstenberg correspondence principle satisfying $\E{\ind_E - \mu(E)}{\mZ_{rat}} = 0$, where $\mZ_{rat}$ is the factor generated by periodic functions (sometimes called the \emph{rational Kronecker factor} or the \emph{procyclic factor}).
A more precise description of such sets is given in \cite[Section 3.2]{bf} under the moniker \emph{relatively totally ergodic sets}.
Relatively totally ergodic sets contain a rich variety of polynomial patterns (see \cite[Section 3.3]{bf}), so it is reasonable to make the following conjecture, which we state in dynamical language:

\begin{conjecture} \label{conj: trivial projection to rational factor}
	Let $(X, \mu, T)$ be an ergodic $\Z$-system, $\Phi$ a F{\o}lner sequence, and $a \in \gen(\mu, \Phi)$.
	Let $E \subseteq X$ be an open set with $\mu(E) > 0$ satisfying $\E{\ind_E - \mu(E)}{\mZ_{rat}} = 0$.
	Then for any integer-valued polynomial $P(x) \in \Q[x]$ such that both $P(x)$ and $P(x) + x$ are nonconstant, there exists $t \in \Z$ such that $\EFS^P_X(T^ta) \cap (E \times E) \ne \es$.
\end{conjecture}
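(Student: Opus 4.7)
The plan is to transpose the machinery of Sections~\ref{sec: Furstenberg systems}--\ref{sec: proof finish density} to general ergodic $\Z$-systems, invoking the hypothesis $\E{\ind_E - \mu(E)}{\mZ_{rat}} = 0$ only at the averaging-over-$t$ step to neutralize the contribution of rational eigenvalues of $T$. After passing to an extension with topological Abramov factors via the $\Z$-analog of \cite[Lemma 5.8]{kmrr1}, I would invoke an Abramov-type structure theorem for ergodic $\Z$-systems (combining \cite{abramov} with Host--Kra-style analysis, cf.\ \cite{frakra}) to realize $(A_d, m_{A_d}, T_d)$ as a skew-product tower of isometric extensions over a compact abelian base, with $d = \deg P$. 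A characteristic factor result analogous to Theorem~\ref{thm: characteristic factor} then follows from the polynomial Wiener--Wintner theorem for general ergodic $\Z$-systems (extending Lesigne's totally ergodic version \cite{lesigne}), so that the averages $\frac{1}{|\Phi_N|}\sum_{n \in \Phi_N} f(T^n a)\cdot T^{P(n)} g$ are governed by the Abramov factor of order $d$ in the first coordinate and the Kronecker factor in the second.

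With these ingredients in hand, one defines $\tilde\sigma^P_u$ and $\sigma^P_a \in \mM(X^2)$ exactly as in Section~\ref{sec: progressive measure}, verifies progressivity by running the Ajtai--Szemer\'{e}di argument of Theorem~\ref{thm: sigma is progressive} over the compact abelian base of the Abramov tower (replacing the role of $(\A/\Q)^l$ in Theorem~\ref{thm: coloring skew product average}), and then performs the averaging of Section~\ref{sec: proof finish density}. A Fubini swap of uniform Ces\`{a}ro limits together with unique ergodicity of $T_d \times R$ on $\Delta = \{(u,w) \in A_d \times Z : \pi^d_1(u) = w\}$ yields
\begin{equation*}
    \UClim_{t \in \Z}\, \tilde\sigma^P_{\pi_d(T^t a)} \;=\; \UClim_{q \in \Z}\, (\id_{A_d} \times R^{P(q)})_* m_\Delta,
\end{equation*}
so that for $f, g \in C(X)$,
\begin{equation*}
    \UClim_{t \in \Z} \int_{X^2} f \otimes g\, d\sigma^P_{T^t a} \;=\; \int_{A_d} \E{f}{A_d}(u) \cdot \Bigl( \UClim_{q \in \Z} R^{P(q)} \E{g}{Z} \Bigr)\!\bigl( \pi^d_1(u) \bigr)\, dm_{A_d}(u).
\end{equation*}
For nonconstant integer-valued $P$, the inner limit $\UClim_q R^{P(q)} h$ on $L^2(m_Z)$ equals the orthogonal projection of $h$ onto the sub-$\sigma$-algebra of functions invariant under every $R^{P(q)}$; the corresponding eigenfunctions necessarily have rational frequency, so this factor is contained in $\mZ_{rat}$. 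Taking $f = g = \ind_E$, the hypothesis forces $\UClim_q R^{P(q)} \E{\ind_E}{Z} = \mu(E)$, and the averaged mass of $E \times E$ equals $\mu(E)^2 > 0$. The portmanteau lemma produces $t \in \Z$ with $\sigma^P_{T^t a}(E \times E) > 0$, and progressivity of $\sigma^P_{T^t a}$ supplies the desired Erd\H{o}s--Furstenberg--S\'{a}rk\"{o}zy $P$-progression.

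The main obstacle will be transporting the Ajtai--Szemer\'{e}di linearization of Proposition~\ref{prop: uniform bound on factor} from the torsion-free $\Q$-vector space setting to the compact abelian groups (potentially with substantial torsion) that form the bases of $\Z$-Abramov towers. The equidistribution input Proposition~\ref{prop: polynomial equidistribution} exploits the divisibility of $\A/\Q$ to ensure that the closed subspace generated by any single element is sufficiently rich, and the change of variables used in the proof of Proposition~\ref{prop: uniform bound on factor} involves the coefficient $(1 + c_1^{-1})$, which may be ill-defined modulo the torsion of the base group. Careful bookkeeping of cosets, together with the appropriate $\Z$-variant of Weyl equidistribution for polynomial orbits in compact abelian groups, should resolve these issues; crucially, the final application uses progressivity only against $\ind_E$, whose rational-Kronecker projection is trivialized by the hypothesis, so the coset phenomena that arise in the construction of $\sigma^P_a$ should not translate into genuine obstructions at the end of the argument.
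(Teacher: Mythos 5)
This statement is stated in the paper as a \emph{conjecture}, and the paper explicitly explains why its own machinery cannot prove it: \emph{``The methods of this paper do not seem to be sufficient to address Conjecture~\ref{conj: trivial projection to rational factor}, since defining the measure $\sigma^P_a$ required absolute continuity of a certain measure on the Kronecker factor, and this property may fail in systems with nontrivial rational Kronecker factor.''} Your proposal, which declares that one ``defines $\tilde\sigma^P_u$ and $\sigma^P_a \in \mM(X^2)$ exactly as in Section~\ref{sec: progressive measure},'' runs directly into this obstruction and does not resolve it. Recall that $\sigma^P_a$ is defined by pairing $\E{f}{A_d}\otimes\E{g}{Z}$ against $\tilde\sigma^P_{\pi_d(a)}$; since conditional expectations are only determined up to null sets, this pairing is well-defined only if the two marginals of $\tilde\sigma^P_{\pi_d(a)}$ are absolutely continuous with respect to $m_{A_d}$ and $m_Z$. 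In the $\Q$-setting (or the totally ergodic $\Z$-setting of Theorem~\ref{thm: TE}), the second marginal $\UClim_{n} \delta_{R^{(P(n)+n)}\pi_1(a)}$ equals $m_Z$ by polynomial equidistribution. But when $\sigma(T)$ has an infinite rational part, the closure of $\{(P(n)+n)\alpha : n \in \Z\}$ in $Z$ can be a coset of a closed subgroup of Haar measure zero, so the second marginal may be singular and the measure $\sigma^P_a$ is simply not defined.

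Your attempt to dismiss this at the end of the proposal is a non sequitur. You argue that ``the final application uses progressivity only against $\ind_E$, whose rational-Kronecker projection is trivialized by the hypothesis.'' But progressivity is a property of the measure $\sigma^P_a$ itself, namely that $\supp{\sigma^P_a}\subseteq\overline{\EFS^P_X(a)}$; establishing it (via Proposition~\ref{prop: progressive criterion} and Proposition~\ref{prop: uniform bound on factor}) requires the Ajtai--Szemer\'{e}di argument to hold against \emph{all} nonnegative $f,g\in C(X)$, not against $\ind_E$ alone. The hypothesis $\E{\ind_E - \mu(E)}{\mZ_{rat}} = 0$ is a property of the test set $E$ and enters only in the very last positivity step $\UClim_t \sigma^P_{T^t a}(E\times E)\ge \mu(E)^2$; it has no bearing on the construction of the measure, nor on its progressivity, nor on the torsion/coset issues in transporting the linearization of Proposition~\ref{prop: uniform bound on factor} to compact abelian groups with nontrivial torsion (which you correctly flag but never resolve). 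A secondary inaccuracy: for $\Z$-actions the limit $\UClim_n R^{P(n)}h$ on $L^2(m_Z)$ is generally \emph{not} the orthogonal projection onto the $R^{P(n)}$-invariant functions; for a character $\chi_\alpha$ with $\alpha = p/q$, the limit multiplies $\chi_\alpha$ by the Gauss-type sum $\tfrac1q\sum_{n\bmod q}e(P(n)p/q)$, whose modulus can be strictly between $0$ and $1$. What you actually need---and what is true---is only that the limit annihilates characters orthogonal to $\mZ_{rat}$; but this correction does not rescue the earlier gaps.

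In short, the proposal identifies some of the right ingredients, but the central well-definedness and progressivity issues for $\sigma^P_a$ in the presence of a nontrivial rational Kronecker factor are not addressed, and the hypothesis on $E$ cannot be invoked to avoid them because they are system-level problems independent of $E$. This is precisely why the statement remains a conjecture in the paper.
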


The methods of this paper do not seem to be sufficient to address Conjecture \ref{conj: trivial projection to rational factor}, since defining the measure $\sigma^P_a$ required absolute continuity of a certain measure on the Kronecker factor, and this property may fail in systems with nontrivial rational Kronecker factor.


\subsection{Relatively weakly mixing sets}

As supporting evidence for Conjecture \ref{conj: trivial projection to rational factor}, we will now show that (a strengthening of) the conclusion of the conjecture holds if one strengthens the assumption $\E{\ind_E - \mu(E)}{\mZ_{rat}} = 0$ to $\E{\ind_E - \mu(E)}{\mZ} = 0$.
Our argument is based on the argument given in \cite[Proposition 3.27]{kmrr_survey} for the case $P(x) = x$.

\begin{proposition} \label{prop: trivial projection to Kronecker factor}
	Let $(X, \mu, T)$ be a $\Z$-system, $\Phi$ a F{\o}lner sequence, and $a \in \gen(\mu, \Phi)$.
	Let $E \subseteq X$ be an open set with $\mu(E) > 0$ satisfying $\E{\ind_E - \mu(E)}{\mZ} = 0$.
	Then for any nonconstant integer-valued polynomial $P(x) \in \Q[x]$, we have $\EFS^P_X(a) \cap (E \times E) \ne \es$.
\end{proposition}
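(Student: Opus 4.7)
The plan is to adapt the proof of \cite[Proposition 3.27]{kmrr_survey} (which handles the case $P(x)=x$) to arbitrary nonconstant integer-valued $P$. The starting observation is that the hypothesis $\E{\ind_E - \mu(E)}{\mZ} = 0$ automatically propagates to the $P$-invariant sub-$\sigma$-algebra $\mI_P$ of $L^2(\mu)$ (functions invariant under every $T^{P(n)}$, $n \in \Z$): such functions are invariant under $T^d$ for $d$ the generator of $\langle P(n) : n \in \Z\rangle \subseteq \Z$, and for ergodic $T$ the $T^d$-invariant functions span a finite-dimensional space of eigenfunctions at $d$-th roots of unity, hence lie inside $\mZ$. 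Therefore $\E{\ind_E}{\mI_P} = \mu(E)$ almost everywhere.

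First I would establish the key $L^2$ convergence, for every $f \in C(X)$:
\begin{equation*}
	\frac{1}{|\Phi_N|} \sum_{n \in \Phi_N} f(T^n a) \ind_E(T^{P(n)} x) \longrightarrow \mu(E) \int_X f\,d\mu \qquad \text{in } L^2(\mu).
\end{equation*}
This follows by decomposing $\ind_E = \mu(E) + h$ with $\E{h}{\mZ} = 0$: the constant part contributes $\mu(E) \cdot \frac{1}{|\Phi_N|}\sum_n f(T^n a) \to \mu(E)\int f\,d\mu$ by genericity of $a$, while $\frac{1}{|\Phi_N|}\sum_n f(T^n a) T^{P(n)} h \to 0$ in $L^2(\mu)$ by the $\Z$-analogue of Proposition \ref{prop: Kronecker partially characteristic} (a standard van der Corput/Lesigne-type argument using $\E{h}{\mZ}=0$).

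Next I would run an integrated positivity argument against $\ind_F$ for a closed subset $F \subseteq E$ with $\mu(F)$ close to $\mu(E)$. Integrating $\ind_F \otimes \ind_F$ against the average (and applying the polynomial mean ergodic theorem to $\frac{1}{|\Phi_N|}\sum_n \mu(F \cap T^{-P(n)}F)$) yields
\begin{equation*}
	\int_F \bigl( \limsup_N \tfrac{1}{|\Phi_N|} \sum_n f(T^n a) \ind_F(T^{P(n)} x) \bigr)\,d\mu(x) \;\ge\; \int f\,d\mu \cdot \int \E{\ind_F}{\mI_P}^2\,d\mu \;>\; 0
\end{equation*}
by reverse Fatou. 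Applying this to a countable dense family $\{f_k\}$ of continuous test functions with $\supp(f_k) \subseteq F$ and $\int f_k\,d\mu > 0$ and intersecting the resulting full-measure sets, I can select a point $x_1 \in F$ at which all the associated limsups are positive. Choosing for each $k$ a suitable $f_k$ with $\supp(f_k) \subseteq B_{1/k}(x_1) \cap F$ produces infinitely many $n$ with $T^n a \in B_{1/k}(x_1)$ and $T^{P(n)} x_1 \in F$; a diagonal extraction then yields a distinct sequence $(s_k)$ with $T^{s_k} a \to x_1$, and a further compactness extraction gives $T^{P(s_k)} x_1 \to x_2$ for some $x_2 \in F$. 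The triple $(a, x_1, x_2)$ is then the desired EFS $P$-progression.

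The main obstacle is the boundary issue: ensuring $x_2 \in E$ rather than only in $\overline{E}$. This is what forces the detour through a closed $F \subseteq E$. Since $\ind_F$ does not inherit the Kronecker-orthogonality hypothesis, I cannot prove an exact limit for the $F$-average as I did for the $E$-average; instead I rely on the inequality $\E{\ind_F}{\mI_P} \le \E{\ind_E}{\mI_P} = \mu(E)$ and the positivity of $\int \E{\ind_F}{\mI_P}^2\,d\mu$. Once this technical point is handled, the rest of the argument proceeds as outlined.
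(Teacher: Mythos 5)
Your argument diverges substantially from the paper's. The paper proves a density claim --- for every Borel $B$ and $\eps>0$, the set $\{n : |\mu(B\cap T^{-P(n)}E) - \mu(B)\mu(E)| > \eps\}$ has zero Banach upper density --- using the continuity of the spectral measure of $\ind_E - \mu(E)$ (from $\E{\ind_E - \mu(E)}{\mZ}=0$), Weyl equidistribution, and a van der Corput step, and then runs a greedy iteration (choosing $b_n$ one at a time with $E_n = E_{n-1}\cap T^{-P(b_n)}E$ kept of positive measure) to produce the required sequence. Your plan replaces this by an $L^2$ convergence statement, a positivity integral against a closed approximant $F\subseteq E$, and a test-function extraction; this is a genuinely different route, but as written it has two gaps.

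The first gap is the displayed inequality in your third step. After reverse Fatou you are reduced to controlling $\limsup_N \frac{1}{|\Phi_N|}\sum_n f(T^n a)\,\mu(F\cap T^{-P(n)}F)$, and the polynomial mean ergodic theorem only identifies the unweighted Ces\`aro limit of $\mu(F\cap T^{-P(n)}F)$, not the weighted one. Writing $\ind_F = \E{\ind_F}{\mI_P} + g$, the cross term $\frac{1}{|\Phi_N|}\sum_n f(T^na)\int \ind_F\, T^{P(n)}g\,d\mu$ does not vanish, because $g = \ind_F - \E{\ind_F}{\mI_P}$ does \emph{not} have trivial Kronecker projection: unlike $\ind_E$, the set $F$ does not inherit the hypothesis $\E{\,\cdot - \mu(\cdot)}{\mZ} = 0$, so $\E{\ind_F}{\mZ}$ is genuinely nonconstant and the Kronecker-partially-characteristic mechanism cannot be invoked. (If you patch this by writing $\mu(F\cap T^{-P(n)}F)\ge \mu(E\cap T^{-P(n)}E) - 2\mu(E\setminus F)$ and using the clean bound for $E$, the error term scales with $\|f\|_\infty\,\mu(E\setminus F)$, which dominates $\int f\,d\mu$ once you later shrink the support of $f$ to a small ball around $x_1$.) The second gap is in the intersection step: the integral inequality at best produces, for each fixed $f$, a \emph{positive-measure} set of $x$ with positive limsup, not a full-measure set, so intersecting over a countable family $\{f_k\}$ can be empty; a pointwise (rather than $L^2$) convergence result, of Wiener--Wintner type, would be needed to get full-measure good sets, and you have not supplied one. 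You are right, incidentally, that the open-versus-closed boundary issue is a legitimate concern for the subsequence extraction; the paper's own sketch glosses over it (in the combinatorial application $E$ is clopen so it does not arise), but your proposed repair via $F$ is exactly where the above difficulties enter.
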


\begin{proof}
	The key observation is the following: \\
	
	\noindent \textbf{Claim.} For every Borel set $B \subseteq X$ and every $\eps > 0$,
	\begin{equation*}
		d^* \left( \left\{ n \in \Z : \left| \mu \left( B \cap T^{-P(n)}E \right) - \mu(B) \mu(E) \right| > \eps \right\} \right) = 0.
	\end{equation*}
	
	\begin{proof}[Proof of Claim] \renewcommand\qedsymbol{$\blacksquare$}
		Let $f = \ind_E - \mu(E)$ and $g = \ind_B$.
		First we will show
		\begin{equation*}
			\UClim_{n \in \Z} \left| \innprod{f}{T^{Q(n)}f} \right|^2 = 0
		\end{equation*}
		for every nonconstant integer-valued polynomial $Q(x) \in \Q[x]$.
		By Herglotz's theorem (see, e.g., \cite[4.13]{nadkarni}), let $\nu_f$ be a positive measure on $\T$ such that
		\begin{equation*}
			\innprod{f}{T^nf} = \hat{\nu}_f(n) = \int_{\T} e(nt)~d\nu_f(t)
		\end{equation*}
		for $n \in \Z$.
		Since $f$ is a weakly mixing function, the measure $\nu_f$ is continuous (see, e.g., \cite[Corollary 4.17]{nadkarni}).
		Let $\Theta$ be a F{\o}lner sequence in $\Z$.
		Then
		\begin{equation*}
			\frac{1}{|\Theta_N|} \sum_{n \in \Theta_N} \left| \innprod{f}{T^{Q(n)}f} \right|^2 = \frac{1}{|\Theta_N|} \sum_{n \in \Theta_N} \int_{\T^2} e(Q(n)(t-s))~d\nu_f(t)~d\nu_f(s).
		\end{equation*}
		Now, by Weyl's equidistribution theorem, we have
		\begin{equation*}
			\lim_{N \to \infty} \frac{1}{|\Theta_N|} \sum_{n \in \Theta_N} e(Q(n)x) = 0
		\end{equation*}
		for every $x \in \R \setminus \Q$.
		Therefore,
		\begin{equation*}
			\limsup_{N \to \infty} \frac{1}{|\Theta_N|} \sum_{n \in \Theta_N} \left| \innprod{f}{T^{Q(n)}f} \right|^2 \le \left( \nu_f \times \nu_f \right) \left( \left\{ (s, t) \in \T^2 : t - s \in \Q \right\} \right).
		\end{equation*}
		For each $s \in \T$, the set $\{t \in \T : t - s \in \Q\}$ is countable, so $\nu_f \left( \{t \in \T : t - s \in \Q\} \right) = 0$.
		By Fubini's theorem, it follows that
		\begin{equation*}
			\limsup_{N \to \infty} \frac{1}{|\Theta_N|} \sum_{n \in \Theta_N} \left| \innprod{f}{T^{Q(n)}f} \right|^2 = 0.
		\end{equation*}
		
		Now we want to prove
		\begin{equation*}
			\UClim_{n \in \Z} \left| \innprod{g}{T^{P(n)}f} \right|^2 = 0.
		\end{equation*}
		Let $u_n = \innprod{g}{T^{P(n)}f} T^{P(n)}f$.
		Then for any $h \in \Z$,
		\begin{equation*}
			\innprod{u_{n+h}}{u_n} = \innprod{g}{T^{P(n)}f} \innprod{g}{T^{P(n+h)}f} \innprod{T^{P(n+h)}f}{T^{P(n)}f}.
		\end{equation*}
		But $T$ is measure-preserving, so we can rewrite this as
		\begin{equation*}
			\innprod{u_{n+h}}{u_n} = \innprod{g}{T^{P(n)}f} \innprod{g}{T^{P(n+h)}f} \innprod{T^{P(n+h) - P(n)}f}{f},
		\end{equation*}
		and so by Cauchy--Schwarz,
		\begin{equation*}
			\left| \innprod{u_{n+h}}{u_n} \right| \le \norm{2}{f}^2 \norm{2}{g}^2 \left| \innprod{T^{P(n+h) - P(n)}f}{f} \right|.
		\end{equation*}
		If $\deg{P} \ge 2$, then $Q_h(n) = P(n+h) - P(n)$ is a nonconstant polynomial, so
		\begin{equation*}
			\UClim_{n \in \Z} \left| \innprod{u_{n+h}}{u_n} \right| = 0.
		\end{equation*}
		On the other hand, if $P(n) = an + b$, then $P(n+h) - P(n) = ah$, and we have
		\begin{equation*}
			\UClim_{h \in \Z} \UClim_{n \in \Z} \left| \innprod{u_{n+h}}{u_n} \right| \le \norm{2}{f}^2 \norm{2}{g}^2 \UClim_{h \in \Z} \left| \innprod{T^{ah}f}{f} \right| = 0.
		\end{equation*}
		Hence, by van der Corput (Lemma \ref{lem: vdC}), we have
		\begin{equation*}
			\UClim_{n \in \Z} u_n = 0
		\end{equation*}
		in $L^2(\mu)$.
		Taking the inner product with $g$ gives
		\begin{equation*}
			\UClim_{n \in \Z} \left| \innprod{g}{T^{P(n)}f} \right|^2 = 0
		\end{equation*}
		as desired.
	\end{proof}
	
	Let $A = \{n \in \N : T^n a \in E\}$.
	By the portmanteau lemma, $d_{\Phi}(A) \ge \mu(E) > 0$.
	Therefore by the claim, we may choose $b_1 \in A$ such that $\mu(E \cap T^{-P(b_1)}E) > 0$.
	Let $E_1 = E \cap T^{-P(b_1)}E$ and $A_1 = A \cap A - P(b_1)$.
	Note that $A_1 = \left\{ n \in \N : T^na \in E_1 \right\}$, and $d_{\Phi}(A_1) > 0$.
	At each stage, we choose $b_n \in A_{n-1}$ such that $\mu \left( E_{n-1} \cap T^{-P(b_n)} E \right) > 0$.
	In the end, we obtain a sequence $(b_n)_{n \in \N}$ in $A$ with $P(b_i) + b_j \in A$ for $i < j$.
	By passing to subsequences if necessary, we can ensure that $T^{b_n}a$ converges to some point $x_1 \in E$ and $T^{P(b_n)}x_1$ converges to a point $x_2 \in E$ so that $(a, x_1, x_2)$ is the desired Erd\H{o}s--Furstenberg--S\'{a}rk\"{o}zy $P$-progression.
\end{proof}

Given a set $A \subseteq \N$, we can give a reasonable description of when its Furstenberg system satisfies the hypothesis of Proposition \ref{prop: trivial projection to Kronecker factor}.
Let $\Phi$ be a F{\o}lner sequence.
Say that a bounded function $f : \N \to \R$ \emph{admits correlations along $\Phi$} if
\begin{equation*}
	\lim_{N \to \infty} \frac{1}{|\Phi_N|} \sum_{n \in \Phi_N} f(n + t_1) \cdot \ldots \cdot f(n + t_k)
\end{equation*}
exists for every $k \in \N$ and $t_1, \ldots, t_k \in \N \cup \{0\}$.
We define the \emph{local $U^k(\Phi)$ uniformity seminorms} of a function $f$ along $\Phi$ by
\begin{align*}
	\seminorm{U^0(\Phi)}{f} & = \lim_{N \to \infty} \frac{1}{|\Phi_N|} \sum_{n \in \Phi_N} f(n), \\
	\seminorm{U^{k+1}(\Phi)}{f}^{2^{k+1}} & = \lim_{H \to \infty} \frac{1}{H} \sum_{h=1}^H \seminorm{U^k(\Phi)}{\Delta_h f}^{2^k}.
\end{align*}
These seminorms were introduced in \cite{hk_uniformity}, and the authors demonstrated the existence of the relevant limits using known results in ergodic theory.

One can show that if $A \subseteq \N$ and $\ind_A$ admits correlations along $\Phi$, then $\seminorm{U^2(\Phi)}{\ind_A - d_{\Phi}(A)} = 0$ if and only if for the system $(X, \mu, T)$ and set $E \subseteq X$ representing $A$ via the Furstenberg correspondence principle, one has $\E{\ind_E - \mu(E)}{\mZ} = 0$; see \cite[Appendix A]{kmrr_survey}.
Sets $A$ satisfying this property are called \emph{relatively weakly mixing} in \cite{bf}, and several additional descriptions are given in \cite[Section 5.2]{bf}. 
We have the following corollary.

\begin{corollary}
	Let $P(x) \in \Q[x]$ be a nonconstant integer-valued polynomial.
	Suppose $A \subseteq \N$ and there is a F{\o}lner sequence $\Phi$ such that $d_{\Phi}(A) > 0$ and $\norm{U^2(\Phi)}{\ind_A - d_{\Phi}(A)} = 0$.
	Then there is an infinite set $\{b_n : n \in \N\} \subseteq \N$ such that $\{b_j, P(b_i) + b_j : i < j\} \subseteq A$.
\end{corollary}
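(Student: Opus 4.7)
The plan is to combine the standard Furstenberg correspondence principle in $\Z$ with Proposition~\ref{prop: trivial projection to Kronecker factor}. Since the local $U^2(\Phi)$ seminorm hypothesis presupposes that $\ind_A$ admits correlations along $\Phi$, we may assume this is the case (passing to a subsequence of $\Phi$ if necessary by a standard diagonal argument; the vanishing of the seminorm is preserved under such a refinement, as is the property $d_\Phi(A) > 0$). The correspondence then yields an ergodic $\Z$-system $(X, \mu, T)$, a transitive point $a \in \gen(\mu, \Phi)$, and a clopen set $E \subseteq X$ with $\mu(E) = d_\Phi(A) > 0$ such that $A = \{n \in \Z : T^n a \in E\}$.

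Next, I would invoke the identification recalled just before the corollary: the hypothesis $\seminorm{U^2(\Phi)}{\ind_A - d_\Phi(A)} = 0$ is equivalent, via this correspondence, to $\E{\ind_E - \mu(E)}{\mZ} = 0$ (see~\cite[Appendix A]{kmrr_survey}). The hypotheses of Proposition~\ref{prop: trivial projection to Kronecker factor} are therefore satisfied for the triple $(X, \mu, T)$, the generic point $a$, the open set $E$, and the integer-valued polynomial $P$. That proposition directly yields a pair $(x_1, x_2) \in \EFS^P_X(a) \cap (E \times E)$.

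Finally, Proposition~\ref{prop: EFS implies sumset} applied with $U = V = E$ produces an infinite sequence $(b_n)_{n \in \N}$ of distinct elements satisfying $\{b_i : i \in \N\} \subseteq \{q \in \Z : T^q a \in E\} = A$ and $\{P(b_i) + b_j : i < j\} \subseteq A$; since $A \subseteq \N$, one has automatically $b_i \in \N$. This delivers the required infinite set, and in particular $\{b_j, P(b_i) + b_j : i < j\} \subseteq A$. The main substance of the corollary is already contained in Proposition~\ref{prop: trivial projection to Kronecker factor}, so I do not anticipate any serious obstacle; the only mild technical nuisance is the preliminary subsequence argument needed to guarantee existence of the relevant correlation limits and to align the Furstenberg system with the given F{\o}lner sequence.
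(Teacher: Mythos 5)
Your argument is correct and is exactly the route the paper intends: the corollary is the immediate translation of Proposition~\ref{prop: trivial projection to Kronecker factor} through the $\Z$-version of the Furstenberg correspondence principle, using the equivalence $\seminorm{U^2(\Phi)}{\ind_A - d_\Phi(A)} = 0 \iff \E{\ind_E - \mu(E)}{\mZ} = 0$ cited in the paragraph preceding the corollary, then Proposition~\ref{prop: EFS implies sumset}. The only cosmetic remark is that your preliminary subsequence discussion is slightly redundant: the hypothesis that $\seminorm{U^2(\Phi)}{\ind_A - d_\Phi(A)} = 0$ already presupposes that the relevant correlation limits exist along $\Phi$, so $a$ is automatically generic for the correspondence measure along $\Phi$ itself, and since Proposition~\ref{prop: trivial projection to Kronecker factor} does not require ergodicity of $(X,\mu,T)$, no ergodic decomposition or further subsequence refinement is needed.
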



\subsection{Open questions}

We end our discussion by collecting some further questions about polynomial sumsets in the integers.

Our first question concerns a variant of Theorem \ref{thm: coloring P} in the integers.

\begin{question} \label{q: coloring}
	If $\N$ is finitely colored, must there exists an infinite set $B = \{b_n : n \in \N\}$ and a shift $t \in \Z$ such that $\{b_i, b_i^2 + b_j : i < j\} + t$ is monochromatic?
\end{question}

The shift $t$ cannot be eliminated by Theorem \ref{thm: not ramsey pr}.
However, the arguments in Subsection \ref{sec: 5-coloring} (as well as the proof using nonstandard analysis in \cite{ramsey_witness}) depend quite delicately on the algebraic structure of the pattern $\{b_i, b_i^2 + b_j : i < j\}$, and it is not at all clear how to produce a coloring that would also avoid monochromatic shifts of such configurations. \\

Our second question concerns the extent to which we can recover a density statement similar to Theorem \ref{thm: density P} in the integers.

\begin{question}
	Let $A \subseteq \N$.
	Which of the following conditions (from strongest to weakest) are sufficient to guarantee a configuration $\{b_i^2 + b_j : i < j\} \subseteq A$ for some infinite set $B = \{b_n : n \in \N\}$?
	\begin{itemize}
		\item	$A$ is a totally ergodic set (i.e., has a totally ergodic Furstenberg system).
		\item	$A$ is relatively totally ergodic (Conjecture \ref{conj: trivial projection to rational factor}).
		\item	There exists $\Phi$ such that $d_{\Phi}(A \cap k\N) > 0$ for all $k \in \N$.
		\item	There exists $\Phi$ such that for every $k \in \N$, there exists $n \in \N$ with $d_{\Phi} \left( A \cap (k\N + n^2 + n) \right) > 0$.
	\end{itemize}
\end{question}

The first bullet has a positive answer by Theorem \ref{thm: TE}.
We note that the third bullet (and consequently also the fourth) is a partition regular condition.
That is, if $\N = \bigcup_{i=1}^r C_i$, then one of the sets $C_i$ will satisfy $d_{\Phi}(C_i \cap k\N) > 0$ for every $k \in \N$.
This offers a possible density reason for the fact (which follows from Ramsey's theorem) that given such a partition, one of the sets $C_i$ contains $\{b_i^2 + b_j : i < j\}$ for some infinite set $B = \{b_n : n \in \N\}$.


\end{document}